\newcommand{\RR}{{\mathbb R}}
\newcommand{\NN}{{\mathbb N}}
\newcommand{\ZZ}{{\mathbb Z}}
\newcommand{\Hc}{\mathcal{H}}
\newcommand{\Lc}{\mathcal{L}}
\newcommand{\Pc}{\mathcal{P}}
\newcommand{\Cc}{\mathcal{C}}
\newcommand{\Qc}{{\mathcal Q}}
\newcommand{\Fc}{{\mathcal F}}
\newcommand{\Bc}{\mathcal{B}}
\newcommand{\Gc}{\mathcal{G}}
\newcommand{\Yy}{y}
\newcommand{\weakly}{\rightharpoonup}
\newcommand{\weakstar}{\stackrel{\ast}{\rightharpoonup}}
\DeclareMathOperator{\supp}{supp}
\DeclareMathOperator{\dive}{div}
\DeclareMathOperator{\dist}{dist}
\DeclareMathOperator{\Reg}{Reg}
\DeclareMathOperator{\Sing}{Sing}
\renewcommand{\ge }{\geq}
\renewcommand{\le }{\leq}
\newcommand{\de}{\partial}
\DeclareMathOperator{\interior}{Int}
\newcommand{\eps}{\varepsilon}		
\renewcommand{\phi}{\varphi}
\renewcommand{\tilde }{\widetilde}
\DeclareMathOperator{\osc}{osc} 	
\newcommand{\dif}{\mathop{}\!\mathrm{d}}	
\newcommand{\Per}{\mathrm{Per}}
\newcommand{\inset}[1]{\left\{ #1 \right\}}
\newcommand*{\sphere}[1][n-1]{\mathbb{S}^{{#1}}}
\newcommand{\dista}[1]{{w_{{#1}}}}
\numberwithin{equation}{section}						
\renewenvironment{proof}[1][\proofname]{{\par\medskip\noindent\bfseries #1. }}{\qed\par}		
\crefname{subsection}{Subsection}{Subsections}		
\theoremstyle{plain}
\newtheorem{theorem}{Theorem}[section]
\newtheorem{proposition}[theorem]{Proposition}
\newtheorem{lemma}[theorem]{Lemma}
\newtheorem{corollary}[theorem]{Corollary}
\newtheorem{definition}[theorem]{Definition}
\newtheorem{assumption}[theorem]{Assumption}
\theoremstyle{definition}
\newtheorem{remark}[theorem]{Remark}
\begin{document}

\title[Regularity of surfaces with degenerate weights]{Regularity for free boundary surfaces minimizing degenerate area functionals}

\author[C.~Gasparetto]{Carlo Gasparetto}

\author[F.~Paiano]{Filippo Paiano}

\author[B.~Velichkov]{Bozhidar Velichkov}

\address {Carlo Gasparetto \newline \indent
Dipartimento di Matematica, Universit\`a di Pisa \newline \indent
Largo Bruno Pontecorvo, 5, 56127 Pisa, Italy}
\email{carlo.gasparetto@dm.unipi.it}

\address {Filippo Paiano \newline \indent
Dipartimento di Matematica, Universit\`a di Pisa \newline \indent
Largo Bruno Pontecorvo, 5, 56127 Pisa, Italy}
\email{filippo.paiano@phd.unipi.it}

\address {Bozhidar Velichkov \newline \indent
Dipartimento di Matematica, Universit\`a di Pisa \newline \indent
Largo Bruno Pontecorvo, 5, 56127 Pisa, Italy}
\email{bozhidar.velichkov@unipi.it}


\subjclass[2020] {49Q05, 49Q10, 35R35}
\keywords{Minimal surfaces, free boundaries, degenerate weights, $\varepsilon$-regularity, almost-minimality, Harnack inequality.}

\begin{abstract}
    We establish an epsilon-regularity theorem at points in the free boundary of almost-minimizers of the energy $\mathrm{Per}_{w}(E)=\int_{\partial^*E}w\,\mathrm{d}\mathcal{H}^{n-1}$, where $w$ is a weight asymptotic to $d(\cdot,\mathbb{R}\setminus\Omega)^a$ near $\partial\Omega$ and $a>0$.
    This implies that the boundaries of almost-minimizers are $C^{1,\gamma_0}$-surfaces that touch $\de \Omega$ orthogonally, up to a Singular Set $\Sing(\de E)$ whose Hausdorff dimension satisfies the bound
        $d_\Hc(\Sing(\de E)) \le n +a -(5+\sqrt{8})$.
\end{abstract}

\maketitle

\section{Introduction}\label{sec:intro}

Let $\Omega\subset\RR^n$ be an open set, and let $\dista{\Omega}(x)\coloneqq \dist(x;\RR^n\setminus\Omega)^a$, where $a>0$ is fixed hereafter.
For $E\subset\Omega$ with finite perimeter and $A\subset\RR^n$ a Borel set, we define the weighted perimeter (or $\dista{\Omega}$-perimeter for short) of $E$ in $A$ as
\begin{equation}
\Per_{\dista{\Omega}}(E;A) := \int_{\de^* E\cap A} \dista{\Omega}(x) \dif\Hc^{n-1}(x),
\end{equation}
where $\de^*E$ denotes the reduced boundary of $E$.
We say that $E$ is a local minimizer if it minimizes $\Per_{\dista{\Omega}}$ among all its compact perturbations. Similarly, we call $E$ an almost-minimizer if it minimizes $\Per_{\dista{\Omega}}$ up to a deficit that depends on the size of the perturbation.

The purpose of this work is to study the boundary regularity of almost-minimizers of $\Per_\dista{\Omega}$.
In particular, the main result is an $\eps$-regularity theorem, which we roughly state as follows:

\vspace{0.2cm}
{\it Let $E\subset\Omega$ be an almost-minimizer, $x\in\de E \cap \de\Omega$, and assume that $\de\Omega$ is sufficiently regular.
If the minimizing deficit of $E$ is small and $\de E\cap B_1(x)$ is contained in a small neighborhood of a plane $\Pi(x)$ orthogonal to $\de\Omega$, then $\de E\cap B_{1/2}(x)$ coincides with the graph of a $C^{1,\gamma}$ function that satisfies suitable a-priori estimates.
}
\vspace{0.2cm}

In the literature, perimeter with similar weights arise naturally in the case $\Omega = \{x_n > 0\}$, see \cite{Dierkes2003} for further historical notes and motivations on this problem.
For $a=1$, these weights model heavy surfaces (used in architecture).
For $a=m\in\NN$, minimizers of $\Per_{\dista{}}$ correspond to rotationally invariant perimeter minimizers in $\RR^{n+a}$.
Indeed, the set $\tilde{E}= \{(x',x'') \in \RR^{n-1}\times\RR^{a+1} : (x',|x''|)\in E\}$ is an (almost)-minimizer for the classical perimeter if and only if $E$ is an (almost)-minimizer of $\Per_{\dista{\Omega}}$.
This result holds because $|x''|^a$ is the Jacobian of $a$ rotations around $\{x''=0\}$.
While this correspondence fails for non-integer $a\in\RR$, various results concerning weighted perimeters, as the weighted isoperimetric inequality \cite{CabreRosOtonSerra2016} or the boundary monotonicity formula for minimizers (see \Cref{prop:monotonicity}), suggest that we are dealing with objects where the \enquote{relevant dimension} is $(n+a)$. 

More recently, in a paper by the third author \cite{Bianco_Manna_Velichkov_2022}, weights for which $\dista{}(x) \sim d_\Omega(x)^2$ as $x\to\de\Omega$ were introduced to study a free boundary problem in dimension $n=2$.

A similar problem to ours arises in the case where $a<0$ and $\Omega = \{x_n>0\}$.
In particular, in the case $a=-1$, the weight corresponds to the one induced by the hyperbolic metric in the half-space, thus giving a connection with the asymptotic behavior at infinity of area-minimizing surfaces in hyperbolic spaces.
This problem have been investigated in a series of works \cite{Hardt_Lin_1987,Lin_1989CPAM,Lin_1989,Tonegawa_1996}.
Unlike our case, in these works $ \bar E\cap \de \Omega$ is fixed, and the authors investigate the asymptotic behavior as $E$ approaches $\de \Omega$.

In the case $a>0$, to the best of our knowledge, previous studies on minimizers of $\Per_{\dista{}}$ have mostly addressed the Bernstein problem for this setting; see, for instance, \cite{Dierkes1988,Dierkes_1990,Dierkes1995}.
However, no $\eps$-regularity result currently exists in the literature. 
In our approach, we rely on some structural properties of almost-minimizers, such as density estimates, to establish regularity. These properties are ensured by recent progress in the study of the relative isoperimetric problem; see \cite{CabreRosOtonSerra2016,Cinti_Glaudo_Pratelli_Ros-Oton_Serra_2022}.

Finally, we mention that related results concerning regularity of solutions of partial differential equations with degenerate weights have been studied in \cite{Audrito_Fioravanti_Vita_2024, Restrepo_Ros-Oton_2024, Sire_Terracini_Vita_2021, Terracini_Tortone_Vita_2024}. In particular, the results in \cite{Sire_Terracini_Vita_2021} will be used in the proof of our main result.
We also refer the readers to \cite{Bevilacqua_Stuvard_Velichkov_2024,  De_Masi_Edelen_Gasparetto_Li_2024, Ferreri_Tortone_Velichkov_2023} for $\eps$-regularity theorems which employ techniques similar to ours.

\vspace{0.2cm}

We now introduce the setting of the problem.

\subsection{Setting of the problem and assumptions on \texorpdfstring{$\de\Omega$}{the boundary}}
To state the $\eps$-regularity theorem, we need to provide a quantitative definition of almost-minimality and specify the regularity assumptions on $\de\Omega$.

\begin{definition}[Almost-Minimizer of $\Per_{\dista{\Omega}}$]
Let $\vartheta,\beta>0$ and $\Omega\subset\RR^n$ be an open set. 
We say that a set $E\subset\overline{\Omega}$ is a $(\vartheta,\beta)$-minimizer of $\Per_{\dista{\Omega}}$ in an open set $D\subset\RR^n$ if, for all $B_r(x_0)\Subset D$ and every $F\subset\overline{\Omega}$ such that $E\Delta F \Subset B_r(x_0)$, it holds
\begin{equation}
\Per_{\dista{\Omega}}(E;B_r(x_0)) \le (1+\vartheta r^\beta) \Per_{\dista{\Omega}}(F;B_r(x_0)).
\end{equation}
\end{definition}

We refer to \Cref{sec:propalmostminimizers} for a rigorous definition of $\Per_\dista{}$.
We now turn our attention to the assumptions on $\de\Omega$.
To prove that almost-minimizers are $C^{1,\gamma}$, the natural assumption is that $\de\Omega$ is a $C^{1,\alpha}$ surface for some $\alpha\in(0,1)$.

More precisely, in this paper, we make the following assumption.

\begin{definition}\label{ass:boundaryof_Omega}
	Given $\alpha\in(0,1)$ and $R>0$, we say that \textit{$\Omega$ is $\varkappa$-flat} in $B_R$ if $\Omega\subset\RR^n$ is an open set, $0\in\de\Omega$ and there exists a function $g\in C^{1,\alpha}(\RR^{n-1})$ such that
	\begin{equation}
		\Omega \cap B_R = \left\{ x = (x',x_n) \in B_R : x_n > g(x') \right\},
	\end{equation}
	where $x'$ is the projection of $x$ onto $\RR^{n-1}$ and $g$ satisfies the following conditions
	\begin{enumerate}
		\item $g(0) = |\nabla g(0)| =0$;
		\item The H\"older modulus of $\nabla g$ is bounded by $\varkappa R^{-\alpha}$, i.e.,
			\begin{equation}
				[g]_{C^{1,\alpha}(B_R)} = \sup\left\{ \frac{|\nabla g(x') - \nabla g(y')|}{|x'-y'|^\alpha} :\; x',y' \in B_R' ,\; x\ne y \right\} \le \frac{\varkappa}{R^\alpha}.
			\end{equation} 
	\end{enumerate}
\end{definition}

We will simply refer to $\Omega$ as $\varkappa$-flat whenever $B_R = B_1$.

\subsection{Main results}
The main result of this paper is an $\eps$-regularity theorem at the boundary. We prove that if an almost-minimizer is sufficiently flat along some direction, then its boundary is the graph of a $C^{1,\gamma}$-function.

\begin{theorem}[$\eps$-Regularity]\label{thm:main_theorem}
	There exist constants $\eps_0, \lambda_0 > 0$ (small), $C_0>0$ (large) and $\gamma_0\in(0,1)$ depending only on $n, a, \alpha$, and $\beta$ such that the following holds.
	Let $\Omega$ be $\varkappa$-flat in the sense of \Cref{ass:boundaryof_Omega}, and let $E$ be a $(\vartheta,\beta)$-minimizer of $\Per_\dista{\Omega}$.
	Furthermore, assume that
	\begin{equation}
		\de E \cap \Omega\cap B_1 \subset \left\{ x \in B_1 : |x\cdot \nu| \le \eps \right\},
	\end{equation}
	for some $\nu\in\sphere$ with $\nu\perp e_n$ and $\eps > 0$, and that
	\begin{equation}
		(\varkappa+\vartheta)^{\lambda_0} \le \eps \le \eps_0.
	\end{equation}
	Then, there exists a function $u \in C^{1,\gamma_0}(\RR^{n-1})$ such that
	\begin{equation}
		\de E \cap\Omega\cap B_{1/2}= \left\{x\in\Omega\cap B_{1/2} : x = x'' + u(x'')\nu \mbox{ and }x''\in\nu^\perp\right\},
	\end{equation}
	and
	\begin{equation}
		\| u\|_{C^{1,\gamma_0}(B''_{1/2})} \le C_0 \eps.
	\end{equation}
\end{theorem}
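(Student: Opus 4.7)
My plan is a standard improvement-of-flatness iteration in the spirit of De~Giorgi, combined with compactness and linearization at the contact with $\de\Omega$. The key step is a single-scale decay lemma: there exist $\rho\in(0,1/2)$ and $\gamma_0,\eps_0,\lambda_0>0$ such that if $E$ is $\eps$-flat in direction $\nu\perp e_n$ at scale $1$, $\Omega$ is $\varkappa$-flat, $E$ is $(\vartheta,\beta)$-minimizing, and $(\varkappa+\vartheta)^{\lambda_0}\le\eps\le\eps_0$, then $E$ is $\rho^{1+\gamma_0}\eps$-flat at scale $\rho$ in a new direction $\nu'\perp e_n$ with $|\nu-\nu'|\le C\eps$. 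Iterating geometrically at the scales $\rho^k$ produces a Cauchy sequence of directions $\nu_k\to \nu_\infty\perp e_n$; standard arguments then promote the flatness decay to the claimed $C^{1,\gamma_0}$ graph representation with the quantitative estimate $\|u\|_{C^{1,\gamma_0}}\le C_0\eps$. Perpendicularity to $\de\Omega$ is preserved at every step because each $\nu_k\perp e_n$.

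\textbf{Linearization.} The decay lemma is proved by contradiction. Take sequences $E_k,\Omega_k$ with $\eps_k\downarrow 0$ violating the conclusion. Straighten $\de\Omega_k$ via a $C^{1,\alpha}$ diffeomorphism so that $\Omega_k$ becomes (nearly) the half-space $\{x_n>0\}$ and the weight becomes $\tilde w_k$ with $\tilde w_k(x)=x_n^a(1+o_k(1))$ in $C^{0,\alpha}$. After rotation, assume $\nu_k=e_1$; since $\nu_k\perp e_n$, the splitting $\nu_k^\perp\supset \RR e_n$ is preserved. By the density and weighted Ahlfors-regularity estimates of \Cref{sec:propalmostminimizers}, $\de E_k\cap B_{3/4}$ can be written as the graph of a function $u_k$ over $\nu_k^\perp\cap\{x_n\ge 0\}$ with $\|u_k\|_\infty\lesssim \eps_k$. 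Normalize by $v_k:=u_k/\eps_k$ and show that, along a subsequence, $v_k\to v_\infty$ locally uniformly, with the limit weakly solving the degenerate Neumann problem
\begin{equation*}
    \dive(x_n^a\nabla v_\infty)=0 \text{ in } B''_{3/4}\cap\{x_n>0\},\qquad \lim_{x_n\to 0^+} x_n^a\,\de_{x_n}v_\infty=0.
\end{equation*}
Once this is established, the Schauder-type estimates of~\cite{Sire_Terracini_Vita_2021} for Muckenhoupt $A_2$ weighted equations give $v_\infty\in C^{1,\gamma_0}$ up to $\{x_n=0\}$, and the weighted Neumann condition forces the first-order Taylor polynomial $\ell$ of $v_\infty$ at $0$ to be tangential, i.e.\ $\de_{x_n}\ell=0$. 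Choosing $\rho$ small relative to the modulus of continuity of $\nabla v_\infty$ then contradicts the failure of the decay lemma.

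\textbf{Main obstacle.} The delicate step is deriving the limiting equation, i.e.\ passing to the limit in the Euler--Lagrange equation of $\Per_{\tilde w_k}$,
\begin{equation*}
    \dive\!\left(\tilde w_k(x)\,\frac{\nabla u_k}{\sqrt{1+|\nabla u_k|^2}}\right)=O(\vartheta_k r^{\beta-1}),
\end{equation*}
with a weight that degenerates exactly on $\{x_n=0\}$, where the graphs also terminate. The ingredients needed to make this rigorous are: the quantitative smallness $(\varkappa+\vartheta)^{\lambda_0}\le\eps$, which makes the almost-minimality deficit negligible after dividing by $\eps_k$; the density estimates and weighted Ahlfors regularity of $\de E_k$, which prevent concentration of mass near the degenerate locus and guarantee the graph parametrization up to the boundary; and the weighted Caccioppoli and Sobolev--Poincar\'e inequalities associated to the $A_2$ weight $x_n^a$, which give $H^1(x_n^a\,dx)$ equi-integrability and hence strong compactness of the sequence $\{v_k\}$. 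Crucially, the Neumann-type boundary condition at $\{x_n=0\}$ is \emph{not} a pointwise trace, but must be recovered as the vanishing of a weighted flux in the limiting integration by parts; this is where the weighted trace theory of Muckenhoupt-weighted Sobolev spaces enters.
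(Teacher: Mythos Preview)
Your overall architecture---improvement of flatness by compactness/contradiction, linearization to the degenerate Neumann problem $\dive(x_n^a\nabla v)=0$ with $\lim x_n^a\de_n v=0$, and Schauder estimates from \cite{Sire_Terracini_Vita_2021}---matches the paper's. The genuine gap is the sentence ``By the density and weighted Ahlfors-regularity estimates of \Cref{sec:propalmostminimizers}, $\de E_k\cap B_{3/4}$ can be written as the graph of a function $u_k$ over $\nu_k^\perp\cap\{x_n\ge0\}$.'' Density estimates and Ahlfors regularity do \emph{not} give graphicality: they only say $\de E_k$ has the right $(n-1)$-dimensional mass, not that it projects injectively onto $\nu_k^\perp$. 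With only $L^\infty$ flatness (slab containment) and almost-minimality, $\de E_k$ could a priori fold back on itself at small scales. Everything downstream in your sketch---the Euler--Lagrange equation for $u_k$, the Caccioppoli inequality, the $H^1(x_n^a\,dx)$ compactness---presupposes this graph parametrization, so the whole energy route is unsupported.

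The paper fills this gap by a viscosity approach rather than an energy one. It first proves that almost-minimizers satisfy a \emph{weak} viscosity condition in the sense of \cite{DeSilva_Savin_2021} (\Cref{prop:weak_viscosity}), then derives Harnack-type inequalities both away from $\de\Omega$ (\Cref{prop:interior_harnack}) and near $\de\Omega$ (\Cref{prop:harnack_boundary}). Iterating these gives a $C^{0,\sigma}$ decay of oscillations (\Cref{cor:full_decay}) down to scale $\sim\eps$, which is exactly what forces the rescaled boundaries $\de\tilde E_k$ to converge in Hausdorff distance to the graph of a H\"older function---graphicality is obtained only \emph{in the limit}, never for the $E_k$ themselves. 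The limiting PDE is then identified by stability of the viscosity touching conditions (including the boundary one, \Cref{prop:boundary_max}), not by passing to the limit in a divergence-form equation. Note also the paper's explicit discussion in \Cref{subsec:sketch} of why a pointwise viscosity condition fails for almost-minimizers in the interior (the second-order equation is not scale-invariant), which is why the weak $\Pc_\Lambda^I(r)$ class and the associated measure-theoretic Harnack machinery are unavoidable here.
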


We refer the reader to \Cref{subsec:sketch} for an outline of the main ideas of the proof.
In the remainder of this subsection, we briefly discuss two consequences of \Cref{thm:main_theorem}: a generalization to a broader class of weights and its connection to the Bernstein problem.

\subsection*{Hölder continuous weights.}
For any nonnegative function $\dista{} : \RR^n \to [0,+\infty)$, we define the weighted perimeter of a set of finite perimeter $E$ (within a Borel set $A$) as
\begin{equation}
\Per_{\dista{}}(E;A) = \int_{\de^*E\cap A} \dista{}(x) \dif \Hc^{n-1}(x),
\end{equation}
and all previous definitions of minimizing properties remain valid.
We make the following assumption on $\dista{}$.

\begin{assumption}\label{ass:weights}
	We assume that there exist $C>0$ and $b\in(0,1)$ such that
	\begin{equation}
		\dista{}(x) \mbox{ and } \frac{\dista{}(x)}{d_\Omega(x)^a}  \, \in C^{0,b}(\overline{\Omega})
	\end{equation}
		and
	\begin{equation}
		\frac{1}{C} \le \frac{\dista{}(x)}{d_\Omega(x)^a} \le C,\quad \mbox{in }\overline{\Omega}. 
	\end{equation}
\end{assumption}  

\begin{proposition}\label{prop:changing_weight}
	Let $\dista{}\in C^{0,b}(\RR^n)$ satisfy \Cref{ass:weights}. 
	If $E \subset \overline{\Omega}$ is a $(\vartheta,\beta)$-minimizer for $\Per_{\dista{}}$, then there exist constants $\vartheta' > 0$ and $\beta' > 0$ such that $E$ is also a $(\vartheta',\beta')$-minimizer for $\Per_{\dista{\Omega}}$.
\end{proposition}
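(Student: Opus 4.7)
The plan is to exploit the factorization $\dista{}=h\cdot \dista{\Omega}$, where $h:=\dista{}/\dista{\Omega}$ is bounded above and below and Hölder continuous on $\overline{\Omega}$ by \Cref{ass:weights}. On a small ball $B_r(x_0)$, the oscillation of $h$ is of order $r^b$, so $h$ is nearly constant there, and the $(\vartheta,\beta)$-minimality of $E$ for $\Per_{\dista{}}$ transfers to $(\vartheta',\beta')$-minimality for $\Per_{\dista{\Omega}}$ with a modified deficit of order $r^{\min(b,\beta)}$.

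Concretely, fix $B_r(x_0)\Subset D$ and a competitor $F\subset\overline{\Omega}$ with $E\Delta F\Subset B_r(x_0)$, and set
\begin{equation*}
h^- := \inf_{\overline{\Omega}\cap B_r(x_0)} h, \qquad h^+ := \sup_{\overline{\Omega}\cap B_r(x_0)} h.
\end{equation*}
Since the reduced boundaries of $E$ and $F$ lie in $\overline{\Omega}$, the identity $\dista{}=h\,\dista{\Omega}$ gives
\begin{equation*}
h^-\,\Per_{\dista{\Omega}}(E;B_r(x_0)) \le \Per_{\dista{}}(E;B_r(x_0)), \qquad \Per_{\dista{}}(F;B_r(x_0)) \le h^+\,\Per_{\dista{\Omega}}(F;B_r(x_0)).
\end{equation*}
Sandwiching the assumed $(\vartheta,\beta)$-minimality of $E$ for $\Per_{\dista{}}$ between these two bounds yields
\begin{equation*}
\Per_{\dista{\Omega}}(E;B_r(x_0)) \le \frac{h^+}{h^-}\,(1+\vartheta r^\beta)\,\Per_{\dista{\Omega}}(F;B_r(x_0)).
\end{equation*}

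For $r$ small enough that $[h]_{C^{0,b}}(2r)^b\le 1/(2C)$, the lower bound $h\ge 1/C$ together with the Hölder estimate $h^+-h^-\le [h]_{C^{0,b}}(2r)^b$ gives $h^+/h^-\le 1+C'r^b$, producing the desired inequality with $\beta':=\min(b,\beta)$ and a suitable $\vartheta'$. For larger $r$ the Hölder estimate is useless, but the crude bound $h^+/h^-\le C^2$ still applies, and the resulting fixed factor $C^2(1+\vartheta r^\beta)$ can be absorbed into $1+\vartheta'r^{\beta'}$ by enlarging $\vartheta'$, which is admissible since $r$ is bounded below in this regime. The argument presents no essential obstacle; its one technical point is that $h$ is only defined on $\overline{\Omega}$, but this is harmless because $\partial^*E,\partial^*F\subset\overline{\Omega}$, so only the values of $h$ on $\overline{\Omega}\cap B_r(x_0)$ are needed.
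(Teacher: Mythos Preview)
Your proof is correct and follows essentially the same approach as the paper: both factor $\dista{}=h\,\dista{\Omega}$ with $h=\dista{}/\dista{\Omega}$, sandwich the $(\vartheta,\beta)$-minimality inequality between the bounds coming from $h^{\pm}$ on the ball, and control $h^+/h^-$ via the Hölder continuity of $h$ to obtain $\beta'=\min(b,\beta)$. The paper writes the ratio as a product $\sup(1/h)\cdot\sup(h)$ and estimates it in one stroke for $r\le 1$, whereas you split into small and large $r$; this is only a cosmetic difference.
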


\begin{proof}
Let $F\subset\RR^n$, $x_0\in\RR^n$ and $r>0$ such that  
 $E\Delta F \Subset B_r(x_0)\subset B_1$, then 
\begin{equation}
\begin{split}
    \Per_{\dista{\Omega}}(E;B_r(x_0)) 
        &\le \sup_{y\in B_r(x_0)}\left( \frac{d_\Omega(y)^a}{\dista{}(y)} \right) \Per_{\dista{}}(E;B_r(x_0))\\
        &\le \sup_{y\in B_r(x_0)}\left( \frac{d_\Omega(y)^a}{\dista{}(y)} \right)(1+\vartheta r^\beta) \Per_{\dista{}}(F;B_r(x_0))\\
        &\le \sup_{y\in B_r(x_0)}\left( \frac{d_\Omega(y)^a}{\dista{}(y)} \right)\sup_{y\in B_r(x_0)}\left( \frac{\dista{}(y)}{d_\Omega(y)^a} \right)(1+\vartheta r^\beta) \Per_{\dista{\Omega}}(F;B_r(x_0)).
\end{split}
\end{equation}
Using the Hölder continuity of $\dista{}$ and \Cref{ass:weights}, we estimate:  \begin{equation}
    \sup_{y\in B_r(x_0)}\left( \frac{d_\Omega(y)^a}{\dista{}(y)} \right)\sup_{y\in B_r(x_0)}\left( \frac{\dista{}(y)}{d_\Omega(y)^a} \right) 
    \le \left( \frac{d_\Omega(x_0)^a}{\dista{}(x_0)} + Cr^b \right)\left( \frac{\dista{}(x_0)}{d_\Omega(x_0)^a} + Cr^b \right) 
    \le 1 + C r^b,
\end{equation}
and therefore, letting $\beta'=\min\{b,\beta\}$ and $\vartheta' = \vartheta + C$, we get
\begin{equation}
    \Per_{\dista{\Omega}}(E;B_r(x_0)) \le (1+ \vartheta'r^{\beta'}) \Per_{\dista{\Omega}}(F;B_r(x_0)).
\end{equation}

\end{proof}

By \Cref{prop:changing_weight} above, the following result is a straightforward consequence of \Cref{thm:main_theorem}.

\begin{corollary}\label{cor:main_theorem}
	There exist constants $\eps_0',\lambda_0'>0$ (small), $C_0'>0$ (large) and $\gamma_0'\in(0,1)$ depending only on $n,a,\alpha,\beta,C$ and $b$ with the following property.
    Let $\Omega$ be $\varkappa$-flat in the sense of \Cref{ass:boundaryof_Omega}, let $w$ satisfy \Cref{ass:weights} and let $E$ be a $(\vartheta,\beta)$-minimizer of $\Per_w$.
    Furthermore, assume that
    \begin{equation}
				\de E \cap \Omega \cap B_1 \subset \left\{ x \in B_1 : |x\cdot \nu |\le \eps \right\};
	\end{equation} 
    for some $\nu\in\sphere$ with $\nu\perp e_n$ and $\eps>0$ and that
    \begin{equation}
        (\varkappa+\vartheta)^{\lambda_0'}\le\eps\le\eps_0'.
    \end{equation}
    Then there exists a function $u\in C^{1,\gamma_0'}(\RR^{n-1})$ such that
	\begin{equation}
		\de E \cap\Omega \cap B_{1/2} = \left\{x\in\Omega\cap B_{1/2} : x = x'' + u(x'')\nu \mbox{ and }x''\in\nu^\perp\right\},
	\end{equation}
	and 
	\begin{equation}
		\| u\|_{C^{1,\gamma_0'}(B_{1/2}'')} \le C_0' \eps.
	\end{equation}
\end{corollary}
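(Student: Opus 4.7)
The strategy is to combine \Cref{prop:changing_weight} and \Cref{thm:main_theorem}. By \Cref{prop:changing_weight}, $E$ is a $(\vartheta',\beta')$-minimizer of $\Per_{d_\Omega^a}$ for some $\vartheta'=\vartheta'(\vartheta,C,b,w)$ and $\beta'=\min\{b,\beta\}$, and the other structural hypotheses of \Cref{thm:main_theorem} --- $\varkappa$-flatness of $\Omega$ and flatness of $\de E\cap\Omega\cap B_1$ --- are unchanged. If the smallness condition $(\varkappa+\vartheta')^{\lambda_0}\le\eps\le\eps_0$ (with $\lambda_0,\eps_0$ the constants of \Cref{thm:main_theorem} taken for the exponent $\beta'$) could be verified, a direct application would yield the claim.

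The nontrivial point is that the proof of \Cref{prop:changing_weight} produces $\vartheta'=\vartheta+C_1$ with $C_1>0$ depending on the weight, so $\vartheta'$ is bounded below by a fixed constant and the smallness condition cannot be obtained by shrinking $\eps_0'$ alone. I would circumvent this by a preliminary rescaling: set $r:=\eps/\eps_0$, $E_r:=r^{-1}E$, $\Omega_r:=r^{-1}\Omega$, $w_r(x):=r^{-a}w(rx)$. After rescaling, $E_r$ is a $(\vartheta r^\beta,\beta)$-minimizer of $\Per_{w_r}$ in $B_1$, $\Omega_r$ is $(\varkappa r^\alpha)$-flat, and $\de E_r\cap\Omega_r\cap B_1\subset\{|x\cdot\nu|\le\eps_0\}$. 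Crucially, the H\"older seminorms of $w_r$ and $w_r/d_{\Omega_r}^a$ on $B_1$ pick up a factor $r^b$, so reapplying \Cref{prop:changing_weight} to $(E_r,w_r,\Omega_r)$ gives a $(\vartheta'',\beta')$-minimizer of $\Per_{d_{\Omega_r}^a}$ with $\vartheta''\lesssim r^{\min\{b,\beta\}}$, now small. Choosing $\lambda_0'$ and $\eps_0'$ sufficiently small in terms of $n,a,\alpha,\beta,C,b$, the smallness hypothesis $(\varkappa r^\alpha+\vartheta'')^{\lambda_0}\le\eps_0$ of \Cref{thm:main_theorem} is satisfied.

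\Cref{thm:main_theorem} then produces $u'\in C^{1,\gamma_0}(B''_{1/2})$ representing $\de E_r\cap\Omega_r\cap B_{1/2}$ as a graph over $\nu^\perp$, which scaled back via $u(x''):=r\,u'(x''/r)$ yields a $C^{1,\gamma_0}$ graph representation of $\de E\cap\Omega\cap B_{r/2}$. The remaining step --- passing from the ball $B_{r/2}$ of radius $\sim\eps$ back to the full $B_{1/2}$ --- is performed by applying the same rescaled argument centered at each $x_0\in\de E\cap\de\Omega\cap B_{1/2}$ (since the global flatness hypothesis restricts to a flatness hypothesis in each sub-ball $B_{1/2}(x_0)\subset B_1$) and combining with classical interior $\eps$-regularity for almost-minimizers of $\Per_w$ on the portion of $\de E$ bounded away from $\de\Omega$; since the direction $\nu$ is fixed, the local graphs patch consistently into a single $u\in C^{1,\gamma_0'}(B''_{1/2})$. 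The subtlest point I expect to face is recovering the sharp $\eps$-linear bound $\|u\|_{C^{1,\gamma_0'}(B''_{1/2})}\le C_0'\eps$, as the naive rescaling loses a factor $r^{-1}$ in the gradient estimate; this must be restored by combining the height control $\|u\|_{L^\infty}\le C_0\eps$ with interior Schauder-type estimates for the degenerate perimeter equation.
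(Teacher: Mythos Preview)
The paper's own proof is a single sentence: combine \Cref{prop:changing_weight} with \Cref{thm:main_theorem}. You have correctly spotted that this is not a black-box deduction, since \Cref{prop:changing_weight} yields $\vartheta'=\vartheta+C$ with $C>0$ fixed (depending on the H\"older data of $w/d_\Omega^a$), and then the smallness hypothesis $(\varkappa+\vartheta')^{\lambda_0}\le\eps\le\eps_0$ of \Cref{thm:main_theorem} can never be met for small $\eps$. Your rescaling by $r=\eps/\eps_0$ --- exploiting that the H\"older seminorm of $w_r/d_{\Omega_r}^a$ picks up a factor $r^b$, so the new deficit becomes $\vartheta''\le \vartheta r^\beta+Cr^b$ --- is the natural way to restore smallness, and is presumably what the authors have in mind.

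Where your outline is still incomplete is the return from balls of radius $r$ to $B_{1/2}$ with the sharp estimate $\|u\|_{C^{1,\gamma_0'}}\le C_0'\eps$. Two issues. First, your appeal to ``classical interior $\eps$-regularity'' for the part of $\de E$ bounded away from $\de\Omega$ faces the very same obstruction: the computation in \Cref{subsection:Tamanini} shows that the classical-perimeter deficit there is again of the form $(\vartheta+\bar C)^2$ with $\bar C>0$ fixed, so Tamanini's theorem at unit scale is no more directly applicable than \Cref{thm:main_theorem}; those interior balls must be rescaled as well. Second, and this is the point you correctly flag, the sharp $\eps$-linear bound does not follow from a single rescaled application: the hypothesis of \Cref{thm:main_theorem} at scale $r_0$ forces $(\vartheta'r_0^{\beta'})^{\lambda_0}\le\eps/r_0$, so the admissible $r_0$ shrinks with $\eps$, and the resulting $C^{1,\gamma}$ constant $\sim r_0^{-(1+\gamma)}$ blows up. Your proposed fix --- bootstrapping via Schauder estimates for the degenerate linearised equation once some qualitative $C^{1,\gamma}$ regularity is in hand --- is the right idea, but carrying it out amounts to re-entering the machinery behind \Cref{thm:main_theorem} (the Sire--Terracini--Vita estimates in particular) rather than using it as a black box. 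In short: you have located a genuine subtlety that the paper's one-line argument glosses over and identified the correct first move, but a complete proof along your lines requires more work than either you or the paper spell out.
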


\subsection*{Singular Set}
In order to state the following results regarding almost-minimizers with singular boundaries, we introduce the following terminology: we say that $E\subset\{x_n\ge0\}\subset\RR^{n}$ is a \textit{regular cone at $0$} if there exists $\nu\in\sphere$, $\nu\cdot e_n=0$, such that
\begin{equation}
    \overline{E} = \{x\in\RR^n\colon x_n\ge0\mbox{ and }x\cdot\nu\le0\}.
\end{equation}
In \cite{Dierkes_1990}, it was proved that if $E\subset\RR^n$ is a cone at $0$ (meaning $\frac{E}{r}=E$ for all $r$), it is a minimizer of $\Per_{w}$ for $w=(x_n)_+^a$ and $n<5+\sqrt{8}-a$, then $E$ is regular at $0$. 

Given $a>0$, let $n^*_a$ denote the smallest $n\in\NN$ such that there exists a cone $E$ that is a minimizer of $\Per_{(x_n)_+^a}$ in $\{x_n\ge0\}\subset \RR^n$ that is not regular.
Notice that, by the aforementioned result, it holds $n^*_a\ge 5+\sqrt{8}-a$.

Expanding on the above discussion, it is natural to state the following



\begin{corollary}
    Let $\gamma_0\in(0,1)$ be as in \Cref{thm:main_theorem}, let $\Omega$ be $\varkappa$-flat in the sense of \Cref{ass:boundaryof_Omega}, and let $E$ be a $(\vartheta,\beta)$-minimizer of $\Per_{\dista{\Omega}}$.
    Then there exist two disjoint sets $\Sing(\de E)$ and $\Reg(\de E)$ such that
    \begin{equation}
        \overline{\de E \cap {\Omega}} = \Sing(\de E) \cup \Reg(\de E),
    \end{equation}
    with the following properties:
    \begin{enumerate}
        \item  $\Reg (\de E)$ is relatively open in $\overline{\de E\cap \Omega}$ and $\Sing(\de E)$ is relatively closed in $\overline{\de E\cap \Omega}$.
        
        \item The set $\Reg(\de E)$ is a $(n-1)$-dimensional $C^{1,\gamma_0}$-manifold with boundary given by 
        \begin{equation}
            \de \Reg(\de E) = \Reg(\de E) \cap \de \Omega.
        \end{equation}
        
        \item $\Hc^{s}(\Sing(\de E))=0$ for all $s > n-n_a^*$.
    \end{enumerate}
    In particular, if $n< n_a^*$, then $\de E$ is a $(n-1)$-manifold with boundary $\de(\de E) = \de E \cap \de \Omega$.
\end{corollary}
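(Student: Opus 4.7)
Define $\Reg(\de E)$ as the set of points $x \in \overline{\de E \cap \Omega}$ admitting a neighborhood in which $\de E \cap \overline{\Omega}$ is an $(n-1)$-dimensional $C^{1,\gamma_0}$-manifold (with boundary on $\de\Omega$ whenever $x \in \de\Omega$), and set $\Sing(\de E) := \overline{\de E \cap \Omega} \setminus \Reg(\de E)$. Both the topological statement in (1) and the manifold structure in (2) are then tautological, so the content of the proof reduces to identifying points belonging to $\Reg(\de E)$ and bounding the Hausdorff dimension of its complement.

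The analysis splits into interior and boundary points. At an interior point $x \in \overline{\de E \cap \Omega} \cap \Omega$ the weight $d_\Omega^a$ is smooth and bounded between positive constants on a fixed ball around $x$; this reduces almost-minimality of $\Per_{\dista{\Omega}}$ to almost-minimality of the classical perimeter (up to adjusting the deficit parameters), and the classical Almgren--Tamanini theory gives $C^{1,\gamma_0}$-regularity outside a singular set of dimension at most $n - 8$. At a boundary point $x \in \de E \cap \de\Omega$, the monotonicity formula (\Cref{prop:monotonicity}), combined with uniform density and perimeter estimates for almost-minimizers, provides compactness for the rescaled sets $(E-x)/r$ as $r \to 0$, every subsequential limit being a $0$-homogeneous minimizer of $\Per_{(x_n)_+^a}$ on the flattened half-space --- that is, a minimizing cone in the sense used to define $n_a^*$.

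If some blow-up at $x$ is a regular cone, flatness at a scale $r \ll 1$ with respect to a plane orthogonal to $\de\Omega$ transfers to $E$, and after rescaling to unit radius one verifies the smallness hypotheses $(\varkappa+\vartheta)^{\lambda_0} \le \eps \le \eps_0$ and applies \Cref{thm:main_theorem} to conclude $x \in \Reg(\de E)$. Otherwise $x$ is, by definition, in $\Sing(\de E)$. A Federer dimension-reduction argument, driven by this blow-up dichotomy and the definition of $n_a^*$ as the smallest dimension admitting a non-regular minimizing cone, then yields $\Hc^s(\Sing(\de E)) = 0$ for every $s > n - n_a^*$. When $n < n_a^*$, both the interior bound $n - 8$ and the boundary bound $n - n_a^*$ are strictly negative, so $\Sing(\de E) = \emptyset$ and $\de E \cap \overline{\Omega}$ is globally a manifold with boundary $\de E \cap \de\Omega$.

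The principal obstacle lies in the boundary analysis: extending the monotonicity formula to the almost-minimizing setting, producing genuine minimizers of $\Per_{(x_n)_+^a}$ on $\{x_n \ge 0\}$ as tangent objects after flattening $\de\Omega$ (which requires \Cref{cor:main_theorem} to absorb the Hölder perturbation of the weight under the straightening change of coordinates), and verifying that the approximating plane produced by a regular blow-up is orthogonal to $\de\Omega$ as required by the hypothesis $\nu \perp e_n$ of \Cref{thm:main_theorem}.
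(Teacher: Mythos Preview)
Your outline is correct and matches what the paper indicates: the paper explicitly omits the proof, stating only that ``the proof of the corollary relies on Federer's dimensional reduction principle, which is nowadays considered classical.'' Your sketch is precisely a detailed version of that argument.

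One small streamlining: you list ``extending the monotonicity formula to the almost-minimizing setting'' as a principal obstacle, but the paper's machinery lets you sidestep this. Compactness of the rescalings $(E-x_0)/r$ comes directly from the uniform perimeter bound \eqref{eq:perboundonqmin} and \Cref{prop:compactnessqmin}, not from monotonicity; by \Cref{cor:blowupqm} any blow-up limit is already a \emph{minimizer} of $\Per_a$ on the flat half-space, and it is only at that stage that the monotonicity formula (\Cref{prop:monotonicity}, valid for stationary sets) is invoked to show that a further blow-up is a cone (\Cref{cor:blowupsarecones}). So no almost-minimizer version of monotonicity is needed. Likewise, the orthogonality $\nu\perp e_n$ is automatic once the blow-up cone is regular, since a regular cone is by definition $\{x\cdot\nu\le0,\,x_n\ge0\}$ with $\nu\cdot e_n=0$, and Hausdorff convergence of boundaries (\Cref{cor:convergence_Hausdorff}) transfers this flatness back to $E$ at small scales.
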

The proof of the corollary relies on Federer’s dimensional reduction principle, which is nowadays considered classical, thus we omit it.

\subsection{Strategy of the proof}\label{subsec:sketch}

\Cref{thm:main_theorem} follows from an \textit{improvement of flatness}-type result, which we state next.
The below result, combined with an analogous one for points away from $\de\Omega$ (\Cref{prop:interior_IOF}) gives a $C^{1,\gamma}$-decay of oscillations up to $\de\Omega$ (\Cref{cor:iof_iter}); we prove all these results in \Cref{sec:IOF}.
Then, with \Cref{cor:iof_iter} at hand, the proof of \Cref{thm:main_theorem} is classical (see, for instance, \cite{Velichkov_2023}) and we omit it.

\begin{theorem}[Improvement of flatness]\label{thm:improvement_flatness}
    There exist universal constants $\eps_{1},\lambda_{1}, \eta_{1}$ (small) and $C_1$ (large) with the following property.
    Let $\Omega$ be $\varkappa$-flat and let $E$ be a $(\vartheta,\beta)$-minimizer of $\Per_{\dista{\Omega}}$ in $B_1$. Furthermore, assume that
    \begin{equation}
		\de E \cap \Omega\cap B_1 \subset \left\{ x \in B_1 : |x\cdot \nu| \le \eps \right\},
	\end{equation}
	for some $\nu\in\sphere$ with $\nu\perp e_n$ and $\eps > 0$, and that
	\begin{equation}
		(\varkappa+\vartheta)^{\lambda_{1}} \le \eps \le \eps_{1}.
	\end{equation}
    Then there exists $\tilde\nu\in\sphere$ such that $\tilde\nu\cdot e_n=0$, $|\tilde\nu-\nu|\le C_1\eps$ and
    \begin{equation}\label{eq:IOF_thesis_0}
        \de E\cap \Omega\cap B_{\eta_{1}}\subset\left\{x\colon |x\cdot\tilde\nu|\le\frac{1}{2}\eps \eta_{1}\right\}.
    \end{equation}
\end{theorem}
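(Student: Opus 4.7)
The plan is to prove \Cref{thm:improvement_flatness} by the classical De~Giorgi compactness-linearisation argument, adapted to the degenerate weighted setting. I argue by contradiction: assume the statement fails for some sequence, so that there exist $\eps_k\to 0$, parameters $\varkappa_k,\vartheta_k$ with $(\varkappa_k+\vartheta_k)^{\lambda_{1}}\le\eps_k$, $\varkappa_k$-flat domains $\Omega_k$ (with $\Omega_k\to\{x_n>0\}$), $(\vartheta_k,\beta)$-minimizers $E_k$ of $\Per_{\dista{\Omega_k}}$, and directions $\nu_k\in\sphere\cap e_n^\perp$ with $\nu_k\to\nu_\infty$, such that $\de E_k\cap\Omega_k\cap B_1\subset\{|x\cdot\nu_k|\le\eps_k\}$, yet for every admissible $\tilde\nu$ the flatness fails to halve at scale $\eta_{1}$ (still to be chosen).

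The first step is graphicality. Using the interior and boundary density estimates for $E_k$ together with the flatness hypothesis and the fact that the deficit and the ambient curvature are both negligible compared to $\eps_k$ (this is where $(\varkappa_k+\vartheta_k)^{\lambda_1}\le \eps_k$ is used), one shows that $\de E_k\cap\Omega_k\cap B_{3/4}$ is the graph of a function $\eps_k u_k$ over the hyperplane $\nu_k^\perp$. To obtain uniform regularity for $u_k$, the key technical ingredient is a boundary Harnack-type inequality for minimal graphs with the degenerate weight $\dista{\Omega_k}$, which exploits the boundary monotonicity formula \Cref{prop:monotonicity} and competitor constructions based on the weighted relative isoperimetric inequalities of \cite{CabreRosOtonSerra2016, Cinti_Glaudo_Pratelli_Ros-Oton_Serra_2022}. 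Together with the oscillation bound $|u_k|\le 1$, this yields uniform Hölder bounds on $u_k$ and, up to subsequence, convergence $u_k\to u_\infty$ uniformly on compact subsets of $B_{3/4}\cap\{x_n\ge 0\}$.

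Next, I identify the limiting PDE. Writing the Euler--Lagrange equation for the graph function obtained from first variations of $\Per_{\dista{\Omega_k}}$ against vector fields tangent to $\nu_k$, and exploiting that $\dista{\Omega_k}(x)/x_n^a\to 1$ uniformly on compact subsets of $\{x_n>0\}$ by \Cref{ass:boundaryof_Omega}, one obtains in the limit
\begin{equation}
    \dive\bigl(x_n^{a}\,\nabla u_\infty\bigr)=0 \quad\text{in } B_{3/4}\cap\{x_n>0\},
\end{equation}
with homogeneous weighted Neumann condition on $\{x_n=0\}$---the natural condition arising because the approximating plane is orthogonal to $\de\Omega$. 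The boundary regularity results of \cite{Sire_Terracini_Vita_2021} then give $u_\infty\in C^{1,\gamma}$ up to $\{x_n=0\}$, with a universal constant $C$ such that
\begin{equation}
    \sup_{B_\eta\cap\{x_n\ge 0\}}\bigl|u_\infty(x)-u_\infty(0)-\nabla_{x''}u_\infty(0)\cdot x\bigr|\le C\eta^{1+\gamma}\quad\text{for all small }\eta.
\end{equation}
Choosing $\eta_{1}$ so that $C\eta_{1}^{\gamma}\le 1/8$ and defining $\tilde\nu_k$ by tilting $\nu_k$ by $\eps_k\,\nabla_{x''} u_\infty(0)$, the uniform convergence $u_k\to u_\infty$ forces the graph of $\eps_k u_k$ to lie in the $\frac{1}{2}\eps_k\eta_{1}$-neighbourhood of the plane orthogonal to $\tilde\nu_k$ for large $k$, contradicting the failure of \eqref{eq:IOF_thesis_0}.

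The main obstacle is the graphicality and boundary Harnack step. In the interior, standard density estimates and the Lipschitz approximation machinery of De~Giorgi go through, but the analysis up to $\de\Omega_k$ is delicate because the weight $\dista{\Omega_k}$ vanishes there at a polynomial rate and because the free boundary condition is only implicit. Establishing that $\de E_k$ meets $\de\Omega_k$ at uniformly bounded vertical distance and that the trace $u_k|_{\{x_n=0\}}$ is well-controlled requires a quantitative use of the orthogonality constraint $\nu_k\perp e_n$ and of the boundary monotonicity formula; once these are in place, the derivation of the limit PDE, the application of \cite{Sire_Terracini_Vita_2021}, and the transfer of the affine approximation back to $u_k$ follow the standard scheme.
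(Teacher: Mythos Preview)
Your overall skeleton---contradiction plus compactness, convergence of rescalings to a solution of the linearised degenerate PDE, Schauder estimates from \cite{Sire_Terracini_Vita_2021}, and transfer of the affine approximation---matches the paper. But two of the steps you flag as ``key technical ingredients'' are handled quite differently in the paper, and as stated your versions have genuine gaps.

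First, graphicality and first variation. You write that $\de E_k$ is the graph of $\eps_k u_k$ and then derive the limit PDE from ``the Euler--Lagrange equation for the graph function obtained from first variations of $\Per_{\dista{\Omega_k}}$''. Neither is available: almost-minimizers are \emph{not} stationary (there is no first-variation identity for them), and their boundaries need not be graphs over $\nu_k^\perp$. The paper never claims graphicality of $\de E_k$; it only shows that the anisotropically rescaled sets $\tilde E^j$ converge in Hausdorff distance to the graph of a H\"older function $u$, and then proves directly that $u$ is a \emph{viscosity} solution of the linearised problem. This is done by a touching-paraboloid argument: if a smooth $\phi$ touches $u$ from above at $\bar y''$, one pulls the touching back to the unrescaled $E^j$ and applies the technical competitor construction of \Cref{lemma:technical_geom} to contradict almost-minimality (two separate cases: $\bar y''_n>0$ via the interior computation, $\bar y''_n=0$ via \Cref{prop:boundary_max} and another application of \Cref{lemma:technical_geom}). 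No Euler--Lagrange equation is ever written.

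Second, the H\"older compactness. You invoke the boundary monotonicity formula (\Cref{prop:monotonicity}) and weighted isoperimetric inequalities to get a ``boundary Harnack-type inequality for minimal graphs''. But \Cref{prop:monotonicity} holds only for \emph{minimizers} (stationary sets), not for almost-minimizers, so it cannot drive a Harnack at the level of the $E_k$. The paper instead proves two genuine Harnack inequalities for almost-minimizers---an interior one (\Cref{prop:interior_harnack}) via the De~Silva--Savin weak viscosity class $\Pc_\Lambda^I(r)$ and a boundary one (\Cref{prop:harnack_boundary}) via a sliding barrier built from a truncated fundamental solution---and then iterates them (\Cref{cor:full_decay}) to get the uniform $C^{0,\sigma}$ control on the rescalings. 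Both Harnack proofs rest on \Cref{lemma:technical_geom} (pushing a paraboloid through and comparing perimeters), not on monotonicity. Your proposal, as written, does not supply a substitute mechanism that survives the passage from minimizers to almost-minimizers.
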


The improvement of flatness asserts that if the boundary of an almost-minimizer $E$ is sufficiently close to a plane orthogonal to $\de\Omega$ at $0$, then there exists another plane (still orthogonal to $\de\Omega$ at $0$) to which the boundary of $E$ is closer, even after a rescaling.

We provide an outline of the proof, emphasizing key ideas and challenges. The approach follows the framework developed in \cite{Savin_2007, DeSilva_2011, DeSilva_Savin_2021}.

We begin by showing that if $E\subset\overline{\Omega}$ is a $(\vartheta,\beta)$-minimizer, then $E$ is close (in a sense we will specify later) to being a viscosity solution of the differential problem:
\begin{equation}\label{eq:viscosity_system01}
	\begin{cases}
		H_{E}(x) + a\frac{\nu_E(x)\cdot \nabla d_\Omega(x)}{d_{\Omega}(x)}=0, & \mbox{in } \Omega\\
		\nu_E(x) \cdot \nabla d_{\Omega}(x) =0 & \mbox{on } \de \Omega,
	\end{cases}
\end{equation}
where $H_E$ denotes the generalized mean curvature of $\de E$ and $\nu_E$ is its outer unit normal vector.

Exploiting the above property, we then prove that the oscillations of $\de E$ satisfy a quasi-Harnack inequality, which entails that the oscillations of $\de E$ decay up to a scale comparable to the flatness parameter. Specifically, we will prove a Harnack inequality valid away from $\de\Omega$ using techniques based on \cite{DeSilva_Savin_2021} and one near $\de\Omega$ using \cite{DeSilva_2011}.
The most technical issue in the procedure above will be the \textit{weak maximum principle} describing in which sense an almost-minimizer is a viscosity solution to \eqref{eq:viscosity_system01}: we will spend a few more words about it in the next subsection. 

Once we have obtained the aforementioned control of oscillations, we argue by contradiction.
Assume that the statement of \Cref{thm:improvement_flatness} fails; then there exist sequences $\{\Omega_j\}_{j\in\NN}$ of $\varkappa_j$-flat sets and $\{ E_j\}_{j\in\NN}$ of $(\vartheta_j,\beta_j)$-minimizers of $\Per_{\dista{\Omega_j}}$ such that $0\in\de E_j$,
\begin{equation}
	\de E_j \cap B_1 \subset \left\{x\in B_1 : |x\cdot e_1|\le \eps_j\right\},
\end{equation}
with $\eps_j\to0$ and $\vartheta_j+\varkappa_j \ll\eps_j$, but for which there is no direction $\tilde{\nu}_j$ for which \eqref{eq:IOF_thesis_0} holds.
Then we consider the sets
\begin{equation}
	\tilde E_j = \left\{ \left(\frac{x_1}{\eps_j},x''\right) \in B_1 : (x_1,x'')\in \de E_j  \right\} \subset \{|x_1| \le 1\}.
\end{equation}

The oscillation control provided by the aforementioned quasi-Harnack inequality guarantees a uniform $C^{0,\sigma}$-type control on $\de\tilde{E}_j$.
Consequently, $\de \tilde{E}_j$ converges in the Hausdorff distance to a closed set which can be shown to be the graph of a H\"{o}lder function $u$ over $\{x_1=0\}$.
By the stability of viscosity properties under uniform convergence, $u$ is the solution of the linearized problem of \eqref{eq:viscosity_system01}, i.e., it is the solution of the following Neumann problem:
\begin{equation}
	\begin{cases}
		\dive(x_n^a \nabla_{x''} u) = 0, & \mbox{in }\{x_n>0\},\\
		\de_n u = 0 & \mbox{on } \{x_n = 0\},
	\end{cases}
\end{equation}
where $x=(x_1,x'')$.

Finally, the desired conclusion \eqref{eq:IOF_thesis_0} follows by the regularity results proved in \cite{Sire_Terracini_Vita_2021}.

\subsection{Maximum Principles}
We now elaborate on how an almost-minimizer $E$ satisfies \eqref{eq:viscosity_system01} in an appropriate sense.

The boundary condition $\nu_E\cdot\nabla d_\Omega =0$ should be understood in a classical viscosity sense.
Specifically, if $E$ is a $(\vartheta,\beta)$-minimizer, and a closed and smooth set $F$ touches $E$ at $0\in\de \Omega$ from outside (that is $0\in\de E\cap \de F$ and $F\supset E\cap B_r\cap \overline{\Omega}$ for some $r>0$), it can be shown that $\nu_F(0)\cdot\nabla d_\Omega(0)\le 0$. 

For the condition in the interior of $\Omega$, the situation is less straightforward. Indeed, as we will see later, any  $C^{1,\gamma}$-regular set is an almost minimizer of $\Per_\dista{\Omega}$ with respect to compact perturbations that are sufficiently small and localized far enough from the boundary.
Therefore, we cannot expect an almost-minimizer to satisfy a partial differential equation in any classical viscosity sense, at least not pointwise.
In \cite{DeSilva_Savin_2021}, the authors introduced a weaker viscosity-type condition that holds true for almost minimizers and they showed how to use it to obtain regularity.
In this paper, we adapt their techniques to our case. In the following lines we will give an overview of what is the appropriate viscosity condition.

To introduce the viscosity condition for almost-minimizers, we first discuss the viscosity properties of a \emph{minimizer} $E$.
Let $E$ be a minimizer and $F$ be a smooth set that touches $E$ from the outside at $x_0$ in $B_r(x_0)$, for some $r>0$.
Following the ideas introduced in \cite{Caffarelli_Cordoba_1993}, if
\begin{equation}\label{eq:wrong_curvature}
	H_{\de F}(x_0) + a\frac{\nu_{\de F}(x_0) \cdot \nabla d_\Omega(x_0)}{d_\Omega(x)} > 0, 
\end{equation}
then we may \enquote{push} $F$ a bit further through $\de E$, defining a competitor set $G\subset \Omega$ with $G\Delta E \Subset B_r(x_0)$ and
\begin{equation}\label{eq:example_non_minimality}
	\Per_{\dista{\Omega}}(G;B_r(x_0)) < \Per_{\dista{\Omega}}(E;B_r(x_0))
\end{equation}
contradicting the minimality of $E$.

In the case of an \textit{almost}-minimizer, this last condition is not a contradiction, and to overcome this issue we need to define a competitor set $G$ that satisfies \eqref{eq:example_non_minimality} with a quantitative estimate on the gap.
The key idea is to connect the size of the oscillations of $\de F$ (up to the second order) with the radius $r$ of the ball $B_r(x_0)$ where $F$ acts as a barrier and touches $E$ from outside.
More precisely, let us assume that $\Omega$ is $\varkappa$-flat, $E$ is a $(\vartheta,\beta)$-minimizer and $F = \{ x_1 \le \phi(x'')\}$, with $\|\phi\|_{C^{2}} \lessapprox c$ and the {\it wrong curvature}.
The idea is to show that for sufficiently small $\vartheta$ and $\varkappa$, if $F$ touches $E$ from outside at $x_0$ in a ball $B_c(x_0)$ (the same $c>0$ as above), then we have enough space to define a competitor $G$ that contradict the almost-minimality.

These arguments are initially developed in \Cref{lemma:technical_geom}, which serves as a technical basis for the proofs of \Cref{prop:weak_viscosity}, \Cref{prop:harnack_boundary} and \Cref{thm:improvement_flatness}. 
There we introduce appropriate maximum principles, to deal both with points that are far enough from $\de\Omega$ (where $H_{\de F}$ dominates \eqref{eq:wrong_curvature}), and with points close to $\de\Omega$ (where $\frac{\nu_{\de F}\cdot\nabla d_\Omega}{d_\Omega}$ is the leading term).

We now spend a few more words to discuss why we get different viscosity notions at the boundary and at the interior.
The boundary equation is of first-order, and more importantly it is scale invariant.
Indeed, if a smooth $F$ touches an almost-minimizer $E$ from outside at some $x_0\in\de\Omega$ then the tangent half-space $F_0=\{(x_0- x)\cdot \nu_F(x_0)\le 0\}$ touches the blow-up limit of $E$ from outside at $x_0$.
Moreover, blow-up limits of almost-minimizers are minimizers (see \Cref{prop:compactnessqmin}).
Therefore, the viscosity notion for almost-minimizers and for minimizers are equivalent and it coincides with the \enquote{classical} one: a pointwise information on the tangent space to $\de F$ at the contact point $x_0$.

\begin{figure}[h]
\centering
	\begin{tikzpicture}[use Hobby shortcut, thick,even odd rule]
		\begin{scope}
		 \clip (-2.5,-0.5) rectangle (2.5,2);
		 \clip	(0,0) circle (2);
		 	
		 \draw[fill = black!10, yscale=0.25] (-4,12) -- (-4,-0.5) .. (-3,-1) .. (-2,0.5) .. (-1,-0.5) .. (0,0) .. (1,0.5) .. (2,-0.2) .. (3,1) .. (4,0.5) -- (4,12) -- cycle;
		 
		 \begin{scope}
		 	\clip[yscale=0.25,shift={(0,-0.9)}] (-4,12) -- (-4,-0.5) .. (-3,-1) .. (-2,0.5) .. (-1,-0.5) .. (0,0) .. (1,0.5) .. (2,-0.2) .. (3,1) .. (4,0.5) -- (4,12) -- cycle;
		 	\draw[fill = red!20] (0.3,-0.6).. (0,0) .. (-0.25,1) .. (0,3)  -- (0,6) -- (6,6) -- (6,-6) -- (0,-6) -- cycle;
		 \end{scope}

		 \begin{scope}
		 	\clip[yscale=0.25] (-4,12) -- (-4,-0.5) .. (-3,-1) .. (-2,0.5) .. (-1,-0.5) .. (0,0) .. (1,0.5) .. (2,-0.2) .. (3,1) .. (4,0.5) -- (4,12) -- cycle;
		 	
		 		\draw[fill = red!20] (0,0) .. (-0.25,1) .. (0,3) -- (0,6) -- (6,6) -- (6,-6) -- cycle;

		 		\draw[fill = blue!20] (0,6) -- (3,3) .. (0.25,2) .. (0,1.5) .. (0.25,1) .. (0,0.5) .. (0,0) .. (1,-1) -- (1,-6) -- (6,-6) -- (6,6) -- cycle;
		 		
				\draw[red!70,->] (0.35,0) arc (0:107:0.35cm);
		 		\draw[yscale=0.25] (-4,12) -- (-4,-0.5) .. (-3,-1) .. (-2,0.5) .. (-1,-0.5) .. (0,0) .. (1,0.5) .. (2,-0.2) .. (3,1) .. (4,0.5) -- (4,12) -- cycle;
		 \end{scope}	
		 		
		\end{scope}
		\node at (-1,1) {$\Omega$};
		\node at (0,-0.3) {$x_0$};
		\node at (0,1) {$F$};
		\node at (1,1) {$E$};
		\node at (0.3,0.5) {$\theta$};
		
	\end{tikzpicture}
    \hspace{1cm}
    \begin{tikzpicture}[use Hobby shortcut, thick,even odd rule]
		\begin{scope}
		 \clip (-2.5,-0.5) rectangle (2.5,2);
		 \clip	(0,0) circle (2);
		 			 
		 \draw[fill = black!10, yscale=0.25] (-4,-4) -- (4,4) -- (4,12) -- (-2,12) -- cycle;
		 
		 \begin{scope}
		 	\clip[yscale=0.25,shift={(0.5,-0.5)}] (-4,-4) -- (4,4) -- (4,12) -- (-2,12) -- cycle;

		 	\draw[fill = red!20, shift={(0.375,-1)}] (0,0)  -- (-1.5,4) .. (4,4) -- (4,0);
		 \end{scope}

		 \begin{scope}
		 	\clip[yscale=0.25] (-4,-4) -- (4,4) -- (4,12) -- (-2,12) -- cycle;

		 		\draw[fill = blue!20,scale=4] (0,6) -- (3,3) .. (0.25,2) .. (0,1.5) .. (0.25,1) .. (0,0.5) .. (0,0) .. (1,-1) -- (1,-6) -- (6,-6) -- (6,6) -- cycle;
		 		
				\draw[red!70,->] (0.35,0) arc (0:107:0.35cm);
				\draw[yscale=0.25] (-4,-4) -- (4,4) -- (4,12) -- (-2,12) -- cycle;

		 \end{scope}	
		 		
		\end{scope}
		
		\node at (-1,1) {$\Omega$};
		\node at (0,-0.3) {$x_0$};
		\node at (-0.3,1.5) {$F$};
		\node at (1,1) {$E$};
		\node at (0.3,0.5) {$\theta$};

	\end{tikzpicture}
    \caption{Two sets touching at $\de\Omega$: before blow-up (left) and after (right)}
\end{figure}

On the other hand, the equation in the interior is of second order, and it vanishes under rescaling.
More precisely, let us assume $x_0\in\de E \cap \Omega$, $E$ is an almost-minimizer and $F$ is a smooth set that touches $E$ from outside at $x_0$.
At any scale $r>0$, $F_r=\frac{1}{r}(F-x_0)$ still touches $E_r=\frac{1}{r}(E-x_0)$ from outside (at the origin), but 
\begin{equation}
    H_{F_r}(0) + a \frac{\nu_{F_r}(0)\cdot \nabla d_{\Omega_r}(0)}{d_{\Omega_r}(0)} = r \left(H_F(x_0) + a \frac{\nu_{F}(x_0)\cdot \nabla d_\Omega(x_0)}{d_\Omega(x_0)}\right)\xrightarrow{r\to0} 0,
\end{equation}
thus taking the blow-up limit will lead to a complete loss of information.
Therefore, we cannot reduce ourselves to the case of minimizers as we did for the boundary case, but we need to introduce a truly weaker viscosity notion.
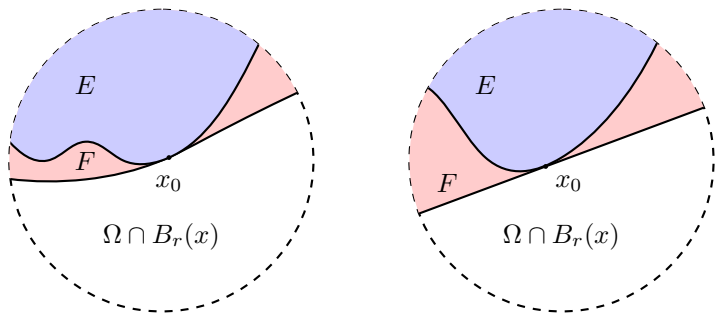
\begin{figure}[ht]
    \centering
    \begin{tikzpicture}[use Hobby shortcut, thick,even odd rule,rotate =90]
	\draw[dashed] (0,0) circle (2);
		\begin{scope}
		 \clip	(0,0) circle (2);
		 	
		 	\draw[fill = red!20] (1,-2) .. (0.3,-0.6).. (0,0) .. (-0.25,1) .. (0,3)  -- (0,6) -- (6,6) -- (6,-6) -- (0,-6) -- cycle;

		 	\draw[fill = blue!20] (0,6) -- (3,3) .. (0.25,2) .. (0,1.5) .. (0.25,1) .. (0,0.5) .. (0,0) .. (1,-1) .. (4,-2) -- cycle;
		 		
		 	\draw (0.04,-0.1) circle (0.5pt);

		\end{scope}
		\node at (-1,0) {$\Omega\cap B_r(x)$};
		\node at (-0.3,-0.1) {$x_0$};
		\node at (0,1) {$F$};
		\node at (1,1) {$E$};
		
	\end{tikzpicture}
    \hspace{1cm}
	\begin{tikzpicture}[use Hobby shortcut, thick,even odd rule,rotate=90]
		\draw[dashed] (0,0) circle (2);
		\begin{scope}
	        \clip	(0,0) circle (2);
		 	
		 	\draw[fill = red!20, shift={(0.375,-1)},scale=10] (3,-8) -- (0,0)  -- (-3,8) .. (8,8) -- (4,-8);

		 	\draw[fill = blue!20,xscale=4,yscale=2,shift={(0.01,0)}] (0,6) -- (3,3) .. (0.25,2) .. (0,1.5) .. (0.25,1) .. (0,0.5) .. (0,0) .. (1,-1) -- (1,-6) -- (6,-6) -- (6,6) -- cycle;
		 	
                \draw[shift={(-0.078,0.208)}] (0,0) circle (0.5pt);
		  
            \node at (-1,0) {$\Omega\cap B_r(x)$};
		\node at (-0.3,-0.1) {$x_0$};
		\node at (-0.3,1.5) {$F$};
		\node at (1,1) {$E$};

		\end{scope}
		
	\end{tikzpicture}
    \caption{Two sets touching at points in $\Omega$: before blow-up (left) and after (right) }

\end{figure}

\subsection{Regularity away from \texorpdfstring{$\de\Omega$}{the boundary} and optimality}\label{subsection:Tamanini}
Regularity for almost-minimizer of $\Per_{\dista{}}$ (where $w$ satisfies \Cref{ass:weights}) may be inferred from classical regularity results for almost-minimizers of the euclidean perimeter, see for instance \cite{Tamanini_1982}.

To see this, we briefly show that if $E$ is a $(\vartheta,\beta)$-minimizer of $\Per_{\dista{\Omega}}$ and $D\Subset\Omega$, then there exist $\vartheta_D,\beta_D$ such that $E$ is a $(\vartheta_D,\beta_D)$-minimizer of $\Per$ in $D$ with respect to small enough perturbations.
Namely, if $F\subset\RR^n$ is such that $E\Delta F\Subset B_r(x)\subset D$, then
\begin{multline}
	\Per(E;B_r(x_0)) \le \frac{1}{(d_{\Omega}(x_0)-r)^a}\Per_{\dista{\Omega}}(E;B_r(x_0))\\
	\le \frac{1+\vartheta r^\beta}{(d_\Omega(x_0)-r)^a}\Per_{\dista{\Omega}}(F;B_r(x_0))\\
	\le (1+\vartheta r^\beta)\left(\frac{d_\Omega(x_0)+r}{d_\Omega(x_0)-r}\right)^a\Per(F;B_r(x_0)).
\end{multline}
Assuming $r\le1$ and taking into account that $d_\Omega(x_0)-r$ can be bounded from below by some constant depending only on $D$, we may bound
\begin{equation}
    \left(\frac{d_\Omega(x_0)+r}{d_\Omega(x_0)-r}\right)^a\le 1+\bar Cr
\end{equation}
where $\bar C$ depends only on $a$ and $D$.
Summing up, we obtain
\begin{equation}
    \Per(E;B_r(x_0))\le (1+\vartheta_D r^{\beta_D})\Per(F;B_r(x_0))
\end{equation}
with $\vartheta_D\coloneqq (\vartheta+\bar C)^2$ and $\beta_D\coloneqq\min\{\beta,1\}$.

With the above computation, we can reduce ourselves to the setting of \cite{Tamanini_1982} and obtain $C^{1,\gamma}$ interior regularity for an almost-minimizer $E$.
We stress that, as one should expect, any a-priori estimate given by \cite{Tamanini_1982} must degenerate near $\de\Omega$.
Furthermore, \cite{Tamanini_1982} would only apply if the flatness $\frac{oscillations}{radius}$ overcame the non-minimality $\vartheta_D$, which actually degenerates as $D$ gets closer to $\de\Omega$. In particular, differently than in the case of (almost)-minimizers of $\Per$, even under the a-priori assumption that the flatness is the same at all scales, \cite{Tamanini_1982} will fail for points near $\de\Omega$.

The latter consideration highlights that the core difficulty in obtaining $\eps$-regularity for almost-minimizers of $\Per_w$ lies in the fact that $w$ degenerates near $\de\Omega$, rather than in the assumption of \textit{almost}-minimality.
In particular, we point the attention of the reader to the fact that the interior Harnack inequality (\Cref{prop:interior_harnack}) requires the use of the weak viscosity condition introduced in \cite{DeSilva_Savin_2021} even for minimizers of $\Per_{\dista{\Omega}}$.

Finally, we remark that $C^{1,\gamma}$ regularity is optimal, as any set with $C^{1,\gamma}$-regular boundary is an almost minimizer for $\Per_{\dista{\Omega}}$ (at least for localized enough perturbations). This is true both in the interior of $\Omega$ and at boundary points.

\subsection{Outline of the paper}

In \Cref{sec:propalmostminimizers}, we recall some notions from geometric measure theory to properly state the relevant theorems and definitions. 
We also discuss properties such as density estimates and the compactness of almost-minimizers, which are crucial for the following sections.

In \Cref{sec:var_visc_properties}, we analyze the variational properties of stationary points, introducing a monotonicity formula at boundary points that enables us to establish the boundary viscosity property. 
Additionally, we present a technical geometric lemma that plays a key role in proving the interior maximum principles.

In \Cref{sec:eps_regularity}, we discuss the main results of the paper. The first two subsections focus on developing the interior and boundary Harnack inequalities, while the third subsection is dedicated to the proof of \Cref{thm:improvement_flatness}.

\subsection*{Acknowledgments}
The authors are supported by the European Research Council (\textsc{ERC}), under the European Union's Horizon 2020 research and innovation program, through the project \textsc{ERC VAREG} - Variational approach to the regularity of the free boundaries (grant agreement No. 853404). The authors also acknowledge the MIUR Excellence Department Project
awarded to the Department of Mathematics, University of Pisa, CUP I57G22000700001.
C.G. acknowledges the support of Gruppo Nazionale per l’Analisi Matematica, la Probabilità e le loro Applicazioni (GNAMPA) of Istituto
Nazionale di Alta Matematica (INdAM) through the INdAM-GNAMPA project 2024 CUP E53C23001670001 and the INdAM-GNAMPA project 2025 CUP E5324001950001.
F.P. is partially supported by Gruppo Nazionale per l’Analisi Matematica, la Probabilità e le loro Applicazioni (GNAMPA) of Istituto
Nazionale di Alta Matematica (INdAM).
B.V. acknowledges also support from the project MUR-PRIN ``NO3'' (No. 2022R537CS).

\section{Preliminaries and basic properties of almost-minimizers of \texorpdfstring{$\Per_w$}{Per w}}\label{sec:propalmostminimizers}

We fix hereafter $n\in\NN$ with $n\ge2$, $a>0$ and $\alpha,\beta\in(0,1)$. For convenience, the dependence of any constants on $n,a,\alpha$ and $\beta$ will not be stated, and constants depending only on those three parameters will be called \textit{universal}.

\subsection{Regular domains and distance function}\label{subsec:distance}
As previously stated, given an open domain $\Omega\subset\RR^n$, we let $d_\Omega(x)\coloneqq \inf\{|x-y|\colon y\in\RR^n\setminus\Omega\}$, so that $d_\Omega>0$ in $\Omega$ and $d_\Omega\equiv0$ in $\RR^n\setminus\Omega$.

Provided $\de\Omega$ is regular enough, we also let $\nu_\Omega(y)$ denote the outer unit normal to $\de\Omega$ at $y$, so that
\begin{equation}
    \nu_\Omega(y) = -\lim_{\substack{x\in\Omega\\x\to y}}\nabla d_\Omega(x).
\end{equation}
We gather some straightforward remarks on properties of $d_\Omega$.

\begin{lemma}[Technical lemma on the distance function]\label{lemma:tech_distance}
    $d_\Omega$ is differentiable at $x\in\Omega$ if and only if there exists a unique $y\in\de\Omega$ such that $d_\Omega(x)=|x-y|$, and in that case $\nabla d_\Omega(x)=\frac{x-y}{|x-y|}$.
    Furthermore, if $d_\Omega$ is differentiable at $x$, $y$ is as above and $\de\Omega$ has a tangent plane at $y$, then $\nabla d_\Omega(x)=-\nu_\Omega(y)$.
\end{lemma}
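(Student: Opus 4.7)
The plan is to establish the equivalence in two steps, then handle the tangent plane statement separately. Throughout, I will use that $d_\Omega$ is $1$-Lipschitz and that the set of closest points of $x$ in $\partial\Omega$ is always nonempty (since $\partial\Omega$ is closed) and bounded.

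\textbf{Sufficiency and gradient formula.} Assume $y\in\partial\Omega$ is the unique point with $d_\Omega(x)=|x-y|$ and set $\nu\coloneqq(x-y)/|x-y|$. Since $x\in\Omega$ we have $|x-y|>0$, so the function $z\mapsto |z-y|$ is smooth near $x$. The inequality $d_\Omega(z)\le|z-y|$ combined with a Taylor expansion of $|z-y|$ at $z=x$ yields the upper bound
\begin{equation*}
    d_\Omega(z)-d_\Omega(x)\le \nu\cdot(z-x)+O(|z-x|^2).
\end{equation*}
For the matching lower bound, for each $z$ close to $x$ pick a closest point $y(z)\in\partial\Omega$, so $d_\Omega(z)=|z-y(z)|$. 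The triangle inequality gives $|x-y(z)|\le d_\Omega(x)+2|z-x|$, so every cluster point of $y(z)$ as $z\to x$ is a closest point of $x$ in $\partial\Omega$, hence equals $y$ by uniqueness; this forces $y(z)\to y$. Now expanding the identity $|z-y(z)|^2=|x-y(z)|^2+2(z-x)\cdot(x-y(z))+|z-x|^2$ and using $|x-y(z)|\ge d_\Omega(x)=|x-y|>0$ gives
\begin{equation*}
    d_\Omega(z)=|z-y(z)|\ge d_\Omega(x)+\frac{x-y(z)}{|x-y(z)|}\cdot(z-x)+o(|z-x|),
\end{equation*}
and since $(x-y(z))/|x-y(z)|\to\nu$, the term multiplying $(z-x)$ coincides with $\nu$ up to an $o(1)$ factor, which can be absorbed into $o(|z-x|)$. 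Matching the upper and lower bounds shows $d_\Omega$ is differentiable at $x$ with $\nabla d_\Omega(x)=\nu$.

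\textbf{Necessity.} Suppose $d_\Omega$ is differentiable at $x$ and let $y_1,y_2\in\partial\Omega$ both realize the distance. Applying the upper bound step to each $y_i$ separately gives
\begin{equation*}
    \nabla d_\Omega(x)\cdot v\le \frac{x-y_i}{|x-y_i|}\cdot v\qquad\text{for every }v\in\RR^n,
\end{equation*}
and replacing $v$ by $-v$ forces equality. Hence $\nabla d_\Omega(x)=(x-y_1)/|x-y_1|=(x-y_2)/|x-y_2|$, and since $|x-y_1|=|x-y_2|=d_\Omega(x)$ we conclude $y_1=y_2$.

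\textbf{Tangent plane identification.} Finally, if $\partial\Omega$ has a tangent plane at $y$, parametrize $\partial\Omega$ near $y$ by a curve $s\mapsto\gamma(s)$ tangent to any chosen $\tau\in T_y\partial\Omega$. The scalar function $s\mapsto|x-\gamma(s)|^2$ attains its minimum at $s=0$, so its derivative vanishes there, giving $(x-y)\cdot\tau=0$. Thus $x-y$ is normal to $\partial\Omega$ at $y$, and since $x\in\Omega$ the unit vector $(x-y)/|x-y|$ points into $\Omega$, i.e.\ equals $-\nu_\Omega(y)$.

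The only mildly delicate point is the passage to the limit $y(z)\to y$ in the lower bound: it must be shown that the cluster-point argument indeed identifies $y$ as the unique accumulation, which is exactly where uniqueness of the minimizer is essential. Everything else is bookkeeping with the Taylor expansion of $z\mapsto|z-y'|$, valid because $x\in\Omega$ keeps the expansion away from its singularity.
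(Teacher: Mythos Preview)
The paper states this lemma without proof, treating it as a well-known fact about the distance function; your argument correctly supplies the missing details.

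Two small remarks. In the tangent-plane step you ``parametrize $\partial\Omega$ near $y$ by a curve $s\mapsto\gamma(s)$'': this implicitly uses that $\partial\Omega$ is a $C^1$ hypersurface near $y$, which is stronger than merely having a tangent plane at the single point $y$. In the paper's setting $\partial\Omega$ is $C^{1,\alpha}$, so this is harmless, but if you want the argument to match the hypothesis literally you can instead use the inequality $2(y-x)\cdot(y'-y)+|y'-y|^2\ge 0$ for all $y'\in\partial\Omega$ (which is just $|x-y'|\ge|x-y|$ rewritten) and pass to the limit along any sequence $y'\to y$ with $(y'-y)/|y'-y|$ converging to a tangent direction. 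Second, the last sentence (``since $x\in\Omega$ the unit vector $(x-y)/|x-y|$ points into $\Omega$'') deserves one more line: the whole open segment from $y$ to $x$ lies in $\Omega$ because each intermediate point $y+t(x-y)$ has $y$ as its unique closest boundary point, hence positive distance; this rules out $(x-y)/|x-y|=\nu_\Omega(y)$.
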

Notice that, since $d_\Omega$ is $1$-Lipschitz, it is differentiable at $\Lc^n$-almost every point in $\RR^n$.

As previously stated, throughout the paper we will work with domains $\Omega$ that are locally $C^{1,\alpha}$.
We refer the reader to \Cref{ass:boundaryof_Omega} for the definition of $\varkappa$-flat domain.
In passing, we notice that if $\Omega$ is $\varkappa$-flat, then for every $R>0$, $\frac{1}{R}\Omega$ is $\varkappa R^\alpha$-flat.

\begin{lemma}\label{lemma:notes_on_distance}
    Let $\Omega$ be $\varkappa$-flat.
    Then:
    \begin{enumerate}
        \item\label{item:varkappa_normals} For every $y\in\de\Omega\cap B_1$,
        \begin{equation}
            |\nu_\Omega(y)+e_n|\le \varkappa|y'|^\alpha,\qquad|y\cdot \nu_\Omega(y)|\le 2\varkappa |y|^{1+\alpha}.
        \end{equation}
        \item\label{item:varkappa_gradients} For $\Lc^n$-almost every $x\in\Omega\cap B_{1/2}$,
        \begin{equation}
            |d_\Omega(x)-x_n|\le 3\varkappa,\qquad |\nabla d_\Omega(x)-e_n|\le\varkappa
        \end{equation}
        and the first inequality above holds true everywhere in $B_{1/2}\cap\Omega$.
    \end{enumerate}
\end{lemma}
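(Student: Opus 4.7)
The proof will be a direct unpacking of the graph representation in \Cref{ass:boundaryof_Omega}. The plan is to write any boundary point as $y = (y', g(y')) \in \partial\Omega \cap B_1$, so that the outer unit normal has the explicit form
\begin{equation*}
	\nu_\Omega(y) = \frac{(\nabla g(y'), -1)}{\sqrt{1 + |\nabla g(y')|^2}},
\end{equation*}
and the conditions $\nabla g(0) = 0$ together with $[g]_{C^{1,\alpha}(B_1)} \le \varkappa$ yield the pointwise bound $|\nabla g(y')| \le \varkappa|y'|^\alpha$.

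For the first bound in \Cref{item:varkappa_normals} the plan is a short algebraic computation: setting $s := |\nabla g(y')|^2$, one finds $|\nu_\Omega(y) + e_n|^2 = 2(\sqrt{1+s}-1)/\sqrt{1+s} = 2s/((1+s)+\sqrt{1+s}) \le s$, hence $|\nu_\Omega(y) + e_n| \le |\nabla g(y')| \le \varkappa|y'|^\alpha$. For the second bound I will expand $y \cdot \nu_\Omega(y) = (y' \cdot \nabla g(y') - g(y'))/\sqrt{1 + |\nabla g(y')|^2}$, rewrite the numerator via the fundamental theorem of calculus as $\int_0^1 [\nabla g(y') - \nabla g(ty')] \cdot y' \dif t$, and apply the Hölder bound $\varkappa(1-t)^\alpha |y'|^{1+\alpha}$ on the integrand; integration then yields $|y \cdot \nu_\Omega(y)| \le \varkappa|y'|^{1+\alpha}/(1+\alpha) \le 2\varkappa|y|^{1+\alpha}$.

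For the first inequality in \Cref{item:varkappa_gradients}, I plan to argue by elementary geometric considerations. The upper bound uses $(x', g(x')) \in \partial\Omega$ as a competitor projection point (valid when $x \in B_{1/2}$ and $\varkappa$ is small): the same FTC argument as above gives $|g(x')| \le \varkappa|x'|^{1+\alpha}/(1+\alpha) \le \varkappa$, whence $d_\Omega(x) \le x_n - g(x') \le x_n + \varkappa$. For the lower bound I split on the location of an arbitrary $y \in \partial\Omega$: if $y \in \partial\Omega \cap B_1$, then $|y_n| = |g(y')| \le \varkappa$, so $|x - y| \ge x_n - y_n \ge x_n - \varkappa$; if $y \notin B_1$, then $|x - y| \ge 1 - |x| \ge 1/2 \ge x_n - \varkappa$ since $x \in B_{1/2}$. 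Taking the infimum produces $d_\Omega(x) \ge x_n - \varkappa$, which together with the upper bound yields $|d_\Omega(x) - x_n| \le \varkappa \le 3\varkappa$.

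For the gradient estimate I will invoke \Cref{lemma:tech_distance}: at any $x \in \Omega \cap B_{1/2}$ where $d_\Omega$ is differentiable there is a unique closest $y \in \partial\Omega$ with $\nabla d_\Omega(x) = (x - y)/|x - y| = -\nu_\Omega(y)$, and the dichotomy just used shows $y \in B_1$ for $\varkappa$ small. Then \Cref{item:varkappa_normals} gives $|\nabla d_\Omega(x) - e_n| = |\nu_\Omega(y) + e_n| \le \varkappa|y'|^\alpha \le \varkappa$. I do not expect any substantive obstacle; the one point requiring care is checking that the minimizing projection always lies in $B_1$, where the graph representation of $\partial\Omega$ is valid, which is precisely why the statement restricts $x$ to $B_{1/2}$.
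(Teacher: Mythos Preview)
Your proof is correct and follows essentially the same plan as the paper's: \Cref{item:varkappa_normals} from the explicit formula for $\nu_\Omega$, and the gradient bound in \Cref{item:varkappa_gradients} via \Cref{lemma:tech_distance} exactly as the paper does. The one genuine difference is in the estimate $|d_\Omega(x)-x_n|\le 3\varkappa$. You argue by direct geometric comparison: the competitor $(x',g(x'))$ gives the upper bound and a case split on $y\in\partial\Omega$ gives the lower bound, yielding the inequality \emph{everywhere} (and in fact with constant $1$ instead of $3$). The paper instead works only at differentiability points, writes $d_\Omega(x)=-(x-y)\cdot\nu_\Omega(y)$ for the unique closest $y$, bounds
\[
|d_\Omega(x)-x_n|=|x\cdot(\nu_\Omega(y)+e_n)-y\cdot\nu_\Omega(y)|\le \tfrac{1}{2}\varkappa+2\varkappa
\]
directly from \Cref{item:varkappa_normals}, and then extends to all of $B_{1/2}\cap\Omega$ by continuity. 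Your route is slightly more elementary and avoids the continuity step; the paper's is more uniform in that both estimates of \Cref{item:varkappa_gradients} are read off from \Cref{item:varkappa_normals}.

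One small imprecision: the ``dichotomy'' you invoke does not by itself force the closest point $y$ into $B_1$ (it only shows points outside $B_1$ are at distance $\ge 1/2$, which is compatible with $d_\Omega(x)\le x_n+\varkappa$). The clean argument is $|y|\le |x|+d_\Omega(x)\le 2|x|<1$, using $0\in\partial\Omega$.
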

\begin{proof}
    \Cref{item:varkappa_normals} is a straightforward consequence of $\nu_\Omega(y) = \frac{(\nabla'g(y'),-1)}{\sqrt{1+|\nabla'g(y')|^2}}$.
    For \Cref{item:varkappa_gradients}, if $x\in\Omega\cap B_{1/2}$ is a point of differentiability for $d_\Omega$, then for some $y\in\de\Omega\cap B_1$ it holds $\nabla d_\Omega(x)=-\nu_\Omega(y)$, hence $|\nabla d_\Omega(x)-e_n|\le\varkappa$ and
    \begin{equation}
        |d_\Omega(x)-x_n|\le|(x-y)\cdot\nu_\Omega(y)+e_n\cdot x|\le 3\varkappa
    \end{equation}
    by \Cref{item:varkappa_normals}. The latter inequality holds true by continuity everywhere in $\Omega\cap B_{1/2}$.
    
\end{proof}

\subsection{Weighted perimeters} 
We start by recalling the relevant definitions and notation, most of which were already introduced in \Cref{sec:intro}.

Throughout the section, we assume that $\Omega\subset\RR^n$ is an open set with Lipschitz boundary and that $w$ satisfies \Cref{ass:weights} with the additional assumption $w\in W^{1,1}_{loc}(\RR^n)$
(see the end of this subsection for the case where $w\notin W^{1,1}$).
\begin{definition}[Sets of finite $w$-perimeter]
    We say that a set $E\subset\RR^n$ has \textit{locally finite $w$-perimeter} if for every $R>0$ we have
    \begin{equation}\label{eq:def_weightedperimeter}
        \sup\left\{\int_{E} \dive(wX)\dif\Lc^n\colon X\in C_c^1(B_R;\RR^n)\mbox{ and }|X|\le1\right\}<\infty.
    \end{equation}
\end{definition}
By Riesz's theorem, if $E$ has locally finite $w$-perimeter, then there exists a vector-valued Radon measure $\mu_{E;w}$ such that, for every $X\in C^1_c(\RR^n;\RR^n)$ it holds
\begin{equation}\label{eq:div_theo}
    \int_E\dive(wX)\dif\Lc^n = \int X\cdot\dif\mu_{E;w}.
\end{equation}
We define the $w$-perimeter of $E$ as the Radon measure
\begin{equation}
    \Per_{w}(E;A) = |\mu_{E;w}|(A),
\end{equation}
for every $A\subset\RR^n$ Borel.
We can also introduce the weighted Lebesgue and Hausdorff measures as
\begin{equation}
    \Lc_w^n(A)=\int_Aw\dif\Lc^n,\qquad\Hc_w^s(A) = \int_Aw\dif\Hc^s,
\end{equation}
for $s>0$. 
As for the classical perimeter, a structure theorem that allows us to describe the perimeter as a $\Hc_{\dista{}}^{n-1}$ measure holds true:

\begin{lemma}\label{lemma:structure}
    If $E$ is a set of locally finite $w$-perimeter, then $E$ is a set of locally finite perimeter in $\Omega$.
	In particular, the reduced boundary $\de^*E$ is well defined in $\Omega$ and it holds
    \begin{equation}\label{eq:weighted_structure}
        \Per_w(E;A) = \Hc^{n-1}_w(A\cap \de^* E)
    \end{equation}
    for every $A$ Borel. 
    Moreover, for all $X\in C^1_c(\RR^n;\RR^n)$, it holds
    \begin{equation}\label{eq:weighted_divergence}
        \int_E\dive(wX)\dif\Lc^n = \int_{\de^*E}wX\cdot\nu_E\dif\Hc^{n-1}
    \end{equation}
    where $\nu_E$ is defined below.
\end{lemma}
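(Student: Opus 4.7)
The plan is to deduce the conclusions by reducing to the classical De Giorgi structure theorem on compact subsets of $\Omega$, and then transfer the divergence identity to the weighted setting by mollifying the weight.

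\textbf{Step 1 (local finiteness of the classical perimeter).} Fix $K \Subset \Omega$. By \Cref{ass:weights} and the fact that $d_\Omega \ge c_K>0$ on $K$, we have $w \ge c_K' > 0$ on $K$. For $Y \in C^1_c(K;\RR^n)$ with $|Y| \le 1$, set $Z = Y/w$, which lies in $W^{1,1}_c$ and satisfies $|Z| \le 1/c_K'$ because $w$ is continuous, bounded below away from zero on $K$, and $w \in W^{1,1}_{loc}$. Mollify to get $Z_\epsilon \in C^1_c$ with $Z_\epsilon \to Z$ uniformly and in $W^{1,1}$, and $|Z_\epsilon| \le 2/c_K'$. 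Then $wZ_\epsilon \to Y$ in $W^{1,1}$, and the definition of $\mu_{E;w}$ gives
\begin{equation*}
\int_E \dive(wZ_\epsilon)\dif\Lc^n = \int Z_\epsilon \cdot \dif\mu_{E;w} \;\longrightarrow\; \int Z\cdot\dif\mu_{E;w},
\end{equation*}
by uniform convergence. Since $\int_E \dive(wZ_\epsilon) \to \int_E \dive Y$, we obtain $|\int_E \dive Y| \le (c_K')^{-1}|\mu_{E;w}|(K)$. Taking the supremum over $Y$ proves that $E$ has finite classical perimeter in $K$, hence locally in $\Omega$.

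\textbf{Step 2 (De Giorgi in $\Omega$).} By the classical De Giorgi structure theorem applied in $\Omega$, the reduced boundary $\de^* E$ is well-defined in $\Omega$, is countably $\Hc^{n-1}$-rectifiable, and the classical divergence theorem
\begin{equation*}
\int_E \dive Y\dif\Lc^n = \int_{\de^*E} Y\cdot\nu_E\dif\Hc^{n-1}
\end{equation*}
holds for every $Y \in C^1_c(\Omega;\RR^n)$.

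\textbf{Step 3 (weighted divergence and identification of $\mu_{E;w}$).} Given $X \in C^1_c(\Omega;\RR^n)$, let $w_\epsilon \coloneqq \eta_\epsilon * w$ be a standard mollification of $w$. For $\epsilon$ small, $w_\epsilon X \in C^1_c(\Omega;\RR^n)$, and Step 2 gives
\begin{equation*}
\int_E \dive(w_\epsilon X)\dif\Lc^n = \int_{\de^*E} w_\epsilon X\cdot\nu_E\dif\Hc^{n-1}.
\end{equation*}
Since $w$ is continuous, $w_\epsilon \to w$ uniformly on $\supp X$, so the right-hand side converges to $\int_{\de^*E} wX\cdot\nu_E\dif\Hc^{n-1}$. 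For the left-hand side, expand $\dive(w_\epsilon X) = w_\epsilon\dive X + \nabla w_\epsilon\cdot X$; by $W^{1,1}_{loc}$ convergence of $w_\epsilon$ to $w$ and boundedness of $X, \dive X$, this converges in $L^1_{loc}$ to $\dive(wX)$. This proves \eqref{eq:weighted_divergence}. Comparing with \eqref{eq:div_theo}, we identify $\mu_{E;w} = w\nu_E\Hc^{n-1}\llcorner\de^*E$, and taking the total variation yields
\begin{equation*}
\Per_w(E;A) = |\mu_{E;w}|(A) = \int_{A\cap\de^*E}w\dif\Hc^{n-1} = \Hc^{n-1}_w(A\cap\de^*E),
\end{equation*}
which is \eqref{eq:weighted_structure}.

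The main obstacle is Step 1: the quotient $Y/w$ is not a priori a smooth test field, so the reduction to the weighted definition requires an approximation argument. The two ingredients that make it go through are the strict positivity of $w$ on every $K \Subset \Omega$ (guaranteed by \Cref{ass:weights} and $d_\Omega>0$ on $K$) and the continuity of $w$, which upgrades mollification from $L^1$ to uniform convergence and thus allows us to pair against the vector Radon measure $\mu_{E;w}$ in the limit.
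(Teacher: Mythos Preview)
Your Steps 1--2 match the paper's Step 1 and are fine. The gap is in Step 3: you only establish \eqref{eq:weighted_divergence} for $X\in C^1_c(\Omega;\RR^n)$, not for $X\in C^1_c(\RR^n;\RR^n)$ as the statement demands. Consequently your identification $\mu_{E;w}=w\nu_E\Hc^{n-1}\llcorner\de^*E$ is only valid as measures on the open set $\Omega$; you have not ruled out that $|\mu_{E;w}|(\de\Omega)>0$, so neither \eqref{eq:weighted_structure} for Borel $A$ meeting $\de\Omega$ nor \eqref{eq:weighted_divergence} for general $X$ follows.

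This is not a technicality that closes itself: your mollification $w_\eps X$ requires $\supp X\Subset\Omega$, and you cannot approximate a test field with $X\neq0$ on $\de\Omega$ uniformly by fields in $C^1_c(\Omega)$. The paper handles this by truncating at level sets $\{d_\Omega=\delta_j\}$, applying the classical divergence theorem on $E\cap B_R\cap\{d_\Omega\ge\delta_j\}$, and showing the boundary term on $\{d_\Omega=\delta_j\}$ vanishes as $\delta_j\to0$ via the estimate
\[
\Hc^{n-1}_w\big(E^{(1)}\cap B_R\cap\{d_\Omega=\delta_j\}\big)\le C\delta_j^a\,\Hc^{n-1}\big(B_R\cap\{d_\Omega=\delta_j\}\big)\le C\delta_j^a R^{n-1}.
\]
This is where the specific decay $w\le Cd_\Omega^a$ from \Cref{ass:weights} and the Lipschitz regularity of $\de\Omega$ actually enter. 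Your argument uses neither, so as written it would equally ``prove'' the result for a weight bounded away from zero on $\de\Omega$, where mass on $\de\Omega$ can genuinely occur.
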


Before giving the proof of the \Cref{lemma:structure}, we recall and fix some notation. 
If $E$ is a set of locally finite perimeter in $\Omega$, we define its reduced boundary as the set $\de^*E$ of points $x\in\Omega$ for which 
\begin{equation}
	\nu_E(x) \coloneqq \lim_{r\to0} \frac{\mu_E(B_r(x))}{|\mu_E|(B_r(x))} \quad \mbox{exists and belongs to } \sphere,
\end{equation}
where $\mu_E\coloneqq -D\chi_E$ is the Gauss-Green measure of $E$ (see \cite[Chapter 12]{maggi12}).
Additionally, we define $E^{(1)}$ the set of points in the measure theoretic interior of $E$. 
Specifically
\begin{equation}
    E^{(1)} = \left\{x \in\RR^n : \frac{\Lc^{n}(E\cap B_r(x))}{|B_r|} = 1, \mbox{ for some } r>0 \right\},
\end{equation}
and with this definition it is not difficult observe that $E^{(1)}$ is an open set.

\begin{proof}
    \textbf{Step 1. }We first prove that $E$ is a set of locally finite perimeter in $\Omega$, namely that $\Per(E;D)<+\infty$ for all $D\Subset\Omega$.
    To this end, given some compact set $D\subset\{d_\Omega\ge\delta\}$ for some $\delta>0$ and given $X\in C^1_c(D;\RR^n)$, let $Y = \frac{1}{w}X$.
    Since $w\in W^{1,1}$ and $w\ge C^{-1}d_\Omega^a$, it holds $Y\in W^{1,1}(D;\RR^n)$ and $|Y|\le C\delta^{-a}$ for some $C$ depending on $w$ and $X$.
    By mollifying and using \eqref{eq:def_weightedperimeter}, we obtain
    \begin{equation}
		C\delta^{-a} \Per_{\dista{}}(E;D) \ge \int_{E}\dive(\dista{}Y) \dif\Lc^{n} = \int_{E}\dive(X) \dif\Lc^{n},
	\end{equation}
	and taking the supremum in $|X|\le1$ we get
    \begin{equation}\label{eq:per_awayfromdOmega}
    	\Per(E;D) \le C\delta^{-a} \Per_{\dista{}}(E;D)
    \end{equation}
    as claimed.

    \textbf{Step 2. }We show that \eqref{eq:weighted_structure} holds true for all but countably many $A = B_R$.
    First, notice that, by Step 1, \eqref{eq:weighted_structure} holds true under the additional assumption $E\Subset\Omega$.
    Otherwise, we define \begin{equation}
    	\Omega_j = \left\{x\in\Omega : d_\Omega(x) \ge \delta_j \right\},
    \end{equation}
    where $\{\delta_j\}_{j\in\NN}$ is a decreasing sequence such that $\delta_j\to0$ and for all $j\in\NN$ it holds
    \begin{equation}\label{eq:divergence_partition}
        \Hc^{n-1}(\de^*E\cap \{d_\Omega = \delta_j\})= \Hc^{n-1}(\de^*E \cap \de B_R \cap \Omega_j)= \Hc^{n-1}(\de B_R \cap \{d_\Omega = \delta_j\})=0,
    \end{equation}
    for all but countably many $R>0$.
    Such a sequence exists because $d_\Omega$ is Lipschitz and, by \eqref{eq:per_awayfromdOmega}, $E\cap \left\{ d_\Omega \ge 2^{-j}\right\}$ has locally finite perimeter.
    In particular, it follows that $E_{j,R} := E \cap B_R \cap \Omega_j$ has finite perimeter with 
    \begin{equation}
    	\de^* E_{j,R} = 
    	\Bigl( \de^*E \cap (\Omega_j \cap B_R)\Bigl) 
    	\cup
    	\Bigl( \{ d_\Omega = \delta_j\} \cap (E^{(1)}\cap B_R) \Bigl)
    	\cup 
    	\Bigl(\de B_R \cap (E^{(1)}\cap \Omega_j) \Bigl).
    \end{equation}
\begin{center}
    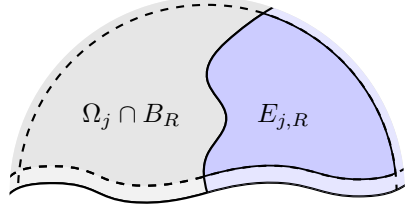
\begin{figure}
	\begin{tikzpicture}[use Hobby shortcut, thick,even odd rule]
		\begin{scope}
		 \clip (-2.6,-0.15) rectangle (2.6,2.6);
		 \clip	(0,0) circle (2.6);
		 	
		 \draw[fill = black!5, yscale=0.25] (-4,12) -- (-4,-0.5) .. (-3,-1) .. (-2,0.5) .. (-1,-0.5) .. (0,0) .. (1,0.5) .. (2,-0.2) .. (3,1) .. (4,0.5) -- (4,12) -- cycle;
		 
		 \draw[fill = black!10, yscale=0.25,shift={(0,0.9)},dashed] (-4,12) -- (-4,-0.5) .. (-3,-1) .. (-2,0.5) .. (-1,-0.5) .. (0,0) .. (1,0.5) .. (2,-0.2) .. (3,1) .. (4,0.5) -- (4,12) -- cycle;

		 \begin{scope}
		 	\clip[yscale=0.25] (-4,12) -- (-4,-0.5) .. (-3,-1) .. (-2,0.5) .. (-1,-0.5) .. (0,0) .. (1,0.5) .. (2,-0.2) .. (3,1) .. (4,0.5) -- (4,12) -- cycle;
		 	\clip (0,0) circle (2.6);


		 		\draw[fill = blue!10] (0,6) -- (3,3) .. (0.25,2) .. (0,1.5) .. (0.25,1) .. (0,0.5) .. (0,0) .. (1,-1) -- (1,-6) -- (6,-6) -- (6,6) -- cycle;
		 		
		 		\draw[yscale=0.25] (-4,12) -- (-4,-0.5) .. (-3,-1) .. (-2,0.5) .. (-1,-0.5) .. (0,0) .. (1,0.5) .. (2,-0.2) .. (3,1) .. (4,0.5) -- (4,12) -- cycle;

		 \begin{scope}
		 	\clip[yscale=0.25,shift={(0,0.86)}] (-4,12) -- (-4,-0.5) .. (-3,-1) .. (-2,0.5) .. (-1,-0.5) .. (0,0) .. (1,0.5) .. (2,-0.2) .. (3,1) .. (4,0.5) -- (4,12) -- cycle;
		 	\clip (0,0) circle (2.5);
		 		\draw[fill = blue!20] (0,6) -- (3,3) .. (0.25,2) .. (0,1.5) .. (0.25,1) .. (0,0.5) .. (0,0) .. (1,-1) -- (1,-6) -- (6,-6) -- (6,6) -- cycle;
		 	\begin{scope}
		 		\clip (0,6) -- (3,3) .. (0.25,2) .. (0,1.5) .. (0.25,1) .. (0,0.5) .. (0,0) .. (1,-1) -- (1,-6) -- (6,-6) -- (6,6) -- cycle;
		 			
		 			\draw[very thick, yscale=0.25,shift={(0,0.87)}] (-4,12) -- (-4,-0.5) .. (-3,-1) .. (-2,0.5) .. (-1,-0.5) .. (0,0) .. (1,0.5) .. (2,-0.2) .. (3,1) .. (4,0.5) -- (4,12) -- cycle;
		 			\draw (0,0) circle (2.49);

		 	\end{scope}

		 \end{scope}
		    \draw[ yscale=0.25,shift={(0,0.9)},dashed] (-4,12) -- (-4,-0.5) .. (-3,-1) .. (-2,0.5) .. (-1,-0.5) .. (0,0) .. (1,0.5) .. (2,-0.2) .. (3,1) .. (4,0.5) -- (4,12) -- cycle;

		 	\draw[dashed] (0,0) circle (2.49);

		 \end{scope}

		\end{scope}
		\node at (-1,1) {$\Omega_j\cap B_R$};
		\node at (1,1) {$E_{j,R}$};
		
	\end{tikzpicture}
    \caption{The set $E_{j,R}$ constructed in the proof of \Cref{lemma:structure}}
    \end{figure}
\end{center}
By \eqref{eq:divergence_partition} and monotone convergence it follows
    \begin{align}
    	\Per_{\dista{}}(E;B_R) = \lim_{j\to +\infty} \Per_{\dista{}}&(E; B_R \cap\Omega_j) \\
    	&= \lim_{j\to+\infty} \left(\Per_{\dista{}}(E_{j,R}) - \Per_{\dista{}}(\Omega_j\cap B_R;E^{(1)}) \right).
    \end{align}
    Again from \eqref{eq:divergence_partition} and direct computation we get
    \begin{equation}
    	\Per_{\dista{}}(E_{j,R}) - \Per_{\dista{}}(\Omega_j\cap B_R;E^{(1)}) = \Hc_{\dista{}}^{n-1}(\de^*E\cap B_R \cap \Omega_j).
    \end{equation}
    Lastly, by monotone convergence it follows that 
	\begin{equation}
		\Per_{\dista{}}(E;B_R) = \lim_{j\to+\infty} \Hc^{n-1}_{\dista{}}(\de^*E \cap B_R \cap \Omega_j)  = \Hc^{n-1}_{\dista{}}(\de^*E \cap B_R \cap \Omega),
	\end{equation}
	that concludes Step 2.

    \textbf{Step 3. }We now prove \eqref{eq:weighted_divergence}.
    Consider $X\in C^1_c(\RR^n;\RR^n)$ such that $\supp X\subset B_R$ and observe that, for all $j\in\NN$, it holds
    \begin{equation}
        \int_{E_{j,R}} \dive(wX) \dif\Lc^{n}
            = \int_{\de^*E\cap \Omega_j\cap B_R} w X\cdot \nu_E\dif \Hc^{n-1}
            - \int_{E^{(1)}\cap B_R\cap\{d_\Omega=\delta_{j}\}}w X\cdot \nabla d_\Omega\dif\Hc^{n-1},
    \end{equation}
    because $X = 0$ on $\de B_R$.
    Since $X\in C^{1}_c(\RR^n;\RR^n)$, the left hand side term converges as $j\to+\infty$.
    On the right hand side, the first term converges by the dominated convergence theorem, since $|X|\le1$ and $E$ is of locally finite $\dista{}$-perimeter.
    The last term converges to zero, since $|X|\le1$ and by \Cref{ass:weights} it holds
    \begin{equation}
    	 \Hc^{n-1}_{\dista{}}(E^{(1)}\cap B_R \cap \{d_\Omega = \delta_j\})
    	 \le C\delta_j^{a} \Hc^{n-1}(B_R \cap \{d_\Omega = \delta_j\})
    	 \le C\delta_j^a R^{n-1},
    \end{equation}
    where in the last inequality the fact that $\de\Omega$ is Lipschitz was used.
   Taking all together, we finally conclude that
    \begin{equation}
    	\int_{E}\dive(wX)\dif\Lc^{n} = \int_{\de^*E} w X\cdot\nu_E \dif\Hc^{n-1}
    \end{equation}
    as desired.

    \textbf{Step 4. }\eqref{eq:weighted_structure} for generic Borel sets $A$ is derived from \eqref{eq:weighted_divergence} by classical rectifiability results, such as \cite[Theorem 2.83]{ambrosio_fusco_pallara00}.
    
\end{proof}

\begin{remark}
    We shall always assume that a set $E$ with locally finite $w$-Perimeter satisfies
    \begin{equation}
        \de E = \{x\in\RR^n\colon 0<\Lc_w^n(E\cap B_r(x))<\Lc_w^n(B_r(x))\mbox{ for all }r>0\},
    \end{equation}
    since we can always replace $E$ with any set $E'$ such that $\Lc^n_w(E\Delta E')=0$ without affecting the behavior of $\Per_w(E;\cdot)$.
    With the above assumption, it also holds
    \begin{equation}
        \de E\cap \Omega = \supp\mu_{E;w}\cap\Omega.
    \end{equation}
\end{remark}

With a more solid background on weighted perimeters, we now recall for the reader's convenience the following 
\begin{definition}[Almost-minimizers of $\Per_w$]
    We say that $E\subset\overline{\Omega}$ is a $(\vartheta,\beta)$-minimizer of $\Per_w$ in $D\subset\RR^n$ if
    \begin{equation}
        \Per_w(E;B_r(x))\le(1+\vartheta r^\beta)\Per_w(F;B_r(x))
    \end{equation}
    for every set of locally finite perimeter $F$ such that $E\Delta F\Subset B_r(x)\subset D$.
    When $D=\RR^n$ or when the indication of $D$ is unnecessary, we simply say that $E$ is a $(\vartheta,\beta)$-minimizer of $\Per_w$.
    Moreover, if $E$ is a $(0,\beta)$-minimizer of $\Per_w$, we simply say that it is a minimizer of $\Per_w$.
\end{definition}
The assumption $E\subset\overline{\Omega}$ is purely technical. However, it will be convenient later on and it is not restrictive since for any set $E\subset\RR^n$ it holds $\Per_w(E;\cdot)=\Per_w({E\cap\overline{\Omega}};\cdot)$

\begin{remark}\label{remark:deficit} (Obtaining an additive deficit)
    Let $E$ be a $(\vartheta,\beta)$-minimizer of $\Per_w$ in $D\subset\RR^n$.
    By almost-minimality, for every $B_r(y)\subset D$ with $r\le1$ and every $0<s<r$, it holds
    \begin{equation}
        \Per_w(E;B_r(y))\le (1+\vartheta r^\beta)\left(\Per_w\big(E;B_r(y)\setminus \overline{B_s(y)}\big)+\Hc_w^{n-1}(\partial B_s(y))\right).
    \end{equation}
    Letting $s\nearrow r$, we obtain
    \begin{equation}\label{eq:perboundonqmin}
        \Per_w(E;B_r(y))\le C(1+\vartheta)\left(\sup_{B_r(y)}w\right)r^{n-1}
    \end{equation}
    for some constant $C$ depending only on $n$.
    In particular, for every $F$ such that $E\Delta F\Subset B_r(y)\subset D$, provided $\vartheta\le1$ and $r\le1$, it holds
    \begin{equation}\label{eq:percompetitorbound}
        \Per_w(E;B_r(y))\le \Per_w(F;B_r(y))+ C\left(\sup_{B_r(y)}w\right)\vartheta r^{n-1+\beta}
    \end{equation}
\end{remark}

\begin{remark}[Scaling]\label{remark:scaling}  
    If $E$ is a $(\vartheta,\beta)$-minimizer of $\Per_w$ in $D$ then, for any $r>0$ and $x_0\in\RR^n$, $\frac{E-x_0}{r}$ is a $(\vartheta r^\beta,\beta)$-minimizer of $\Per_{\tilde w}$ in $\frac{D-x_0}{r}$, where $\tilde w(x) \coloneqq w(x_0+rx)$.
    Notice that a $(\vartheta,\beta)$-minimizer of $\Per_w$ is also a $(\vartheta,\beta)$-minimizer for $\Per_{cw}$ for any $c>0$.
    Therefore, in the particular case where the weight is $\dista{\Omega}\coloneqq d_\Omega^a$, if $E$ is a $(\vartheta,\beta)$-minimizer of $\Per_{\dista{\Omega}}$, then $\frac{E}{r}$ is a $(\vartheta r^\beta,\beta)$-minimizer of $\Per_{\dista{\Omega/r}}$.
\end{remark}

\begin{proposition}[Lower Semicontinuity]\label{rmk:lscperimeter}
    Let $\{\Omega_j\}_{j\in\NN}$ be a family of open sets and $\{w_j\}_{j\in\NN}$ be a family of weights satisfying \Cref{ass:weights}, and let $\{E_j\}_{j\in\NN}$ be a family of sets of finite $w_j$-perimeter.
    Assume there exist an open set $\Omega$, $E\subset\overline{\Omega}$ and $w$ satisfying \Cref{ass:weights} such that 
    \begin{equation}
        E_j \longrightarrow E\; \mbox{in } L^1_{loc}(\RR^n),\qquad \mbox{and}\qquad w_j \longrightarrow w \;\in W^{1,1}_{loc}(\RR^n).
    \end{equation}
    Then
    \begin{equation}
\Per_{\dista{}}(E;D)\le\liminf_{j\to\infty}\Per_{w_j}(E_j;D)
\end{equation}
for all open sets $D\subset\RR^n$ and  
\begin{equation}\label{eq:weakstarconv}
\nu_{E_j} \Hc^{n-1}_{\dista{j}} \llcorner \partial^*E_j \weakstar \nu_{E} \Hc^{n-1}_{\dista{}} \llcorner \partial ^*E.
\end{equation}
\end{proposition}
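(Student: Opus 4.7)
The plan is to reduce the statement to the pointwise convergence of the duality pairing $X\mapsto\int_E\dive(wX)\,\dif\Lc^n$ on smooth compactly supported test vector fields, and then to deduce both claims by, respectively, taking a supremum over $X$ and invoking the divergence formula \eqref{eq:div_theo}. The first step is to show that for every fixed $X\in C^1_c(\RR^n;\RR^n)$, $\int_{E_j}\dive(w_jX)\,\dif\Lc^n\to\int_E\dive(wX)\,\dif\Lc^n$. Expanding $\dive(w_jX)=\nabla w_j\cdot X+w_j\dive X$, I would split
\begin{equation}
\begin{split}
\chi_{E_j}\dive(w_jX)-\chi_E\dive(wX) &= \chi_{E_j}(\nabla w_j-\nabla w)\cdot X + \chi_{E_j}(w_j-w)\dive X \\
&\quad + (\chi_{E_j}-\chi_E)\bigl(\nabla w\cdot X+w\dive X\bigr).
\end{split}
\end{equation}
The first two summands tend to $0$ in $L^1$ because $w_j\to w$ in $W^{1,1}_{loc}$ and $|\chi_{E_j}|,|X|,|\dive X|$ are bounded; the third is handled by extracting a subsequence along which $\chi_{E_j}-\chi_E\to 0$ almost everywhere on $\supp X$ and applying dominated convergence against the $L^1$ dominating function $2|\nabla w\cdot X+w\dive X|$. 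Since the limit $0$ is obtained along every subsequence, the whole sequence converges.

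Once this is established, the lower semicontinuity follows by the duality definition of $\Per_w$: for every $X\in C^1_c(D;\RR^n)$ with $|X|\le 1$ one has $\int_{E_j}\dive(w_jX)\,\dif\Lc^n\le\Per_{w_j}(E_j;D)$, and passing to the limit in $j$ and then taking the supremum over such $X$ yields $\Per_w(E;D)\le\liminf_j\Per_{w_j}(E_j;D)$. For the weak-$*$ convergence, the divergence formula \eqref{eq:div_theo} identifies the pairing $\int X\cdot\dif\mu_{E;w}$ with $\int_E\dive(wX)\,\dif\Lc^n$, so the convergence from the first step translates directly into $\int X\cdot\dif\mu_{E_j;w_j}\to\int X\cdot\dif\mu_{E;w}$ for every $X\in C^1_c(\RR^n;\RR^n)$. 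The extension to test fields $X\in C^0_c$ is standard once the total-variation measures $|\mu_{E_j;w_j}|$ are locally uniformly bounded on any relevant $D$, which follows from \eqref{eq:perboundonqmin} (the statement being vacuous on any $D$ where $\liminf_j\Per_{w_j}(E_j;D)=+\infty$) together with a mollification argument.

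The technical heart is the product-convergence step above: the indicator functions $\chi_{E_j}$ converge to $\chi_E$ only in $L^1_{loc}$ and not in $L^\infty$, while they must be paired against the merely $L^1_{loc}$ factor $\nabla w\cdot X+w\dive X$. This forces the subsequence-plus-dominated-convergence detour outlined above; once it is in place, the rest of the argument is a routine duality and approximation scheme, and I do not anticipate any further obstacle.
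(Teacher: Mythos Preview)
Your proof is correct and follows the same route as the paper's: establish $\int_{E_j}\dive(w_jX)\to\int_E\dive(wX)$ for each $X\in C^1_c$, then read off lower semicontinuity by taking the supremum over $|X|\le1$ and weak-$*$ convergence via the divergence identity \eqref{eq:div_theo}. The paper simply compresses your three-term splitting into the bare assertion that $\chi_{E_j}w_j$ and $\chi_{E_j}\nabla w_j$ converge weakly in $L^1_{loc}$; your decomposition is exactly what justifies that claim.

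One small correction: your appeal to \eqref{eq:perboundonqmin} for the uniform local mass bounds needed to pass from $C^1_c$ to $C^0_c$ test fields is misplaced, since that estimate requires almost-minimality, which is not hypothesized in this proposition. The paper's own proof is equally silent on this point and only tests against $C^1_c$; in every subsequent application (the compactness results) the required uniform perimeter bounds are supplied separately, so the gap is harmless in context.
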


\begin{proof}
    Since $\chi_{E_j}w_j$ and $\chi_{E_j}\nabla w_j$ converge weakly in $L^1_{loc}$, for all $X\in C^{1}_c(\RR^n;\RR^n)$ it holds
    \begin{equation}
        \int_{E} \dive(wX) = \lim_{j\to+\infty} \int_{E_j} \dive(w_jX)\dif\Lc^{n} \le \liminf_{j\to+\infty} \Per_{w_j}(E),
    \end{equation}
    and we conclude taking the supremum in $X$.
    Since $E$ is of finite $w$-perimeter and $w$ satisfies \Cref{ass:weights}, \Cref{lemma:structure} applies and for all $X\in C^1_c(\RR^n)$
    \begin{equation}
        \int_{\de^*E} w X\cdot \nu_E \dif \Hc^{n-1} = \lim_{j\to +\infty} \int_{E_j} \dive (w_jX)\dif\Lc^{n} =\lim_{j\to+\infty} \int_{\de^*E_j} w_j X\cdot \nu_{E_j} \dif \Hc^{n-1}.
    \end{equation}
\end{proof}

\begin{remark}
    In the special case $\dista{j} = \dista{\Omega_j}$, $\dista{} = \dista{\Omega}$, and if $\de\Omega_j$ are $\varkappa$-flat, with $\varkappa\le 1$, we can replace the assumption on $\{\dista{j}\}_{j\in\NN}$ with the geometric assumption
    \begin{equation}
        \de \Omega_j \xrightarrow[j\to+\infty]{} \de \Omega \; \mbox{ in the Hausdorff distance,}
    \end{equation}
    meaning that 
    \begin{equation}
    	d_{\Hc}(\de\Omega_j,\de\Omega) := \max \left\{ \sup_{x\in\de\Omega_j}\dist(x,\de\Omega),\sup_{y\in\de\Omega} \dist(x,\de\Omega_j)  \right\} \longrightarrow 0.
    \end{equation}

    Indeed, $\de\Omega_j \to \de\Omega$ implies that $\dista{\Omega_j} \to \dista{\Omega}$ uniformly, and their gradients are equi-bounded in $L^{p}(B_1)$, for some $p>1$, since     \begin{equation}
        \int_{\Omega_j\cap B_1}|\nabla \dista{\Omega_j}|^p \dif\Lc^{n} \le C_p\left(1 +  \int_{\Omega_j \cap B_1 \cap \{d_{\Omega_j} < 1/10\}} d_{\Omega_j}^{p(a-1)} \dif\Lc^{n}\right),
    \end{equation}
    where $C_p>0$ is universal.    
    The sets $\de\Omega_j$ are $\varkappa$-flat, with $\varkappa\le1$ and thus
    \begin{equation}
        \int_{\Omega_j \cap B_1 \cap \{d_{\Omega_j} < 1/10\}} d_{\Omega_j}^{p(a-1)} \dif\Lc^{n}\le C_p \int_{0}^{1/10} t^{p(a-1)} \dif t,
    \end{equation}
    which is uniformly bounded if $a\ge1$ or $p< \frac{1}{1-a}$.

    Therefore $\nabla w_j$ are uniformly bounded in $L^p$ and thus there exists $v\in L^p_{loc}(B_1;\RR^n)$ and a subsequence $\{\nabla w_{j(\ell)}\}_{\ell\in\NN}$ such that $\nabla w_{j(\ell)}\weakly v$ in $L^p_{loc}$ (and in particular in $L^1_{loc}$). 
    
    Looking at the behavior with smooth test functions, it is not difficult to show that $v = \nabla w$ and since the limit of subsequences is unique, the whole sequence of $\{w_j\}_{j\in\NN}$ converges weakly to $w$ in $W^{1,1}_{loc}(B_1)$. 
\end{remark}

We conclude this subsection with a quick observation about the case where $w$ satisfies \Cref{ass:weights}, but $w\notin W^{1,1}_{loc}(\RR^n)$.
In this case, we cannot define $\Per_{\dista{}}$ as in \eqref{eq:def_weightedperimeter}, since $\int_E \dive(wX)\dif\Lc^{n}$ is not well defined.
On the other hand, if $\dista{}$ satisfies the \Cref{ass:weights}, and $E$ is a set of locally finite $\dista{\Omega}$-perimeter, then we can define 
\begin{equation}\label{eq:def_moregeneralperimeter}
	\Per_{\dista{}}(E;B_R) := \int_{\de^*E \cap B_R}  \dista{} \dif\Hc^{n-1}.
\end{equation}
Thanks to \Cref{lemma:structure}, this expression is well defined.
From \Cref{ass:weights}, we know that $\frac{1}{C}\le\frac{\dista{}}{\dista{\Omega}}\le C$, therefore
\begin{equation}\label{eq:comparable_weights}
  	\frac{1}{C} \Per_{\dista{\Omega}}(E;B_R) \le \Per_{\dista{}}(E;B_R) \le  C\Per_{\dista{\Omega}}(E;B_R).
\end{equation}

Thus $\Per_{\dista{\Omega}}$ and $\Per_{\dista{}}$ are comparable, and we can simply define the sets of finite $\dista{}$-perimeter as of finite $\dista{\Omega}$-perimeter.
This definition is sufficient for \Cref{prop:changing_weight} to hold, thus we recover the regularity properties of almost-minimizers of $\Per_{\dista{}}$ from the analogous result for $\Per_{\dista{\Omega}}$.

\subsection{Compactness}

From now on we turn our attention to the special case $\dista{\Omega}(x) = d_\Omega(x)^a$.

\begin{proposition}[Compactness from bounds on the $\dista{\Omega}$-perimeter]\label{prop:compactness_frombound}
    Let $\{\Omega_j\}_{j\in\NN}$, $\Omega_j\subset B_1$ a sequence of $\varkappa$-flat open sets, with $\varkappa\le 1$, and $\{E_j\}_{j\in\NN}$ with $E_j\subset \overline{\Omega_j}\cap B_1$ be a sequence of sets of locally finite $\dista{\Omega_j}$-perimeter such that 
    \begin{equation}
        \liminf_{j\to+\infty}\Per_{\dista{\Omega_j}}(E_j;B_1) < +\infty.
    \end{equation}
    Then there exists {$\Omega = \{x_n > g(x') \}\subset\RR^n$,  with $\|g\|_{C^{1,\alpha/2}}\le 1$}, $E\subset\Omega$ of locally finite $\dista{\Omega}$-perimeter and a sequence $j(\ell) \to +\infty$ such that
    $\de\Omega_{j(\ell)} \to \de\Omega$ locally in $B_1$ with respect to the Hausdorff distance,
    \begin{equation}
         E_{j(\ell)} \xrightarrow[]{L^{1}_{loc}(B_1)} E,
         \qquad \mbox{and}\qquad 
         \Hc^{n-1}_{\dista{\Omega_{j(\ell)}}} \llcorner\de^*E_{j(\ell)} \weakstar \Hc^{n-1}_{\dista{\Omega}}\llcorner \de^*E \mbox{ in } B_1.
    \end{equation}
\end{proposition}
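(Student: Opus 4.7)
I would extract limits of the three objects --- the domains $\Omega_j$, the weights $\dista{\Omega_j}$, and the sets $E_j$ --- separately, and then combine them via \Cref{rmk:lscperimeter}.

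\emph{Domain compactness.} Since $\Omega_j$ is $\varkappa$-flat with $\varkappa\le 1$, write $\Omega_j\cap B_1=\{x_n>g_j(x')\}$ with $g_j(0)=|\nabla g_j(0)|=0$ and $[g_j]_{C^{1,\alpha}(B'_1)}\le 1$. Arzelà--Ascoli produces a subsequence $g_{j(\ell)}\to g$ in $C^{1,\alpha/2}_{\mathrm{loc}}(B'_1)$ with $\|g\|_{C^{1,\alpha/2}}\le 1$, giving Hausdorff convergence $\partial\Omega_{j(\ell)}\to\partial\Omega$ locally in $B_1$ and hence uniform convergence $d_{\Omega_{j(\ell)}}\to d_\Omega$ on compact subsets of $B_1$. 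Using the argument in the remark following \Cref{rmk:lscperimeter}, $\varkappa$-flatness makes $|\nabla \dista{\Omega_j}|^p$ uniformly integrable on $B_1$ for some $p>1$, so along a further subsequence $\dista{\Omega_{j(\ell)}}\to\dista{\Omega}$ in $W^{1,1}_{\mathrm{loc}}(B_1)$.

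\emph{Compactness of the sets.} For every $\delta>0$, for $\ell$ large the compact set $K_\delta:=\{x\in\overline{B_{1-\delta}}:d_\Omega(x)\ge\delta\}$ is contained in $\{d_{\Omega_{j(\ell)}}\ge\delta/2\}$, hence $\dista{\Omega_{j(\ell)}}\ge(\delta/2)^a$ on $K_\delta$ and
\begin{equation}
\Per(E_{j(\ell)};K_\delta)\le(2/\delta)^a\,\Per_{\dista{\Omega_{j(\ell)}}}(E_{j(\ell)};B_1)\le C_\delta.
\end{equation}
Standard BV compactness applied on an increasing exhaustion $U_m\Subset\Omega\cap B_1$, combined with a diagonal argument, yields a further subsequence and a set $E\subset\overline\Omega\cap B_1$ of locally finite perimeter in $\Omega\cap B_1$ with $E_{j(\ell)}\to E$ in $L^1_{\mathrm{loc}}(\Omega\cap B_1)$. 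To upgrade to $L^1_{\mathrm{loc}}(B_1)$, use $E_{j(\ell)}\subset\overline{\Omega_{j(\ell)}}$: for any $K\Subset B_1$, Hausdorff convergence of boundaries gives $|K\cap(\overline{\Omega_{j(\ell)}}\triangle\overline{\Omega})|\to 0$, so no mass of $\chi_{E_{j(\ell)}}$ survives outside $\overline{\Omega}$ in the limit.

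\emph{Conclusion and obstacle.} With $E_{j(\ell)}\to E$ in $L^1_{\mathrm{loc}}(B_1)$ and $\dista{\Omega_{j(\ell)}}\to\dista{\Omega}$ in $W^{1,1}_{\mathrm{loc}}(B_1)$, \Cref{rmk:lscperimeter} yields both $\Per_{\dista{\Omega}}(E;B_1)\le\liminf_\ell\Per_{\dista{\Omega_{j(\ell)}}}(E_{j(\ell)};B_1)<+\infty$ (so $E$ has locally finite $\dista{\Omega}$-perimeter) and the desired weak-* convergence of the perimeter measures. The main obstacle is the $L^1_{\mathrm{loc}}(B_1)$ convergence: the weighted-perimeter bound gives no control on the classical perimeter of $E_j$ up to $\partial\Omega_j$ where the weight degenerates. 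The resolution uses the $\varkappa$-flatness, which both pins down the limiting $\partial\Omega$ and confines any ``escaping'' mass of $E_j$ to a thin strip $\{d_{\Omega_{j(\ell)}}<\delta\}$ of vanishingly small measure as $\delta\to 0$.
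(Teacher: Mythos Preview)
Your proposal is correct and follows essentially the same approach as the paper: Arzel\`a--Ascoli for the domains, the lower bound $\dista{\Omega_j}\ge c\delta^a$ on $\{d_{\Omega_j}\ge\delta\}$ to reduce to classical BV compactness away from $\partial\Omega$, a diagonal argument over $\delta\to0$, and the thin-strip estimate $\Lc^n(\{d_\Omega\le\delta\})\le C\delta$ to upgrade to $L^1_{\mathrm{loc}}(B_1)$, concluding via \Cref{rmk:lscperimeter}. You are in fact slightly more explicit than the paper about the $W^{1,1}_{\mathrm{loc}}$ convergence of the weights needed to invoke \Cref{rmk:lscperimeter}, which the paper defers to the subsequent remark.
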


\begin{proof}
    The convergence of $\de\Omega_j$ is a classical application of Arzelà-Ascoli Theorem, thus in the rest of the proof we assume that $\de\Omega_j \to \de\Omega$ in the Hausdorff distance.
    
    For the compactness of the $E_j$ we rely on the compactness theorem for the usual perimeter (see for instance \cite[Theorem 12.26]{maggi12}):
    for all $\delta >0$ and $j\gg 1$ such that $d_{\Hc}(\de\Omega,\de\Omega_j) < \delta$, it holds 
    \begin{equation}
        \Per(E_j;B_1\cap\{d_{\Omega}\geq 3\delta\}) \leq C\delta^{-a} \Per_{\dista{\Omega}}(E_j;B_1\cap\{d_{\Omega}\geq 3\delta\}).
    \end{equation}
    Therefore for every $\delta>0$ there exists $\{E_{j(\ell)}\}_{\ell\in\NN}$ that converges in $L^1(B_R\cap\{\dista{\Omega}\geq 3\delta\})$ to some set $E_\delta$. 
    Taking $\delta = 1/m$, for $m\in\NN$, a diagonal argument shows that 
    \begin{equation}
        E_{j(\ell)} \xrightarrow[\ell\to +\infty]{} E \;\;\mbox{in }L^1_{loc}(B_1\cap\{\dista{\Omega}\geq1/m\}),\quad \mbox{where } E = \bigcup_{m\in\NN} E_{1/m}.
    \end{equation}
    Since $\de\Omega$ is the graph of a $C^{1}$-function with bounded derivative, then $\Lc^n(\{d_{\Omega}\le 1/m\}) \leq \frac{C}{m}$, and
    \begin{equation}
        \limsup_{\ell\to+\infty} \Lc^n(E\triangle E_{j(\ell)}) 
        \leq \frac{C}{m} + \limsup_{\ell\to+\infty} \Lc^n(E\triangle E_{j(\ell)} \cap \{\dista{\Omega}\geq 1/m\})\leq \frac{C}{m}.
    \end{equation}
    We conclude by taking $m\to+\infty$ and using \Cref{rmk:lscperimeter}.
\end{proof}

\begin{proposition}[Compactness for almost-minimizers]\label{prop:compactnessqmin}
    Let $\{\Omega_j\}_{j\in\NN}$ be a sequence of $\varkappa$-flat open set, with $\varkappa\le1$, and $\{E_j\}_{j\in\NN}$, $E_j\subset \Omega_j$ a sequence of $(\vartheta_j,\beta)$-minimizers of the $\dista{\Omega_j}$-perimeter.
    If 
    \begin{equation}
        \vartheta \coloneqq \liminf \vartheta_j < +\infty
    \end{equation}
    then there exist a subsequence $\{j(\ell)\}_{\ell\in\NN}$, $\Omega\subset\RR^n$, and a $(\vartheta,\beta)$-minimizer $E\subset \Omega$ of $\Per_\dista{\Omega}$ such that $\de\Omega_{j(\ell)}\to \de\Omega$ locally in $B_1$ with respect to the Hausdorff distance and
    \begin{equation}
        E_{j(\ell)}\xrightarrow[]{L^1_{loc}(B_1)} E;\qquad  \Hc^{n-1}_{\dista{\Omega_j}}\llcorner\de^*{E_{j(\ell)}}\weakstar\Hc^{n-1}_{\dista{\Omega}}\llcorner\de^*E
        \mbox{ in $B_1$};
    \end{equation}
    Moreover, $\de E_{j(\ell)}\to \de E$ locally in $B_1$ with respect to the Hausdorff distance.
\end{proposition}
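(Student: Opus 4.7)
The plan is to reduce the proposition to the more general \Cref{prop:compactness_frombound} and then to verify by hand that the limit set inherits the $(\vartheta,\beta)$-minimality. First, I would pass to a subsequence along which $\vartheta_{j(\ell)}\to\vartheta$. Testing almost-minimality of $E_j$ against $E_j\setminus B_r(y)$ as in \Cref{remark:deficit} and using that $\dista{\Omega_j}\le C$ on $B_1$ (since $\Omega_j$ is $\varkappa$-flat with $\varkappa\le 1$), I obtain the uniform bound $\Per_{\dista{\Omega_j}}(E_j;B_1)\le C(1+\vartheta)$. \Cref{prop:compactness_frombound} then produces an open set $\Omega$ and a limit set $E\subset\overline\Omega$ together with the stated $L^1_{loc}$-convergence, weak-$\ast$ convergence of the weighted surface measures, and Hausdorff convergence $\de\Omega_{j(\ell)}\to\de\Omega$.

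The main step is to prove that $E$ is itself a $(\vartheta,\beta)$-minimizer. Fix a ball $B_r(y)$ and a competitor $F\subset\overline\Omega$ with $E\Delta F\Subset B_r(y)$. The natural candidate competitor for $E_j$ is $F_j\coloneqq (F\cap\overline{\Omega_j}\cap B_{r'}(y))\cup (E_j\setminus\overline{B_{r'}(y)})$ for some $r'\in(r,r+\delta)$. For $\mathcal{L}^1$-a.e.\ choice of $r'$ the coarea formula together with $L^1_{loc}$-convergence guarantee that $\Hc^{n-1}(\de B_{r'}(y)\cap(E_{j(\ell)}\Delta F))\to 0$, so that $F_{j(\ell)}\Delta E_{j(\ell)}\Subset B_{r'}(y)$ for $\ell$ large, and the contribution of the ``seam'' on $\de B_{r'}(y)$ to the weighted perimeter vanishes in the limit. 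Applying almost-minimality of $E_{j(\ell)}$ on $B_{r'}(y)$ and passing to the limit gives, by the lower semicontinuity from \Cref{rmk:lscperimeter} on the left and by the uniform convergence $\dista{\Omega_{j(\ell)}}\to\dista{\Omega}$ on $\overline{B_{r'}(y)}$ on the right,
\begin{equation*}
\Per_{\dista{\Omega}}(E;B_{r'}(y))\le (1+\vartheta (r')^\beta)\,\Per_{\dista{\Omega}}(F;B_{r'}(y)),
\end{equation*}
and then $r'\searrow r$ yields the required inequality.

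The point that requires the most care is the bookkeeping of the truncation $F\cap\overline{\Omega_j}$: since $F\subset\overline\Omega$ but possibly $F\not\subset\overline{\Omega_j}$, we need to check that replacing $F$ by $F\cap\overline{\Omega_j}$ inside $B_{r'}(y)$ does not inflate the weighted perimeter. This is precisely where the degeneracy of the weight helps: any new boundary piece created on $\de\Omega_j$ carries $\dista{\Omega_j}=0$, so $\Per_{\dista{\Omega_j}}(F\cap\overline{\Omega_j};B_{r'}(y))\le \Per_{\dista{\Omega_j}}(F;B_{r'}(y))$; together with uniform convergence of the weights, the right-hand side tends to $\Per_{\dista{\Omega}}(F;B_{r'}(y))$ as $F$ is fixed. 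The set-theoretic condition $F_j\subset\overline{\Omega_j}$ is automatic by construction, and the condition $E_j\Delta F_j\Subset B_{r'}(y)$ is verified by the slice selection above.

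Finally, the Hausdorff convergence $\de E_{j(\ell)}\to\de E$ locally in $B_1$ follows from uniform density estimates for $(\vartheta_j,\beta)$-minimizers (both for $E_j$ and for its complement in $\Omega_j$), which follow in turn from the relative isoperimetric inequality alluded to in the introduction and the almost-minimality bound \eqref{eq:perboundonqmin}. The lower bound on the density of $E_j$ and $\Omega_j\setminus E_j$ at any $x\in\de E_j$ together with $L^1_{loc}$-convergence prevents $\de E_{j(\ell)}$ from collapsing onto a set of positive distance from $\de E$; the same estimates for $E$ prevent any ball centered at $\de E\cap\Omega$ from being left uncovered by $\de E_{j(\ell)}$ for large $\ell$. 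The expected main obstacle is purely the technical verification of the competitor construction in the presence of varying domains $\Omega_j$; once that is settled, the rest is the standard interplay between lower semicontinuity, uniform density estimates, and weak-$\ast$ convergence of the weighted surface measures.
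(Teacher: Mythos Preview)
Your proposal is correct and follows essentially the same strategy as the paper: apply \Cref{prop:compactness_frombound} after the uniform perimeter bound \eqref{eq:perboundonqmin}, then verify $(\vartheta,\beta)$-minimality of the limit via a cut-and-paste competitor, and deduce Hausdorff convergence from the density estimates (which the paper defers to \Cref{cor:convergence_Hausdorff}). The only notable difference is cosmetic: you glue $F$ to $E_j$ along an \emph{outer} sphere $\partial B_{r'}$ with $r'>r$ and let $r'\searrow r$, whereas the paper glues along an \emph{inner} sphere $\partial B_s$ with $s<r$ and lets $s\nearrow r$; your explicit handling of the truncation $F\cap\overline{\Omega_j}$ and of the seam on $\partial B_{r'}$ is in fact more careful than the paper's own exposition.
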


\begin{proof}
    The convergence of $\de\Omega_j$ follows as in \Cref{prop:compactness_frombound}.
    Without loss of generality we therefore assume $\de\Omega_j\to\de\Omega$ and $\vartheta = \lim \vartheta_j < +\infty$.

    From \eqref{eq:perboundonqmin}, it follows that 
    \begin{equation}
        \liminf\Per_{\dista{\Omega_j}}(E_j;B_1) \le \liminf C(1+\vartheta_j) \le \vartheta C,
    \end{equation}
    and thus from \Cref{prop:compactness_frombound} there exists a subsequence $\{E_{j(\ell)}\}_{\ell\in\NN}$ and a set $E\subset\Omega$ of locally finite $\dista{\Omega}$-perimeter such that $E_{j(\ell)} \to E$. 
    
    We only need to prove that if $E$ is the limit of a sequence of $(\vartheta_j,\beta)$-minimizers, then $E$ is $(\vartheta,\beta)$-minimizer where $\vartheta = \lim \vartheta_j$.

    If $F$ is such that $E\triangle F \Subset B_r(x)$ for some $x\in\RR^n$ and $r>0$, then for all except at most countably many $s<r$ it holds 
    \begin{equation}\label{eq:goodsplitcond}
        \Hc^{n-1}_{\dista{\Omega_j}}(\partial^*E_{j} \cap \partial B_s(x) ) =0 \quad \mbox{ and } \quad \Hc^{n-1}_{\dista{\Omega_j}}(\partial^*F \cap \partial B_s(x) ) =0.
    \end{equation}
    Let now $F_{s,j} = \left(F \cap \overline{B_s(x)}\right) \cup \left(E_{j}\cap (B_r(x)\setminus \overline{B_s(x)})\right)$ so that
    \begin{equation}
        \begin{split}
            \Per_{\dista{\Omega}}(E;B_r(x)) \leq \liminf_{j\to +\infty} \Per_{\dista{\Omega_j}}(E_{j};B_r(x)) \leq \liminf_{j \to+\infty} (1+\vartheta_{j} r^\beta) \Per_{\dista{\Omega_j}}(F_{s,j};B_r(x)).
        \end{split}
    \end{equation}
    Since we chose $s<r$ such that \eqref{eq:goodsplitcond} holds, we get
    \begin{equation}
        \Per_{\dista{\Omega_j}}(F_{s,j}; B_r(x)) \leq \Per_\dista{\Omega_j}(F;B_s(x)) + \Per_\dista{\Omega_j}(E_{j};B_r(x)\setminus\overline{B_s(x)}),
    \end{equation}
    and taking $s\nearrow r$ from \Cref{rmk:lscperimeter} we get 
    \begin{equation}
        \Per_{\dista{\Omega}}(E;B_r(x)) 
        \leq \liminf_{j \to+\infty} (1+\vartheta_{j} r^\beta) \Per_{\dista{\Omega_j}}(F;B_r(x)) = (1+\vartheta r^\beta) \Per_{\dista{\Omega}}(F;B_r(x)) \qedhere
    \end{equation}
    
\end{proof}

In the previous propositions we lose information on $\de\Omega$, since the Arzelà-Ascoli theorem does not prevent some loss of regularity of the limit.
This is no issue, because in the applications we always consider sequences defined by a blow-up procedure.
Indeed, taking into account \Cref{prop:compactnessqmin} and \Cref{remark:scaling} we get that blow-up of $(\vartheta,\beta)$-minimizers are $\Per_a$-minimizers, as stated next.
Before proceeding, we introduce the notation 
\begin{equation}
    \RR^n_+ = \left\{ x \in\RR^n : x_n\ge 0 \right\} \quad \mbox{and} \quad D^+ = D \cap \RR^{n}_+,
\end{equation}
for all sets $D\subset \RR^n$.
We also define $\Per_a$, and write the $a$-perimeter, in a Borel set $A$ as
\begin{equation}
    \Per_a(E;A) = \Per_{\dista{\RR^n_+}}(E;A) =\int_{\de^*E\cap A}(x_n^+)^a \dif\Hc^{n-1},
\end{equation}
where $x_n^+ = \max\{0,x_n\}$. 
In the same way, we also define $\Lc^{n}_a$ and $\Hc^{n-1}_a$.

\begin{corollary}[Blow-ups]\label{cor:blowupqm}
    Let $\Omega$ be a $\varkappa$-flat open set, $E$ a $(\vartheta,\beta)$-minimizer, $x_0\in \de E \cap \de \Omega$, and $\Phi$ be be a linear isometry that maps $\{ x\cdot\nu_{\Omega}(x_0) \ge 0 \} \mapsto \{ x_n\ge 0\}$
    Then there exist a sequence $r_j\to0$ and a local $\Per_a$-minimizer $E_0$ such that 
    \begin{equation}
        \frac{1}{r_j}\Phi(E-x_0)\xrightarrow[]{L^1_{loc}} E_0,\quad \mbox{and}\quad \frac{1}{r_j}\Hc^{n-1}_{\dista{\Omega_j}}\llcorner \partial^*\Phi(E-x_0) \weakstar \Hc^{n-1}_a\llcorner \partial^*E_0.
    \end{equation}
    More in general, whenever there exists $r_j\to0$ and $E_0\subset\RR^n_+$ such that $\frac{1}{r_j}\Phi(E-x_0)\xrightarrow[]{L^1_{loc}} E_0$, then $E_0$ is a $\Per_a$-minimizer.
\end{corollary}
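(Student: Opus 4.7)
The plan is to realize any blow-up of $E$ at $x_0$ as the $L^1_{loc}$-limit of rescalings of $E$, viewed as almost-minimizers of a weighted perimeter on rescaled ambient domains, where both the almost-minimality deficit and the flatness parameter of the ambient boundary vanish in the limit. The conclusion will then follow directly from \Cref{prop:compactnessqmin}.

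First I would reduce to the case $x_0 = 0$ and $\nu_\Omega(0) = -e_n$ by applying $\Phi$ and translating; the class of $(\vartheta,\beta)$-minimizers of $\Per_{d_\Omega^a}$ is clearly invariant under such rigid motions. For $r>0$ I set $\Omega_r \coloneqq \Omega/r$ and $E_r \coloneqq E/r$. Using \Cref{remark:scaling} together with the identity $d_\Omega(ry) = r\,d_{\Omega_r}(y)$, and the fact that positive multiplicative constants on the weight do not affect the minimality class, $E_r$ is a $(\vartheta r^\beta,\beta)$-minimizer of $\Per_{d_{\Omega_r}^a}$. The scaling property for $\varkappa$-flat domains recorded in \Cref{subsec:distance} gives that $\Omega_r$ is $\varkappa r^\alpha$-flat; hence $\partial\Omega_r \to \{x_n=0\}$ locally in Hausdorff distance as $r \to 0$, and the rescaled deficits $\vartheta r^\beta$ vanish.

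Next I would fix an arbitrary sequence $r_j \to 0$ and apply \Cref{prop:compactnessqmin} to the sequence $\{E_{r_j}\}$ of $(\vartheta_j,\beta)$-minimizers with $\vartheta_j = \vartheta r_j^\beta \to 0$, defined on the (eventually) $\varkappa$-flat domains $\Omega_{r_j}$. This extracts a subsequence, still denoted $r_j$, along which $E_{r_j} \to E_0$ in $L^1_{loc}$ with $E_0 \subset \overline{\{x_n \ge 0\}}$, $\partial\Omega_{r_j} \to \{x_n = 0\}$, the weighted perimeter measures converging weakly-$\ast$ to $\Hc^{n-1}_a\rest \partial^*E_0$, and $E_0$ a $(0,\beta)$-minimizer of $\Per_{d_{\{x_n\ge 0\}}^a} = \Per_a$, i.e.\ a local $\Per_a$-minimizer; this yields the first assertion.

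For the final assertion, if some sequence $r_j \to 0$ produces an $L^1_{loc}$-limit $E_0'\subset\RR^n_+$, I would argue by uniqueness: every subsequence admits, by the procedure above, a further subsequence converging in $L^1_{loc}$ to a $\Per_a$-minimizer, and this limit must agree a.e.\ with $E_0'$; hence $E_0'$ itself is a $\Per_a$-minimizer. I do not anticipate a substantial obstacle: the only delicate point is the consistent bookkeeping of how the weight, the deficit, and the flatness of the ambient domain transform under the rescaling around a boundary point, which is fully captured by \Cref{remark:scaling} together with the scaling of $\varkappa$-flatness and the observation that the classes of $(\vartheta,\beta)$-minimizers of $\Per_w$ and of $\Per_{cw}$ coincide for any $c>0$.
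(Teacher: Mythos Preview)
Your proposal is correct and follows exactly the approach the paper indicates: the corollary is stated immediately after the sentence ``Taking into account \Cref{prop:compactnessqmin} and \Cref{remark:scaling} we get that blow-up of $(\vartheta,\beta)$-minimizers are $\Per_a$-minimizers,'' and no separate proof is given. Your write-up simply spells out the bookkeeping of the rescaling (deficit $\vartheta r^\beta\to0$, flatness $\varkappa r^\alpha\to0$) and the subsequence argument for the final clause, which is precisely what the paper leaves to the reader.
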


\subsection{Density estimates}
Here we introduce two density estimates, one holding at points in the interior of $\Omega$ and one up to the boundary of $\Omega$.

\begin{proposition}[Density estimates at the boundary]\label{prop:density_a}
    There exists a universal constant $C$ such that,
    if $\Omega\subset\RR^n$ is $\varkappa$-flat with $\varkappa\le\frac{1}{C}$, and $E$ is a $(\vartheta,\beta)$-minimizer of $\Per_{\dista{\Omega}}$ in $B_1$, then for all $x_0\in\de E\cap \de\Omega \cap B_{1/2}$ and $r\in(0,1/4)$ such that $\vartheta r^\beta\le 1$ it holds
    \begin{equation}\label{eq:densityest}
        \frac{1}{C}\le\frac{\Lc^n_{\dista{\Omega}}(E\cap B_r(x_0))}{r^{n+a}}\le C.
    \end{equation}
\end{proposition}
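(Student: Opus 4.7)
The upper bound is immediate. Since $x_0\in\de\Omega$, for every $x\in B_r(x_0)$ we have $d_\Omega(x)\le|x-x_0|\le r$, hence $\dista{\Omega}(x)\le r^a$ on $B_r(x_0)$, and
\begin{equation*}
    \Lc^n_{\dista{\Omega}}\big(E\cap B_r(x_0)\big)\le r^a\Lc^n(B_r)\le Cr^{n+a}.
\end{equation*}
The content of the proposition is therefore the lower bound, which I would establish by the classical De Giorgi-type argument of combining almost-minimality with a relative isoperimetric inequality.

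Set $m(r)\coloneqq\Lc^n_{\dista{\Omega}}(E\cap B_r(x_0))$. By the coarea formula applied to $x\mapsto|x-x_0|$,
\begin{equation*}
    m'(r)=\Hc^{n-1}_{\dista{\Omega}}\big(E\cap\de B_r(x_0)\big)\quad\text{for a.e. }r\in(0,1/4).
\end{equation*}
For a.e.\ $r$ (namely those with $\Hc^{n-1}_{\dista{\Omega}}(\de^*E\cap\de B_r(x_0))=0$), I would compare $E$ with the competitor $F\coloneqq E\setminus\overline{B_r(x_0)}$ in a ball $B_\rho(x_0)$, let $\rho\searrow r$, use the additivity of $\Per_{\dista{\Omega}}$ across $\de B_r(x_0)$, and use $\vartheta r^\beta\le1$ to conclude
\begin{equation*}
    \Per_{\dista{\Omega}}\big(E;\overline{B_r(x_0)}\big)\le(1+\vartheta r^\beta)\,\Hc^{n-1}_{\dista{\Omega}}\big(E^{(1)}\cap\de B_r(x_0)\big)\le 2\,m'(r).
\end{equation*}

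The second ingredient is a relative weighted isoperimetric inequality in the (almost) half-ball $\Omega\cap B_r(x_0)$. By \Cref{lemma:notes_on_distance}, if $\varkappa\le 1/C$ with $C$ large, then $\dista{\Omega}(x)$ is comparable to $((x-x_0)_n-g((x-x_0)')_+)^a$ up to multiplicative constants arbitrarily close to $1$, so $\Omega\cap B_r(x_0)$ is a small Lipschitz perturbation of a half-ball. Applying the weighted isoperimetric inequality in the half-space from \cite{CabreRosOtonSerra2016} (and its quantitative/relative refinements in \cite{Cinti_Glaudo_Pratelli_Ros-Oton_Serra_2022}), I would obtain, after distinguishing the two cases $m(r)\le\tfrac12\Lc^n_{\dista{\Omega}}(\Omega\cap B_r(x_0))$ (in which case isoperimetry applies) and the complementary one (in which case $m(r)\ge cr^{n+a}$ already by $\varkappa$-flatness),
\begin{equation*}
    m(r)^{\frac{n+a-1}{n+a}}\le C\Big[\Per_{\dista{\Omega}}(E;B_r(x_0))+\Hc^{n-1}_{\dista{\Omega}}\big(E\cap\de B_r(x_0)\big)\Big]\le C\,m'(r).
\end{equation*}
This yields the differential inequality $\big(m^{1/(n+a)}\big)'\ge c_0>0$, which integrates from $0$ (where $m(0)=0$) to give $m(r)\ge c_0^{\,n+a}\,r^{n+a}$, as required.

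The main technical obstacle is to justify the relative isoperimetric inequality in $\Omega\cap B_r(x_0)$ with constants that are \emph{independent} of $\varkappa$ (for $\varkappa$ small). For this one needs the comparison between $\dista{\Omega}$ and $(x_n-g(x'))_+^a$ to be stable in the half-space change of variables $x\mapsto(x',x_n-g(x'))$, and to check that the resulting bi-Lipschitz deformation multiplies the weighted perimeter and volume only by factors close to $1$. Once this stability is in place, the rest of the argument is a direct ODE computation.
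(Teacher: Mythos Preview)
Your proposal is correct and follows essentially the same route as the paper: the lower bound is obtained via the De Giorgi differential inequality for $m(r)=\Lc^n_{\dista{\Omega}}(E\cap B_r(x_0))$, combining almost-minimality against the competitor $E\setminus\overline{B_r(x_0)}$ with the weighted isoperimetric inequality of \cite{CabreRosOtonSerra2016}, transferred to the curved domain via the graph-flattening map $\Psi(x',x_n)=(x',x_n-g(x'))$. Two minor differences: the paper uses the \emph{global} weighted isoperimetric inequality applied directly to $E\cap B_r(x_0)$, which avoids your case distinction on whether $m(r)$ exceeds half the ball; and for the upper bound the paper argues by applying the lower bound to the complement $E^c$, whereas your direct observation $d_\Omega\le r$ on $B_r(x_0)$ is cleaner.
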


\begin{proof}
    The proof relies on a weighted isoperimetric inequality, which is guaranteed by \cite{CabreRosOtonSerra2016}. 
    Once we reduce ourselves to the assumptions of that result, the proof is based on classical arguments, that we briefly recollect, referring to \cite[Theorem 16.14]{maggi12} for further details.

    \textbf{Step 1.}
    We call $g$ the function introduced in the definition of $\varkappa$-flatness (\Cref{ass:boundaryof_Omega}).
    By elementary computations analogous to the ones in \Cref{lemma:notes_on_distance}, we get that for some universal $C>0$, it holds
    \begin{equation}
    \dista{\Omega}(x) \ge (1-C\varkappa^2) |x_n - g(x')|^a \ge \frac{1}{2}|x_n - g(x')|^a,
    \end{equation}
    for all $x\in\Omega$, provided $\varkappa>0$ is sufficiently small.
    Thus for all $x_0\in \de \Omega \cap B_{1/2}$ and $r<1/4$ it holds
    \begin{multline}
                \Per_{\dista{\Omega}}(E;B_r(x_0)) = \int_{\de^*E \cap B_r(x_0)} \dista{\Omega}(x) \dif \Hc^{n-1} 
        \ge \frac{1}{2}\int_{\de^*E\cap B_r(x_0)} |x_n - g(x')|^a\dif \Hc^{n-1}.
    \end{multline}
    We now consider the diffeomorphism $\Psi(x',x_n) = (x',x_n - g(x'))$, that sends $\{x_n = g(x')\}$ in $\{x_n=0\}$.
    It holds
    \begin{equation}
        \nabla \Psi  = Id - \nabla_{x'} g \otimes e_n,\quad |\nabla \Psi | \le 1 + 2\varkappa,\quad \mbox{and}\quad J\Psi =1.
    \end{equation}
    Gathering these estimates and exploiting the isoperimetric inequality for the flat case proved in \cite[Theorem 1.3]{CabreRosOtonSerra2016}, we find 
    \begin{equation}\label{eq:curvedisoperimetric}
        \frac{\Per_{\dista{\Omega}}(E\cap B_r(x_0))}{\Lc_{\dista{\Omega}}^n(E\cap B_r(x_0))^{\frac{n+a-1}{n+a}}} 
        \ge \frac{1}{2}  \frac{\Per_{a}(\Psi(E\cap B_r(x_0))}{\Lc_{a}^n(\Psi(E\cap B_r(x_0))^{\frac{n+a-1}{n+a}}}
        \ge c.
    \end{equation}

    \textbf{Step 2.}
     Let $E$ be a $(\vartheta,\beta)$-minimizer of $\Per_{\dista{\Omega}}$ in $B_1$, let $x_0\in\partial E\cap\de\Omega\cap B_{1/2}$ and consider $0<t<s<1/4$ for which $\Per_{\dista{\Omega}}(E;\de B_t(x_0)) = \Per_{\dista{\Omega}}(E;\de B_s(x_0)) = 0$.
     By $(\vartheta,\beta)$-minimality it holds  
     \begin{equation}
         \begin{split}
             \Per_{\dista{\Omega}}(E;B_s(x_0)) &\leq (1+\vartheta s^\beta)\Per_{\dista{\Omega}}(E\setminus B_t(x_0);B_s(x_0)) \\
             &= (1+\vartheta s^\beta)\left(\Per_{\dista{\Omega}}(E;B_s(x_0)\setminus\overline{B_t(x_0)}) + \Hc^{n-1}_{\dista{\Omega}}(\de B_t(x_0)\cap E^{(1)}) \right)\\
         \end{split}
     \end{equation}
     thus, by letting $s\searrow t$ and using Fatou's lemma, we get
     \begin{equation}
         \Per_{\dista{\Omega}}(E;B_t(x_0)) \leq (1+\vartheta t^\beta)\Hc^{n-1}_{\dista{\Omega}}(\partial B_t(x_0) \cap E^{(1)}).
     \end{equation}
     Adding $\Hc^{n-1}_{\dista{\Omega}}(\partial B_t(x_0) \cap E^{(1)})$ to both sides, we then get
     \begin{equation}\label{eq:isoperimetricdensity}
         c[\Lc^{n}_{\dista{\Omega}}(E\cap B_t(x_0))]^{\frac{n+a-1}{n+a}} 
         \leq \Per_{\dista{\Omega}}(E\cap B_t(x_0)) 
         \leq (2+\vartheta t^\beta)\Hc^{n-1}_{\dista{\Omega}}(\partial B_t(x_0) \cap E^{(1)}),
     \end{equation}
     where the first inequality comes from \eqref{eq:curvedisoperimetric}.
     Calling $m(t) = \Lc_{\dista{\Omega}}^{n}(E\cap B_t(x_0))$, the assumption that $\vartheta r^\beta\le1$ and \eqref{eq:isoperimetricdensity} above yield the differential inequality 
     \begin{equation}
         c m(t)^{\frac{n+a-1}{n+a}} \leq 3 m'(t)
     \end{equation}
     for all $t\in(0,r)$.
     Since $x\in\partial E$, $m(t) >0$ for all positive radius, hence integrating from $0$ to $r$ and using the fact that $m(0^+)=0$,  we obtain
     \begin{equation}\label{eq:lowerdensityestimate}
         c r^{n+a} \leq \Lc_{\dista{\Omega}}^{n}(E\cap B_r(x_0)).
     \end{equation}
     The complementary set $E^c$ is a $(\vartheta,\beta)$-minimizer as well, hence
     \begin{equation}
         c r^{n+a} \le \Lc^{n}_{\dista{\Omega}}(E^c\cap B_r(x_0)) 
         = \Lc^{n}_{\dista{\Omega}}(B_r(x_0)) - \Lc^{n}_{\dista{\Omega}}(E\cap B_r(x_0)),
     \end{equation}
     and observing that for $x_0\in\de\Omega$, $\Lc^{n}_{\dista{\Omega}}(B_r(x_0)) \le \left(\frac{1}{2}+ C\varkappa\right) C r^{n+a}$ for $r\le 1/4$, we conclude
     \begin{equation}
         \Lc_{\dista{\Omega}}^{n}(E\cap B_r(x_0)) \le C r^{n+a}.
     \end{equation}
\end{proof}

\begin{remark}
	In the previous proposition, we need the assumption $x_0\in\de\Omega$ only to achieve the upper density estimate.
	Indeed we need it to prove that the mass of the balls centered in $x_0$ are comparable with $r^{n+a}$, but we do not need it to achieve \eqref{eq:lowerdensityestimate}.
	
\end{remark}

\begin{proposition}[Density estimates away from $\de\Omega$]\label{lemma:density_estimates_away}
	There exists a universal constant $C>0$ such that, if $\Omega\subset\RR^n$ is $\varkappa$-flat with $\varkappa\le \frac{1}{C}$ and $E$ is a $(\vartheta,\beta)$-minimizer of $\Per_{\dista{\Omega}}$ in $B_1$, then for all $x_0\in\de E \cap B_{1/2}\cap \Omega$ and $r < \frac{d_{\Omega}(x_0)}{2}$ such that $\vartheta r^\beta \le 1$ it holds
	\begin{equation}
		\frac{d_\Omega(x_0)^a}{C} \le \frac{\Lc^{n}_{\dista{\Omega}}(E\cap B_r(x_0)) }{r^n} \le C d_\Omega(x_0)^a.
	\end{equation}
\end{proposition}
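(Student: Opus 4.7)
The plan is to exploit the fact that, on $B_r(x_0)$ with $r<d_\Omega(x_0)/2$, the weight $\dista{\Omega}$ is essentially constant, and therefore the density estimate reduces essentially to the classical Euclidean one (compare the reduction to Tamanini's setting in \Cref{subsection:Tamanini}). More precisely, since $d_\Omega$ is $1$-Lipschitz, for every $x\in B_r(x_0)$ we have $d_\Omega(x)\in[d_\Omega(x_0)-r,d_\Omega(x_0)+r]\subset[d_\Omega(x_0)/2,\,3d_\Omega(x_0)/2]$, so
\begin{equation}
    \bigl(d_\Omega(x_0)/2\bigr)^a\le \dista{\Omega}(x)\le \bigl(3d_\Omega(x_0)/2\bigr)^a\qquad\text{on }B_r(x_0).
\end{equation}
The upper bound is immediate: $\Lc^n_{\dista{\Omega}}(E\cap B_r(x_0))\le (3d_\Omega(x_0)/2)^a \Lc^n(B_r(x_0))\le C\, d_\Omega(x_0)^a r^n$.

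For the lower bound I would adapt \emph{verbatim} Step~2 of the proof of \Cref{prop:density_a}. Choose $0<t<s<r$ such that $\Per_{\dista{\Omega}}(E;\de B_t(x_0))=\Per_{\dista{\Omega}}(E;\de B_s(x_0))=0$, use the competitor $E\setminus B_t(x_0)$ in $B_s(x_0)$ against the $(\vartheta,\beta)$-minimality of $E$, and let $s\searrow t$ to obtain
\begin{equation}
    \Per_{\dista{\Omega}}(E;B_t(x_0))\le (1+\vartheta t^\beta)\Hc^{n-1}_{\dista{\Omega}}\bigl(\de B_t(x_0)\cap E^{(1)}\bigr).
\end{equation}
Adding $\Hc^{n-1}_{\dista{\Omega}}(\de B_t(x_0)\cap E^{(1)})$ to both sides gives a bound on $\Per_{\dista{\Omega}}(E\cap B_t(x_0))$.

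Here is the point where I replace the boundary isoperimetric inequality of \cite{CabreRosOtonSerra2016} by the usual one: since $\dista{\Omega}$ is pinched between two constants on $B_r(x_0)$, I have $\Per_{\dista{\Omega}}(E\cap B_t(x_0))\ge (d_\Omega(x_0)/2)^a \Per(E\cap B_t(x_0))$, and likewise $\Hc^{n-1}_{\dista{\Omega}}(\de B_t\cap E^{(1)})\le (3d_\Omega(x_0)/2)^a\Hc^{n-1}(\de B_t\cap E^{(1)})$. Applying the classical Euclidean isoperimetric inequality to $E\cap B_t(x_0)$ and using $\vartheta t^\beta\le 1$, I obtain, for a.e.~$t\in(0,r)$, a differential inequality of the form
\begin{equation}
    c_{n,a}\, m(t)^{(n-1)/n}\le m'(t),\qquad m(t)\coloneqq \Lc^n(E\cap B_t(x_0)),
\end{equation}
where the coarea formula provides $m'(t)=\Hc^{n-1}(\de B_t(x_0)\cap E^{(1)})$. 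Integrating on $(0,r)$ and using $m(0^+)=0$ (since $x_0\in\de E$) yields $m(r)\ge c'_{n,a}r^n$, so
\begin{equation}
    \Lc^n_{\dista{\Omega}}(E\cap B_r(x_0))\ge (d_\Omega(x_0)/2)^a m(r)\ge c''_{n,a}\,d_\Omega(x_0)^a r^n.
\end{equation}

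I do not expect any serious obstacle: the only subtlety is that the ratio between the largest and smallest value of $\dista{\Omega}$ on $B_r(x_0)$ is bounded by a universal constant $3^a$, which is what allows absorbing the weight into a universal constant throughout the ODE argument without any smallness assumption relating $r$ to $d_\Omega(x_0)$ beyond $r<d_\Omega(x_0)/2$. Note in particular that the upper density estimate does not require the complementary-set trick used in \Cref{prop:density_a}: the straightforward bound $\Lc^n(E\cap B_r(x_0))\le \Lc^n(B_r(x_0))$ already suffices after multiplying by $(3d_\Omega(x_0)/2)^a$.
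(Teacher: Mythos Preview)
Your proof is correct and follows essentially the same route as the paper: bound the weight above and below by $d_\Omega(x_0)^a$ times universal constants on $B_r(x_0)$, reduce to the Euclidean isoperimetric inequality, and run the ODE argument of Step~2 of \Cref{prop:density_a}. The only difference is cosmetic: the paper packages the weight bounds into a weighted isoperimetric inequality with constant $d_\Omega(x_0)^{a/n}$ and runs the ODE for $\Lc^n_{\dista{\Omega}}(E\cap B_t)$, while you strip the weight first and run the ODE for $\Lc^n(E\cap B_t)$; and for the upper bound the paper goes through the complement-set trick (as in \Cref{prop:density_a}) whereas your trivial bound $\Lc^n_{\dista{\Omega}}(E\cap B_r)\le (3d_\Omega(x_0)/2)^a|B_r|$ is indeed sufficient and shorter.
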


\begin{proof}
    As we observed in \Cref{sec:propalmostminimizers}, $E$ is of finite perimeter away from $\de\Omega$. 
    Therefore the Euclidean isoperimetric inequality applies giving 
    \begin{equation}\label{eq:equclideanisoper}
        \frac{1}{C} \le \frac{\Per(E\cap B_r(x_0))}{\Lc^{n}(E\cap B_r(x_0))^{\frac{n-1}{n}}}.
    \end{equation}
    Using $(d_\Omega(x_0)-r)^a\le \dista{\Omega}\le(d_\Omega(x_0)+r)^a$ in $B_r(x_0)$, from \eqref{eq:equclideanisoper} we get
    \begin{equation}
        \frac{1}{C} \le \frac{\Per(E\cap B_r(x_0))}{\Lc^{n}(E\cap B_r(x_0))^{\frac{n-1}{n}}} 
        \le \frac{(d_\Omega(x_0)+r)^{a\frac{n-1}{n}}}{(d_\Omega(x_0)-r)^a}
        \frac{\Per_{\dista{\Omega}}(E\cap B_r(x_0))}{\Lc^{n}_{\dista{\Omega}}(E\cap B_r(x_0))^{\frac{n-1}{n}}}.
    \end{equation}
    Since $r< \frac{d_{\Omega}(x_0)}{2}$, it follows that 
    \begin{equation}
        \frac{(d_\Omega(x_0)+r)^{a\frac{n-1}{n}}}{(d_\Omega(x_0)-r)^a} 
        = d_{\Omega}(x_0)^{-\frac{a}{n}}
        \frac{\left( 1 + \frac{r}{d_\Omega(x_0)} \right)^{a\frac{n-1}{n}}}
        {\left(1-\frac{r}{d_{\Omega}(x_0)}\right)^{a}}
        \le  C d_\Omega(x_0)^{-\frac{a}{n}},
        \end{equation}
    that implies 
    \begin{equation}\label{eq:isoper1_away}
    	\frac{d_{\Omega}(x_0)^{\frac{a}{n}}}{C} \le 
        \frac{\Per_{\dista{\Omega}}(E\cap B_r(x_0))}{\Lc^{n}_{\dista{\Omega}}(E\cap B_r(x_0))^{\frac{n-1}{n}}}.
    \end{equation}
    Next, carefully following Step 2 of the proof of \Cref{prop:density_a}, from \eqref{eq:isoper1_away} we deduce
    \begin{equation}
    	 \frac{d_\Omega(x_0)^{a}}{C} r^n \le \Lc^{n}_{\dista{\Omega}}(E \cap B_r(x_0)).
    \end{equation}
    We then conclude as in \Cref{prop:density_a} after observing that
    \begin{equation}
        \Lc_{\dista{\Omega}}^{n}(B_r(x_0)) \le (d_{\Omega}(x_0)+r)^a C r^n \le C d_\Omega(x_0)^ar^{n},
    \end{equation}
    whence   
    \begin{equation}
    	\Lc^{n}_{\dista{\Omega}}(B_r(x_0) \cap E) \le \Lc^{n}_{\dista{\Omega}}(B_r(x_0)) - \Lc^{n}_{\dista{\Omega}}(E^c\cap B_r(x_0)) \le C d_\Omega(x_0)^a r^n
    \end{equation}
\end{proof}

From the density estimates, classical properties of almost minimizers of the perimeter follow also for almost minimizers of the weighted perimeter. In particular:

\begin{corollary}
	Let $\Omega$ and $E$ be as in the hypotheses of \Cref{prop:density_a} and \Cref{lemma:density_estimates_away}. Then, up to a set of measure $\Lc^{n}_{\dista{\Omega}}$-negligible, $E^{(1)}=\interior E$ and 
	\begin{equation}
		\de E = \left\{ x\in\RR^n:  0 < \frac{\Lc^{n}_{\dista{\Omega}}(B_r(x)\cap E)}{\Lc^{n}_{\dista{\Omega}}(B_r(x))} < 1 \mbox{ for all }r>0 \right\}.
	\end{equation}
\end{corollary}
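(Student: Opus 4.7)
The plan is to leverage the canonical representative of $E$ fixed in the remark preceding the corollary, together with the density estimates of \Cref{prop:density_a} and \Cref{lemma:density_estimates_away}. Working with that representative, the topological boundary $\de E$ equals by construction $\{x \in \RR^n : 0 < \Lc^n_{\dista{\Omega}}(B_r(x) \cap E) < \Lc^n_{\dista{\Omega}}(B_r(x)) \text{ for all } r > 0\}$, which is exactly the weighted-density formulation in the statement (divide through by the positive quantity $\Lc^n_{\dista{\Omega}}(B_r(x))$ whenever the ratio is meaningful). Hence the second identity is tautological for this representative, and for an arbitrary representative it holds up to a Lebesgue-null set—hence up to a $\Lc^n_{\dista{\Omega}}$-null set, since $\dista{\Omega} \in L^1_{\mathrm{loc}}(\RR^n)$.

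For $E^{(1)} = \interior E$, the inclusion $\interior E \subset E^{(1)}$ is immediate: any ball $B_r(x) \subset E$ yields $\Lc^n(E \cap B_r(x)) = |B_r|$. For the reverse inclusion, I would fix $x \in E^{(1)}$ with some $r > 0$ realizing full Lebesgue density, i.e. $\Lc^n(B_r(x) \setminus E) = 0$, and argue that $x \notin \de E$: if $x$ were in $\de E$, then the canonical representative would force $\Lc^n_{\dista{\Omega}}(B_r(x) \setminus E) > 0$, contradicting $\Lc^n(B_r(x) \setminus E) = 0$ together with $\dista{\Omega} \in L^1_{\mathrm{loc}}$. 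Hence $x$ lies in $\interior E \cup \interior(E^c)$, and the second case is ruled out since any ball $B_\rho(x) \subset E^c$ with $\rho \le r$ would be a subset of $B_r(x) \setminus E$ of positive Lebesgue measure. This forces $x \in \interior E$.

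The role of the density estimates of \Cref{prop:density_a} and \Cref{lemma:density_estimates_away} is indirect but essential: they guarantee that the canonical representative is well-behaved, in particular that the density boundary $\de E$ is topologically closed and coincides with the topological boundary of this representative. The key quantitative input is the pair of bounds $c \le \Lc^n_{\dista{\Omega}}(E \cap B_r(x))/\Lc^n_{\dista{\Omega}}(B_r(x)) \le 1 - c$ at every point of $\de E$, which also yields $\Lc^n_{\dista{\Omega}}(\de E) = 0$ via a Vitali covering combined with the local finiteness of $\Hc^{n-1}_{\dista{\Omega}} \llcorner \de E$ (see \Cref{lemma:structure} and \Cref{remark:deficit}). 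I do not anticipate any substantive difficulty here: the density estimates exclude the pathologies that could otherwise obstruct the identification, and what remains is elementary measure-theoretic bookkeeping, which is why the authors state the corollary without proof.
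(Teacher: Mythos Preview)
Your argument is correct. The paper gives no proof of this corollary, treating it as a classical consequence of the canonical-representative convention fixed in the earlier remark together with the density estimates; your write-up makes explicit exactly the bookkeeping the authors leave implicit, and your observation that both identities are essentially tautological once one works with that representative is accurate.
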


\begin{corollary}\label{cor:convergence_Hausdorff}
	Let $\{\Omega_j\}_{j\in\NN}$ a sequence of $\varkappa$-flat open sets, with $\varkappa\le1$, and $\{E_j\}_{j\in\NN}$ be a sequence of $(\vartheta_j,\beta)$-minimizers of $\Per_{\dista{\Omega_j}}$ in $B_1$ with
	\begin{equation}
		\vartheta = \sup_{j\in\NN}\vartheta_j < +\infty.
	\end{equation}
	If there exist a $\varkappa$-flat set $\Omega$ and a $(\vartheta,\beta)$-minimizer $E\subset \overline\Omega$ such that $\de\Omega_j \to \de\Omega$ with respect to the Hausdorff distance and $E_j \to E$ in $L^1_{loc}(B_1)$, then
		$\de E_j \longrightarrow \de E$ in the Hausdorff distance.
\end{corollary}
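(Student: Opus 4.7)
We want to show that, locally in $B_1$, the closed sets $\de E_j$ converge to $\de E$ in Hausdorff distance. This decomposes into the two standard inclusions:
\textbf{(I)} every $x\in\de E\cap B_1$ is a limit of a sequence $x_j\in\de E_j$, and
\textbf{(II)} every accumulation point of a sequence $x_j\in\de E_j\cap K$ ($K\Subset B_1$) lies in $\de E$.
Both are consequences of the volume density estimates (\Cref{prop:density_a} and \Cref{lemma:density_estimates_away}), which hold with universal constants along the whole sequence since $\sup_j\vartheta_j<\infty$ and $\varkappa\le 1$. The analytic backbone is: since $\de\Omega_j\to\de\Omega$ in Hausdorff distance and all the $\de\Omega_j$ are uniformly $\varkappa$-flat, the distance functions $d_{\Omega_j}$ (and hence $\dista{\Omega_j}=d_{\Omega_j}^a$) converge uniformly on compact subsets of $B_1$. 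Combined with $L^1_{loc}$ convergence $\chi_{E_j}\to\chi_E$, this yields
\begin{equation}
\Lc^{n}_{\dista{\Omega_j}}(E_j\cap A)\;\longrightarrow\;\Lc^{n}_{\dista{\Omega}}(E\cap A)
\end{equation}
for every bounded Borel $A\Subset B_1$ with $\Lc^n(\de A)=0$, and similarly for the complements.

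\textbf{Step (I).} Fix $x\in\de E\cap B_1$ and $\rho>0$ small. Applying to $E$ (respectively $E^c$, which is also a $(\vartheta,\beta)$-minimizer) the appropriate density estimate—\Cref{prop:density_a} if $x\in\de\Omega$, \Cref{lemma:density_estimates_away} if $x\in\Omega$—we obtain $\Lc^{n}_{\dista{\Omega}}(E\cap B_\rho(x))>0$ and $\Lc^{n}_{\dista{\Omega}}(E^c\cap B_\rho(x))>0$. The convergence above transfers these strict positivities to $E_j$ and $E_j^c$ for $j$ large. By the density characterization of $\de E_j$ recalled in the corollary that precedes \Cref{cor:convergence_Hausdorff}, the complement $\RR^n\setminus\de E_j$ splits as the disjoint union of the two open sets $\interior E_j$ and $\interior E_j^c$; since $B_\rho(x)$ is connected and meets both of them (in positive $\Lc^{n}_{\dista{\Omega_j}}$-measure), it must meet $\de E_j$. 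Hence $\de E_j\cap B_\rho(x)\ne\emptyset$ for $j$ large, proving (I).

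\textbf{Step (II).} Given $x_j\in\de E_j\cap K$ with $x_j\to x\in B_1$, we show $x\in\de E$. If $x\in\Omega$, then $d_{\Omega_j}(x_j)\ge d_\Omega(x)/2>0$ for $j$ large, and \Cref{lemma:density_estimates_away} applied at $x_j$ (and to $E_j^c$) gives, with a fixed $r\lesssim d_\Omega(x)$,
\begin{equation}
\Lc^{n}_{\dista{\Omega_j}}(E_j\cap B_r(x_j))\;\ge\;c\, d_\Omega(x)^a r^n,\qquad \Lc^{n}_{\dista{\Omega_j}}(E_j^c\cap B_r(x_j))\;\ge\;c\, d_\Omega(x)^a r^n,
\end{equation}
uniformly in $j$; using $B_r(x_j)\subset B_{2r}(x)$ and passing to the limit yields the density conditions for $\de E$ at $x$. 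If instead $x\in\de\Omega$, then $d_{\Omega_j}(x_j)\to 0$, and for each fixed $\rho>0$ we would argue, depending on whether $d_{\Omega_j}(x_j)$ remains comparable to $\rho$ or not, either by applying \Cref{lemma:density_estimates_away} at $x_j$ itself, or by selecting a point $y_j\in\de E_j\cap\de\Omega_j\cap B_\rho(x)$ (which must exist because $E_j\subset\overline{\Omega_j}$ and the density estimate for $E_j^c$ prevents $E_j$ from entirely filling the half-ball $B_\rho(x)\cap\Omega_j$) and invoking \Cref{prop:density_a} at $y_j$. In all cases, this produces a uniform lower bound of order $\rho^{n+a}$ on $\Lc^{n}_{\dista{\Omega_j}}(E_j\cap B_{2\rho}(x))$ and its complementary analogue, which passes to the limit and forces $x\in\de E$.

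\textbf{Main obstacle.} The principal technical point is the case $x\in\de\Omega$ in Step (II): one must guarantee that a nearby point $y_j\in\de E_j\cap\de\Omega_j$ is available to apply the boundary density estimate \Cref{prop:density_a} with a uniform constant, since the interior estimate \Cref{lemma:density_estimates_away} degenerates as $d_{\Omega_j}(x_j)\to 0$. This requires a careful density argument exploiting that $E_j\subset\overline{\Omega_j}$, that $\Omega_j^c\cap B_\rho(x)$ has uniformly positive Lebesgue measure when $x\in\de\Omega$, and that both $E_j$ and $E_j^c$ satisfy density estimates at their common boundary.
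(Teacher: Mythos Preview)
Your two-inclusion decomposition is a legitimate strategy, and Step~(I) is essentially fine. The gap is exactly the one you flag in Step~(II) when $x\in\de\Omega$: there is no reason for a point $y_j\in\de E_j\cap\de\Omega_j\cap B_\rho(x)$ to exist (imagine $\de E_j$ a graph sitting a tiny height above $\de\Omega_j$ near $x$), and the justification you sketch for its existence does not go through.

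The fix is simpler than you expect, and it is precisely what the paper exploits. Your boundary/interior case split is unnecessary: as noted in the Remark immediately following \Cref{prop:density_a}, the lower density bound \eqref{eq:lowerdensityestimate}
\[
\Lc^n_{\dista{\Omega_j}}(E_j\cap B_\rho(x_j))\;\ge\;c\,\rho^{n+a}
\]
is valid at \emph{every} point $x_j\in\de E_j$, whether or not $x_j\in\de\Omega_j$; the hypothesis $x_0\in\de\Omega$ in \Cref{prop:density_a} is only used for the \emph{upper} bound. Since $\dista{\Omega_j}\le 1$ in $B_1$, this yields a uniform unweighted lower bound $\Lc^n(E_j\cap B_\rho(x_j))\ge c\rho^{n+a}$, and likewise for $E_j^c$. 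The paper then finishes by contradiction in three lines: if some $x_{j(\ell)}\in\de E_{j(\ell)}$ stays at distance $\ge\delta$ from $\de E$, pick $\rho<\delta$ with $\vartheta_j\rho^\beta\le 1$; then $B_\rho(x_{j(\ell)})$ lies entirely in $E$ or in $E^c$, so one of the two lower bounds forces $\Lc^n(E\Delta E_{j(\ell)})\ge c\rho^{n+a}$, contradicting $L^1_{loc}$ convergence. The reverse inclusion is symmetric. No convergence of weighted measures and no dichotomy are needed.
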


\begin{proof}
    If not, then there exist $\delta>0$ and a subsequence $\{x_{j(\ell)}\}_{\ell\in\NN}\subset \overline{B_r}$ (for some $r<1$) such that $x_{j(\ell)}\in\partial E_{j(\ell)}$ and $\dist(x_{j(\ell)},\partial E) \geq \delta$ (or $\dist(x_{j(\ell)},\partial E) \geq \delta$ and $x_{j(\ell)}\in\partial E$).
    Without loss of generality we argue for the first case. 
    By taking a fixed $\rho < \delta$ such that $\vartheta_j \rho^\beta \le 1$ for all $j\ge 1$, then by \eqref{eq:lowerdensityestimate}, for all $\ell\in\NN$ it holds
    \begin{equation}
        \Lc^{n}(E\Delta E_j) \ge \Lc^{n}(E\cap B_\rho(x_{j(\ell)})) \ge \Lc^{n}_{\dista{\Omega_j}}(E\cap B_\rho(x_{j(\ell)})) \ge \frac{1}{C}\rho^{n+a},     \end{equation}
    where the second inequality follows since $\dista{\Omega_j}\le 1$ in $B_1$.
    This is a contradiction, because we assumed $E_j\xrightarrow[]{} E$ in $L^{1}_{loc}(B_1)$.
    
\end{proof}

\section{Variational and viscosity properties of (almost)-minimizers}\label{sec:var_visc_properties}

\subsection{First variation and monotonicity formula for minimizers}

In this subsection, we explore some properties that minimizers (rather than \textit{almost} minimizers) of the weighted perimeter enjoy.
Notice that minimizer arise as blow-up limits of almost minimizers: as we are mostly interested in the properties of such blow-up limits, the results in this subsection are about minimizers of $\Per_a$ in the flat domain $\RR^n_+$.

Before stating the next result, we introduce the following notation: if $E$ is a set of locally finite perimeter and $X\in C^1_c(\RR^n;\RR^n)$, then the quantity
\begin{equation}
    \dive_E X(x) = \dive X - \langle\nabla X(x)\nu_E(x),\nu_E(x)\rangle
\end{equation}
is well defined for $\mu_E$-almost every $x$.

\begin{proposition}[First variation]\label{prop:first_var}
    Let $E\subset\RR^n_+$ be a minimizer of $\Per_{a}$ in $A\subset\RR^n$.
    For every $X\in C^1_c(A\cap\RR^{n}_+;\RR^n)$ such that $X\cdot e_n=0$ on $\{x_n=0\}$, it holds
    \begin{equation}\label{eq:stationarity}
        \int x_n^a\left(\dive_E X + a\frac{X\cdot e_n}{x_n}\right)\dif\mu_E = 0.
    \end{equation}
\end{proposition}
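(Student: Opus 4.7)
The proof is a first-variation computation. Consider the one-parameter family of diffeomorphisms $\Phi_t(x) = x + tX(x)$. First I would check that, because $X\cdot e_n = 0$ on $\{x_n=0\}$ and $X$ is $C^1$, Hadamard's lemma gives $X\cdot e_n(x) = x_n\,f(x)$ for some continuous $f$, so $|X_n(x)|\le C x_n$. This bound implies $\Phi_t(\overline{\RR^n_+})\subset\overline{\RR^n_+}$ for $|t|$ small, and since $X$ has compact support in $A\cap\overline{\RR^n_+}$, $\Phi_t$ is a diffeomorphism close to the identity. Hence $E_t:=\Phi_t(E)\subset\overline{\RR^n_+}$ satisfies $E\Delta E_t\Subset A$ and is an admissible competitor; by minimality, $\Per_a(E;A)\le\Per_a(E_t;A)$ for all small $|t|$.

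Next I would apply the area formula along $\de^*E$ to express
\begin{equation}
    \Per_a(E_t;A) = \int_{\de^*E\cap A}\bigl(y_n + t X_n(y)\bigr)^a\,J^E\Phi_t(y)\,\dif\Hc^{n-1}(y),
\end{equation}
where $J^E\Phi_t$ is the tangential Jacobian. The two standard expansions
\begin{equation}
    \frac{d}{dt}\bigg|_{t=0}\!J^E\Phi_t(y)=\dive_E X(y),\qquad \frac{d}{dt}\bigg|_{t=0}\!\bigl(y_n+tX_n(y)\bigr)^a = a\,y_n^{a-1}X_n(y),
\end{equation}
formally yield the integrand $y_n^a\bigl(\dive_E X + a\,\tfrac{X\cdot e_n}{y_n}\bigr)$. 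Since $\Phi_t(\overline{\RR^n_+})\subset\overline{\RR^n_+}$ for small $|t|$ of either sign and $-X$ satisfies the same boundary condition, taking $t\to 0^{\pm}$ gives both inequalities and hence the desired vanishing.

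The only delicate point is justifying differentiation under the integral at $t=0$, since $y_n^{a-1}$ is singular on $\de^*E\cap\{y_n=0\}$ when $a<1$. I would handle this by dominated convergence: using $|X_n(y)|\le Cy_n$, one has $y_n + \tau X_n(y)\ge\tfrac{1}{2}y_n$ for $|\tau|\le t_0$ small, so by the mean value theorem
\begin{equation}
    \left|\frac{(y_n+tX_n(y))^a-y_n^a}{t}\right|\le a\,2^{|a-1|}\,(1+Ct_0)^{|a-1|}\,C\,y_n^{a},
\end{equation}
uniformly in $|t|\le t_0$ and $y\in\de^*E$, which is integrable against $\Hc^{n-1}\llcorner\de^*E$ because $E$ has locally finite $\dista{\RR^n_+}$-perimeter. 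The Jacobian factor $J^E\Phi_t$ is uniformly bounded, so the full integrand admits a dominating function, and the derivative at $t=0$ equals the integral of the pointwise derivative. This identifies the first variation with the left-hand side of \eqref{eq:stationarity}, and minimality forces it to vanish. The main subtlety is thus not the geometry but the uniform integrable control of the weighted integrand near $\{x_n=0\}$, which relies crucially on the admissibility condition $X\cdot e_n\equiv 0$ on $\{x_n=0\}$.
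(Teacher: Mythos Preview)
Your proposal is correct and follows essentially the same approach as the paper: both use the flow $\Phi_t(x)=x+tX(x)$, compute $\Per_a(\Phi_t(E);A)$ via the area formula, expand the weight $(x_n+tX_n)^a$ and the tangential Jacobian to first order in $t$, and conclude by minimality. The paper's version is terser (it cites standard computations and records the expansion $((f_t)_n)^a=x_n^a\bigl(1+at\,\tfrac{X\cdot e_n}{x_n}\bigr)$ without further comment), whereas you add the Hadamard-type bound $|X_n|\le Cx_n$ and a dominated-convergence argument to justify differentiating under the integral near $\{x_n=0\}$; this extra care is genuinely useful when $a<1$, but the underlying strategy is identical.
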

\begin{proof}
    Let $X\in C^1_c(A\cap\RR^{n}_+;\RR^n)$ be such that $X\cdot e_n=0$ on $\{x_n=0\}$ and, for $t\in\RR$, let $f_t(x) = x+tX(x)$.
    For $t$ small enough, $f_t(E)\Delta E\Subset A$ and standard computations (see, for instance, \cite[Chapter 17]{maggi12}) give
    \begin{align}
        \Per_{a}(f_t(E),A) &= \int_{f_t^{-1}(A)}((f_t)_n)^a Jf_t\,|(\nabla(f_t^{-1})\circ f)^*\nu_E|\dif\mu_E\\        
        &=\Per_a(E,A) + t\left(\int x_n^a\dive_E X\dif\mu_E + \int a x_n^{a-1} e_n\cdot X\dif\mu_E\right) + O(t^2)
    \end{align}
    where we have used the fact that, due to $X\cdot e_n=0$ on $\{x_n=0\}$, it holds $((f_t(x))_d)^a=x_n^a\left(1+at\frac{X(x)\cdot e_n}{x_n}\right)$.
    The minimality of $E$ yields that the first-order term in the right-hand side above must vanish, hence the desired result.
    
\end{proof}
We call a set $E$ that satisfies \eqref{eq:stationarity} for every $X\in C^1_c(A;\RR^n)$ such that $X\cdot e_n=0$ on $\{x_n=0\}$ a \textit{stationary set for $\Per_{a}$ in $A$}.

We introduce the $a$-density of a set $E$ at $x$ and at scale $r$:
\begin{equation}
    \Theta_a(E;x,r) \coloneqq\frac{\Per_a(E;B_r(x))}{r^{n-1+a}} = \frac{1}{r^{n-1+a}}\int_{\de^*E\cap B_r(x)}(x_n^+)^a\dif\Hc^{n-1}.
\end{equation}

\begin{proposition}[Monotonicity formula at the boundary]\label{prop:monotonicity}
    Let $E\subset\RR^n_+$ be a stationary set for $\Per_{a}$ in $B_1$. For all $x_0\in\{x_n=0\}$ and every $0<r\le s<1-|x|$, it holds
    \begin{equation}
    \Theta_a(E;x_0,s) - \Theta_a(E;x_0,r) =
    C\int_{B_s(x_0)\setminus B_r(x_0)} \frac{x_n^a}{|x-x_0|^{n-1+a}} \left( 1-\frac{|(x-x_0)^T|^2}{|x-x_0|^2} \right) \dif \mu_E(x)
    \end{equation}
    where $x^T = x-(x\cdot \nu_E(x))\nu_E(x)$ is the projection of $x$ onto the approximate tangent space to $\de^*E$ at $x$ and $C$ is a universal constant.
\end{proposition}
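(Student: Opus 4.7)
The plan is to plug a radial cutoff vector field centered at $x_0$ into the first-variation identity \eqref{eq:stationarity} of \Cref{prop:first_var}, then read the resulting $1$-variable identity as an ODE for the rescaled density $\rho \mapsto \Theta_a(E;x_0,\rho)$ and integrate from $r$ to $s$, converting back to a bulk integral by coarea.

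Concretely, I would test \eqref{eq:stationarity} with
\begin{equation}
X(x) = \eta(|x-x_0|)\,(x-x_0),\qquad \eta\in C^\infty_c\bigl([0,1-|x_0|)\bigr).
\end{equation}
Since $x_0 \in \{x_n=0\}$, one has $X\cdot e_n = x_n\,\eta(|x-x_0|)$, so $X\cdot e_n$ vanishes on $\{x_n=0\}$ and $\frac{X\cdot e_n}{x_n} = \eta(|x-x_0|)$ is bounded, making $X$ admissible in \eqref{eq:stationarity}. A direct computation (writing $r := |x-x_0|$ and using $|(x-x_0)^T|^2 = r^2 - ((x-x_0)\cdot\nu_E)^2$) gives
\begin{equation}
\dive_E X + a\,\frac{X\cdot e_n}{x_n} = (n-1+a)\,\eta(r) + r\,\eta'(r)\,\frac{|(x-x_0)^T|^2}{r^2},
\end{equation}
so \eqref{eq:stationarity} becomes
\begin{equation}\label{eq:plan-A}
\int x_n^a\Bigl[(n-1+a)\eta(r) + r\eta'(r)\tfrac{|(x-x_0)^T|^2}{r^2}\Bigr]\,\dif|\mu_E| = 0.
\end{equation}

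Next I would specialize \eqref{eq:plan-A} to $\eta=\eta_\rho$ approximating $\chi_{[0,\rho]}$ and pass to the limit. Applying the coarea formula for the $1$-Lipschitz function $x\mapsto|x-x_0|$ on $\partial^*E$ (whose tangential gradient equals $|(x-x_0)^T|/|x-x_0|$), and restricting to $\rho\in(0,1-|x_0|)$ outside the countable set where $|\mu_E|(\partial B_\rho(x_0))\neq 0$, yields simultaneously
\begin{equation}
(n-1+a)\,\Per_a(E;B_\rho(x_0)) = \int_{\partial^*E\cap\partial B_\rho(x_0)} x_n^a\,|(x-x_0)^T|\,\dif\Hc^{n-2},
\end{equation}
\begin{equation}
\tfrac{\dif}{\dif\rho}\,\Per_a(E;B_\rho(x_0)) = \int_{\partial^*E\cap\partial B_\rho(x_0)} x_n^a\,\tfrac{\rho}{|(x-x_0)^T|}\,\dif\Hc^{n-2}.
\end{equation}
Subtracting the first identity from $\rho$ times the second, dividing by $\rho^{n+a}$, and using $\rho^2 - |(x-x_0)^T|^2 = ((x-x_0)\cdot\nu_E)^2$ on $\partial B_\rho(x_0)$, I obtain
\begin{equation}
\tfrac{\dif}{\dif\rho}\,\Theta_a(E;x_0,\rho) = \rho^{-(n+a)}\!\!\int_{\partial^*E\cap\partial B_\rho(x_0)}\!\! x_n^a\,\frac{((x-x_0)\cdot\nu_E)^2}{|(x-x_0)^T|}\,\dif\Hc^{n-2}.
\end{equation}
Integrating in $\rho\in[r,s]$ and reassembling the right-hand side into a bulk integral by coarea one last time gives exactly the stated formula, with integrand $\frac{x_n^a}{|x-x_0|^{n-1+a}}\bigl(1-\frac{|(x-x_0)^T|^2}{|x-x_0|^2}\bigr)$ (and $C=1$).

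The main technical point will be the justification of the limit $\eta_\rho\to\chi_{[0,\rho]}$ and of the two coarea applications. Once one restricts to the cocountable set of radii with $|\mu_E|(\partial B_\rho(x_0))=0$ and invokes the $(n-1)$-rectifiability of $\partial^*E$ provided by \Cref{lemma:structure}, this becomes a routine exercise (cf.~the Euclidean monotonicity in \cite[Chapter 17]{maggi12}). The weight $x_n^a$ introduces no genuinely new difficulty: the hypothesis $x_0\in\{x_n=0\}$ is exactly what is needed to make the singular factor harmless, and the extra term $a\,\eta(r)$ from $\frac{X\cdot e_n}{x_n}$ combines with the Euclidean $(n-1)\,\eta(r)$ to upgrade the scaling exponent from $n-1$ to $n-1+a$ — the reason the effective dimension of the problem is $n+a$.
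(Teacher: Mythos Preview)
Your approach is correct and is the classical Allard--Simon derivation of monotonicity: test with $X=\eta(|x-x_0|)(x-x_0)$, read off an ODE for $\rho\mapsto\Theta_a(E;x_0,\rho)$, and integrate via coarea on the rectifiable set $\de^*E$. The paper takes a different but equivalent route: it tests \eqref{eq:stationarity} directly with $X=\nabla g_{r,s}$, where $g_{r,s}$ is built from the truncated fundamental solution in the effective dimension $n+a$, chosen so that $\dive_E\nabla g_{r,s}$ already contains the difference $\tfrac{\chi_{B_s}}{s^{n-1+a}}-\tfrac{\chi_{B_r}}{r^{n-1+a}}$; the integrated identity then drops out in one line, without differentiating in $\rho$ or invoking coarea. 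Your route is more transparent about the underlying ODE structure and sidesteps the mild bookkeeping needed when $n+a=3$ (where the paper's $h$ degenerates to a logarithm); the paper's route is slicker once one knows the right test function and avoids the slice-level justifications you correctly flag at the end. Both give $C=1$.
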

\begin{proof}
We assume without loss of generality that $x_0=0$
We introduce the truncated fundamental solution
\begin{equation}
	h(x) = \frac{1}{(n-1)(n+a-3)}
	\begin{cases}
		\frac{n-1+a}{2}-\frac{n+a-3}{2}|x|^2\qquad&\mbox{if $|x|<1$}\\
		\frac{1}{|x|^{n+a-3}}&\mbox{if $|x|\ge1$}
	\end{cases}
\end{equation}
and, for $0<r<s$ fixed:
\begin{equation}
	g_{r,s}(x) = r^{3-n-a}h\left(\frac{x}{r}\right)-s^{3-n-a}h\left(\frac{x}{s}\right).
\end{equation}
Notice that
\begin{multline}\label{eq:divSg}
	\dive_E\nabla g_{r,s}=\frac{\chi_{B_s}}{s^{n-1+a}}-\frac{\chi_{B_r}}{r^{n-1+a}}\\
    +\frac{1}{(n-1)|x|^{n-1+a}}\left((n-1+a)\frac{|x^T|^2}{|x|^2}-(n-1)\right)\chi_{B_s\setminus B_r}.
\end{multline}
Since the vector field $\nabla g$ is tangent to $\{x_n=0\}$, by \Cref{prop:first_var} we have
\begin{align}\label{eq:first_var}
	0 &= \int x_n^a\left(\dive_E(\nabla g_{r,s})+a\frac{e_n\cdot\nabla g_{r,s}}{x_n}\right)\dif \mu_E.
\end{align}
Now, for the first summand in the integrand of \eqref{eq:first_var}, we have
\begin{multline}\label{eq:first}
	\int x_n^a\dive_E(\nabla g_{r,s})\dif \mu_E
    = \frac{1}{s^{n-1+a}}\int_{B_s}x_n^a\dif \mu_E
    -\frac{1}{r^{n-1+a}}\int_{B_r}x_n^a\dif \mu_E \\
  + \frac{1}{n-1}\int_{B_s\setminus B_r}\frac{x_n^a}{|x|^{n-1+a}}\left((n-1+a)\frac{|x^T|^2}{|x|^2}-n+1\right)\dif\mu_E.
\end{multline}
On the other hand, for the second summand in \eqref{eq:first_var}, we remark that
\begin{equation}
	\nabla g_{r,s}(x) = \frac{1}{n-1}\left(-\frac{\chi_{B_r}}{r^{n-1+a}}+\frac{\chi_{B_s}}{s^{n-1+a}} - \frac{\chi_{B_s\setminus B_r}}{|x|^{n-1+a}}\right)x
\end{equation}
and that $x\cdot \nabla [x_n^a] = a x_n^a$, thus
\begin{equation}\label{eq:second}
	a x_n^{a-1}e_n\cdot\nabla g_{r,s}(x) = \frac{a}{n-1}\left(-\frac{\chi_{B_r}}{r^{n-1+a}}+\frac{\chi_{B_s}}{s^{n-1+a}} - \frac{\chi_{B_s\setminus B_r}}{|x|^{n-1+a}}\right) x_n^a.
\end{equation}
Using \eqref{eq:first} and \eqref{eq:second} in \eqref{eq:first_var}, we obtain
\begin{multline}
	\left(1+\frac{a}{n-1}\right)\left(\frac{1}{s^{n-1+a}}\int_{B_s}x_n^a\dif \mu_E - \frac{1}{r^{n-1+a}}\int_{B_r}x_n^a\dif \mu_E \right) \\
    = \frac{n-1+a}{n-1}\int_{B_s\setminus B_r} \frac{x_n^a}{|x|^{n-1+a}}\left(1-\frac{|x^T|^2}{|x|^2}\right)\dif\mu_E
\end{multline}
which is the desired conclusion.

\end{proof}

As corollaries, we get the two following standard results:

\begin{corollary}\label{cor:densityusc}
    If $E\subset \RR^n_+$ is a stationary point for $\Per_{a}$, then for all $x\in\{x_n =0\}$ there exists the density 
        \begin{equation}
            \Theta_a(E;x) := \lim_{r\searrow0} \Theta_a(E;x,r),
        \end{equation}
    and it is upper-semicontinuous on $\{x_n=0\}$.
\end{corollary}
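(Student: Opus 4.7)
The plan is to read both claims off the monotonicity formula in \Cref{prop:monotonicity}. The key observation is that the integrand on the right-hand side of the monotonicity identity,
\begin{equation}
    \frac{x_n^a}{|x-x_0|^{n-1+a}}\left(1-\frac{|(x-x_0)^T|^2}{|x-x_0|^2}\right),
\end{equation}
is non-negative, because $x_n\ge 0$ on $\supp\mu_E\subset \overline{\RR^n_+}$ and $|(x-x_0)^T|\le |x-x_0|$ (the tangential projection is at most the full vector). Consequently the map $r\mapsto \Theta_a(E;x_0,r)$ is monotone non-decreasing for every $x_0\in\{x_n=0\}$ and $r\in(0,1-|x_0|)$. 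This immediately gives the existence of the limit
\begin{equation}
    \Theta_a(E;x_0)=\lim_{r\searrow 0}\Theta_a(E;x_0,r)=\inf_{r>0}\Theta_a(E;x_0,r),
\end{equation}
which is the first assertion.

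For the upper-semicontinuity, I would fix $x_0\in\{x_n=0\}$ and a sequence $\{x_k\}_{k\in\NN}\subset\{x_n=0\}$ with $x_k\to x_0$. For any fixed $r>0$ with $r<1-|x_0|$, by monotonicity
\begin{equation}
    \Theta_a(E;x_k)\le \Theta_a(E;x_k,r)=\frac{1}{r^{n-1+a}}\int_{\de^*E\cap B_r(x_k)}x_n^a\dif\Hc^{n-1}
\end{equation}
for all $k$ sufficiently large. Since $\Hc^{n-1}_a\llcorner\de^*E$ is a Radon measure, the set of radii $r$ for which this measure charges $\de B_r(x_0)$ is at most countable; for any $r$ outside this countable set one has $\Per_a(E;B_r(x_k))\to\Per_a(E;B_r(x_0))$ as $k\to\infty$ by the Portmanteau-type property of Radon measures (the sets $B_r(x_k)$ converge to $B_r(x_0)$ both in the sense of containment up to a small error and with boundary of zero measure). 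Therefore
\begin{equation}
    \limsup_{k\to\infty}\Theta_a(E;x_k)\le \Theta_a(E;x_0,r)
\end{equation}
for all such $r$. Letting $r\searrow 0$ along admissible radii yields $\limsup_{k\to\infty}\Theta_a(E;x_k)\le \Theta_a(E;x_0)$, which is the desired upper-semicontinuity.

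\textbf{Main obstacle.} Both steps are essentially automatic once the monotonicity formula is in hand; the only delicate point is the convergence $\Per_a(E;B_r(x_k))\to\Per_a(E;B_r(x_0))$ along good radii, which is a standard Radon-measure argument but must be invoked carefully since the weight $x_n^a$ does not change the set of ``bad'' radii where boundary mass is concentrated. No additional ingredient beyond \Cref{prop:monotonicity} is needed.
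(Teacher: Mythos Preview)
Your proposal is correct and follows exactly the standard monotonicity-based argument that the paper defers to by citing \cite[Proposition 2.2]{DeLellis_2018}; the paper gives no independent proof, so what you have written is precisely the argument being invoked.
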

\begin{proof}
    See, for instance, \cite[Proposition 2.2]{DeLellis_2018}.
\end{proof}

\begin{corollary}\label{cor:blowupsarecones}
Let $E$ be a minimizer for $\Per_{a}$ (resp. a stationary point) and $x_0\in\de E \cap \{x_n=0\}$.
Then there exists a blow-up sequence $E_{x_0,r_i}\coloneqq \frac{E-x_0}{r_i}$ and a minimizing (resp. stationary) cone $K$ such that
\begin{equation}
    E_{x_0,r_i} \xrightarrow[]{L^1_{loc}} K,\quad \mbox{and}\quad \Hc^{n-1}_{a}\llcorner\de^*E_{x_0,r_i} \weakstar \Hc^{n-1}_a\llcorner K.
\end{equation}
\end{corollary}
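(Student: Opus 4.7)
The plan is to produce a blow-up limit by compactness, check that it inherits minimality (resp.\ stationarity), and then upgrade it to a cone using the boundary monotonicity formula \Cref{prop:monotonicity}.

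\textbf{Step 1 (compactness of the blow-up sequence).} Set $E_{x_0,r}:=(E-x_0)/r$ and note that, since $x_0\in\{x_n=0\}$, \Cref{prop:monotonicity} applies and gives that $r\mapsto\Theta_a(E;x_0,r)$ is non-decreasing, hence bounded for $r\le r_0$. Rescaling,
\begin{equation}
\Per_a(E_{x_0,r};B_R) = R^{n-1+a}\,\Theta_a(E;x_0,rR)\le C(R,r_0)
\end{equation}
uniformly in $r$. Applying \Cref{prop:compactness_frombound} (or \Cref{prop:compactnessqmin} in the minimizer case) yields a subsequence $r_i\to 0^+$ and a set $K\subset\RR^n_+$ of locally finite $\Per_a$-perimeter such that $E_{x_0,r_i}\to K$ in $L^1_{loc}$, together with the vector-valued weak-$\ast$ convergence in \eqref{eq:weakstarconv}. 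Non-triviality of $K$ follows from the density estimates at $x_0\in\de E$.

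\textbf{Step 2 (minimality or stationarity of $K$).} In the minimizer case, \Cref{prop:compactnessqmin} directly gives that $K$ is a local minimizer of $\Per_a$ (so in particular stationary). In the stationary case, one passes the first variation identity \eqref{eq:stationarity} to the limit: the divergence term and the boundary-type term $aX\cdot e_n/x_n$ are handled by the weak-$\ast$ convergence of Step~1, while the quadratic-in-$\nu_E$ term $\langle\nabla X\nu_E,\nu_E\rangle$ requires upgrading to varifold convergence; this is obtained from the weak-$\ast$ convergence of the perimeter measures together with rectifiability, which rules out mass cancellation.

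\textbf{Step 3 (cone property via monotonicity).} By \Cref{cor:densityusc} the density $\theta:=\Theta_a(E;x_0)=\lim_{r\to 0^+}\Theta_a(E;x_0,r)$ exists. For every $s>0$ with $\mu_K(\de B_s)=0$ (hence for a.e.\ $s$), the weak-$\ast$ convergence yields
\begin{equation}
\Theta_a(K;0,s) = \lim_{i\to\infty}\Theta_a(E_{x_0,r_i};0,s) = \lim_{i\to\infty}\Theta_a(E;x_0,r_i s) = \theta.
\end{equation}
Since $s\mapsto\Theta_a(K;0,s)$ is monotone non-decreasing by \Cref{prop:monotonicity} applied to $K$, equality $\Theta_a(K;0,s)=\theta$ holds for \emph{every} $s>0$. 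Plugging this constancy back into the monotonicity formula for $K$ gives
\begin{equation}
0 = \int_{B_s\setminus B_r}\frac{x_n^a}{|x|^{n-1+a}}\left(1-\frac{|x^T|^2}{|x|^2}\right)\dif\mu_K \quad\text{for all }0<r<s,
\end{equation}
so $|x^T|=|x|$ at $\mu_K$-a.e.\ $x\in\de^*K\cap\{x_n>0\}$. The position vector is thus tangent to $\de^*K$ in the open half-space, which implies that $K\cap\{x_n>0\}$ is invariant under dilations from the origin. Since $K\subset\RR^n_+$ and $\{x_n=0\}$ is $\Lc^n$-negligible, $K$ is a cone with vertex at $0$.

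\textbf{Main obstacle.} The delicate point is preserving the stationarity under blow-up in Step~2: the first variation is quadratic in $\nu_E$ and $L^1_{loc}$-convergence of sets does not, a priori, control such quantities, so one must argue via varifold convergence. In the minimizer case this is free from \Cref{prop:compactnessqmin}. The cone implication in Step~3, although standard, also requires checking that the radial tangency on the open half-space determines $K$ globally, which uses $\Lc^n(\{x_n=0\})=0$.
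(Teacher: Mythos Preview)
Your proof is correct and follows essentially the same route as the paper: compactness of blow-ups, constancy of the density ratio via \Cref{prop:monotonicity}, and then the rigidity part of the monotonicity formula to conclude $x\cdot\nu_K=0$ $\mu_K$-a.e. The paper is terser (it cites \Cref{prop:compactnessqmin} directly and jumps to $\Theta_a(K;0,s)\equiv\Theta_a(E;x_0)$ without the ``a.e.\ $s$ then all $s$'' interpolation), and in the final step it handles $\de^*K\cap\{x_n=0\}$ by rectifiability rather than your $\Lc^n$-negligibility argument; your treatment of the stationary case (flagging the need for varifold-type convergence to pass the quadratic term) is actually more careful than the paper's, which does not spell this out.
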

\begin{proof}
    The existence of a limit $K$ is given by \Cref{prop:compactnessqmin}.
    By \Cref{prop:monotonicity}, it holds
    \begin{equation}
        \Theta_a(K;0,s)\equiv \Theta_a(E;x_0)
    \end{equation}
    for every $s>0$.
    By \Cref{prop:monotonicity} again, we have
    \begin{equation}
        \int_{B_s\setminus B_r} \frac{x_n^a}{|x|^{n-1+a}} \left( 1-\frac{|x^T|^2}{|x|^2} \right) \dif \mu_K(x)=0
    \end{equation}
    for every $0<r<s$.
    Therefore, for $\Hc^{n-1}$-almost every $x\in \de^*K\cap\{x_n>0\}$, it holds $x\cdot \nu_K(x)=0$.
    Since $\de^*K$ is rectifiable, we also have that $x\cdot\nu_K(x)=0$ for $\Hc^{n-1}$-almost every $x\in\de^*K\cap\{x_n=0\}$.
    The two fact above yield that $K$ is, indeed, a cone (see, for instance, \cite[Proposition 28.8]{maggi12}).
    
\end{proof}

\subsection{Viscosity properties}\label{subsec:viscosity_prop}
As explained in the introduction, the proof of the improvement of flatness exploits the fact that an almost minimizer is, in a very weak sense, a viscosity solution to an elliptic equation with Neumann boundary condition.
On one hand, the Neumann boundary condition holds true in a fairly standard viscosity sense, as we show in \Cref{prop:boundary_max} below.
On the other hand, the condition
\begin{equation}\label{eq:target_viscosity_eq}
    H_E+a\frac{\nu_E\cdot\nabla d_\Omega}{d_\Omega}=0
\end{equation}
would hold true for a minimizer, but is false, in general, for almost minimizers.
However, we will use some \textit{interior maximum principles} that correspond, in a certain sense, to a viscosity formulation of \eqref{eq:target_viscosity_eq}.
Specifically, we will do so in \Cref{prop:weak_viscosity}, in the proof of \Cref{prop:harnack_boundary} and in the proof of \Cref{thm:improvement_flatness}.
Those three proofs rely on the same technical geometric construction, which is carried out in \Cref{lemma:technical_geom} below.

We start by introducing some terminology.
We say that \textit{a set $F$ touches another set $E$ from outside at $z$ in a neighborhood $B_r(z)$} if $E\cap B_r(z)\subset F$ and $z\in\de E\cap\de F$.
The usual situation we will encounter in the rest of this paper is that an almost minimizer $E$ is touched from outside by some smooth set $F$.
The technical assumption that an almost minimizer of $\Per_{\dista{\Omega}}$ is a subset of $\overline{\Omega}$ plays a non-trivial role here, in that we allow any set $F$ to touch $E$ even at points on $\de\Omega$ regardless of the behavior of $F$ outside $\Omega$.
\begin{center}
\begin{figure}[ht]
	\begin{tikzpicture}[use Hobby shortcut, thick,even odd rule]
	\begin{scope}
			\clip(1,-0.4) rectangle (8,3.2);
	\begin{scope}
		\clip 	(4.2,0) circle (3);
			\draw[fill=black!5, fill opacity =1, draw opacity =0] (1,-0.2) .. (3,0.1) .. (4,0) .. (5,0.1) .. (7,-0.1) .. (7.4,0.1) -- (7.4,3.2) -- (1,3.2) -- (1,0.-0.2);

	\begin{scope}
		\clip(1,-0.4) rectangle (8,3.2);

			  \draw[closed, fill= blue!20, fill opacity =1] (1,-2) .. (4,0) ..  (5,3.5) .. (6,6) .. (6,7) .. (6,8) .. (0,10);
	\end{scope}
	\begin{scope}
   		\clip (1,-0.2) .. (3,0.1) .. (4,0) .. (5,0.1) .. (7,-0.1) .. (7.4,0.1) -- (7.4,3.2) -- (1,3.2) -- (1,0.-0.2);

         	\draw[closed, fill= red!20, fill opacity =1] (4.2,-6) .. (4,0) .. (3.45,3) .. (3,3.5) .. (2.7,6) .. (2.5,7) .. (2,8) .. (0,10);
            \draw[dashed] (4.2,0) circle (3);
	\end{scope}
	\end{scope}
	\end{scope}
		\draw (1,-0.2) .. (3,0.1) .. (4,0) .. (5,0.1) .. (7,-0.1) .. (7.4,0.1);

		\node at (5.7,1) {$\Omega\cap B_r(x_0)$};
		\node at (3,1.5) {$E$};
		\node at (4.2,2.6) {$F$};
		\node at (4.2,-0.3) {$x_0$};
	\end{tikzpicture}
    \caption{A set $F$ touching $E$ from outside at $x_0\in\de\Omega$. Notice that the assumption $E\subset\overline{\Omega}$ allows, in the smooth setting, that the tangent spaces at $x_0$ to $\de E$ and $\de F$ to differ when the touching point is at $\de\Omega$}
\end{figure}
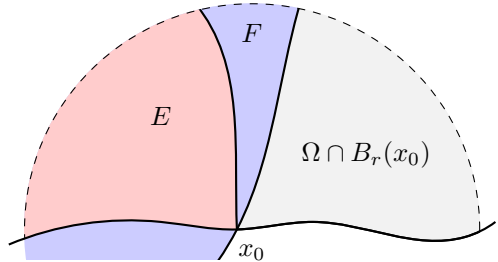
\end{center}
\begin{proposition}[Viscosity property on $\de\Omega$]\label{prop:boundary_max}
    Let $\Omega$ be a set with $C^1$ boundary and let $E\subset\overline{\Omega}$ be a $(\vartheta,\beta)$-minimizer of $\Per_\dista{\Omega}$ in $B_1$. If a smooth set $F$ touches $E$ from outside at $x_0\in\de\Omega\cap B_1$ in a neighborhood $B_r(x_0)$, then
    \begin{equation}\label{eq:viscosity_boundary}
            \nu_F(x_0)\cdot \nu_\Omega(x_0) \ge 0
    \end{equation}
    where $\nu_F$ and $\nu_\Omega$ denote the outer unit normals to $\de F$ and $\de\Omega$, respectively.
\end{proposition}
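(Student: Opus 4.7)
The plan is a contradiction argument by blow-up. Suppose $\nu_F(x_0) \cdot \nu_\Omega(x_0) = -\delta < 0$; after translation we may take $x_0 = 0$. Applying \Cref{cor:blowupqm} along a sequence $r_j \searrow 0$, and composing with an isometry $\Phi$ that sends $\nu_\Omega(0)$ to $-e_n$, the rescaled sets $E_j := \Phi(E)/r_j$ converge in $L^1_{loc}$ to a $\Per_a$-minimizer $E_0 \subset \overline{\RR^n_+}$ with $0 \in \de E_0$; by \Cref{cor:convergence_Hausdorff}, $\de E_j \to \de E_0$ in the Hausdorff sense on any fixed ball.

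Since $F$ is $C^1$ at $0$ with outer unit normal $\nu_F(0)$, the rescaled sets $\Phi(F)/r_j$ converge locally uniformly (hence in $L^1_{loc}$) to the tangent half-space $F_0 := \{y \in \RR^n : y \cdot \bar\nu \leq 0\}$, with $\bar\nu := \Phi(\nu_F(0))$. As $\Phi$ preserves inner products, $\bar\nu \cdot e_n = -\nu_F(0) \cdot \nu_\Omega(0) = \delta > 0$. The touching condition $E \cap B_r(x_0) \subset F$ rescales to $E_j \cap B_{r/r_j} \subset \Phi(F)/r_j$, valid on any fixed $B_R$ for $j$ large; passing to the limit and combining with $E_0 \subset \overline{\RR^n_+}$ gives
\[
    E_0 \subset W := F_0 \cap \overline{\RR^n_+}.
\]
The wedge $W$ is \emph{strictly} contained in $\overline{\RR^n_+}$, since $e_n \in \overline{\RR^n_+}$ satisfies $e_n \cdot \bar\nu = \delta > 0$, hence $e_n \notin W$.

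The heart of the proof, and the main obstacle, is to derive a contradiction from the existence of such an $E_0$: a $\Per_a$-minimizer in $\overline{\RR^n_+}$ contained in a strict wedge $W \subsetneq \overline{\RR^n_+}$ with $0 \in \de E_0 \cap \{y_n = 0\}$. The natural approach is a further blow-up at $0$ via \Cref{cor:blowupsarecones}, producing a $\Per_a$-minimizing cone $K \subset W$ (since $W$ is itself a cone), nontrivial by the density estimate \Cref{prop:density_a}. The monotonicity formula \Cref{prop:monotonicity} then forces $\de^* K$ to be radial, i.e., invariant under positive dilations from $0$. One then tests the first variation (\Cref{prop:first_var}) against the admissible vector field $X(y) := \psi(|y|)\, y_n\, \bar\nu$, which vanishes on $\{y_n = 0\}$: the resulting identity pits a strictly positive $(1+a)\,\bar\nu_n \int y_n^a \dif\mu_K$ term against integrals of the form $\int y_n^a (\bar\nu \cdot \nu_K)(e_n \cdot \nu_K)\dif\mu_K$, whose signs are constrained by the wedge containment $K \subset F_0$ (which forces $\bar\nu \cdot \nu_K \geq 0$ on the portion of $\de^* K$ lying on $\de F_0$, with $\nu_K = \bar\nu$ there). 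A careful sign analysis exploiting the cone structure is intended to yield the required incompatibility with $\bar\nu_n > 0$.

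Should the direct variational computation prove too delicate, the fallback is a Federer-style dimension reduction of $\Sing(\de K)$: each subsequent blow-up at a singular boundary point produces a new minimizing cone of lower effective dimension still contained in a wedge of the same type, and after finitely many iterations one either reaches a base case where Dierkes' classification (cited in the introduction) applies or obtains an explicit half-space cone for which the wedge containment $K \subset W \subsetneq \overline{\RR^n_+}$ is manifestly impossible. Either route reduces the viscosity boundary condition for the almost-minimizer $E$ to a rigidity statement about $\Per_a$-minimizing cones at boundary points of $\overline{\RR^n_+}$, completing the contradiction.
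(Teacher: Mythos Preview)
Your blow-up reduction to a $\Per_a$-minimizing cone $K$ contained in the wedge $W=F_0\cap\overline{\RR^n_+}$ is exactly right and matches the paper. The gap is in the step you yourself flag as the heart of the matter: extracting a contradiction from the existence of such a cone. Your proposed test field $X(y)=\psi(|y|)\,y_n\,\bar\nu$ does not give a clean sign. Writing out the stationarity identity (using $y\cdot\nu_K=0$ for a cone) yields three terms; the first, $(1+a)\bar\nu_n\int y_n^a\psi\,d\mu_K$, is indeed positive, and the $\psi'$ term can be made nonnegative by taking $\psi$ decreasing. But the remaining term $-\int y_n^a\psi\,(\nu_K\cdot e_n)(\nu_K\cdot\bar\nu)\,d\mu_K$ has no a-priori sign on the portion of $\partial^*K$ interior to the wedge, and on the flat face $\partial^*K\cap\partial F_0$ it equals $-\bar\nu_n\int y_n^a\psi$, which competes against (and can cancel) the good term. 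The ``careful sign analysis'' you defer is not a detail here; it is the whole difficulty, and with this vector field it does not close. Your fallback via Dierkes' classification is also not available: Dierkes' Bernstein-type results apply only for $n<5+\sqrt{8}-a$ and say nothing in general dimension about cones trapped in a wedge.

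The paper closes the argument differently. After a first blow-up it takes a second blow-up to a cone $E_{00}$ and then performs a case split on whether $\partial E_{00}$ meets the \emph{edge} $\Gamma_0=\partial F_0\cap\{x_n=0\}$ only at the origin. If so, the cone property forces $\overline{E_{00}}\subset\{x_1\le -q|x|\}$ for some $q>0$ (after choosing coordinates so that $W\subset\{x_1\le0,\,x_n\ge0\}$). The key vector field is then $X(x)=f(x_1)\,e_1$ with $f$ smooth, nondecreasing, $f\equiv0$ on $(-\infty,-1]$: this is tangent to $\{x_n=0\}$, has $\supp X\cap\overline{E_{00}}$ compact thanks to the strict containment, and satisfies $\dive_{E_{00}}X=f'(x_1)|\Pi_x e_1|^2\ge q^2 f'(x_1)\ge0$ (the lower bound $|\Pi_x e_1|\ge q$ comes from $x\cdot\nu_{E_{00}}=0$ and $|x_1|\ge q|x|$). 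Stationarity then forces $\mu_{E_{00}}(B_{1/2}\cap\{x_n>0\})=0$, contradicting the density estimate. If instead $\partial E_{00}\cap\Gamma_0$ is larger than $\{0\}$, the paper blows up at a nonzero point of the edge to split off a translation-invariant direction, and iterates until the first case applies---this is dimension reduction, but it terminates in the paper's own first-variation argument, not in an external classification.
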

\begin{proof}
    By contradiction, assume (up to a change of coordinates) that there is a smooth set $F$ that touches $E$ from outside at $0\in\de\Omega$ in a neighborhood $B_r$, that $\nu_\Omega(0)=-e_n$ and that 
    \begin{equation}
        \nu_F(x_0)\eqqcolon \nu_0 = b e_n + \sqrt{1-b^2}\,e_1,
    \end{equation}
    for some $b\in(0,1)$.
    
    By \Cref{cor:blowupqm}, we can find a sequence $r_j\searrow 0$ and a set $E_0$ such that
    \begin{equation}
    \frac{E}{r_j} \longrightarrow E_0, \quad\mbox{and}\quad \frac{F}{r_j} \longrightarrow F_0 \coloneqq \{x\in\RR^n\colon\nu_0\cdot x < 0 \}
    \end{equation}
    locally in $L^1$.
    Furthermore, $E_0$ is a minimizer of $\Per_{a}$ and $F_0$ touches $E_0$ from outside at $0$ in any neighborhood $B_r(0)$.

    Applying \Cref{cor:blowupqm} again, we find another sequence $s_j\searrow0$ and $E_{00}$ such that
    \begin{equation}
        \frac{E_0}{s_j} \longrightarrow E_{00}
    \end{equation}
    as above, $E_{00}$ is a minimizer of $\Per_{a}$ in any ball and $F_0$ touches $E_{00}$ from outside at $0$.
    Moreover, by \Cref{cor:blowupsarecones}, $E_{00}$ is a cone. 
    Let $\Gamma_0\coloneqq\partial F_0\cap\{x_n=0\}=\{x_n=x_1=0\}$.

    \textbf{We first assume that $\partial E_{00}\cap\Gamma_0 = \{0\}$.}
    Notice that, since $E_{00}\subset\RR^n_+\cap\{x\cdot \nu_0\le0\}$, it holds $E_{00}\subset\{x_1\le0\mbox{ and }x_n\ge0\}$.
    Furthermore, the fact that $\overline E_{00}$ is a cone implies that there exists $q>0$ such that
    \begin{equation}\label{eq:esistegamma}
        \overline {E_{00}}\subset \{x_1\le-q|x|\},
    \end{equation}
    because otherwise it would be $\Gamma_0\cap\partial E_{00}\cap\sphere[n-1]\neq\emptyset$.

    Consider the vector field $X(x) = f(x_1)e_1$, where $f$ is a smooth non-decreasing function such that $f\equiv0$ in $(-\infty,-1]$ and $f'\equiv1$ in $[-1/2,0]$.
    Since $E_{00}$ is a cone, for $\mu_{E_{00}}$-a.e. $x$ it holds $\frac{x}{|x|}\cdot \nu_{E_{00}}(x)=0$,
    thus \eqref{eq:esistegamma} gives 
    \begin{math}
        |\Pi_xe_1|^2\ge \left|e_1\cdot\frac{x}{|x|}\right|^2\ge q^2,
    \end{math}
    at $\mu_{E_{00}}$-a.e. $x$, where $\Pi_x$ denotes the orthogonal projection onto $T_x\de^*E_{00}$.
    Therefore
    \begin{equation}
        \dive_{E_{00}} X = f'(x_1)|\Pi_xe_1|^2 \ge q^2f'(x_1)\ge0
    \end{equation}
    at $\mu_{E_{00}}$-a.e. $x$.
    
    Now, since
    \begin{equation}
        \supp X\cap \overline {E_{00}}\subset\{-1\le x_1\le-q|x|\}\subset B_{2/q},
    \end{equation}
    we may use $X$ as a test vector field in \eqref{eq:first_var} and obtain that
    \begin{align}
        0 &= \int x_n^a\left(\dive_{E_{00}} X + a\frac{X\cdot e_n}{x_n}\right)\dif\mu_{E_{00}}\\
        &\ge q^2\int x_n^a f'(x_1)\dif\mu_{E_{00}}\\
        &\ge q^2\int_{B_{1/2}}x_n^a\dif\mu_{E_{00}}.
    \end{align}
    This yields $x_n=0$ for $\mu_{E_{00}}$-a.e. $x\in B_{1/2}$, thus either $E_{00}\cap B_{1/2}=B_{1/2}$ (contradicting the fact that $F_0$ touches from outside) or $E_{00}\cap B_{1/2}=\emptyset$ (contradicting $0\in\de E$).

    \textbf{If, on the other hand, $\partial E_{00}\cap\Gamma_0$ contains more than one point, }then the fact that $E_{00}$ is a cone implies that, up to a rotation of $\{x_n=0\}$, $\partial E_{00}\cap\Gamma_0\supset\{t e_2:t\ge 0\}$.
    
    We let $E_{000}$ be a further blow-up of $E_{00}$ at $e_2$: namely, $E_{000} = \lim\frac{E_{00}-e_2}{t_j}$ for some $t_j\searrow0$. Then $E_{000}$ is a stationary cone which is invariant under translations by $te_2$ for all $t\in\RR$. In particular, $E_{000}=E_{000}'\times\RR$ for some stationary cone $E_{000}'\subset\RR^{n-1}$ that is included in an acute wedge of the form $\{0\le x_{n-1}\le -q x_1\}$ for some $q\in(0,+\infty)$.
    At this point, either $\partial E_{000}'\cap\{x_1=x_{n-1}=0\}=\{0\}$, which gives a contradiction by the discussion above, or we can apply the dimension-reduction argument until that is the case, which will happen in at most $(n-1)$ iterations.
    
\end{proof}

We now state and prove the technical geometric lemma that will allow us, later, to prove the interior maximum principles.

\begin{lemma}[Technical geometric lemma]\label{lemma:technical_geom}
    Let $\phi:\RR^{n-1}\to\RR$ be a $C^2$ function with $||\nabla \phi||_\infty\le 1/4$ and assume that $\{x\colon x_1\le \phi(x'')\}$ touches $E$ from outside at $z$ in a neighborhood $B_r(z)$, with $r\le1$.
    Given $0<b\le \frac{1}{4}$, let $p(x'') = b\left(\frac{r^2}{16}-|x''-z''|^2\right)$ and let
    \begin{equation}
        F\coloneqq E\setminus\left(B_{r}(z)\cap\left\{x\colon x_1\le \phi(x'')-p(x'')\right\}\right).
    \end{equation}
    Then:
    \begin{enumerate}
        \item $E\setminus F\Subset B_{r}(z)$;
        \item \label{item:smallballinside} there exists $c>0$ universal such that $B_{cbr^2}(z)\subset\RR^n\setminus F$;
        \item if $z\in\mathrm{Int}\,\Omega$ and $E\subset\Omega$ is a $(\vartheta,\beta)$-minimizer of $\Per_{\dista{\Omega}}$ in $B_r(z)$, then letting $G(x)\coloneqq x_1-\phi(x'')+p(x'')$ it holds
        \begin{equation}\label{eq:technical_geom_integral}
            C\vartheta(d_\Omega(z)+r)^ar^{n-1+\beta}\ge\int_{E\setminus F}\dive\left(\dista{\Omega}\frac{\nabla G}{|\nabla G|}\right)\dif\Lc^n,
        \end{equation}
        where $C$ is a universal constant.
    \end{enumerate}    
\end{lemma}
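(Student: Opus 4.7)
The plan is to prove the three conclusions in order, combining explicit geometric estimates on the paraboloid cap $p$ with a divergence-theorem argument that compares $\Per_{\dista{\Omega}}(E;B_r(z))$ and $\Per_{\dista{\Omega}}(F;B_r(z))$ via the $(\vartheta,\beta)$-minimality of $E$. Properties (1) and (2) are direct consequences of the touching condition $z_1=\phi(z'')$ and the bound $\|\nabla\phi\|_\infty\le 1/4$, while (3) requires careful accounting of the reduced boundary of $E\setminus F$.

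For (1), I first localize the relevant cap: the contribution to $E\setminus F$ is supported where the pushed graph $\{x_1=\phi-p\}$ has moved strictly below $\{x_1=\phi\}$ (i.e.\ $p\ge 0$, which forces $|x''-z''|\le r/4$), and by the touching $E\cap B_r(z)\subset\{x_1\le\phi\}$ one has $x_1\in(\phi-p,\phi]$ on this cap. Using $z_1=\phi(z'')$ and the Lipschitz estimate,
\begin{equation*}
|x_1-z_1|\le \tfrac14|x''-z''|+p(x'')\le \tfrac{r}{16}+\tfrac{br^2}{16},
\end{equation*}
so that $|x-z|^2\le r^2/16+(r/16+br^2/16)^2<r^2$ whenever $b\le 1/4$ and $r\le 1$, yielding the compact inclusion $E\setminus F\Subset B_r(z)$ with a quantitative gap.

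For (2), I plan to verify that $G(x)>0$ throughout $B_{cbr^2}(z)$ for a universal $c>0$ small enough. For $x\in B_{cbr^2}(z)$ the Lipschitz bound gives $\phi(x'')\le z_1+\tfrac14 cbr^2$, while $p(x'')\ge br^2/16-b(cbr^2)^2\ge br^2/32$ once $c$ is small; combined with $x_1\ge z_1-cbr^2$, this yields
\begin{equation*}
G(x)\ge \tfrac{br^2}{32}-\tfrac54 cbr^2>0\qquad\text{for all }c<\tfrac{1}{40}.
\end{equation*}
Since $F\cap B_r(z)$ is contained in the slab $\{G\le 0\}\cap E$ by construction, the inclusion $B_{cbr^2}(z)\subset\{G>0\}$ forces $B_{cbr^2}(z)\cap F=\emptyset$.

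For (3), property (1) guarantees that $E\setminus F$ is a set of finite perimeter compactly contained in $B_r(z)$, so the divergence theorem applied with the vector field $\dista{\Omega}\,\nabla G/|\nabla G|$ gives
\begin{equation*}
\int_{E\setminus F}\dive\!\left(\dista{\Omega}\frac{\nabla G}{|\nabla G|}\right)\dif\Lc^n = \int_{\partial^*(E\setminus F)} \dista{\Omega}\,\frac{\nabla G}{|\nabla G|}\cdot\nu_{E\setminus F}\dif\Hc^{n-1}.
\end{equation*}
The reduced boundary splits into a piece of $\partial^*E\cap\{G>0\}$ (with outer normal $\nu_E$) and a piece of $\{G=0\}\cap E^{(1)}$ (with outer normal $-\nabla G/|\nabla G|$). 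Using the pointwise bound $\nu_E\cdot\nabla G/|\nabla G|\le 1$, the right-hand side is at most
\begin{equation*}
\int_{\partial^*E\cap\{G>0\}}\dista{\Omega}\dif\Hc^{n-1}-\int_{\{G=0\}\cap E^{(1)}}\dista{\Omega}\dif\Hc^{n-1}=\Per_{\dista{\Omega}}(E;B_r(z))-\Per_{\dista{\Omega}}(F;B_r(z)),
\end{equation*}
after identifying $\partial^*F\cap B_r(z)$ with $(\partial^*E\cap\{G<0\})\cup(\{G=0\}\cap E^{(1)})$. The conclusion then follows from the additive deficit form of almost-minimality (\Cref{remark:deficit}) and the Lipschitz estimate $\sup_{B_r(z)}\dista{\Omega}\le(d_\Omega(z)+r)^a$. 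The main obstacle will be the careful bookkeeping of the reduced boundaries of $E\setminus F$ and $F$ in terms of $\partial^*E$, $\{G=0\}$, and $E^{(1)}$, so that the divergence integral collapses exactly onto the perimeter difference controlled by almost-minimality.
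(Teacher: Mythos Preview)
Your proposal is correct and follows essentially the same route as the paper's own proof: localize $E\setminus F$ via $p\ge 0$ and the Lipschitz bound on $\phi$; show $G>0$ on a small ball around $z$ by direct estimation; and apply the divergence theorem with the unit field $T=\nabla G/|\nabla G|$, splitting $\partial^*(E\setminus F)$ into the piece of $\partial^*E$ (where $T\cdot\nu_E\le 1$) and the piece of $\{G=0\}\cap E^{(1)}$ (which becomes part of $\partial^*F$), then invoking the additive deficit form of almost-minimality from \Cref{remark:deficit} together with $\sup_{B_r(z)}\dista{\Omega}\le (d_\Omega(z)+r)^a$. The paper's argument is slightly terser in the boundary bookkeeping, but the structure is identical.
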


\begin{center}
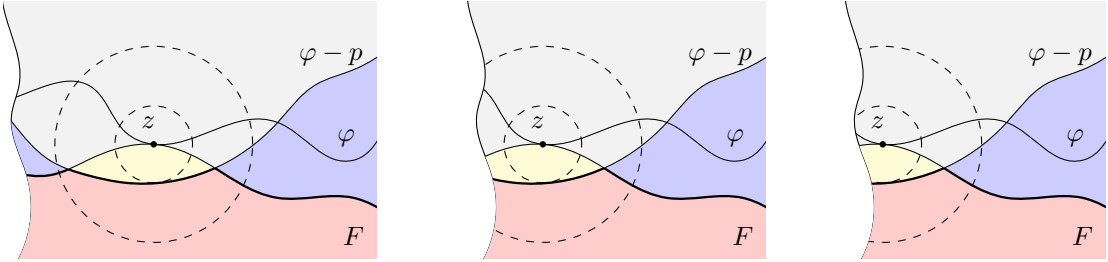
\begin{figure}[ht]
	
\begin{minipage}{\textwidth}
\begin{center}
	\begin{tikzpicture}[use Hobby shortcut]
		\begin{scope}
			\clip (-0.61,-1.3) rectangle (4.3,2.1);
			
			\draw[fill = black!5] (-0.6,-1.6) .. (-0.3,-0.2) .. (-0.5,0.6) .. (-0.4,1) .. (-0.6,2) .. (-0.5,3) -- (10,3) -- (10,-3) -- (-0.6,-3);
			
		\begin{scope}
				\clip[shift={(0,0)}] (-0.6,-1.6) .. (-0.3,-0.2) .. (-0.5,0.6) .. (-0.4,1) .. (-0.6,2) .. (-0.5,3) -- (10,3) -- (10,-3) -- (-0.6,-3);

			\draw[fill = yellow!20] (-15,-6) .. (-7,-1) .. (-5,2) .. (-4,-1) .. (-3,-1) .. (-2,-0.1) .. (-1,0) .. (0,-0.2) .. (0.5,0) .. (1.37,0.21) .. (2,0) .. (3,-0.5) .. (4,-0.5) .. (5,-1) .. (6,-1) .. (7,-9);	
			
			\draw[fill=blue!20] (-15,-6) .. (-7,1) .. (-5,7) .. (-4,6) .. (-3,1.5) .. (-2,1.2) .. (-1,1) .. (0,0) .. (0.5,-0.2) .. (1,-0.3) .. (3,0.5) .. (3.5,1) .. (4,1.2) .. (5,1.8) .. (6,2) .. (7,-9);
			\draw (-15,-6) .. (-7,1) .. (-5,7) .. (-4,6) .. (-3,1.5) .. (-2,1) .. (-1,0.7) .. (0,1) .. (0.5,1) .. (1,0.37) .. (1.37,0.21) .. (3,0.5) .. (4,0) .. (5,1) .. (5.5,0.8) .. (6,0.9) .. (7,-9);

			\begin{scope}
			\clip(-15,-6) .. (-7,1) .. (-5,7) .. (-4,6) .. (-3,1.5) .. (-2,1.2) .. (-1,1) .. (0,0) .. (0.5,-0.2) .. (1,-0.3) .. (3,0.5) .. (3.5,1) .. (4,1.2) .. (5,1.8) .. (6,2) .. (7,-9);

			 \draw[fill = red!20,line width=0.3mm] (-15,-6) .. (-7,-1) .. (-5,2) .. (-4,-1) .. (-3,-1) .. (-2,-0.1) .. (-1,0) .. (0,-0.2) .. (0.5,0) .. (1.37,0.21) .. (2,0) .. (3,-0.5) .. (4,-0.5) .. (5,-1) .. (6,-1) .. (7,-9);

			\end{scope}

                \begin{scope}
			\clip(-15,-6) .. (-7,-1) .. (-5,2) .. (-4,-1) .. (-3,-1) .. (-2,-0.1) .. (-1,0) .. (0,-0.2) .. (0.5,0) .. (1.37,0.21) .. (2,0) .. (3,-0.5) .. (4,-0.5) .. (5,-1) .. (6,-1) .. (7,-9);

			 \draw[line width=0.3mm] (-15,-6) .. (-7,1) .. (-5,7) .. (-4,6) .. (-3,1.5) .. (-2,1.2) .. (-1,1) .. (0,0) .. (0.5,-0.2) .. (1,-0.3) .. (3,0.5) .. (3.5,1) .. (4,1.2) .. (5,1.8) .. (6,2) .. (7,-9);

			\end{scope}

			\end{scope}

			\draw[dashed] (1.37,0.21) circle (1.3);
			\draw[dashed] (1.37,0.21) circle (0.51);
			\filldraw (1.37,0.21) circle (1pt);
			
			\node at (4,-1) {$F$};			
			\node at (1.3,0.5) {$z$};
			\node at (3.9,0.3) {$\phi$};
			\node at (3.7,1.4) {$\phi-p$};

		\end{scope}
	\end{tikzpicture}
\hfill
	\begin{tikzpicture}[use Hobby shortcut,scale=1]
		\begin{scope}
			\clip (0.2,-1.3) rectangle (4.3,2.1);
			
			\draw[fill = black!5, shift={(1,0)}] (-0.6,-1.6) .. (-0.3,-0.2) .. (-0.5,0.6) .. (-0.4,1) .. (-0.6,2) .. (-0.5,3) -- (10,3) -- (10,-3) -- (-0.6,-3);
			
		\begin{scope}
				\clip[shift={(1,0)}] (-0.6,-1.6) .. (-0.3,-0.2) .. (-0.5,0.6) .. (-0.4,1) .. (-0.6,2) .. (-0.5,3) -- (10,3) -- (10,-3) -- (-0.6,-3);

			\draw[fill = yellow!20] (-15,-6) .. (-7,-1) .. (-5,2) .. (-4,-1) .. (-3,-1) .. (-2,-0.1) .. (-1,0) .. (0,-0.2) .. (0.5,0) .. (1.37,0.21) .. (2,0) .. (3,-0.5) .. (4,-0.5) .. (5,-1) .. (6,-1) .. (7,-9);	
			
			\draw[fill=blue!20] (-15,-6) .. (-7,1) .. (-5,7) .. (-4,6) .. (-3,1.5) .. (-2,1.2) .. (-1,1) .. (0,0) .. (0.5,-0.2) .. (1,-0.3) .. (3,0.5) .. (3.5,1) .. (4,1.2) .. (5,1.8) .. (6,2) .. (7,-9);

			\draw (-15,-6) .. (-7,1) .. (-5,7) .. (-4,6) .. (-3,1.5) .. (-2,1) .. (-1,0.7) .. (0,1) .. (0.5,1) .. (1,0.37) .. (1.37,0.21) .. (3,0.5) .. (4,0) .. (5,1) .. (5.5,0.8) .. (6,0.9) .. (7,-9);

			\begin{scope}
			\clip(-15,-6) .. (-7,1) .. (-5,7) .. (-4,6) .. (-3,1.5) .. (-2,1.2) .. (-1,1) .. (0,0) .. (0.5,-0.2) .. (1,-0.3) .. (3,0.5) .. (3.5,1) .. (4,1.2) .. (5,1.8) .. (6,2) .. (7,-9);

                    \draw[fill = red!20,line width=0.3mm] (-15,-6) .. (-7,-1) .. (-5,2) .. (-4,-1) .. (-3,-1) .. (-2,-0.1) .. (-1,0) .. (0,-0.2) .. (0.5,0) .. (1.37,0.21) .. (2,0) .. (3,-0.5) .. (4,-0.5) .. (5,-1) .. (6,-1) .. (7,-9);

			\end{scope}

                \begin{scope}
			\clip(-15,-6) .. (-7,-1) .. (-5,2) .. (-4,-1) .. (-3,-1) .. (-2,-0.1) .. (-1,0) .. (0,-0.2) .. (0.5,0) .. (1.37,0.21) .. (2,0) .. (3,-0.5) .. (4,-0.5) .. (5,-1) .. (6,-1) .. (7,-9);

			 \draw[line width=0.3mm] (-15,-6) .. (-7,1) .. (-5,7) .. (-4,6) .. (-3,1.5) .. (-2,1.2) .. (-1,1) .. (0,0) .. (0.5,-0.2) .. (1,-0.3) .. (3,0.5) .. (3.5,1) .. (4,1.2) .. (5,1.8) .. (6,2) .. (7,-9);

			\end{scope}

			\draw[dashed] (1.37,0.21) circle (1.3);
			\draw[dashed] (1.37,0.21) circle (0.51);

			\end{scope}

			\filldraw (1.37,0.21) circle (1pt);
			
			\node at (4,-1) {$F$};			
			\node at (1.3,0.5) {$z$};
			\node at (3.9,0.3) {$\phi$};
			\node at (3.7,1.4) {$\phi-p$};

		\end{scope}
	\end{tikzpicture}
\hfill
	\begin{tikzpicture}[use Hobby shortcut]
		\begin{scope}
			\clip (0.85,-1.3) rectangle (4.3,2.1);
			
			\draw[fill = black!5,shift={(1.5,0)}] (-0.6,-1.6) .. (-0.3,-0.2) .. (-0.5,0.6) .. (-0.4,1) .. (-0.6,2) .. (-0.5,3) -- (10,3) -- (10,-3) -- (-0.6,-3);
			
		\begin{scope}
				\clip[shift={(1.5,0)}] (-0.6,-1.6) .. (-0.3,-0.2) .. (-0.5,0.6) .. (-0.4,1) .. (-0.6,2) .. (-0.5,3) -- (10,3) -- (10,-3) -- (-0.6,-3);

			\draw[fill = yellow!20] (-15,-6) .. (-7,-1) .. (-5,2) .. (-4,-1) .. (-3,-1) .. (-2,-0.1) .. (-1,0) .. (0,-0.2) .. (0.5,0) .. (1.37,0.21) .. (2,0) .. (3,-0.5) .. (4,-0.5) .. (5,-1) .. (6,-1) .. (7,-9);	
			
			\draw[fill=blue!20] (-15,-6) .. (-7,1) .. (-5,7) .. (-4,6) .. (-3,1.5) .. (-2,1.2) .. (-1,1) .. (0,0) .. (0.5,-0.2) .. (1,-0.3) .. (3,0.5) .. (3.5,1) .. (4,1.2) .. (5,1.8) .. (6,2) .. (7,-9);
			\draw (-15,-6) .. (-7,1) .. (-5,7) .. (-4,6) .. (-3,1.5) .. (-2,1) .. (-1,0.7) .. (0,1) .. (0.5,1) .. (1,0.37) .. (1.37,0.21) .. (3,0.5) .. (4,0) .. (5,1) .. (5.5,0.8) .. (6,0.9) .. (7,-9);

			\begin{scope}
			\clip(-15,-6) .. (-7,1) .. (-5,7) .. (-4,6) .. (-3,1.5) .. (-2,1.2) .. (-1,1) .. (0,0) .. (0.5,-0.2) .. (1,-0.3) .. (3,0.5) .. (3.5,1) .. (4,1.2) .. (5,1.8) .. (6,2) .. (7,-9);

			 \draw[fill = red!20,line width=0.3mm] (-15,-6) .. (-7,-1) .. (-5,2) .. (-4,-1) .. (-3,-1) .. (-2,-0.1) .. (-1,0) .. (0,-0.2) .. (0.5,0) .. (1.37,0.21) .. (2,0) .. (3,-0.5) .. (4,-0.5) .. (5,-1) .. (6,-1) .. (7,-9);

			\end{scope}

                \begin{scope}
			\clip(-15,-6) .. (-7,-1) .. (-5,2) .. (-4,-1) .. (-3,-1) .. (-2,-0.1) .. (-1,0) .. (0,-0.2) .. (0.5,0) .. (1.37,0.21) .. (2,0) .. (3,-0.5) .. (4,-0.5) .. (5,-1) .. (6,-1) .. (7,-9);

			 \draw[line width=0.3mm] (-15,-6) .. (-7,1) .. (-5,7) .. (-4,6) .. (-3,1.5) .. (-2,1.2) .. (-1,1) .. (0,0) .. (0.5,-0.2) .. (1,-0.3) .. (3,0.5) .. (3.5,1) .. (4,1.2) .. (5,1.8) .. (6,2) .. (7,-9);

			\end{scope}
			
			\draw[dashed] (1.37,0.21) circle (1.3);
			\draw[dashed] (1.37,0.21) circle (0.51);

			\end{scope}

			\filldraw (1.37,0.21) circle (1pt);
			
			\node at (4,-1) {$F$};			
			\node at (1.3,0.5) {$z$};
			\node at (3.9,0.3) {$\phi$};
			\node at (3.7,1.4) {$\phi-p$};

		\end{scope}
	\end{tikzpicture}
\end{center}
\end{minipage}
\caption{In red $F$, in blue $\{x_1 < \phi - p\}$ and in yellow $E\setminus F$. 
The two dashed balls represent, respectively, $B_r(z)$ and $B_{cbr^2}(z)$.
Three possible configurations based on the relation between $d_\Omega(z)$ and $r$ are represented.}
\end{figure}

\end{center}

\begin{proof}
    Without loss of generality, we may assume $z=0$.
    \begin{enumerate}
        \item For every $x\in E\setminus F$, it holds $x\in E\cap B_r\subset\{x_1\le \phi(x'')\}$ and $x\in\{x_1\ge \phi-p\}$. 
        Therefore $p(x'')\ge0$, hence $|x''|\le \frac{r}{4}$.
        Moreover, we have $|\phi(x'')|\le \frac{r}{4}$ and $p(x'')\le b\frac{r^2}{16}\le \frac{r}{4}$ due to $|\nabla \phi|,b\le\frac{1}{4}$ and $r\le1$.
        Therefore
        \begin{equation}
            |x_1|\le |\phi(x'')|+|p(x'')|\le\frac{r}{2},
        \end{equation}
        wich together with $|x''|\le\frac{r}{4}$ proves the first item.
        \item For every $x\in B_{cbr^2}$, it holds
        \begin{equation}
            x_1 \ge -cbr^2, \quad \phi(x'') \le \frac{1}{4}cbr^2,\quad \mbox{and}\quad p(x'') \ge br^2\frac{1-c^2}{16},
        \end{equation}
        where we used $|\nabla \phi|,b\le\frac{1}{4}$ once again.
        Therefore taking $c$ smaller than, say, $\frac{1}{64}$ it holds
        \begin{equation}
            x_1-\phi(x'')+p(x'')\ge br^2\left(-c-\frac{c}{4}+\frac{1-c^2}{16}\right) > 0 
        \end{equation}
        hence $x\notin F$, as claimed.
        \item Let $T(x)\coloneqq \frac{\nabla G(x)}{|\nabla G(x)|}$.
        Notice that, due to $|\nabla(\phi-p)|\le\frac{1}{2}$, $T$ is well defined. Moreover, $|T|=1$ everywhere and $T|_{\{x_1=\phi-p\}\cap E}$ coincides with the outer unit normal to $\de F$. Therefore, by the divergence theorem:
        \begin{equation}
            \Per_{\dista{\Omega}}(E;B_r)-\Per_{\dista{\Omega}}(F;B_r)\ge\int_{E\setminus F}\dive(\dista{\Omega} T)\dif\Lc^n.
        \end{equation}
        The inequality then follows from \eqref{eq:percompetitorbound}.
    \end{enumerate}
\end{proof}

\section{Epsilon-regularity}\label{sec:eps_regularity}

The goal of this Subsection is proving \Cref{thm:improvement_flatness}. 
As explained in \Cref{sec:intro}, the proof of \Cref{thm:improvement_flatness} is based on some Harnack-type estimates (Propositions \ref{prop:interior_harnack} and \ref{prop:harnack_boundary}) and on their iteration (\Cref{cor:full_decay}), following the scheme developed in \cite{Savin_2007}.

This section is structured as follows.
\begin{itemize}
    \item In \Cref{subsec:interior_harnack}, we prove a Harnack-type inequality away from $\de\Omega$ (\Cref{prop:interior_harnack}), using the results in \cite{DeSilva_Savin_2021}. We then iterate that result to obtain a $C^{0,\sigma}$ decay of oscillations away from $\de\Omega$ up to a scale that depends on the initial flatness of the set (\Cref{cor:osc_decay_interior}).
    \item Exploiting the results from \Cref{subsec:interior_harnack}, in \Cref{subsec:boundary_harnack} we prove a Harnack inequality near $\de\Omega$ (\Cref{prop:harnack_boundary}), with the techniques developed in \cite{DeSilva_2011}. As above, we then iterate it to obtain a $C^{0,\sigma}$ decay of oscillations up to $\de\Omega$ (\Cref{cor:full_decay}).
    \item In \Cref{sec:IOF}, we use \Cref{cor:full_decay} to finally prove \Cref{thm:improvement_flatness}.
\end{itemize}
 
Before proceeding, we fix some notation.
Let $\Omega\subset\RR^n$ be a given open set. For a unit vector $\nu\in\RR^n$, a set of finite $\dista{\Omega}$-perimeter $E$ and $U\subset\RR^n$, we let
\begin{equation}
    \osc_{\nu}(\de E; U) \coloneqq \frac{1}{2}\sup\{|(x-y)\cdot\nu|\colon x,y\in\de E\cap\Omega\cap U\}
\end{equation}
so that, if $\osc_{\nu}(\de E;U)=h$ then there exists $c\in\RR$ such that $\de E\cap\Omega\cap U\subset\{x\colon |x\cdot\nu-c|\le h\}$.
Notice that this entails no restriction on $\de E\cap\de\Omega\cap U$.
We record the following technical result concerning sets with small enough oscillations: 
\begin{lemma}[Infiltration Lemma]\label{lemma:tech_oscill}
	There exist dimensional constant $C,\delta>0$ with the following property.
	Let $\Omega\subset\RR^n$, $\Omega = \{(x',x_n) : x_n > g(x') \}$, where $g$ is $\frac{1}{CR}$-Lipschitz, $g(0)=0$, $x_0\in B_{R}\cap\de E\cap \overline{\Omega}$, $R\ge 1$, and $E$ a $(\vartheta,\beta)$-minimizer of $\Per_\dista{\Omega}$ in $B_{2R}(x_0)$, with $\vartheta\le 1$. 
	If there exists $\nu\in\sphere$ such that
	\begin{equation}
		\de E \cap\Omega\cap B_{R}(x_0) \subset \left\{ |(x-x_0)\cdot\nu| \le \delta R\right\},
	\end{equation}
	then 
	\begin{equation}
		\left\{ (x-x_0)\cdot \nu \le -\delta R \right\}\cap B_R(x_0)\cap \Omega
        \,\subset\,
        E \cap B_R(x_0)
        \,\subset\,
        \left\{ (x-x_0)\cdot \nu \le \delta R \right\}.
	\end{equation}
	
	\end{lemma}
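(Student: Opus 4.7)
The plan is to use the oscillation hypothesis to force $E$ to be locally constant on each side of the strip, and then to combine the density estimates with a weighted volume comparison to identify which side is full and which is empty. I introduce the two open caps
\begin{equation}
    A^+ \coloneqq \{(x-x_0)\cdot\nu > \delta R\} \cap \Omega \cap B_R(x_0), \quad A^- \coloneqq \{(x-x_0)\cdot\nu < -\delta R\} \cap \Omega \cap B_R(x_0).
\end{equation}
The assumption $\de E \cap \Omega \cap B_R(x_0) \subset \{|(x-x_0)\cdot\nu| \le \delta R\}$ says that $E$ has no reduced boundary inside either of $A^\pm$. For $C$ large and $\delta$ small enough, the $\tfrac{1}{CR}$-flatness of $g$ makes $\Omega \cap B_R(x_0)$ a small Lipschitz perturbation of a half-ball, so each of $A^+$ and $A^-$ is open and connected (and possibly empty); consequently, on each cap the set $E$ is a.e.\ either full or empty.

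Next I compare weighted volumes. After rescaling by $R$ via \Cref{remark:scaling} and applying \Cref{prop:density_a} (together with the remark following its proof, according to which the lower density bound holds for any $x_0 \in \de E$, not only at $\de\Omega$) to both $E$ and the $(\vartheta,\beta)$-minimizer $\Omega \setminus E$, I obtain universal constants $c_0, C_1 > 0$ such that
\begin{equation}
    \Lc^n_{\dista{\Omega}}(E \cap B_R(x_0)) \ge c_0 R^{n+a}, \qquad \Lc^n_{\dista{\Omega}}((\Omega \setminus E) \cap B_R(x_0)) \ge c_0 R^{n+a},
\end{equation}
while the strip satisfies $\Lc^n_{\dista{\Omega}}(\{|(x-x_0)\cdot\nu| \le \delta R\} \cap B_R(x_0)) \le C_1 \delta R^{n+a}$ by a direct computation.

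Choose $\delta$ small enough that $C_1 \delta < c_0/2$. If both $A^+$ and $A^-$ were disjoint from $E$, then $E \cap B_R(x_0)$ would be contained (up to a null set) in the strip, contradicting the lower density bound for $E$. Symmetrically, if both caps belonged to $E$, then $(\Omega \setminus E) \cap B_R(x_0)$ would be inside the strip, contradicting the bound for $\Omega \setminus E$. The degenerate situations in which one of $A^\pm$ is empty (for instance when $\nu$ is nearly parallel to $\nabla d_\Omega(x_0)$) are handled identically, since the surviving non-empty cap alone must then carry the required weighted mass. Therefore exactly one of $A^+$, $A^-$ is contained in $E$ and the other is disjoint from $E$. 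Since the oscillation hypothesis is invariant under $\nu \mapsto -\nu$, I fix the sign so that $A^- \subset E$ and $A^+ \cap E = \emptyset$; the two inclusions of the lemma then follow by taking closure across $\{(x-x_0)\cdot\nu = -\delta R\}$ for the first and by combining $A^+ \cap E = \emptyset$ with $E \subset \overline{\Omega}$ for the second.

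The main technical obstacle will be producing the uniform density bound at scale $R$ irrespective of $d_\Omega(x_0)$: \Cref{lemma:density_estimates_away} only yields a bound of the form $c\, d_\Omega(x_0)^a r^n$ for $r < d_\Omega(x_0)/2$, so one must stitch it together with the near-boundary regime of \Cref{prop:density_a} (which gives $c r^{n+a}$) via a careful rescaling that keeps the almost-minimality deficit $\vartheta R^\beta$ under control.
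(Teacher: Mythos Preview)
Your argument is correct and takes a genuinely different route from the paper's. The paper reduces (as you do) to the situation where $E\cap B_R(x_0)$, or its complement, lies in the strip, but then builds an explicit competitor $F_r=E\setminus\overline{B_r(x_0)}$ and derives a \emph{perimeter} contradiction: almost-minimality gives $\Per_{\dista{\Omega}}(E\cap B_r)\le (2+\vartheta r^\beta)\Hc^{n-1}_{\dista{\Omega}}(\partial B_r\cap E^{(1)})\lesssim\delta$, while the isoperimetric inequality together with the lower density bound forces $\Per_{\dista{\Omega}}(E\cap B_r)\gtrsim 1$. Your approach skips the competitor and compares \emph{volumes} directly: the strip has $\Lc^n_{\dista{\Omega}}$-measure $\le C\delta R^{n+a}$, whereas the lower density estimate gives $\Lc^n_{\dista{\Omega}}(E\cap B_R(x_0))\ge c_0 R^{n+a}$. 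This is cleaner, since the paper's competitor step essentially re-derives part of the density estimate that both proofs cite anyway. Two remarks: first, your final paragraph overstates the difficulty---because $g(0)=0$ forces $0\in\de\Omega$ and $x_0\in B_R$, one always has $d_\Omega(x_0)<R$, so after rescaling you are automatically in the near-boundary regime of \Cref{prop:density_a} and no stitching with \Cref{lemma:density_estimates_away} is required (the paper's case $d_\Omega(x_0)>2$ is in fact vacuous for the same reason). Second, the connectedness of $A^\pm$ when nonempty, which you assert in one line, is the one point that deserves a sentence of justification; the paper glosses over it in the same way.
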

\begin{proof}
    This Lemma is the analogous of \cite[Lemma 22.10]{maggi12}.

	We argue by contradiction, and we assume without loss of generality $R=1$.
	By contradiction, for all $\delta>0$, there exist a $(\vartheta,\beta)$-minimizer $E\subset\overline{\Omega}$ and $x,y \in B_1(x_0)$ such that 
	\begin{equation}
		(x-x_0)\cdot\nu < -\delta \qquad \mbox{and}\qquad (y-x_0)\cdot\nu > \delta,
	\end{equation}
	and either $x,y\in E$ or $x,y\in B_1(x_0) \setminus E$.
	Without loss of generality, we us assume the second case holds (we can simply exchange the conditions by taking $B_1(x_0)\setminus E$ instead of $E$).
	
	From \cite[Proposition 7.5]{maggi12}, it follows that, since $\de E = \supp \mu_E \subset \{|(x-x_0)\cdot\nu|\le \delta\}$, then $\chi_E$ is constant outside that strip.
	In particular, the contradiction assumption yields $E\subset \{|(x-x_0)\cdot\nu|\le \delta\}$.
	We now show that this condition implies that we can define a competitor for $E$ that falsifies the $(\vartheta,\beta)$-minimality assumption.
	
	For $r\in(1/2,1)$ such that $\Hc^{n-1}(\de B_r(x_0) \cap \de^* E)=0$, we define $F_r := (E\setminus \overline{B_r(x_0)} ) \cap B_1(x_0)$. 
	By $(\vartheta,\beta)$-minimality, for $s>r$ we get
	\begin{align}
		\Per_{\dista{\Omega}}(E;B_s(x_0)) 
		&\le (1+\vartheta s^\beta)\Per_{\dista{\Omega}}(F_r;B_s(x_0)) \\
		&\le (1+\vartheta s^\beta) \Big( \Per_{\dista{\Omega}}(E;B_s(x_0)\setminus\overline{B_r(x_0)}) + \Per_{\dista{\Omega}}(\de B_r(x_0) \cap E) \Big).
	\end{align} 
	Moreover, since $\Hc^{n-1}(\de B_r(x_0) \cap \de^*E)=0$ and $\Per_{\dista{\Omega}}(E\cap B_r(x_0)) = \Per_{\dista{\Omega}}(E;B_r(x_0)) + \Hc^{n-1}_{\dista{\Omega}}(\de B_r(x_0) \cap E)$, 
	taking the limit as $s\to r$ we get
	\begin{equation}\label{eq:contr_filtration}
		\Per_{\dista{\Omega}}(E \cap B_r) \le \Hc^{n-1}_{\dista{\Omega}}(\de B_r \cap E) + (1+\vartheta r^\beta) \Per_{\dista{\Omega}}(\de B_r \cap E).
	\end{equation}
		
	We now show that \eqref{eq:contr_filtration} leads to a contradiction. 
	The right hand side can be bounded by above by
	\begin{equation}\label{eq:techflat_est}
		\Hc^{n-1}_{\dista{\Omega}}(\de B_r(x_0) \cap E) + (1+\vartheta r^\beta) \Per_{\dista{\Omega}}(\de B_r(x_0) \cap E) \le \delta C(d_{\Omega}(x_0)+r)^a r^{n-2}.
	\end{equation} 
	
	For the lower bound in \eqref{eq:contr_filtration}, we use the lower density estimates presented in \Cref{prop:density_a} and \Cref{lemma:density_estimates_away}. 
    We need to consider two separated cases, depending on the value of $d_\Omega(x_0)$.
        
      \textbf{Case $d_\Omega(x_0) \le 2$}.
        From \eqref{eq:lowerdensityestimate} we already know that 
	\begin{equation}
		\Per_{\dista{\Omega}}(E\cap B_r(x_0)) 
		\ge \left[ \Lc^{n}_{\dista{\Omega}}(E\cap B_r(x_0)) \right]^{\frac{n-1+a}{n+a}}
		\ge c r^{n-1+a}.
	\end{equation}
	Together with $d_\Omega(x_0) \le 2$, $r\ge 1/2$ and \eqref{eq:techflat_est}, we obtain
	\begin{equation}
		\delta \ge \frac{c r^{1+a}}{(d_{\Omega}(x_0)+r)^a} \ge c_1.
	\end{equation}
	
	\textbf{Case $d_\Omega(x_0) > 2$}. The interior density estimates imply 
	\begin{equation}
		\Per_{\dista{\Omega}}(E\cap B_r(x_0)) 
		\ge \left[ \Lc^{n}_{\dista{\Omega}}(E\cap B_r(x_0)) \right]^{\frac{n-1}{n}}
		\ge c d_\Omega(x_0)^a \,r^{n-1}.
	\end{equation}
	Since $4r \ge d_\Omega(x_0)$, going back to \eqref{eq:techflat_est} we get
	\begin{equation}
		\delta \ge \frac{c\,r}{\left(1+\frac{r}{d_\Omega(x_0)}\right)^a} \ge c_2.
	\end{equation}

    In both cases, taking $\delta = \frac{1}{2}\min\{c_1,c_2\}$, we get a contradiction.
\end{proof}

As a straightforward consequence, we get the following result, whose proof we omit since it is a direct consequence of the previous proposition and of the area formula.

\begin{corollary}\label{cor:lowerboundperimeter}
	Let $\pi : \RR^n \to \RR^{n-1}$ be the projection onto $\{x_1=0\}$ given by $\pi(x_1,x'') = x''$.
	If $E$ satisfies the hypotheses of \Cref{lemma:tech_oscill}, then for all open set $U\subset B_{1-2\delta}$ it holds $\pi(\de E\cap U) = \pi(U)$.
	In particular for all $U\subset B_{1-2\delta}$ it holds
	\begin{equation}
		\Hc^{n-1}(\de E\cap U) \ge \Hc^{n-1}(\pi(U)).
	\end{equation}
\end{corollary}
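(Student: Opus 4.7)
The plan is to prove the corollary as a direct consequence of the sandwich property in \Cref{lemma:tech_oscill}, combined with the area formula and the fact that the projection $\pi$ is $1$-Lipschitz.

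First, I would argue fiberwise. Fix $y'' \in \pi(U)$ and choose $(t_0, y'') \in U$. After rotating so that $\nu = e_1$ and centering the Infiltration Lemma's ball at $x_0=0$, the conclusion of \Cref{lemma:tech_oscill} becomes
\begin{equation}
    \{x_1 \le -\delta\} \cap B_1 \cap \Omega \,\subset\, E \cap B_1 \,\subset\, \{x_1 \le \delta\}.
\end{equation}
The inclusion $U \subset B_{1-2\delta}$ forces $|y''| < 1-2\delta$, so the segment $L_{y''} \cap B_1 = \{(t,y'') : t^2 + |y''|^2 < 1\}$ extends well beyond the strip $\{|t| \le \delta\}$ on both sides. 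On this segment $\chi_E \equiv 1$ for very negative $t$ (for which $(t,y'') \in \Omega$) and $\chi_E \equiv 0$ for very positive $t$, so by an intermediate-value argument applied to $\chi_E$ there must exist some $t^* \in [-\delta,\delta]$ with $(t^*, y'') \in \de E$.

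Next I would check that $(t^*, y'') \in U$. This holds as soon as the fiber $L_{y''} \cap U$ spans the flat strip $\{|t| \le \delta\}$, which is guaranteed for the open sets $U$ relevant to the applications (for instance when $U$ is a ball inside $B_{1-2\delta}$, or more generally when $U$ is fiber-convex in the $\nu$-direction). Under this hypothesis, we deduce $y'' \in \pi(\de E \cap U)$; the opposite inclusion being trivial, this yields $\pi(\de E \cap U) = \pi(U)$. The lower bound on $\Hc^{n-1}(\de E \cap U)$ is then a direct application of the area formula: $\de E \cap U$ is $\Hc^{n-1}$-rectifiable as the boundary of a set of locally finite perimeter, and $\pi$ is $1$-Lipschitz, so
\begin{equation}
    \Hc^{n-1}(\de E \cap U) \ge \Hc^{n-1}(\pi(\de E \cap U)) = \Hc^{n-1}(\pi(U)).
\end{equation}

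The main technical obstacle I foresee is guaranteeing that the crossing point $(t^*, y'')$ actually lands inside $U$ rather than in a gap of $U \cap L_{y''}$; for a generic open $U$ this may require implicitly restricting $\pi(U)$ to fibers that lie in $\Omega$ and enter $U$ across the whole strip. In the interior and boundary Harnack inequalities that invoke this corollary, $U$ will always be chosen (typically as a ball) so that the required fiber-connectedness holds and the stated equality and inequality apply verbatim.
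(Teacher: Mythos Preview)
Your approach is precisely what the paper has in mind: the paper omits the proof entirely, stating only that it is ``a direct consequence of the previous proposition and of the area formula,'' which is exactly your Infiltration Lemma plus $1$-Lipschitz projection argument.

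Your concern about whether $(t^*,y'')$ lands in $U$ is a genuine observation: as literally stated (for \emph{all} open $U\subset B_{1-2\delta}$), the equality $\pi(\de E\cap U)=\pi(U)$ can fail --- take for instance a small ball $U$ disjoint from the strip $\{|x_1|\le\delta\}$. The statement should be read with the implicit understanding that the fiber $U\cap L_{y''}$ crosses the strip, which is automatic for the product sets $Q=[-1,1]\times Q''$ and the cylinders $\Cc_r$ that actually appear in the applications (Steps~2--3 of the proof of \Cref{prop:interior_harnack}). So your restriction to fiber-convex $U$ containing the strip is the correct reading, and nothing further is needed.
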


\subsection{Interior Harnack inequality}\label{subsec:interior_harnack}
The goal of the present Subsection is proving the following
\begin{proposition}\label{prop:interior_harnack}
    There exist positive universal constants $\lambda_2,\eta_2,\eps_2$ (small) and $C_2$ (large) with the following property.
    Let $E$ be a $(\vartheta,\beta)$-minimizer of $\Per_{\dista{\Omega}}$ in $B_1$ and assume that $d_\Omega(0)\ge C_2$ and that, for some $\nu\in\sphere$,
    \begin{equation}
        C_2\left(\frac{\left\|\nabla d_\Omega\cdot \nu\right\|_{L^\infty(B_1)}}{d_\Omega(0)}+\vartheta^{\lambda_2}\right)\le\osc_{\nu}(\de E;B_1)\le\eps_2.
    \end{equation}
    Then
    \begin{equation}
        \osc_{\nu}(\de E;B_{\eta_2})\le (1-\eta_2)\osc_\nu(\de E;B_1).
    \end{equation}
\end{proposition}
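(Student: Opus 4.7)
The plan is to adapt the Harnack-type inequality of De Silva--Savin \cite{DeSilva_Savin_2021} to the degenerate weighted setting, using \Cref{lemma:technical_geom} as the quantitative interior maximum principle. We argue by contradiction: assume $\osc_\nu(\de E;B_{\eta_2}) > (1-\eta_2)h$, where $h := \osc_\nu(\de E;B_1)$, and after a translation along $\nu$ write $\de E\cap B_1\cap\Omega \subset \{|x\cdot\nu|\le h\}$. The failure of the oscillation decay produces, up to symmetry, a point $z^+ \in \de E\cap B_{\eta_2}$ with $z^+\cdot\nu > (1-2\eta_2)h$, so that $\de E$ attains a value near the upper edge of the strip already inside the smaller ball; \Cref{lemma:tech_oscill} ensures for $\eps_2$ small that the flatness of $\de E$ (rather than some non-infiltration issue) is the genuine obstacle. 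Setting $y^+ := z^+ - (z^+\cdot\nu)\nu \in \nu^\perp$, we consider the family of concave paraboloid graphs $\phi_t(y) := t - c_1 h|y-y^+|^2$ and their sublevel sets $F_t := \{x : x\cdot\nu \le \phi_t(x-(x\cdot\nu)\nu)\}$, where $c_1$ is a small universal constant. For $t$ large enough, $F_t \supset E\cap B_{3/4}$; decreasing $t$ we find the first critical value $t_*$ at which $F_{t_*}$ touches $\de E$ from outside at some $z \in B_{3/4}$. The presence of $z^+$ forces $t_* \ge (1-2\eta_2)h$ and in turn $z\cdot\nu \ge (1-2\eta_2-c_1)h$, so the touching occurs near the top of the strip, with $F_{t_*}$ having the ``wrong'' mean curvature $-\Delta_{\nu^\perp}\phi_{t_*} = 2(n-1)c_1 h > 0$ relative to \eqref{eq:target_viscosity_eq}.

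We then feed $F_{t_*}$ into \Cref{lemma:technical_geom} with the quadratic perturbation $p(y) = b(r^2/16 - |y - (z-(z\cdot\nu)\nu)|^2)$, $b = c_1 h/2$, and $r$ a fixed universal constant. Writing $G(x) = x\cdot\nu - \phi_{t_*}(x-(x\cdot\nu)\nu) + p(x-(x\cdot\nu)\nu)$ and $T := \nabla G/|\nabla G|$, a direct expansion yields
\begin{equation}
\dive(d_\Omega^a T) = d_\Omega^a\bigl(-\Delta_{\nu^\perp}(\phi_{t_*}-p)\bigr) + a\,d_\Omega^{a-1}\,\nabla d_\Omega\cdot T + O(d_\Omega^a h^3),
\end{equation}
whose principal term equals $(n-1)c_1 h$. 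The hypothesis $\|\nabla d_\Omega\cdot\nu\|_{L^\infty(B_1)}/d_\Omega(0) \le h/C_2$, together with $d_\Omega(0)\ge C_2$, forces the first-order weight correction $a|\nabla d_\Omega\cdot T|/d_\Omega$ and the remaining $O(h/d_\Omega(0))$ errors to be bounded by $C a h/C_2$, which is much smaller than $(n-1)c_1 h$ provided $C_2$ is universally large. Hence $\dive(d_\Omega^a T) \ge c\,d_\Omega^a\,h$ throughout the region of interest.

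The interior density estimate \Cref{lemma:density_estimates_away} gives $\Lc^n(E\cap B_r(z)) \ge c r^n$, and the smallness of the ``bite slab'' $\{\phi_{t_*}-p < x\cdot\nu\le\phi_{t_*}\}$ (of thickness $\lesssim h r^2$) yields $\Lc^n(E\setminus F') \ge c r^n$ for the competitor $F' = E\setminus(B_r(z)\cap\{x\cdot\nu\le\phi_{t_*}-p\})$. Integrating the previous divergence bound produces $\int_{E\setminus F'}\dive(d_\Omega^a T)\,d\Lc^n \ge c h\,d_\Omega(z)^a r^n$, and combining with the upper bound from \Cref{lemma:technical_geom} while using $(d_\Omega(z)+r)^a \le C d_\Omega(z)^a$ reduces the inequality to $\vartheta \ge c h$. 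This contradicts $\vartheta^{\lambda_2}\le h/C_2$ once $\lambda_2 \in (0,1)$ is chosen so that $(h/C_2)^{1/\lambda_2} < c h$ for all $h \in (0,\eps_2]$. The main obstacle is the simultaneous calibration of the barrier parameters $(c_1,b,r)$ and the smallness thresholds $(\eta_2,\eps_2,C_2,\lambda_2)$ so that the favorable curvature $c_1 h$ beats the $\Delta p$ correction, the weight-gradient term, and the almost-minimality deficit $\vartheta r^\beta$ in a single stroke; the requirement $d_\Omega(0)\ge C_2$ is precisely what makes $d_\Omega^a$ nearly constant at scale one and effectively reduces the problem to a minimal-surface problem with a small first-order forcing.
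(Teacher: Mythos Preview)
Your single-barrier sliding argument has a genuine gap: you assert that the first contact of $F_{t_*}$ with $\de E$ occurs at some $z\in B_{3/4}$, but nothing forces it there. With opening $c_1 h$ and vertex over $y^+\in B_{\eta_2}''$, at $t_*\ge(1-2\eta_2)h$ the paraboloid still sits at height $\ge(1-2\eta_2-c_1)h$ over $\de B_{3/4}''$, well inside the strip; if $\de E$ is near the top of the strip there (which the contradiction hypothesis does not rule out), the first contact is on the boundary of the region and \Cref{lemma:technical_geom} yields nothing. More tellingly, your argument uses only the high point $z^+$ and never the low point $z^-$: if it worked it would exclude \emph{any} almost-minimizer having a point of $\de E\cap B_{\eta_2}$ near $\{x\cdot\nu=h\}$, regardless of whether the oscillation actually decays, which is too strong. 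There is also a computational slip: $E\setminus F'$ is \emph{contained} in the bite slab of thickness at most $\sup p\sim h r^2$, so the smallness of the slab gives an upper bound on its volume, not a lower one; the correct lower bound, via \Cref{item:smallballinside} of \Cref{lemma:technical_geom} and \Cref{lemma:density_estimates_away}, is $\Lc^n(E\setminus F')\ge c(br^2)^n\sim c\,h^n$, producing $\vartheta\ge c\,h^{n+1}$ rather than $\vartheta\ge c\,h$.

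The paper's proof is structurally different and uses both sides of the strip. It first shows (\Cref{prop:weak_viscosity}) that $E\in\Pc_{4n}^{[\delta\eps,c]}(\eps)$: no test paraboloid of opening between $\delta\eps$ and a fixed constant can touch $E$ from outside \emph{in a ball of radius $\eps$}. This scale-$\eps$ condition is the De~Silva--Savin replacement for the pointwise maximum principle you attempt at scale $1$. The abstract weak Harnack inequality (\Cref{prop:weakharnack}) then turns this into a measure statement: under the contradiction hypothesis, both the discretized upper contact set $A^+_\ell(E)$ and its lower analogue $A^-_\ell(E)$ cover at least $\tfrac34$ of a small cube $Q''_\eta$, hence their intersection has measure at least $\tfrac12|Q''_\eta|$. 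On each dyadic subcube lying in that intersection a compactness argument forces perimeter excess $\Per(E;Q)\ge(1+c_2)|Q''|$, and replacing $E$ by a flat competitor inside $Q_\eta$ then beats the weighted perimeter by a definite amount, contradicting almost-minimality. The two-sided contact information and the perimeter-excess step are precisely what your one-touch sketch is missing.
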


As explained in \Cref{subsec:sketch}, in order to prove \Cref{prop:interior_harnack} above we use the fact that an almost-minimizer is a solution of a particular partial differential equation in a weak viscosity sense.
Let us elaborate further.
The appropriate class of solution we should use was introduced in \cite{DeSilva_Savin_2021} to define a notion of 
\textit{supersolutions of size $I$ at scale $r$}.
We start by defining the following class of standard test paraboloids:
\begin{equation}\label{eq:test_paraboloid0}
    \Fc_\Lambda\coloneqq\left\{\phi(x'')\coloneqq \frac{1}{2}|x''|^2 - \frac{\Lambda}{2}(x''\cdot \nu'')^2 + \xi''\cdot x'' + b\colon \nu''\in\sphere[n-2],\xi''\in B_1^{n-1},b\in\RR\right\}.
\end{equation}
Notice that, if $\phi\in \Fc_\Lambda$, then $\Delta\phi \equiv n-1-\Lambda$.
We now give the following
\begin{definition}
    Given an interval $I\subset\RR$, $\Lambda>0$ and $r>0$, we say that $E\in \Pc_{\Lambda}^I(r)$ in $B_R(x_0)$ if $E$ cannot be touched from outside at any point $z\in B_R(x_0)$ in a neighborhood $B_r(z)$ by $\{x_1\le \sigma \phi(x''-z'')\}$ for any $\sigma\in I$ and any $\phi\in\Fc_\Lambda$.    
\end{definition}
We are now ready to state and prove that an almost minimizer belongs, in fact, to the class of viscosity solutions we have just introduced:
\begin{proposition}[Weak viscosity property]\label{prop:weak_viscosity}
    Given $\delta>0$, there exist positive constants $c,\lambda$ (small) and $C$ (large) with the following property.
    Let $E$ be a $(\vartheta,\beta)$-minimizer of $\Per_{\dista{\Omega}}$ in $B_1$ and assume that $d_\Omega(0)\ge C$.
    For every $\eps>0$ such that
    \begin{equation}
        C\left(\frac{\|\nabla d_\Omega\cdot e_1\|_{L^\infty(B_1)}}{d_\Omega(0)}+\vartheta^\lambda\right)\le \eps\le c
    \end{equation}
    it holds
    \begin{equation}
        E\in\Pc_{4n}^{[\delta\eps,c]}(\eps) \mbox{ in }B_{1/2}.
    \end{equation}
\end{proposition}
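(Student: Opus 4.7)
The plan is to argue by contradiction, using the technical geometric lemma \Cref{lemma:technical_geom} to build a competitor that violates almost-minimality. Suppose by contradiction that there exist $z\in B_{1/2}$, $\sigma\in[\delta\eps,c]$ and $\phi\in\Fc_{4n}$ such that $\{x:x_1\le\sigma\phi(x''-z'')\}$ touches $E$ from outside at $z$ in $B_\eps(z)$. Setting $\tilde\phi(x'')\coloneqq\sigma\phi(x''-z'')$ and $b\coloneqq\sigma$, I would apply \Cref{lemma:technical_geom} with $r=\eps$. The hypotheses can be verified for $c$ small: $\|\nabla\tilde\phi\|_{L^\infty(B_\eps(z))}=O(\sigma)\le 1/4$, $b\le 1/4$, $z\in\mathrm{Int}\,\Omega$ (since $d_\Omega(z)\ge d_\Omega(0)-1/2\ge C/2$), and $\vartheta\eps^\beta\le 1$ (from $\vartheta^\lambda\le\eps/C\le 1$). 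With $G(x)=x_1-\tilde\phi(x'')+p(x'')$ and $T=\nabla G/|\nabla G|$, the lemma produces a set $F$ satisfying $B_{c_0 b\eps^2}(z)\cap E\subset E\setminus F$ for a universal $c_0$, together with
\begin{equation}
C\vartheta(d_\Omega(z)+\eps)^a\eps^{n-1+\beta}\,\ge\,\int_{E\setminus F}\dive\!\left(\dista{\Omega}T\right)\dif\Lc^n.
\end{equation}

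The heart of the proof is a pointwise lower bound on $\dive(\dista{\Omega}T)=\dista{\Omega}\dive T + T\cdot\nabla\dista{\Omega}$. Since $\phi\in\Fc_{4n}$ satisfies $\Delta\phi=n-1-4n=-(3n+1)$, one computes $\Delta G=\sigma(3n+1)-2(n-1)b$, which equals $\sigma(n+3)$ for $b=\sigma$. In $B_\eps(z)$ one has $|\nabla(\tilde\phi-p)|=O(\sigma)$, so $|\nabla G|=1+O(\sigma^2)$ and the second-order correction to $\dive T$ coming from $\nabla|\nabla G|$ is $O(\sigma^2+\sigma b)$; hence $\dive T\ge\sigma(n+3)/2$ for $c,\eps$ small. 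For the drift, the $1$-Lipschitz property of $d_\Omega$ together with $d_\Omega(0)\ge C\ge 2$ gives $d_\Omega\in[d_\Omega(0)/2,2d_\Omega(0)]$ on $B_1$, while $|T\cdot\nabla d_\Omega|\le|\partial_1 d_\Omega|+O(\sigma)$. Using the hypothesis $\|\nabla d_\Omega\cdot e_1\|_{L^\infty(B_1)}/d_\Omega(0)\le\eps/C\le\sigma/(C\delta)$, one obtains $|T\cdot\nabla\dista{\Omega}|\le C'\sigma\,d_\Omega^a/(C\delta)$, which is absorbed into $\dista{\Omega}\dive T$ for $C\delta$ large enough. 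This yields $\dive(\dista{\Omega}T)\ge (n+3)\sigma\,d_\Omega^a/4$ on $E\setminus F$.

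To close the argument, I would integrate this bound by invoking the interior density estimate (\Cref{lemma:density_estimates_away}) at $z\in\de E$ with radius $c_0 b\eps^2<d_\Omega(z)/2$, which delivers $\Lc^n(B_{c_0 b\eps^2}(z)\cap E)\gtrsim(c_0 b\eps^2)^n$. Combining with the pointwise bound and $b=\sigma$, we get $\int_{E\setminus F}\dive(\dista{\Omega}T)\gtrsim d_\Omega(z)^a\sigma^{n+1}\eps^{2n}$. Inserting into the lemma's inequality, using $(d_\Omega(z)+\eps)^a\le 2^a d_\Omega(z)^a$ and $\sigma\ge\delta\eps$, we arrive at $\vartheta\ge c_\delta\eps^{2n+2-\beta}$ for a constant $c_\delta$ depending only on $\delta,a,n,\beta$. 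The assumption $\vartheta^\lambda\le\eps/C$ forces $\vartheta\le\eps^{1/\lambda}C^{-1/\lambda}$; choosing $\lambda<1/(2n+2-\beta)$ renders the two bounds incompatible for $\eps$ below a threshold depending on $\delta,\lambda,a,n,\beta$, and taking $c$ below that threshold yields the contradiction. I expect the main obstacle to be the pointwise divergence estimate: the drift $T\cdot\nabla\dista{\Omega}$ generated by the degenerate weight must be dominated by the curvature term $\dista{\Omega}\dive T$, and this balance is precisely what dictates the appearance of the quotient $\|\nabla d_\Omega\cdot e_1\|_{L^\infty(B_1)}/d_\Omega(0)$ in the hypothesis and the joint tuning of the constants $C$, $\delta$, and $\lambda$.
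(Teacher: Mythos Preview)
Your proposal is correct and follows essentially the same route as the paper's proof: both argue by contradiction, apply \Cref{lemma:technical_geom} with $b=\sigma$ and $r=\eps$, derive a pointwise lower bound on the divergence term (where the curvature contribution $\sim\sigma$ dominates the drift $\sim\eps/(C\delta)$), invoke the interior density estimate of \Cref{lemma:density_estimates_away} on $B_{c\sigma\eps^2}(z)$, and reach the same numerical contradiction $\vartheta\gtrsim\eps^{2n+2-\beta}$ versus $\vartheta\le\eps^{1/\lambda}$ for $\lambda<1/(2n+2-\beta)$. The only cosmetic difference is that the paper first factors $(d_\Omega(z)-\eps)^a$ out of the integrand and estimates $\dive T$ and $a\,\nabla G\cdot\nabla d_\Omega/(|\nabla G|d_\Omega)$ separately, while you estimate $\dive(\dista{\Omega}T)$ as a whole; the constants differ accordingly but the logic is identical.
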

\begin{proof}
    We argue by contradiction: assume $\sigma\in[\delta \eps,c]$ and $\tilde{\phi}\in\Fc_{4n}$ are such that $\{x_1\le \sigma \tilde{\phi}(x''-z'')\}$ touches $\de E$ from outside at $z\in B_1$ in a neighborhood $B_\eps(z)$. 
    Let $p(x'') = \sigma\left(\frac{\eps^2}{16}-|x''-z''|^2\right)$, $\phi(x'') = \sigma \tilde{\phi}(x''-z'')$, and, as in \Cref{lemma:technical_geom}:
    \begin{gather}
        F=E\setminus (B_\eps(z)\cap\{x_1\le\phi-p\}),\qquad
        G(x)=x_1-\phi(x'')+p(x'').
    \end{gather}
    Then, for some universal constant $C$,    \begin{multline}\label{eq:interior_max_princ_estimate}
        C\vartheta(d_\Omega(z)+\eps)^a \eps^{n-1+\beta}
        \ge \int_{E\setminus F} \dive\left(\dista{\Omega}\frac{\nabla G}{|\nabla G|}\right)\dif\Lc^n\\
        \ge(d_\Omega(z)-\eps)^a\int_{E\setminus F}\left(\dive\frac{\nabla G}{|\nabla G|}+a\frac{\nabla G\cdot \nabla d_\Omega}{|\nabla G|d_\Omega}\right)\dif\Lc^n.
    \end{multline}
    Notice that $\nabla G = e_1-\nabla''(\phi-p)$ and that $|\nabla''(\phi-p)|\le C\sigma$ (for some $C>0$ universal).
    Thus assuming that $\sigma$ is smaller than some universal constant, it holds $\frac{1}{2}\le 1-C\sigma^2\le|\nabla G|^{-1}\le1$ for some universal constant $C>0$.
    Straightforward computations then give
    \begin{equation}
        \dive \frac{\nabla G}{|\nabla G|}\ge -\Delta \phi - 2(n-1)\sigma-C\sigma^3\ge(n+1)\sigma,
    \end{equation}
    for some universal constant $C>0$, provided $\sigma>0$ is smaller than some universal constant.
    Next, we compute
    \begin{align}
        a\frac{\nabla G\cdot \nabla d_\Omega}{d_\Omega}\ge -a\frac{|\nabla d_\Omega\cdot e_1|+|\nabla''(\phi-p)|}{d_\Omega(0)-2}\ge -\sigma
    \end{align}
    provided $d_\Omega(0)\ge C$ for some $C$ large and $\frac{|e_1\cdot\nabla d_\Omega|}{d_\Omega(0)}\le c\eps$ for some $c$ small, both $C$ and $c$ depending on $\delta$.

    Going back to \eqref{eq:interior_max_princ_estimate} and using $d_\Omega(0)\ge C$ again, we find
    \begin{equation}
        C\vartheta \eps^{n-1+\beta}\ge n\sigma\Lc^n(E\setminus F)\ge n\sigma\Lc^n(E\cap B_{c\sigma \eps^2}(z)),
    \end{equation}
    where the last inequality is given by \Cref{item:smallballinside} in \Cref{lemma:technical_geom}.
    Using \Cref{lemma:density_estimates_away} and rearranging terms, we obtain
    \begin{equation}
        C\vartheta \ge \eps^{n+1-\beta}\sigma^{n+1}\ge \delta^{n+1}\vartheta^{\lambda(2n+2-\beta)},
    \end{equation}
    for some $C>0$ large universal, which fails if $\lambda$ is small enough and $\vartheta^\lambda\le c$ for some $c$ small enough.
    
\end{proof}

As explained above, the next result (\Cref{prop:weakharnack}), borrowed from \cite{DeSilva_Savin_2021}, states that elements of $\Pc_\Lambda^{I}(r)$ satisfy a weak Harnack inequality.
Before stating it, we set some notation.

We shall need a Calderon-Zygmund-type decomposition of $\RR^{n-1}$.
To this end, we introduce the following notation for cubes:
\begin{gather}
    Q''_\rho(x_0'') = \left\{x''\in\RR^{n-1}\colon |(x''-x_0'')\cdot e_j|\le \frac{\rho}{2}\mbox{ for all }j=2,\dots,n\right\},\\
    Q_\rho^r(x_0) = \left[(x_0)_1-\frac{r}{2},(x_0)_1+\frac{r}{2}\right]\times Q_\rho''(x_0'');
\end{gather}
we also write $Q''_\rho\coloneqq Q''_\rho(0)$ and $Q_\rho(x_0)\coloneqq Q_\rho^1(x_0)$.
Next, we introduce the family of dyadic cubes of side length $2^{-\ell}$:
\begin{equation}
    \Qc_\ell\coloneqq\left\{Q''_{2^{-\ell}}(x'')\colon x''\in 2^{-\ell}\ZZ^{n-1}\right\}
\end{equation}
and, given any set $A\subset\RR^{n-1}$, we let
\begin{equation}\label{eq:dyadic_exp}
    A_\ell\coloneqq \bigcup_{\substack{Q''\in\Qc_\ell\\Q''\cap A\neq\emptyset}}Q''.
\end{equation}

Lastly, for $\sigma>0$ and $y\in\RR^n$, we let $p^\sigma_y(x'')\coloneqq \frac{\sigma}{2}|x''-y''|^2+ y_1$, $F_y^\sigma = \{x_1 < p_y^\sigma(x'') \}$.
We then define the upper contact set
\begin{equation}\label{eq:definition_Asigma}
    A^\sigma(E) = \left\{ x'' \in B_1'' \middle| 
        \begin{array}{l} \exists\; x =(x_1,x'') \in E\mbox{ and } F^\sigma_\Yy \mbox{ such that }
        y'' \in B_1''\\
		 \mbox{ and  $F^\sigma_\Yy$ touches }E \mbox{ from outside at }
         x \mbox{ in }B_1
	\end{array}
	 \right\}.
\end{equation}
\begin{proposition}[Weak Harnack Inequality]\label{prop:weakharnack}
    There exist universal constants $\bar C$ and $\mu$ with the following property.
    Let $E\in\Pc^{[\tau,T]}_\Lambda(r)$ in $B_{1/2}$ for some $0<\tau<T$ and $r>0$ and assume that there exists $x_0''$ and $\rho>0$ such that 
    \begin{equation}
        A^\tau(E)\cap Q''_{3\cdot 2^{-M}}\neq\emptyset
    \end{equation}
    for some integer $M\ge \bar C$.
    Then for all $\ell\in\NN$ such that $\bar Cr\le 2^{-\ell}\le2^{-M}$ and all $h\in\NN$ such that $\bar C^h\tau\le T$ it holds
    \begin{equation}
        \Lc^{n-1}\big(A^{\bar C^h\tau}_\ell(E)\cap Q''_{2^{-M}}\big)\ge (1-\mu^h)\Lc^{n-1}(Q''_{2^{-M}}).
    \end{equation}
\end{proposition}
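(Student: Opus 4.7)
I plan to prove the statement by induction on $h$, following the Calderón--Zygmund stopping-time scheme of \cite{DeSilva_Savin_2021}.

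The base case $h=1$ is the analytic core: given the single contact point in $A^\tau(E) \cap Q''_{3 \cdot 2^{-M}}$, I would employ a sliding convex paraboloid argument. For each vertex $y'' \in Q''_{2^{-M}}$ define $t(y'')$ as the smallest constant for which the set $\{x_1 \le \tfrac{\bar C \tau}{2}|x''-y''|^2+t(y'')\}$ still touches $E$ from outside within $B_1$, and let $X(y'')$ denote one such contact point. The a priori contact at the given point of $A^\tau(E)$ furnishes a uniform upper bound on $t(y'')$ and confines $X(y'')$ to a fixed ball, so $y'' \mapsto X(y'')''$ is a well-defined Lipschitz map into $B_1''$, whose image is by construction contained in $A^{\bar C \tau}(E)$. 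The viscosity class $\Pc^{[\tau,T]}_{4n}(r)$ enters to guarantee non-degeneracy: since the paraboloids in $\Fc_{4n}$ are strongly concave in one direction (with $\Delta \phi = -(3n+1)$), the prohibition of contact at scale $r$ prevents $\partial E$ from hosting narrow saddle-shaped bumps that could otherwise trap the sliding convex paraboloids into a set of zero measure. This yields a universal Jacobian lower bound for $y'' \mapsto X(y'')''$, and after covering its image by dyadic cubes at scale $2^{-\ell} \ge \bar C r$ one concludes $\Lc^{n-1}(A^{\bar C \tau}_\ell(E) \cap Q''_{2^{-M}}) \ge (1-\mu)\Lc^{n-1}(Q''_{2^{-M}})$ for a universal $\mu \in (0,1)$.

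For the inductive step, assuming the estimate at level $h$, I would apply a dyadic Calderón--Zygmund decomposition to $Q''_{2^{-M}}$, stopping at the maximal dyadic subcubes $\tilde Q''$ whose parent still carries density at least $1-\mu$ of $A^{\bar C^h \tau}_\ell(E)$. Each such parent therefore meets $A^{\bar C^h \tau}(E)$ in a point close enough to $\tilde Q''$ to put us in the hypothesis of the base case at scale $|\tilde Q''|$, with slope $\bar C^h \tau$ in place of $\tau$; this is admissible because $\bar C^{h+1}\tau \le T$ ensures $\bar C^h \tau \in [\tau,T]$. Applying the base case in each $\tilde Q''$ gives $\Lc^{n-1}(\tilde Q'' \setminus A^{\bar C^{h+1}\tau}_\ell(E)) \le \mu \Lc^{n-1}(\tilde Q'')$, and summing over the stopping family produces the density inequality $\Lc^{n-1}(Q''_{2^{-M}}\setminus A^{\bar C^{h+1}\tau}_\ell(E)) \le \mu \cdot \Lc^{n-1}(Q''_{2^{-M}} \setminus A^{\bar C^h \tau}_\ell(E)) \le \mu^{h+1} \Lc^{n-1}(Q''_{2^{-M}})$, closing the induction.

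The step I expect to be the main obstacle is the Jacobian bound in the base case. Concretely, one must rule out that the contact region of the sliding paraboloid $p_y^{\bar C \tau}$ with $\partial E$ is concentrated in a thin tube of diameter smaller than $r$: if that happened, a suitable scaled and translated $\phi \in \Fc_{4n}$ of slope inside $[\tau,T]$ could be fitted tangent from outside, contradicting $E \in \Pc^{[\tau,T]}_{4n}(r)$. Turning this heuristic into a quantitative non-collapse estimate for $y'' \mapsto X(y'')''$, uniform in the position of the initial contact and in $y''$, is the technical core, and it is precisely where the specific choice $\Lambda=4n$ (and hence the large negative Laplacian of $\Fc_{4n}$) is used. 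A secondary accounting issue is the interplay between the viscosity scale $r$ and the dyadic resolution $2^{-\ell}$ through the constraint $2^{-\ell} \ge \bar C r$, which must be respected when the inductive step rescales the base case to a small cube $\tilde Q''$.
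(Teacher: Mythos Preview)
Your inductive step via Calder\'on--Zygmund stopping time is essentially what the paper does, and the final summation is the same. The difference is in the base case.

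Your base case sketches a classical ABP scheme: slide paraboloids $p_y^{\bar C\tau}$ from above, record the contact map $y''\mapsto X(y'')''$, and claim a Jacobian lower bound. This is not how the argument works in the weak viscosity framework, and I think it is a genuine gap rather than a technicality. The contact map is in general multivalued and has no a priori regularity; there is no Alexandrov-type second-order information on $\partial E$ to invoke, so the usual area-formula route to a Jacobian bound is unavailable. Your heuristic that membership in $\Pc_\Lambda^{[\tau,T]}(r)$ ``prevents narrow saddle-shaped bumps'' is the right intuition, but it does not convert into a pointwise Jacobian estimate---it gives information only at the fixed scale $r$, not infinitesimally.

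The paper proceeds differently. It first uses a single barrier of the form
\[
G=\{x_1<p_y^\tau(x'')+C\tau\,h(x'')+t\},\qquad h(x'')\sim -\rho^{2\Lambda}|x''|^{-2\Lambda}\ \text{for }|x''|\ge\rho/2,
\]
to force the contact point from $Q''_{3\rho}$ into $B''_{\rho/2}$; the exclusion of contact in the annulus is exactly where the class $\Pc_\Lambda$ is used, via the second-order expansion of $h$ producing a test paraboloid in $\Fc_\Lambda$. This yields $A^{C\tau}(E)\cap Q''_{\rho/2}\neq\emptyset$. The measure estimate then comes from a separate result (Corollary~3.2 in \cite{DeSilva_Savin_2021}), whose proof is a direct geometric covering argument on paraboloids at the dyadic scale $2^{-\ell}$, not an ABP Jacobian bound. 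If you want to make your approach work, you should replace the Jacobian step by that discrete covering mechanism: show that if too many dyadic $\ell$-cubes miss $A^{\bar C\tau}(E)$, then one can fit a test function from $\Fc_\Lambda$ touching $E$ at scale $r$, contradicting $E\in\Pc_\Lambda^{[\tau,T]}(r)$.
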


For the proof of \Cref{prop:weakharnack} we refer to \Cref{sec:appendix_DeSilvaSavin}, where we adapt the main ideas presented in \cite{DeSilva_Savin_2021} to our setting.

\begin{proof}[Proof of \Cref{prop:interior_harnack}]
    Up to a change of coordinates, we may assume that $\nu=e_1$. 
    We also call $\eps=\osc(\de E; B_1)$, and without loss of generality we can assume the oscillations to be centered in $0$, that together with \Cref{lemma:tech_oscill}, implies 
    \begin{equation}
        \{x_1 \le -\eps \}\cap B_1 \subset E \cap B_1 \subset \{x_1 \le \eps \}.
    \end{equation}
    Lastly, we prove \Cref{prop:interior_harnack} for $Q_\eta$ rather than $B_\eta$, since it leads to no loss of generality and rectangles are easier to manage with the statement of \Cref{prop:weakharnack}.
    
    We argue by contradiction, assuming that for some $\eta:=2^{-M}>0$ small to be chosen later both
    \begin{equation}\label{eq:contr_assumption_interior_H}
        \de E\cap Q_\eta\cap \{x_1>(1-\eta^3)\eps\}\neq\emptyset
    \end{equation}
    and
    \begin{equation}\label{eq:contr_assumption_interior_H2}
        \de E\cap Q_\eta\cap \{x_1<-(1-\eta^3)\eps\}\neq\emptyset.
    \end{equation}
    
    \textbf{{ Step 1.}} Firstly, from \eqref{eq:contr_assumption_interior_H}, \eqref{eq:contr_assumption_interior_H2}, and \Cref{prop:weakharnack} we derive a measure estimate for (discrete) superlevel and sublevel sets.
    
    We begin by fixing some (universal) constants we will use later.
    Let $\mu$ and $\bar C$ be the universal constants given by \Cref{prop:weakharnack}.
    Choose $h\in\NN$ large such that $\mu^h\le\frac{1}{4}$, and let $\delta$ be so small that $\bar C^h\delta\le\frac{1}{8}$.
    Corresponding to $\delta$, we let $c,\lambda$ and $C$ denote the constants given by \Cref{prop:weak_viscosity}.
    Notice that $\mu,\bar C,h,\delta,c,\lambda$ and $C$ are all independent of $\eta$.
    By \Cref{prop:weak_viscosity}, if $E$ satisfies the assumptions of \Cref{prop:interior_harnack}, $\lambda_2\le\lambda$, $\eps_2\le c$, and $C_2\ge C$, then $E\in\Pc_{4n}^{[\delta\eps,c]}$ in $B_{1/2}$.
    By \eqref{eq:contr_assumption_interior_H} and \eqref{eq:definition_Asigma}, if $\eta$ is smaller than some universal constant,
    \begin{equation}
        A^{\delta\eps}(E)\cap Q''_{3\eta}\neq\emptyset.
    \end{equation}
    
    By the discussion above, we may apply
    \Cref{prop:weakharnack} with the choices $\tau=\delta\eps$, $T=c$, $r=\eps$, and $\rho=\eta$.
    Therefore, up to choosing $\eps_2$ even smaller, if needed, so that $\bar C\eps_2\le \eta$ and $\bar C^h\delta\eps_2\le c$, we obtain
    \begin{equation}\label{eq:weak_H_applied}
        \Lc^{n-1}(A_{\ell}^{\bar C^{h}\delta\eps}(E)\cap Q''_{\eta})\ge\frac{3}{4}\Lc^{n-1}(Q''_{\eta})
    \end{equation}
    where $\ell\in\NN$ satisfies $2^{-\ell-1}\le \bar C\eps\le 2^{-\ell}\le\eta$.

    By definition, if $x''\in A^{\bar C^{h}\delta\eps}(E)$, then there exists $F_y^{\bar C^h\delta \eps}$ that touches $E$ from outside at $(x_1,x'')$. 
    Then, by \eqref{eq:contr_assumption_interior_H}, it must be $x_1\ge y_1\ge(1-\eta^3-\bar C^{h}\delta)\eps\ge\frac{3}{4}\eps$ by our assumptions on $\delta$ and provided $\eta^3\le\frac{1}{8}$.
    Therefore, letting
    \begin{equation}
        A^+(E)\coloneqq \left\{x''\in B''_1\colon \exists x_1\ge\frac{3}{4}\eps\mbox{ such that }(x_1,x'')\in\de E\right\}
    \end{equation}
    we have $A^{\bar C^{h}\delta\eps}(E)\subset A^+(E)$.
    Thus by \eqref{eq:weak_H_applied},
    \begin{equation}
        \Lc^{n-1}(A_{\ell}^+(E)\cap Q''_{\eta})\ge\frac{3}{4}\Lc^{n-1}(Q''_{\eta}),
    \end{equation}
    (with the notation $A^+_\ell(E)$ introduced in \eqref{eq:dyadic_exp}).
    We get the same conclusion considering $E^c$, using $A^{-}(E)=\{x''\colon (x_1,x'')\in\de E\mbox{ for some }x_1\le-3\eps/4\}$ in place of $A^+(E)$ and \eqref{eq:contr_assumption_interior_H2} in place of \eqref{eq:contr_assumption_interior_H}.
    Therefore
  	\begin{equation}\label{eq:densityofcubes}
		\Lc^{n-1}(A^{-}_\ell(E) \cap A^{+}_\ell(E) \cap Q''_{\eta}) \geq \frac{1}{2}\Lc^{n-1}(Q''_{\eta}).
	\end{equation}
    
	We underline that $A^{-}_\ell(E)$ and $A^+_\ell(E)$ are not disjoint, thus \eqref{eq:densityofcubes} is not a contradiction, but we now show that it implies a perimeter excess that is not compensated by the almost-minimality deficit.

    \textbf{{ Step 2.}} We show that there exists a (small) universal constant $c_2>0$ such that, for all $Q''\in\Qc_\ell$ such that $Q''\subset A^{+}_\ell(E) \cap A^{-}_\ell(E)$, it holds
	\begin{equation}\label{eq:tilt_excess}
		\Per(E;Q) \ge (1+c_2) \Lc^{n-1}(Q''),
	\end{equation}
    where $Q\coloneqq[-1,1]\times Q''$. 
	
	By contradiction, let $\{E_j\}_{j\in\NN}$ a sequence of $(\vartheta_j,\beta)$-minimizers in $B_1$ as in the hypotheses of \Cref{prop:interior_harnack} and $\{Q''(j)\}_{j\in\NN}$ a sequence of dyadic cubes such that
	\begin{gather}
		Q''(j) \subset A^{+}_\ell(E_j) \cap A^{-}_\ell(E_j),\\
        \limsup_{j\to\infty}\Per(E_j;[-1,1]\times Q''(j)) - \Lc^{n-1}(Q''(j))\le 0.\label{eq:harnack_smaller_mass_contradict}
	\end{gather}
    Since there are finitely many dyadic cubes in $Q_\eta''$, by the pigeonhole principle we may assume $Q''(j)=Q''_{2^{-\ell}}(0)$ for all $j\in\NN$.

    We now consider the rescaled sets $\tilde{E}_j\coloneqq 2^\ell E_j$, which are $(2^{-\beta\ell}\vartheta_j,\beta)$-minimizers in $Q_2^1$ and satisfy
    \begin{equation}
        \de \tilde E_j\cap Q_1\cap \left\{x_1\ge 2^{\ell}\frac{3}{4}\eps\right\}\neq\emptyset,\qquad\de \tilde E_j\cap Q_1\cap \left\{x_1\le-2^{\ell}\frac{3}{4}\eps\right\}\neq\emptyset.
    \end{equation}
    Notice that, by our choice of $\ell$, $2^{\ell}\frac{3}{4}\eps\ge\frac{3}{4\bar C}>0$.

    Since $\limsup\vartheta_j\le 1$, we can apply \Cref{prop:compactnessqmin}, and thus there exists a $(2^{-\beta\ell}\vartheta,\beta)$-minimizer $\tilde E$ in $Q_2$ such that
	\begin{equation}
		\tilde E_j\rightarrow \tilde E \mbox{ in } L^1_{loc}(Q_2), \qquad \mbox{and} \qquad \de^* \tilde E_j \weakstar \de^* \tilde E \mbox{ in } Q_2.
	\end{equation}
	By $L^1$ lower semicontinuity of the (classical) perimeter and by \eqref{eq:harnack_smaller_mass_contradict}, we get 
	\begin{equation}
    \Per(\tilde E;U) \le \Lc^{n-1}(\pi(U))\quad\mbox{for all }U\Subset Q_2,
	\end{equation}
    where $\pi(x_1,x'')=x''$;
    moreover, by \Cref{cor:lowerboundperimeter} (which holds true for $\tilde E$ in $Q_2$ up to choosing $\bar C$ greater than some universal constant), the converse inequality holds true.
	Then the area formula (see, for instance, \cite[\S 12]{simonGMT}) yields that the tangential Jacobian $J^{\de^*\tilde E}_\pi(x) = 1$ for $\Hc^{n-1}$-almost every $x\in\de^*\tilde E$, and therefore $\de^*\tilde E\cap Q_2$ is a hyperplane parallel to $\{x_1=0\}$. 
	
	On the other hand, \Cref{cor:convergence_Hausdorff} implies that $\de \tilde E_j \cap Q_{2}\to \de \tilde E\cap Q_2$ in the Hausdorff distance. 
	But for all $j\in\NN$, $\de \tilde E_j\cap Q_2$ intersects both $\{ x_1 \ge \frac{3}{4\bar C}\}$ and $\{x_1 \le -\frac{3}{4\bar C}\}$, thus they cannot converge toward a horizontal hyperplane, which is a contradiction.
	
    \textbf{{ Step 3.}} 
	We show that \eqref{eq:tilt_excess}, \eqref{eq:densityofcubes} allow us to define a competitor for $E$ that contradicts the almost-minimality.
    We preliminarily recall that for all $Q''\in\Qc_\ell$, we defined for any $s>0$ $Q^{2s} = [-s,s]\times Q''$, and $Q = Q^{1}$.

    Firstly, we deduce a lower bound on the classical perimeter of $E$ in $Q_\eta$. 
    Namely, letting $\Bc =\{Q''\in\Qc_\ell:  Q''\not\subset A^+_\ell(E)\cap A^-_\ell(E)\}$ and $\Gc = \{Q''\in\Qc_\ell :  Q''\notin \Bc\}$, it holds
    \begin{equation}
        \Per(E;Q_\eta) \ge \sum_{\substack{Q''\in\Qc_\ell \\Q''\subset Q_\eta''}} \Per(E;Q) 
        = \sum_{\substack{Q'' \in \Bc \\ Q'' \subset Q_\eta''}}\Per(E;Q)  + \sum_{\substack{Q'' \in \Gc \\ Q'' \subset Q_\eta''}}\Per(E;Q),
    \end{equation}
    and by \eqref{eq:tilt_excess}, \eqref{eq:densityofcubes} and \Cref{cor:lowerboundperimeter} we get
    \begin{equation}\label{eq:tilt_excess_eta}
        \Per(E;Q_\eta) \ge \left(1+\frac{c_2}{2}\right) \Lc^{n-1}(Q''_\eta).
    \end{equation}

    From this, we deduce a lower bound on $\Per_{\dista{\Omega}}(E;Q_\eta)$. 
    Since $\de E \cap Q_\eta\subset Q_\eta^{2\eps}$, where $\eps\ll\eta$, for all $x,y\in \de E \cap  Q_\eta$, it holds $\dista{\Omega}(x) \le (1+ C\eta)\dista{\Omega}(y)$.
    Thus, using \eqref{eq:tilt_excess_eta}, we deduce that 
    \begin{multline}
            \Hc^{n-1}_{\dista{\Omega}}(\{0\}\times Q''_\eta) 
                \le \dista{\Omega}(0) (1 + C\eta) \Lc^{n-1}(Q''_\eta) \\
                \le  \dista{\Omega}(0)\frac{1+C\eta}{1+\frac{c_2}{2}} \Per(E;Q_\eta)
                \le \frac{1+C\eta}{1+\frac{c_2}{2}}\Per_{\dista{\Omega}}(E;Q_\eta).
    \end{multline}
    Taking $\eta \le \frac{c_2}{8C}$, we get
    \begin{equation}\label{eq:lowerweightedmeasureest}
       \Per_{\dista{\Omega}}(E;Q_\eta) \ge \left(1+\frac{c_2}{4}\right)\Hc^{n-1}_{\dista{\Omega}}(\{0\}\times Q''_\eta).
    \end{equation}
	
	We are now in a position to define our competitor. 
    Let $s\in(0,1)$ be such that $\Hc^{n-1}(\de^*E \cap  \de Q_{s\eta})=0$, and define $F_s$ as
    \begin{equation}\label{eq:harnack_competitor}
        F_{s} := (E \setminus Q_{s\eta}^{4\eps}) \cup (\{x_1\le 0\} \cap Q_\eta)
    \end{equation}
    so that $F_s \Delta E \Subset Q_\eta^{4\eps} \Subset Q_{\eta}$.
    \begin{center}
    	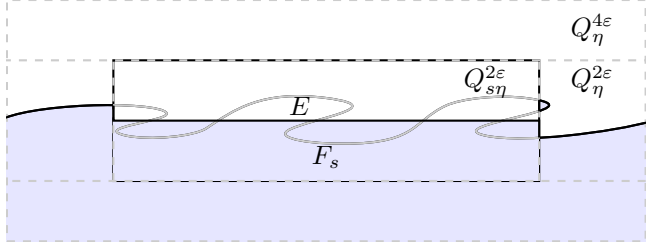
\begin{figure}[ht]
    		\begin{tikzpicture}[use Hobby shortcut, thick,even odd rule,yscale =0.4,xscale=0.7]

\begin{scope}
	\clip(-6,-4) rectangle (6,4);
	
		\draw[yscale=0.5,xscale=1.5,fill = blue!10] (-4.1,0) .. (-3,1) .. (-2,0.5) .. (-2.5,-1) .. (-1.5,-0.5) .. (0,1.5) .. (-0.5,-1) .. (0,-1.5)  .. (1.5,1) .. (2.5,1.5) .. (2,-1) .. (3,-1) .. (4.1,0) -- (4.1,-10) -- (-4.1,-10) ;
		
		\draw[fill = white, draw opacity =0] (-4,-2) rectangle (4,2);

		\fill[blue!10] (-4,-2) rectangle (4,0);

		\draw[dashed, color=black!20] (-4,-2) -- (-4,2);
		\draw[dashed, color=black!20] (4,-2) -- (4,2);
		
		\draw[dashed, color=black!20] (-6.2,-2) rectangle (6.2,2);
		\draw[dashed, color=black!20] (-6,-4) rectangle (6,4);

		\draw[yscale=0.5,xscale=1.5, color=black] (-4.1,0) .. (-3,1) .. (-2,0.5) .. (-2.5,-1) .. (-1.5,-0.5) .. (0,1.5) .. (-0.5,-1) .. (0,-1.5)  .. (1.5,1) .. (2.5,1.5) .. (2,-1) .. (3,-1) .. (4.1,0) -- (4.1,-10) -- (-4.1,-10) ;
		\begin{scope}
			\clip (-4,-2) rectangle (4,2);
					\draw[yscale=0.5,xscale=1.5, color=black!20] (-4.1,0) .. (-3,1) .. (-2,0.5) .. (-2.5,-1) .. (-1.5,-0.5) .. (0,1.5) .. (-0.5,-1) .. (0,-1.5)  .. (1.5,1) .. (2.5,1.5) .. (2,-1) .. (3,-1) .. (4.1,0) -- (4.1,-10) -- (-4.1,-10) ;
		\end{scope}
		
		\draw (-4,0.5) -- (-4,0) -- (4,0) -- (4,-0.56);
		\draw (4,0.305) -- (4,0.69);

		\node at (0,-1.2) {$F_s$};
		\node at (-0.5,0.5) {$E$};

		\node at (5,1.3) {$Q_\eta^{2\eps}$};
		\node at (3,1.3) {$Q_{s\eta}^{2\eps}$};
		\node at (5,3) {$Q_\eta^{4\eps}$};
		
\end{scope}
\end{tikzpicture}
        \caption{The competitor $F_s$ defined in \eqref{eq:harnack_competitor}}
    	\end{figure}
    \end{center}

    Since $\eps\ll\eta$, the $(\vartheta,\beta)$-minimality ensures that 
	\begin{multline}
		\Per_{\dista{\Omega}}(E; Q_{\eta}) 
            \leq (1+\vartheta \eta^\beta ) \Per_{\dista{\Omega}}(F_s; Q_{\eta}) \\
            \le (1+C\vartheta \eta^\beta)\Big( \Hc^{n-1}_{\dista{\Omega}}
            (\{0\}\times Q_\eta'')\\
             + C\dista{\Omega}(0)\eta^{n-2}\eps  + \Per_{\dista{\Omega}}(E;Q_{\eta}\setminus Q_{s\eta}^{2\eps})\Big).
	\end{multline}
        Taking $s\to 1$ we get 
        \begin{equation}
            \Per_{\dista{\Omega}}(E; Q_{\eta}) 
            \le (1+\vartheta \eta^\beta)\left( \Hc^{n-1}_{\dista{\Omega}}(\{0\}\times Q_\eta'') + C\dista{\Omega}(0)\eta^{n-2}\eps \right).
        \end{equation}
                Finally, let us assume that $\eps_2$ is even smaller than previously specified, such that $\eps_2 \leq \eta^2$. 
                Since $\vartheta \leq 1$, the previous estimate, together with \eqref{eq:lowerweightedmeasureest}, implies
        \begin{equation}
            \left(1 + \frac{c_2}{4}\right) \Hc^{n-1}_{\dista{\Omega}}(\{0\} \times Q_{\eta}'') 
            	\le (1+C \eta^{\beta}) \Hc^{n-1}_{\dista{\Omega}}(\{0\}\times Q_\eta''),
        \end{equation}
        and taking $\eta$ small enough depending on $c_2$, we achieve the contradiction.
        \qedhere

\end{proof}

\begin{corollary}[Decay of oscillations in the interior]\label{cor:osc_decay_interior}
    There exist universal constants $\sigma_2$ (small) and $C'_2$ (large) with the following property.
    Let $E$ be a $(\vartheta,\beta)$-minimizer of $\Per_\dista{\Omega}$ in $B_R(x_0)$, where $x_0\in\Omega$ and $R$ are such that $d_\Omega(x_0)\ge C'_2R$, and let
    \begin{equation}
        \eps\coloneqq \osc_{\nu}(\de E;B_R(x_0)) + R\left((\vartheta R^\beta)^{\lambda_2}+R\frac{||\nabla d_\Omega\cdot \nu||_{L^\infty(B_R(x_0))}}{d_\Omega(x_0)}\right) \le \eps_2
    \end{equation}
    for some $\nu\in\sphere$, where $\lambda_2$ is as in \Cref{prop:interior_harnack}.
    Then, for every $r\in[C'_2\eps R,R]$, it holds
    \begin{equation}
        \osc_{\nu}(\de E;B_r(x_0))\le C'_2\eps \left(\frac{r}{R}\right)^{\sigma_2}.
    \end{equation}
\end{corollary}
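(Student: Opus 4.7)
The proof proceeds by iterating the interior Harnack inequality (\Cref{prop:interior_harnack}) at dyadic scales, yielding a geometric decay of oscillations down to a scale comparable to the flatness parameter. First, by \Cref{remark:scaling}, I reduce to the normalized setting $R=1$, $x_0=0$: the set $\tilde E:=(E-x_0)/R$ is a $(\vartheta R^\beta,\beta)$-minimizer of $\Per_{\dista{\tilde\Omega}}$ in $B_1$ with $\tilde\Omega:=(\Omega-x_0)/R$, $d_{\tilde\Omega}(0)\ge C_2'$, and $\tilde\eps:=\eps/R$ coincides with
\[
\osc_\nu(\de\tilde E;B_1)+(\vartheta R^\beta)^{\lambda_2}+\frac{\|\nabla d_{\tilde\Omega}\cdot\nu\|_{L^\infty(B_1)}}{d_{\tilde\Omega}(0)}\le\eps_2.
\]
After this reduction, it suffices to prove $\osc_\nu(\de\tilde E;B_r)\le C_2'\tilde\eps\,r^{\sigma_2}$ for $r\in[C_2'\tilde\eps,1]$, and I drop the tildes from now on.

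Set $r_k:=\eta_2^k$ and $\sigma_2:=\log(1-\eta_2)/\log\eta_2\in(0,1)$, so that $(1-\eta_2)^k=r_k^{\sigma_2}$. For a universal $A\ge\max\{1,2C_2/\eta_2\}$ to be fixed at the end, I prove by induction on $k\ge 0$ the claim
\[
\osc_\nu(\de E;B_{r_k})\le A\eps\, r_k^{\sigma_2},\qquad\text{as long as }r_k\ge C_2'\eps.
\]
At step $k$, I pass to the rescaling $E_k:=E/r_k$ (a $(\vartheta r_k^\beta,\beta)$-minimizer in $B_1$ by \Cref{remark:scaling}): the condition $d_\Omega(0)/r_k\ge C_2$ holds once $C_2'\ge C_2$, and the induction hypothesis together with $r_k\ge C_2'\eps$ gives $\osc_\nu(\de E_k;B_1)\le A(C_2')^{\sigma_2-1}\eps^{\sigma_2}\le\eps_2$ when $C_2'$ is large enough relative to $A/\eps_2$. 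The rescaled perturbation terms $\alpha_k:=r_k\|\nabla d_\Omega\cdot\nu\|_{L^\infty(B_{r_k})}/d_\Omega(0)$ and $\beta_k:=(\vartheta r_k^\beta)^{\lambda_2}$ are both bounded by $\eps$. If the Harnack lower bound $C_2(\alpha_k+\beta_k)\le\osc_\nu(\de E_k;B_1)$ holds, \Cref{prop:interior_harnack} yields $\osc_\nu(\de E;B_{r_{k+1}})\le(1-\eta_2)\osc_\nu(\de E;B_{r_k})\le A\eps\,r_{k+1}^{\sigma_2}$ by the choice of $\sigma_2$; otherwise $\osc_\nu(\de E;B_{r_k})<2C_2\eps\,r_k$ and monotonicity of $\osc_\nu$ in the radius, together with $r_{k+1}^{1-\sigma_2}\le 1$, yields $\osc_\nu(\de E;B_{r_{k+1}})\le(2C_2/\eta_2)\eps\,r_{k+1}\le A\eps\,r_{k+1}^{\sigma_2}$.

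Finally, for $r\in[C_2'\eps,1]$ pick the unique $k$ with $r_{k+1}\le r<r_k$; monotonicity then gives $\osc_\nu(\de E;B_r)\le A\eps\,r_k^{\sigma_2}\le(A/\eta_2^{\sigma_2})\eps\,r^{\sigma_2}$, which is the desired decay upon absorbing constants into $C_2'$. The main technical burden lies in the simultaneous choice of $C_2'$ and $A$: $C_2'$ must be taken large (depending on $C_2$, $\eps_2$, $\eta_2$, $\sigma_2$) to preserve the upper bound $\osc_\nu(\de E_k;B_1)\le\eps_2$ of \Cref{prop:interior_harnack} throughout the iteration despite the growing ratio $r_k^{\sigma_2-1}$, while $A$ must dominate $2C_2/\eta_2$ to close the bad case where the Harnack lower bound fails. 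The exponent $\sigma_2=\log(1-\eta_2)/\log\eta_2$ is the unique one for which the contraction factor $(1-\eta_2)$ of a single Harnack step matches the dyadic scale reduction by $\eta_2$, which is the whole point of the argument.
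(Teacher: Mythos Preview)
Your proof is correct and follows essentially the same strategy as the paper's: iterate \Cref{prop:interior_harnack} at dyadic scales $\eta_2^k$, distinguishing the case where the Harnack lower bound is met from the case where the error terms dominate, and choose $\sigma_2$ so that the contraction factor $(1-\eta_2)$ matches $\eta_2^{\sigma_2}$. The only stylistic difference is that the paper packages the oscillation and the error terms into a single auxiliary quantity $F(r)=\osc_\nu(\de E;B_r)+Ar\big(r\|\nabla d_\Omega\cdot\nu\|_{L^\infty}/d_\Omega(0)+(\vartheta r^\beta)^{\lambda_2}\big)$ and proves the contraction $F(\eta r)\le(1-\eta)F(r)$ in one stroke, whereas you track the oscillation and the error terms separately.
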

\begin{proof}
    Without loss of generality, we may assume $x_0=0$ and $R=1$.
    Let
    \begin{equation}
        F(r)\coloneqq\osc_{\nu}(\de E;B_r) + Ar\left(r\frac{||\nabla d_\Omega\cdot \nu||_{L^\infty(B_r)}}{d_\Omega(x_0)}+(\vartheta r^\beta)^{\lambda_2}\right),
    \end{equation}
    where $A$ is a large constant we will determine later. We claim the following:
    \begin{equation}\label{eq:int_harn_claim}
        \mbox{if }F(r)\le\eps_2 r,\quad\mbox{then}\quad F(\eta r)\le(1-\eta) F(r)
    \end{equation}
    for any $r>0$, where $\eta\coloneqq
    \min\{\eta_2,1/2\}$ and $\eta_2,\eps_2$ are as in \Cref{prop:interior_harnack}. 
    Assuming \eqref{eq:int_harn_claim}, by induction we obtain
    \begin{equation}\label{eq:interior_harnack_iteration}
        F(\eta^k)\le (1-\eta)^kF(1)\mbox{ for all }k\in\NN\mbox{ such that }(1-\eta)^{k-1}F(1)\le\eps_2\eta^{k-1}.
    \end{equation}
    Then, given $r\in [C\eps,1]$, we let $k\in\NN$ be such that $\eta^{k+1}<r\le\eta^{k}$. If $C$ and $A$ are chosen large enough, with $C\ge A$, then we have
    \begin{equation}
        (1-\eta)^{k-1}F(1)\le F(1)\le A\eps\le \eps_2 r\le\eps_2 \eta^k,
    \end{equation}
    hence by \eqref{eq:interior_harnack_iteration} we have
    \begin{equation}
        \osc_{\nu}(\de E;B_r)\le F(\eta^k)\le (1-\eta)^k F(1) \le A\eps r^\sigma
    \end{equation}
    where $\sigma\in(0,1)$ satisfies $1-\eta\le\eta^\sigma$. This proves the desired result.

    We are left with the proof of \eqref{eq:int_harn_claim}.
    \begin{itemize}
        \item If 
        \begin{equation}\label{eq:osc_decay_vince_flat}
            C_2\left(r\frac{||\nabla d_\Omega\cdot \nu||}{d_\Omega(0)}+(\vartheta r^\beta)^{\lambda_2}\right)\le\frac{1}{r}\osc_{\nu}(\de E;B_r),
        \end{equation}
        then by \Cref{prop:interior_harnack} it holds
        \begin{equation}
            \osc_{\nu}(\de E;B_{\eta r})\le (1-\eta)\osc_{\nu}(\de E;B_r),
        \end{equation}
        thus
        \begin{align}
            F(\eta r)&\le(1-\eta)\osc_{\nu}(\de E;B_r) \\
            &\qquad+ A\eta r (||\nabla d_\Omega\cdot \nu||+(\vartheta (\eta r)^\beta)^{\lambda_2})\\
            &\le (1-\eta) F(r),
        \end{align}
        where the last inequality holds true since $\eta\le\frac{1}{2}$.
        \item On the other hand, if \eqref{eq:osc_decay_vince_flat} fails, then we trivially have
        \begin{align}
            F(\eta r)&\le \osc_{\nu}(\de E;B_r)+A\eta r\left((\vartheta(\eta r)^\beta)^{\lambda_2}+\frac{\eta r}{d_\Omega(0)}||\nabla d_\Omega\cdot \nu||\right)\\
            &\le Ar\left(r\frac{||\nabla d_\Omega\cdot \nu||}{d_\Omega(0)}+(\vartheta r^\beta)^{\lambda_2}\right)\left(\frac{C_2}{A}+\eta\right)\\
            &\le (1-\eta)F(r)
        \end{align}
        provided $A$ is large enough so that $C_2/A+\eta\le 1-\eta$.
    \end{itemize}
\end{proof}

\subsection{Boundary Harnack inequality and decay of oscillations}\label{subsec:boundary_harnack}
We now turn our attention to the decay of oscillations near the boundary of $\Omega$.

\begin{proposition}[Harnack inequality at the boundary]\label{prop:harnack_boundary}
    There exist small universal constants $\lambda_3,\eps_3$ and $\eta_3$ with the following property.
    Let $\Omega$ be $\varkappa$-flat in the sense of \Cref{ass:boundaryof_Omega} and let $E$ be a $(\vartheta,\beta)$-minimizer of $\Per_\dista{\Omega}$ in $B_1$.
    If
    \begin{equation}
        (\varkappa+\vartheta)^{\lambda_3}\le\osc_{e_1}(\de E;B_1)\le\eps_3
    \end{equation}
    then
    \begin{equation}
        \osc_{e_1}(\de E;B_{\eta_3})\le(1-\eta_3)\osc_{e_1}(\de E;B_1).
    \end{equation}
\end{proposition}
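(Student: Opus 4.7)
The plan is to adapt the boundary-Harnack scheme of \cite{DeSilva_2011} to the present Neumann-type setting, using the \emph{classical} boundary viscosity property (\Cref{prop:boundary_max}) in place of the weak interior viscosity of \Cref{prop:weak_viscosity}. After translating in $e_1$, the infiltration lemma (\Cref{lemma:tech_oscill}) yields
\[
\{x_1\le-\eps\}\cap\Omega\cap B_{3/4}\,\subset\,E\cap B_{3/4}\,\subset\,\{x_1\le\eps\},\qquad \eps=\osc_{e_1}(\de E;B_1).
\]
Arguing by contradiction and possibly swapping $E$ with $E^c$, there would exist $x^-\in\de E\cap\overline{\Omega}\cap B_{\eta_3}$ with $x^-\cdot e_1\le-(1-2\eta_3)\eps$: an abnormally deep dip of the free boundary from below, to be ruled out.

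\textbf{Barrier and sliding.} The core of the argument is the construction of a single smooth profile $\Phi\colon\overline{\Omega}\cap\overline{B}_{3/4}\to\RR$ meeting, for universal positive constants $c_0$ and $\delta_0$:
\begin{enumerate}
\item $\Phi(x^-_2,\ldots,x^-_n)\ge c_0$ and $\Phi\le-2$ on $\de B_{3/4}\cap\overline{\Omega}$;
\item strict \emph{wrong curvature}: $\Delta\Phi + a\,\nabla\Phi\cdot\nabla d_\Omega/d_\Omega \ge \delta_0$ in $\Omega\cap B_{3/4}$;
\item strict Neumann condition: $\nabla\Phi\cdot\nabla d_\Omega \ge \delta_0$ on $\de\Omega\cap B_{3/4}$.
\end{enumerate}
A natural candidate has the form $\Phi(x)=\gamma_1 d_\Omega(x)-\gamma_2|x''|^2+\gamma_3$ with small universal $\gamma_i>0$; properties (2)--(3) follow by direct computation using \Cref{lemma:notes_on_distance} to write $d_\Omega=x_n-g(x')$ up to $\varkappa$-errors, which are then absorbed via the hypothesis $\varkappa^{\lambda_3}\le\eps$. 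Consider the sliding family $F_t\coloneqq\{x_1<\eps t+\eps\Phi\}$. For $t$ sufficiently negative, $F_t\cap B_{3/4}\subset\{x_1<-\eps\}$ is disjoint from $\de E$, whereas by (1) and the existence of $x^-$, for $t=0$ the set $F_0$ strictly contains $x^-$ in its interior while $x^-\in\de E$. Hence, as $t$ increases toward $0$, there is a smallest $\bar t\in[-3,0]$ at which $F_{\bar t}$ touches $\de E$ from outside at some $\bar z\in\overline{\Omega}\cap B_{3/4}$.

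\textbf{Ruling out the touching and main obstacle.} If $\bar z\in\de\Omega$, property (3) together with $\nu_\Omega=-\nabla d_\Omega+O(\varkappa)$ gives $\nu_{F_{\bar t}}(\bar z)\cdot\nu_\Omega(\bar z)<0$ strictly, contradicting \Cref{prop:boundary_max}. If instead $\bar z\in\Omega$, apply \Cref{lemma:technical_geom} to the tangent profile $\phi=\eps\bar t+\eps\Phi$ at a universal radius $r$: property (2) forces the divergence $\dive(\dista{\Omega}\nabla G/|\nabla G|)$ to be uniformly of order $\eps\,\dista{\Omega}$ on $E\setminus F$, while \Cref{item:smallballinside} of \Cref{lemma:technical_geom} combined with the lower density estimate \Cref{lemma:density_estimates_away} bounds $\Lc^n_{\dista{\Omega}}(E\setminus F)$ from below. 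Feeding both bounds into \eqref{eq:technical_geom_integral} produces an inequality of the form $C\vartheta\ge c\eps^N$ for a universal exponent $N$, contradicting $\vartheta^{\lambda_3}\le\eps$ once $\lambda_3$ is chosen small enough in terms of $N$ and $\beta$. The main obstacle is the barrier itself: simultaneously enforcing (2) and (3) with universal $\delta_0$ is routine in the flat model of \cite{DeSilva_2011} where $d_\Omega=x_n$, but here one must carefully track the $\varkappa$-errors created by the curvature of $\de\Omega$, and this bookkeeping is exactly what fixes the quantitative dependence $(\varkappa+\vartheta)^{\lambda_3}\le\eps$ appearing in the hypothesis.
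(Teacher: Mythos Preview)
Your barrier scheme has a structural obstruction that no choice of $\Phi$ can repair. Properties (2) and (3) make $\Phi$ a strict subsolution of the linearized operator $L\Phi=\Delta\Phi+a\,\de_n\Phi/x_n$ on $(B''_{3/4})^+$ with strictly positive Neumann data on $\{x_n=0\}$. By the maximum principle for $L$ (with Hopf at the flat part), any such $\Phi$ attains its maximum over $\overline{(B''_{3/4})^+}$ on $\de B''_{3/4}\cap\{x_n\ge0\}$. It is therefore impossible to have $\Phi\le-2$ there while $\Phi(x^{-''})\ge c_0>0$ at an interior point; in fact even the weaker requirement $\Phi(x^{-''})>-1$ needed for the sliding already fails. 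Your explicit candidate $\gamma_1 d_\Omega-\gamma_2|x''|^2+\gamma_3$ exhibits this concretely: enforcing (2) at $x_n=3/4$ forces $\gamma_1\ge C(n,a)\gamma_2$, so $\Phi$ is large at the top of $\de B''_{3/4}$, which drives $\gamma_3<-2$; but if $x^-\in\de\Omega$ then $\Phi(x^{-''})\approx\gamma_3<-2$.

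The paper circumvents this by inserting a step you omit: it first applies the \emph{interior} oscillation decay (\Cref{cor:osc_decay_interior}) at a reference point $x_0$ with $(x_0)_n=\tau_2>0$, producing a safe zone $B_{2r}(x_0)$ in which $E\subset\{x_1\le\eps/2\}$ already. The sliding barrier is then the fundamental-solution profile $\Phi(x'')=\min\{(r/|x''-x_0''|)^{n+a-2},1\}$, which satisfies the analogues of (2) and (3) only \emph{outside} that safe zone; touching inside $B_{2r}(x_0)$ is ruled out by the expansion-of-positivity information, not by a curvature computation. Centering $\Phi$ at height $(x_0'')_n=\tau_2>0$ is exactly what provides the correct Neumann sign on $\de\Omega$. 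The missing ingredient in your plan is thus the prior use of the interior Harnack to puncture a hole in the domain where the subsolution property is allowed to fail.
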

\begin{proof}
    This proof follows the idea introduced in \cite{DeSilva_2011}.
    Let $\eps\coloneqq\osc_{e_1}(\de E;B_1)$ . By \Cref{lemma:tech_oscill} we shall assume, without loss of generality, that
    \begin{equation}\label{eq:harnack_boundary_osc_initial}
			\{x_1\le -\eps\}\cap B_1\cap\Omega\subset E\cap B_1\cap\Omega\subset \{x_1\le \eps\}.
	\end{equation}
    
    Throughout the present proof, we will use four small, universal constants $\tau_1,\tau_2,\tau_3,\tau_4$ such that $\tau_1\ll \tau_2\ll \tau_3\ll \tau_4$ and whose value will be (implicitely) specified later.
    Finally, $\eps_3$ and $\eta_3$ will be chosen much smaller than $\tau_1$.

    Let $x_0 = (x_0)_1 e_1 +\tau_2 e_n\in\de E$.
    Such a point exists by \Cref{lemma:tech_oscill}. We assume that $-\eps\le (x_0)_1\le0$, since the other case can be handled by replacing $E$ with $\Omega\setminus E$.
    By \Cref{lemma:notes_on_distance} (assuming $\varkappa\ll \tau_2$), $d_\Omega(x_0)\ge \tau_2/2$ and, in a small neighborhood of $x_0$, $|\nabla d_\Omega\cdot e_1|\le C\varkappa\le \eps$ for some $C$ large, universal.
    By \Cref{cor:osc_decay_interior} applied in $B_{cd_\Omega(x_0)}(x_0)$, provided $\lambda_3\le\lambda_2$ and $\eps_3$ is small enough, we find $\tau_4$ universal so that
    \begin{equation}\label{eq:boundary_harn_oscill_int}
        \osc_{e_1}(\de E;B_{2\tau_2\tau_4}(x_0))\le \frac{\eps}{4}.
    \end{equation}
    For brevity, we let $r\coloneqq \tau_2\tau_4$ for the rest the proof.
    Notice that \eqref{eq:boundary_harn_oscill_int} above yields
    \begin{equation}\label{eq:expansion_of_positivity}
        E\cap B_{2r}(x_0)\subset\{x_1\le\eps/2\}.
    \end{equation}
    
    We now \enquote{slide from above} the family of sets $\{F_t\}_{t\in\RR}$ defined as
	\begin{equation}
		F_t = \inset{(x_1,x'')\in B_1 : x_1 \le -t - \left(\frac{1}{2}+\tau_3\right)\eps\Phi(x'')},
    \end{equation}
	where
	\begin{equation}
		\Phi(x'') = \min\left\{\left(\frac{r}{|x''-x_0''|}\right)^{n+a-2},1\right\}.
	\end{equation}
    Notice that, since $n\ge2$ and $a>0$, $n+a-2>0$; the choice of this particular exponent will be made clear in case \ref{item:bdry_contradiction} below.
    Notice also that $F_t\supset E\cap B_1^+$ for all $t\le-3\eps$.
    We claim that, actually, 
    \begin{equation}\label{eq:claim_bdry_harnack}
        F_t\supset E\cap B_1^+\quad\mbox{for all }t< t^*\coloneqq-\eps\left(1+\tau_3\right).
    \end{equation}
    The claim yields the desired result:
    indeed, assuming $\eta_3\le \tau_2$, for every $x\in \de E\cap B_{\eta_3}\cap\Omega$ it holds $|x''-x_0''|\le 3\tau_2$,
    hence $\Phi(x'')\ge (\tau_4/3)^{n+a-2}$ and
    \begin{align}
        x_1
        &\le \eps\left(1+\tau_3-\left(\frac{1}{2}+\tau_3\right)\left(\frac{\tau_4}{3}\right)^{n+a-2}\right)\\
        &\le\eps\left(1+\tau_3 - \frac{1}{2}\left(\frac{\tau_4}{3}\right)^{n+a-2}\right)\\
        &\le\eps\left(1 - \frac{1}{4}\left(\frac{\tau_4}{3}\right)^{n+a-2}\right)\label{eq:bdry_harnack_desired}
    \end{align}
    where the last inequality holds true provided $\tau_3$ is chosen much smaller than $\tau_4$.
    \eqref{eq:bdry_harnack_desired} gives the desired result provided $\eta_3 \le \min\{\tau_2,\frac{1}{4}(\tau_4/3)^{n+a-2}\}$.
    
    To prove \eqref{eq:claim_bdry_harnack}, we argue by contradiction: if not, then there is $\bar t\in[-3\eps,t^*)$ and a point $\bar x\in\overline{B_1\cap{\Omega}}$ such that $\bar x\in \de F_{\bar t}\cap\de E$ and $F_{\bar t}\supset E\cap B_1^+$.
    We show that this cannot be the case.
    \begin{enumerate}[(i)]
        \item First of all, by \eqref{eq:expansion_of_positivity} we can exclude the case $|\bar x''-x_0''|\le2 r$.
        \item Next, we exclude the case $|\bar x|\ge 1/2$. Indeed, in that case, provided $\eps_3$ and $\tau_2$ are smaller than some universal constant, it holds $|\bar x''-x_0''|\ge 1/3$, thus $\Phi(\bar x'')\le (3r)^{n+a-2}\le \tau_2^{n+a-2}$.
        Using $\bar x\in \de F_{\bar t}$, we find
        \begin{align}
            \bar x_1
            &> -t^* - \eps\left(\frac{1}{2}+\tau_3\right)\Phi(\bar x'')\\
            &\ge \eps\left(1+\tau_3 - \tau_2^{n+a-2} \right)
        \end{align}
        which is greater than $\eps$ since $\tau_2$ is much smaller than $\tau_3$.
        This, however, contradicts \eqref{eq:harnack_boundary_osc_initial}.
        \item If $|\bar x|\le \frac{1}{2}$, $|\bar x''-x_0''|\ge 2 r$ and $\bar x\in\de\Omega$, we first remark that,  since $|\bar x''-x_0''|\le 1$ and $    \bar x_n\le \varkappa\le\eps^{1/\lambda}\le \tau_2/2$, $\de_n\Phi(\bar x'')\ge \bar c$ for some $\bar c$ small universal.
        Therefore, using the fact that $\Omega$ is $\varkappa$-flat, we compute
        \begin{align}
            \nu_{F_{\bar t}}(\bar x)\cdot \nu_\Omega(\bar x)
            &\ge-\nu_{F_{\bar t}}(\bar x)\cdot e_n-\varkappa\\
            &\ge \frac{1}{2}\eps\left(\frac{1}{2}+\tau_3\right)\de_n\Phi(\bar x'') - \varkappa\\
            &\ge \frac{1}{4}\bar c\eps-\varkappa.\label{eq:normali_contradiction}
        \end{align}
        Recalling $\varkappa\le\eps^{1/\lambda}$ and assuming $\eps$ smaller than some universal constant, we find
        \begin{equation}
            \nu_{F_{\bar t}}(\bar x)\cdot \nu_\Omega(\bar x)>0,
        \end{equation}
        which contradicts \Cref{prop:boundary_max}.
        
        \item\label{item:bdry_contradiction} Lastly, we consider the case $|\bar x|\le \frac{1}{2}$, $|\bar x''-x_0''|\ge 2 r$ and $\bar x\in\Omega$.
        For brevity, let $\phi(x'')\coloneqq-\bar t-\left(\frac{1}{2}+\tau_3\right)\eps\Phi(x'')$.               
        Then $F_{\bar t}=\{x_1\le \phi(x'')\}$ touches $E$ from outside at $\bar x$ in a neighborhood $B_r(\bar x)$,
        and there exists $C>0$ large universal such that $|\nabla \phi|\le C\eps$ in $B_r''(\bar x'')$.
        Let $p(x'') = \tau_1\eps\left(\frac{r^2}{16}-|x''-\bar x''|^2\right)$
        and let 
        \begin{equation}
            F\coloneqq E\setminus\bigg(B_r(\bar x)\cap \{x_1\le\phi-p\}\bigg),\quad G(x) \coloneqq x_1 - \phi(x'')+p(x''),
        \end{equation}
        as in \Cref{lemma:technical_geom}.
        By \eqref{eq:technical_geom_integral}, we have
        \begin{equation}\label{eq:bdry_harnack_integral}
            C\vartheta \ge \int_{E\setminus F}\dista{\Omega}\left(\dive\left(\frac{\nabla G}{|\nabla G|}\right)+a\frac{\nabla G\cdot\nabla d_\Omega}{d_\Omega|\nabla G|}\right)\dif\Lc^n,
        \end{equation}
        where $C$ is a large universal constant.
        By computations similar to those in the proof of \Cref{prop:weak_viscosity}, for $x\in B_r(\bar x)$ we find
        \begin{equation}
            \dive\left(\frac{\nabla G}{|\nabla G|}\right)(x) \ge \Delta p(x'')-\Delta\phi(x'')-C\eps^3
        \end{equation}
        for some $C$ universal.
        For the second summand in the integrand on the right-hand side of \eqref{eq:bdry_harnack_integral}, using \Cref{lemma:notes_on_distance}
        we find
        \begin{align}
            \frac{\nabla G\cdot \nabla d_\Omega}{|\nabla G|d_\Omega}
            &\ge \frac{\nabla G\cdot e_n}{d_\Omega|\nabla G|} - C\frac{\varkappa}{d_\Omega}\\
            & = \frac{1}{d_\Omega|\nabla G|}(\de_n(p-\phi))- C\frac{\varkappa}{d_\Omega}.
        \end{align}
        Next, we use \Cref{lemma:notes_on_distance} again and the facts that $\bar x_n\ge-\varkappa$ and $(x_0)_n=\tau_2>0$ to estimate
        \begin{align}
            \frac{\de_np(x'')}{d_\Omega(x)|\nabla G(x)|}
            =-2\tau_1\eps\frac{x_n-\bar x_n}{d_\Omega|\nabla G|}
            \ge -2\tau_1\eps-C\frac{\eps\varkappa}{d_\Omega}
        \end{align}
        and
        \begin{align}
            \frac{\de_n\phi(x'')}{d_\Omega(x)|\nabla G(x)|}
            &=(n+a-2)\frac{r^{n+a-2}}{|x''-x''_0|^{n+a}}\left(\frac{1}{2}+\tau_3\right)\eps\frac{x_n-(x_0)_n}{d_\Omega|\nabla G|}\\
            &\le C\frac{\eps\varkappa}{d_\Omega} + (n+a-2)\frac{r^{n+a-2}}{|x''-x''_0|^{n+a}}\left(\frac{1}{2}+\tau_3\right)\eps.
        \end{align}
        Gathering the above estimates, we find
        \begin{align}
            &\dive\left(\frac{\nabla G}{|\nabla G|}\right)+a\frac{\nabla G\cdot\nabla d_\Omega}{d_\Omega|\nabla G|}\\
            &\qquad\ge \left(-\Delta\phi - a(n+a-2)\frac{r^{n+a-2}}{|x''-x''_0|^{n+a}}\left(\frac{1}{2}+\tau_3\right)\eps\right)\\
            &\qquad\qquad+\left(\Delta p-2a\tau_1\eps\right)-C\eps^3-C\frac{\varkappa}{d_\Omega}\\
            &\qquad = (n+a-2)\frac{r^{n+a-2}}{|x''-x_0''|^{n+a}}\left(\frac{1}{2}+\tau_3\right)\eps\\
            &\qquad\qquad-2\tau_1\eps(n-1-a) - C\eps^3-C\frac{\varkappa}{d_\Omega}\\
            &\qquad\ge \frac{1}{C}\eps - C\frac{\varkappa}{d_\Omega}
        \end{align}
        where the last inequality holds true provided $\tau_1$ is chosen smaller than some universal constant and $\eps\le\eps_3$ is chosen even smaller.
        Going back to \eqref{eq:bdry_harnack_integral}, we find
        \begin{equation}\label{eq:bdry_harnack_almost}
            C\vartheta\ge \frac{1}{C}\eps\Lc^n_{\dista{\Omega}}(E\setminus F) - C\varkappa\int_{E\setminus F}d_\Omega^{a-1}\dif\Lc^n.
        \end{equation}
        Now, by \Cref{lemma:technical_geom} and \eqref{eq:lowerdensityestimate}, we have
        \begin{equation}
            \Lc^n_{\dista{\Omega}}(E\setminus F)\ge \Lc^n_{\dista{\Omega}}(E\cap B_{\eps^2/C}(\bar x))\ge \frac{1}{C}\eps^{2(n+a)}
        \end{equation}
        where $C$ is, as usual, a large universal constant.
        Since $a>0$, we also have
        \begin{equation}
            \int_{E\setminus F}d_\Omega^{a-1}\dif\Lc^n\le\int_{B_1\cap\Omega}d_\Omega^{a-1}\dif\Lc^n\le C.
        \end{equation}
        Therefore \eqref{eq:bdry_harnack_almost} yields
        \begin{equation}
            C\vartheta\ge \frac{1}{C}\eps^{2(n+a)+1}-C\varkappa
        \end{equation}
        which fails if
        \begin{equation}
            (\vartheta+\varkappa)^{\lambda_3}\le\eps
        \end{equation}
        and $\lambda_3$ is small enough.
        This excludes the last alternative and thus it concludes the proof of \eqref{eq:claim_bdry_harnack}.
    \end{enumerate}
\end{proof}

We now combine \Cref{prop:harnack_boundary} and \Cref{cor:osc_decay_interior} to obtain the following

\begin{corollary}[Decay of oscillations up to $\de\Omega$]\label{cor:full_decay}
    There exist positive universal constants $C_3$ (large) and $\sigma_3$ (small) with the following property.
    Let $\Omega$ be $\varkappa$-flat and let $E$ be a $(\vartheta,\beta)$-minimizer of $\Per_\dista{\Omega}$ in $B_1$.
    Let 
    \begin{equation}
        \eps\coloneqq\osc_{e_1}(\de E;B_{1}) + \left(\vartheta + \varkappa \right)^{\lambda_3},
    \end{equation}
    where $\lambda_3$ is given by \Cref{prop:harnack_boundary}.
    Then, for every $x\in B_{1/4}\cap\overline{\Omega}$ and every $r\in [C_3\eps,1/2]$, it holds
    \begin{equation}\label{eq:osc_dec_tot_thesis}
        \osc_{e_1}(\de E;B_r(x))\le C_3\eps r^{\sigma_3}.
    \end{equation}
\end{corollary}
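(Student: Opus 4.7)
The plan is to iterate the two Harnack-type estimates---\Cref{prop:harnack_boundary} near $\de\Omega$ and \Cref{cor:osc_decay_interior} inside $\Omega$---and glue them at scales comparable to the distance to $\de\Omega$. As in the proof of \Cref{cor:osc_decay_interior}, the natural object to iterate is a decay functional of the form
\begin{equation*}
    F(y,r)\coloneqq \osc_{e_1}(\de E;B_r(y)) + A\,r\,(\vartheta+\varkappa)^{\lambda_3},
\end{equation*}
with $A$ a large universal constant. Rescaling is consistent with both hypotheses: if $\Omega$ is $\varkappa$-flat in $B_1$, then for $y\in\de\Omega\cap B_{1/2}$ and $r\le 1/2$ the set $\tfrac{1}{r}(\Omega-y)$ is still $\varkappa$-flat (cf.\ \Cref{remark:scaling} and the comment after \Cref{lemma:notes_on_distance}) and $\tfrac{1}{r}(E-y)$ is a $(\vartheta r^\beta,\beta)$-minimizer, so \Cref{prop:harnack_boundary} can be invoked at every admissible scale.

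First, I would establish the H\"older decay at boundary points. Fix $y\in\de\Omega\cap B_{1/2}$ and argue, exactly as in \Cref{cor:osc_decay_interior}, by splitting into the case where the oscillation dominates $F(y,r)$ (so \Cref{prop:harnack_boundary} applies and $\osc$ shrinks by $(1-\eta_3)$) and the case where the $(\vartheta+\varkappa)^{\lambda_3}$-term dominates (so the improvement is trivial since the scale drops by $\eta_3$). This gives the one-step inequality $F(y,\eta_3 r)\le (1-\eta_3)F(y,r)$ whenever $F(y,r)\le\eps_3 r$. Iterating it from $r=1/2$ yields, for a universal $\sigma_3'\in(0,1)$ and some large $C$,
\begin{equation*}
    \osc_{e_1}(\de E;B_r(y))\le C\eps\, r^{\sigma_3'}\qquad\text{for all } r\in[C\eps,1/2].
\end{equation*}

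Second, I would transfer this to interior points $x\in\Omega\cap B_{1/4}$. Let $y\in\de\Omega$ realize $\rho\coloneqq d_\Omega(x)=|x-y|$; then $y\in\de\Omega\cap B_{1/2}$. For scales $r\in[2\rho,1/2]$ the inclusion $B_r(x)\subset B_{2r}(y)$ plus the boundary decay above already delivers \eqref{eq:osc_dec_tot_thesis}. For scales $r\le 2\rho$ I would apply \Cref{cor:osc_decay_interior} in $B_{\rho/2}(x)$: the oscillation input on $B_{\rho/2}(x)$ is bounded by $C\eps\rho^{\sigma_3'}$ by the boundary step, while \Cref{lemma:notes_on_distance} gives $\|\nabla d_\Omega\cdot e_1\|_{L^\infty(B_{\rho/2}(x))}/d_\Omega(x)\lesssim \varkappa/\rho$ which, together with $(\vartheta+\varkappa)^{\lambda_3}\le \eps$, makes the smallness hypothesis of \Cref{cor:osc_decay_interior} hold provided $\rho\ge C\eps$. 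The interior decay then produces
\begin{equation*}
    \osc_{e_1}(\de E;B_r(x))\le C\eps\,\rho^{\sigma_3'}(r/\rho)^{\sigma_2},
\end{equation*}
and setting $\sigma_3\coloneqq\min\{\sigma_2,\sigma_3'\}$ with $C_3$ large enough gives \eqref{eq:osc_dec_tot_thesis} in all regimes (the trivial case $\rho\le C\eps$ is covered directly by the boundary iteration, since then $B_r(x)\subset B_{2r+\rho}(y)\subset B_{(2+C)r}(y)$).

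The main obstacle is a clean book-keeping at the matching scale $r\sim\rho$: one has to check simultaneously that the rescaled minimality parameter $\vartheta r^\beta$, the ratio $\|\nabla d_\Omega\cdot e_1\|/d_\Omega$, and the inherited flatness all remain controlled by $\eps$ up to a power of $\rho$, and that the boundary iteration has not dropped below its stopping threshold $C\eps$ before reaching the transition. Once $\sigma_3\le\min\{\sigma_2,\sigma_3'\}$ and $C_3$ are chosen appropriately, all these verifications reduce to routine manipulations of the thresholds $\eps_2,\eps_3$ and of the two H\"older exponents produced by the two Harnack steps.
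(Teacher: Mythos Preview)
Your proposal is correct and follows essentially the same strategy as the paper: first iterate \Cref{prop:harnack_boundary} at points of $\de\Omega$ via a decay functional $F(y,r)$ (exactly as in the proof of \Cref{cor:osc_decay_interior}), then for $x\in\Omega$ split into the regimes $r\gtrsim d_\Omega(x)$ (use inclusion into a boundary ball) and $r\lesssim d_\Omega(x)$ (use \Cref{cor:osc_decay_interior} with input flatness inherited from the boundary step). The only cosmetic difference is that the paper uses the slightly sharper additive term $Ar(\vartheta r^\beta+\varkappa r^\alpha)^{\lambda_3}$ in the functional, and applies the interior decay on a ball of radius $\bar C^{-1}d_\Omega(x)$ with $\bar C\ge C_2'$ rather than $\rho/2$; you should make the same adjustment, since the hypothesis $d_\Omega(x_0)\ge C_2'R$ of \Cref{cor:osc_decay_interior} requires the transition radius to be a small multiple of $d_\Omega(x)$, not $d_\Omega(x)/2$.
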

\begin{proof}
    For some $A$ large to be chosen later and $\lambda_3$ as in \Cref{prop:harnack_boundary}, we let
    \begin{equation}
        F_x(r) \coloneqq \osc_{e_1}(\de E;B_r(x))+Ar\left(\vartheta r^\beta+\varkappa r^{\alpha}\right)^{\lambda_3}.
    \end{equation}
    
    \textbf{Case 1: $x\in\de\Omega\cap B_{1/2}$. }We reproduce the proof of \Cref{cor:osc_decay_interior}. 
    Notice that, after rescaling, translating and possibly rotating $\Omega$, the assumptions of \Cref{prop:harnack_boundary} are satisfied in $B_{1/2}(x)$. Arguing exactly as in the proof of \Cref{cor:osc_decay_interior}, but using \Cref{prop:harnack_boundary} instead of \Cref{prop:interior_harnack}, we prove that there exist two universal constants $C$ (large) and $\sigma$ (small) such that, for every $r\in[C\eps,1/2]$, it holds
    \begin{equation}\label{eq:osc_decay_primo}
        \osc_{e_1}(\de E;B_r(x))\le F_x(r)\le C\eps r^\sigma.
    \end{equation}

    If $x\in\Omega$, we prove \eqref{eq:osc_dec_tot_thesis} in two different cases, based on whether $d_\Omega(x)\ge \bar C r$ or not, where $\bar C$ is a large universal constant we will choose later.

    \textbf{Case 2: $x\in \Omega\cap B_{1/4}$ and $d_\Omega(x)\ge \bar Cr$. }
    We let $\rho\coloneqq\bar C^{-1}d_\Omega(x)$. 
        Since $B_{\rho}(x)\subset B_{(\bar C+1)\rho}(y)$ for some $y\in\de\Omega\cap B_{1/2}$, we may use \eqref{eq:osc_decay_primo} to obtain
        \begin{equation}
            \osc_{e_1}(\de E;B_\rho(x))\le\osc_{e_1}(\de E;B_{(\bar C+1)\rho}(y))\le C\eps \rho^\sigma
        \end{equation}
        and
        \begin{equation}
            \rho\left((\vartheta \rho^\beta)^{\lambda_2}+\rho\frac{||\nabla d_\Omega\cdot e_1||_{L^\infty
            (B_\rho(x))}}{d_\Omega(x)}\right)\le \rho(\vartheta^{\lambda_2}+\varkappa)\le \eps\rho^\sigma
        \end{equation}
        provided $\lambda_3\le\lambda_2$ given in \Cref{cor:osc_decay_interior} and $\sigma<1$.
        Therefore $F_x(\rho)\le C\eps\rho^\sigma$.
        Provided $\bar C$ is large enough and $\sigma$ is small enough, \Cref{cor:osc_decay_interior} yields
        \begin{equation}
            \osc_{e_1}(\de E;B_r(x))\le C_2' (C\eps\rho^\sigma)\left(\frac{r}{\rho}\right)^{\sigma_2}\le C\eps r^\sigma
        \end{equation}
        for all $r\ge C\eps\rho^{1+\sigma}$, which is the case since $r\ge C\eps$.

        \textbf{Case 3: $x\in \Omega\cap B_{1/2}$ and $d_\Omega(x)<\bar Cr$. }In this case, we choose $y\in \de\Omega\cap B_{1/2}$ such that $d_\Omega(x)=|x-y|$ and we estimate, using \eqref{eq:osc_decay_primo}:
        \begin{equation}
            \osc_{e_1}(\de E;B_r(x))\le \osc_{e_1}(\de E;B_{(\bar C+1)r}(y))\le C\eps r^\sigma
        \end{equation}
        up to choosing $C$ large enough, as desired.
        
\end{proof}

\subsection{Improvement of flatness}\label{sec:IOF}

In this section we prove \Cref{thm:improvement_flatness} and its counterpart for points away from $\de\Omega$ (\Cref{prop:interior_IOF}).
We anticipate that we will adopt the convention of identifying $\RR^{n-1}$ with $e_1^\perp$, so that $x''=(x''_2,\dots,x''_n)\in\RR^{n-1}$.
We will usually denote points in $\RR^{n-1}$ as $x'',y''$ and $B_r''(x'')=\{y''\in\RR^{n-1}\colon |x''-y''|<r\}$.
We also recall the notation 
\begin{equation}
    U^+\coloneqq\{x\in U\colon x_n\ge0\}, 
\end{equation}
where $U$ is either a subset of $\RR^n$ or of $\RR^{n-1}$.
Lastly, we introduce the notation
\begin{equation}
	\Cc_r(x'') = [-1,1]\times B''_r(x'')\subset\RR^{n}, \quad \Cc_r = \Cc_r(0'').
\end{equation}

\begin{proof}[Proof of \Cref{thm:improvement_flatness}]
    We argue by compactness.
    Assume there exist sequences $\Omega^j
    $ that are $\varkappa_j$ flat, $E^j$ that are $(\vartheta_j,\beta)$-minimizers of $\Per_{\dista{\Omega^j}}$ in $B_1$, and $\nu_j\in\sphere\cap e_n^\perp$, such that
    \begin{equation}
        \Big(\vartheta_j+\varkappa_j\Big)^{\lambda_{1}}\le\osc_{\nu_j}(\de E_j;B_1)\eqqcolon\eps_j
    \end{equation}
    for some $\eps_j\searrow0$, where $\lambda_{1}\coloneqq\lambda_3$ as in \Cref{cor:full_decay}.
    Without loss of generality, we assume $\nu_j=e_1$ for every $j$.

    Consider the rescaled sets
    \newcommand{\tE}{\tilde{E}}
    \newcommand{\tO}{\tilde{\Omega}}
    \begin{equation}
        \tE^j\coloneqq\{(x_1,x'')\in\Cc_1\colon (\eps_jx_1,x'')\in E^j\}
        \subset\tO^j
        \coloneqq\{(x_1,x'')\in\RR^n\colon (\eps_jx_1,x'')\in \Omega^j\}.
    \end{equation}
    Using \Cref{cor:full_decay} and arguing as in \cite{DeP_Gas_Schu24}, we prove that $\overline{\de \tE^j\cap \Cc_{3/4}\cap \tO^j}$ converge in the Hausdorff distance to the graph of some $C^{0,\sigma}$ function $u:(B_{3/4}'')^+\to[-1,1]$.

    Having defined $u$, we now prove that it solves
    \begin{equation}\label{eq:linearized}
        \begin{cases}
            \Delta u + a\frac{\de_n u}{x''_n}=0\quad&\mbox{in }B_{1/2}''\cap \{x_n>0\}\\
            \de_n u=0&\mbox{in }B_{1/2}''\cap \{x_n=0\}
        \end{cases}
    \end{equation}
    in the viscosity sense, meaning that whenever a smooth function $\phi:\RR^{n-1}\to\RR$ touches $u$ from above at some point $\bar y''\in (B''_{1/2})^+$ (that is $\phi(\bar y'')=u(\bar y'')$ and $\phi\ge u$ in some neighborhood $(B''_r(\bar y''))_+$) then:
    \begin{itemize}
        \item if $\bar y''_n>0$, then
    \begin{equation}
        \Delta\phi(\bar y'') + a\frac{\de_n \phi(\bar y'')}{\bar y''_n}\ge0;
    \end{equation}
        \item if $\bar y''_n=0$, then $\de_n\phi(\bar y'')\ge0$
    \end{itemize}
    and the opposite inequalities hold if $\phi$ touches $u$ from below.
    Towards the proof of the above claim, without loss of generality we may assume that $\phi$ is a paraboloid of the form $\phi(x'')\coloneqq\frac{1}{2}A(x''-\bar y'')\cdot (x''-\bar y'') +\xi''\cdot(x''-\bar y'')+c$
    and that $\phi(\bar y'')= u(\bar y'')$ and $\phi>u$ in $(B''_{2r}(\bar y''))^+\setminus\{\bar y''\}$ for some $r>0$.
    \begin{description}
        \item[Case $\bar y''_n>0$]
        Towards a contradiction, we assume that $r\le\frac{\bar y''_n}{8}$ and that 
        \begin{equation}\label{eq:IOF_proof_wrong_lapl}
            \Delta\phi(x'')+a\frac{\de_n\phi(x'')}{x''_n}\le-\Lambda<0
        \end{equation}
        for every  $x''\in B''_{2r}(\bar y'')$.
        By Hausdorff convergence, there exist sequences $y''_j\to \bar y''$ and $c_j\to0$ such that, for every $j$ large, $\{x_1<\eps_j\phi(x'')+c_j\}$ touches $E^j$ from outside at $y_j\coloneqq(\eps_j\phi(y''_j)+c_j,y''_j)$ in a neighborhood $B_r(y_j)$.

        We fix one of those $j$ (in order to simplify the notation, we drop its indication letting $\Omega\coloneqq\Omega^j, y\coloneqq y_j, \eps\coloneqq \eps_j$, and so on) and we let
        \begin{gather}
            p(x'')\coloneqq \frac{\Lambda}{4(n+a-1)}\left(\frac{r^2}{16}-|x''-y''|^2\right),\label{eq:def_pj}\\
            F\coloneqq E\setminus(B_r(y)\cap\{x_1\le\eps(\phi-p)\}),\label{eq:def_Fj}\\
            G(x)=x_1-\eps(\phi(x'')-p(x'')).\label{eq:def_Gj}
        \end{gather}
        By \Cref{lemma:technical_geom} and recalling the choice of $r$, it holds
        \begin{align}\label{eq:iof_proof_integral}
            C\vartheta r^{n+a+\beta-1}\ge \int_{E\setminus F}\dista{\Omega}\left(\dive\frac{\nabla G}{\nabla G}+a\frac{\nabla G\cdot\nabla d_\Omega}{|\nabla G|d_\Omega}\right)\dif\Lc^n.
        \end{align}
        In order to estimate from below of the right-hand side of the above inequality, we first remark that direct computations give
        \begin{equation}
            \Delta p(x'')+a\frac{\de_np(x'')}{x''_n}\ge-\frac{\Lambda}{2}
        \end{equation}
        in $B''_r(y'')$.
        Using the above inequality, \eqref{eq:IOF_proof_wrong_lapl} and the fact that $\left|\frac{\nabla d_\Omega}{d_\Omega}-\frac{e_n}{x''_n}\right|\le C\varkappa$ for some $C$ depending on $\bar y''_n$, we find
        \begin{align}
            &\dive\frac{\nabla G}{\nabla G}+a\frac{\nabla G\cdot\nabla d_\Omega}{|\nabla G|d_\Omega}\\
            &\qquad\ge\frac{\eps}{|\nabla G|}\left(\Delta(p-\phi)+a\frac{\de_n(p-\phi)}{x''_n}\right) - C||\phi||_{C^2}^3\eps^3 - C\varkappa\frac{1}{d_\Omega}\\
            &\qquad\ge\eps\frac{\Lambda}{4}-C\varkappa
        \end{align}
        provided $\eps$ is small enough, depending on $||\phi||_{C^2}$.
        We go back to \eqref{eq:iof_proof_integral} and find
        \begin{equation}
            C(\vartheta+\varkappa)\ge \eps\Lc^n(E\setminus F)
        \end{equation}
        for some $C$ large depending on $\phi$ and $\bar y''$.
        We reach a contradiction by remarking that, by \Cref{lemma:technical_geom} and \eqref{eq:lowerdensityestimate}, $\Lc^n(E\setminus F)\ge c$ (independently of $j$) and that $\varkappa+\vartheta\ll \eps$ by assumption.

        \item[Case $\bar y''_n=0$]As in the previous case, there exist sequences $y''_j\to \bar y''$, and $c_j\to0$ such that $\{x_1<\eps_j\phi(x'')+c_j\}$ touches $E$ from outside at $y_j\coloneqq(\eps_j\phi(y''_j)+c_j,y''_j)$ in a neighborhood $B_r(y_j)$.
        If $y_j\in\de\Omega^j$ for infinitely many $j$, then by \Cref{prop:boundary_max} 
        \begin{equation}
            0\le \nu_\Omega(y_j)\cdot(1,-\eps_j\nabla \phi(y''_j))\le -\eps_j\de_n\phi(y''_j)+C\varkappa_j
        \end{equation}
        Since $\varkappa_j^\lambda\le\eps_j$, as $j\to\infty$ we obtain $\de_n\phi(\bar y'')\ge0$.
        
        If, on the other hand, $y_j\in\Omega^j$ eventually, we argue as follows.
        As in the case $\bar y''_n>0$, we freeze some $j$ large enough and we drop its indication everywhere.
        Then, for some $r>0$ small (independent of $j$) to be determined later, we let 
        \begin{equation}
            p(x'') = \frac{1}{2}\left(\frac{r^2}{16}-|x''-y''|^2\right)
        \end{equation}
        and we let $F$ and $G$ be defined as in \eqref{eq:def_Fj} and \eqref{eq:def_Gj}.
        Then, by \Cref{lemma:technical_geom}, we have
        \begin{equation}\label{eq:iof_last_step}
            C\vartheta r^{n+\beta-1}\ge\int_{E\setminus F}\dista{\Omega}\left(\dive\frac{\nabla G}{|\nabla G|}+a\frac{\nabla G\cdot\nabla d_\Omega}{|\nabla G|d_\Omega}\right)\dif\Lc^n
        \end{equation}
        for some $C>0$ universal.
        We now estimate
        \begin{gather}
            \dive\frac{\nabla G}{|\nabla G|}\ge -\frac{|D^2G|}{|\nabla G|}\ge -\frac{C\eps d_\Omega}{|\nabla G|d_\Omega}.
        \end{gather}
        In order to estimate $a\frac{\nabla G\cdot \nabla d_\Omega}{|\nabla G|d_\Omega}$, we recall $\nabla G=e_1+\eps(0,\nabla''p-\nabla''\phi)$ and we compute
        \begin{gather}
            e_1\cdot \nabla d_\Omega\ge -C\varkappa,\\
            (0,\nabla''\phi)\cdot\nabla d_\Omega\le \de_n\phi+C\varkappa
        \end{gather}
        for some $C$ depending on $\phi$.        
        For the next computations, we recall that $|\nabla d_\Omega-e_n|\le C\varkappa$, that $x\cdot\nabla d_\Omega\ge d_\Omega-C\varkappa$ and that $y_n\ge -\varkappa$: therefore
        \begin{align}
            (0,\nabla''p)\cdot\nabla d_\Omega
            &\ge -x\cdot\nabla d_\Omega - |x_1||e_1\cdot\nabla d_\Omega| + y_n - |y||e_n-\nabla d_\Omega|\\
            &\ge -d_\Omega - C\varkappa
        \end{align}
        Gathering the above estimates, we find
        \begin{equation}
            \dive\frac{\nabla G}{|\nabla G|}+a\frac{\nabla G\cdot\nabla d_\Omega}{|\nabla G|d_\Omega}\ge-\frac{a}{|\nabla G|d_\Omega}\left(\eps\de_n\phi + C\varkappa + C\eps d_\Omega \right)
        \end{equation}
        Towards a contradiction, assume $\de_n\phi(\bar y'') = -4\delta<0$.
        Then, by choosing $r$ small (depending on $\delta$ and $||D^2\phi||_\infty$), using $\varkappa^\lambda\le\eps$ and taking $j$ large enough, we may assume $\de_n\phi\le-2\delta$ in $B_r(y)$ thus
        \begin{equation}
            \dive\frac{\nabla G}{|\nabla G|}+a\frac{\nabla G\cdot\nabla d_\Omega}{|\nabla G|d_\Omega}\ge\eps\delta\frac{a}{|\nabla G|d_\Omega}\ge \frac{a}{2}\eps\delta.
        \end{equation}
        Therefore, going back to \eqref{eq:iof_last_step} and using \Cref{prop:density_a}, we obtain
        \begin{equation}
            C\vartheta r^{n-1+\beta}\ge \delta \eps r^{n+a}
        \end{equation}
        which fails as $j\to\infty$, since $\vartheta^\lambda\le\eps$ and $\lambda<1$.
        This concludes the proof in the case $\bar y''_n=0$.        
    \end{description}
    
    Having established that $u$ is a viscosity solution to \eqref{eq:linearized}, we may apply \Cref{lemma:linearized} below, that exploits the results from \cite{Sire_Terracini_Vita_2021} to obtain Schauder estimates for $u$.
    In particular, it holds $u\in C^2((B_{1/4}'')^+)$ and 
	\begin{equation}
		||u||_{C^2((B''_{1/4})^+)}\le C
	\end{equation}
	for some $C$ universal. 
    Now, we may find $\eta$ so that
    \begin{equation}
        |u(x'')-u(0'')-\nabla u(0'')\cdot x''|\le \frac{1}{4}\eta
    \end{equation}
    for every $x''\in (B''_{2\eta})^+$.
	Therefore, by the Hausdorff convergence established previously, we obtain
	\begin{equation}
		\de E^j\cap B_{\eta}\cap\Omega^j\subset\left\{x\colon |x_1-\eps_j u(0'') - \eps_j \nabla u(0'')\cdot x''|\le \frac{1}{2}\eps_j \eta\right\}
	\end{equation}
    which is the desired result.
    
\end{proof}

The following result was used in the proof of \Cref{thm:improvement_flatness}:
\begin{lemma}[Regularity for the linearized problem]\label{lemma:linearized}
	There exists a universal constant $C$ such that, if $u$ is a viscosity solution to \eqref{eq:linearized} with $u\in C^{0,\sigma}(\overline{B_{1/2}^+})$ and $||u||_{L^\infty}\le 1$, then $u\in C^2((B_{1/4}'')^+)$ and
	\begin{equation}\label{eq:apriori_linearized}
		||u||_{C^2((B''_{1/4})^+)}\le C.
	\end{equation}
\end{lemma}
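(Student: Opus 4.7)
The strategy is to recognize that the equation can be written in weighted divergence form and to invoke the regularity theory developed in \cite{Sire_Terracini_Vita_2021}. In the interior $\{x''_n>0\}$, the equation $\Delta u + a\frac{\de_n u}{x''_n}=0$ is equivalent to
\begin{equation}
	\dive\bigl((x''_n)^a \nabla u\bigr) = 0,
\end{equation}
and the viscosity boundary condition $\de_n u=0$ on $\{x''_n=0\}$ is precisely the natural conormal condition associated to this weighted operator: for any smooth candidate the weighted flux $(x''_n)^a \de_n u$ vanishes as $x''_n\to 0^+$ whenever $\de_n u$ is bounded.

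The first substantive step is to show that a bounded $C^{0,\sigma}$-viscosity solution $u$ is in fact a weak solution in the sense that, for every $\varphi\in C^1_c(B''_{1/2})$,
\begin{equation}
\int_{(B''_{1/2})^+} (x''_n)^a \nabla u\cdot\nabla\varphi\,\dif x'' = 0.
\end{equation}
A convenient way to bypass the boundary is to perform the even reflection $\tilde u(x_1'',\dots,-t) := u(x_1'',\dots,t)$, which extends $u$ continuously to the whole of $B''_{1/2}$; the boundary viscosity condition $\de_n u=0$ ensures that $\tilde u$ satisfies the reflected degenerate elliptic equation $\dive(|x''_n|^a\nabla\tilde u)=0$ in viscosity sense in the entire ball. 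The equivalence between viscosity and distributional notions is then obtained by a standard comparison argument using polynomial barriers with non-vanishing weighted Laplacian, exactly in the spirit of the touching paraboloids already employed in the proof of \Cref{thm:improvement_flatness} (one uses $\phi(x'') = \frac{1}{2}|x''-y''|^2 -\frac{\Lambda}{2}((x''-y'')\cdot\nu'')^2$ and compares against sup/inf-convolutions of $u$, which preserve the even reflection).

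Once $\tilde u$ is a bounded weak solution of the weighted equation $\dive(|x''_n|^a \nabla \tilde u)=0$, the interior Schauder-type estimates established in \cite{Sire_Terracini_Vita_2021} for this precise class of $A_2$-degenerate operators apply and yield
\begin{equation}
\|\tilde u\|_{C^{2,\gamma}(B''_{1/4})} \le C\,\|\tilde u\|_{L^\infty(B''_{1/2})},
\end{equation}
for a constant $C$ depending only on $n$ and $a$. Restricting to the upper half and using $\|u\|_{L^\infty}\le 1$, this is exactly \eqref{eq:apriori_linearized}.

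The main technical obstacle is the passage from the viscosity formulation to the weak formulation across the degenerate set $\{x''_n=0\}$, where the coefficient of the zeroth-order term blows up. In particular, one must verify that touching paraboloids produce genuine pointwise inequalities on the weighted Laplacian of $u$, which requires the barriers to be carefully tailored to the weight (e.g.\ $\phi(x'') + \delta\, x''_n$ to exploit the Neumann condition); once this equivalence is established, the argument is reduced to a black-box application of the regularity theory of \cite{Sire_Terracini_Vita_2021}.
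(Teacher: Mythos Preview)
Your plan identifies the right endgame (even reflection to $\dive(|x''_n|^a\nabla\tilde u)=0$ and the Schauder theory of \cite{Sire_Terracini_Vita_2021}), and you correctly spot the key perturbation $\phi+\delta x''_n$ to handle the Neumann condition. However, the step you call ``first substantive'' --- passing directly from the viscosity formulation to the weak formulation across $\{x''_n=0\}$ --- is left as a sketch, and the tools you name do not obviously carry it. The equation is not translation-invariant in $x''_n$, so sup/inf-convolutions of $u$ are not automatically viscosity sub/supersolutions of the \emph{same} equation; and the paraboloids from $\Fc_\Lambda$ belong to the weak-viscosity framework for \emph{sets}, not to the PDE formulation of \eqref{eq:linearized}. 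As written, this step is a genuine gap.

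The paper avoids this issue entirely by reversing the logic. Instead of showing that $u$ is weak and then applying \cite{Sire_Terracini_Vita_2021} to $u$, it builds a sequence of \emph{energetic} solutions $v_r$ to which \cite{Sire_Terracini_Vita_2021} applies directly, and then shows $v_r\to u$. Concretely: take the inf-convolution $u_r$ (used only as Lipschitz boundary data, not as a solution of anything), solve $\dive(|x_n|^a\nabla v_r)=0$ in $B_{3/4}$ with $v_r=u_r$ on $\partial B_{3/4}$, and observe that the Schauder bound $\|v_r\|_{C^{2,\theta}(B_{1/2})}\le C$ holds uniformly in $r$. The comparison $v_r\le u\le v_r+Cr^{\sigma/2}$ is then obtained by testing $u$ against the smooth function $v_r\pm\delta x_n$ at an interior maximum --- exactly your $\phi+\delta x_n$ trick, but applied to the energetic solution rather than to a convolution. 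Passing $r\to0$ transfers the uniform $C^2$ bound to $u$ via Arzel\`a--Ascoli. This route never needs to know that $u$ itself is a weak solution, which is what makes it clean.
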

\begin{proof}
    The idea of the proof is to build a sequence of energetic solutions to \eqref{eq:linearized} that converges uniformly to $u$. We then conclude by propagating the a-priori estimates proved in \cite{Sire_Terracini_Vita_2021} along the sequence. 

    For the sake of discussion and in order to keep the notation as light as possible, we replace $\RR^{n-1}$ by $\RR^n$ (thus writing $x,B_r$ in place of $x'',B_r''$) and we replace $B_{1/2}$ and $B_{1/4}$ by $B_1$ and $B_{1/2}$, respectively.
    We also extend $u$ evenly to the whole $B_1$, letting $u(x',x_n)=u(x',-x_n)$ for all $(x',x_n)\in B_1\cap\{x_n<0\}$.

    Given $r>0$, we consider the inf-convolution $u_r\colon B_1\to\RR$ defined as
    \begin{equation}
    	u_r(x) := \inf_{y\in B_1}\left\{u(y) + \frac{1}{2r}|x-y|^2  \right\}.
    \end{equation}
    The fact that $u\in C^{0,\sigma}(B_1)$ and $||u||_{L^\infty}\le1$ yields 
    \begin{equation}\label{eq:conv_infconv}
    	u(x) - Cr^{\sigma/2} \le u_r(x) \le u(x),
    \end{equation}
    for all $x\in B_1$.
    Furthermore, $u_r$ is Lipschitz-continuous. We refer the readers to \cite[Section 5.1]{caffarelliCabre95} for the proofs of the above facts.
    Since $u_r$ is Lipschitz-continuous, there exists an energetic solution $v_r$ (in the sense of \cite{Sire_Terracini_Vita_2021}) to
    \begin{equation}
	\begin{cases}
		-\dive(|x_n|^a \nabla v_r) =0  & \mbox{in } B_{3/4},\\
		v_r = u_r  &\mbox{on }\de B_{3/4}.
	\end{cases}
    \end{equation}    
    Since $u_r$ is symmetric with respect to $\{x_n=0\}$, we may assume $v_r$ is symmetric as well.
    Moreover, by \cite{Sire_Terracini_Vita_2021}, $v_r\in C^{2,\theta}_{loc}(B_{3/4})$ and
    \begin{equation}\label{eq:linearized_apriori_est}
        ||v_r||_{C^{2,\theta}(B_{1/2})}\le C||v_r||_{L^\infty}\le C,
    \end{equation}
    where $\theta\in(0,1)$ and $C>0$ are universal constants.
    In particular, $\de_nv_r=0$ on $\{x_n=0\}$.

    Let $\delta>0$ small to be chosen later and consider $v_r'(x)=v_r(x)+\delta x_n$.
    We claim that $v'_r\le u+\delta$ in $\overline{B^+_{3/4}}$.
    If not, then
    \begin{equation}
        \max_{\overline{B^+_{3/4}}}(v'_r-u)=v'_r(z)-u(z)\eqqcolon m>\delta
    \end{equation}
    for some $z\in \overline{B_{3/4}^+}$, so that $v'_r-m$ touches $u$ from below at $z$.
    By $v_r=u_r < u$ on $\de B_{3/4}$, it must be $z\notin\de B_{3/4}$, for $\delta>0$ sufficiently small.
    Moreover, since $u$ is a viscosity solution of \eqref{eq:linearized}, we exclude both the option $z_n=0$ (because $\de_n v_r'(z)=\delta>0$ on $\{x_n=0\}$) and the option $z_n>0$, since in the latter case we would have
    \begin{equation}
        0\ge \Delta v_r'(z)+a\frac{\de_n v'_r(z)}{z_n}=\Delta v_r(z)+a\frac{\de_n v_r(z)}{z_n}+\frac{a\delta}{z_n}=\frac{a\delta}{z_n}>0.
    \end{equation}
    Since $\delta>0$ is arbitrary, we conclude $v_r\le u$ in $\overline{B^+_{3/4}}$. With analogous computations, we also find $v_r\ge u-Cr^{\sigma/2}$ in $\overline{B^+_{3/4}}$.

    By the above considerations, $v_r\to u$ uniformly as $r\to0$ in $B^+_{3/4}$.
    \eqref{eq:linearized_apriori_est} and the Arzelà-Ascoli theorem yield the desired conclusion for $u$.
    
\end{proof}

\begin{proposition}[Improvement of flatness at points in $\Omega$]\label{prop:interior_IOF}
    There exist universal constants $\eps_{4},\lambda_{4},\eta_4$ (small) and $C_4$ (large) with the following property.
    Let $E$ be a $(\vartheta,\beta)$-minimizer of $\Per_{\dista{\Omega}}$ in $B_R(x_0)$, where $x_0\in\Omega$ and $R$ are such that $d_\Omega(x_0)\ge C_4R$.
    Assume that, for some $\nu\in\sphere$,
    \begin{equation}\label{eq:iof_interior_hp}
        C_4\left((\vartheta R^\beta)^{\lambda_4}+R\frac{||\nabla d_\Omega\cdot\nu||_{L^\infty(B_R(x_0))}}{d_\Omega(x_0)}\right)\le \eps\le\eps_{4},
    \end{equation}
    where
    \begin{equation}
        \eps\coloneqq\frac{1}{R}\osc_{\nu}(\de E;B_R(x_0)).
    \end{equation}
    Then there exists $\tilde\nu\in\sphere$ such that $|\tilde \nu-\nu|\le C_4\eps$ and 
    \begin{equation}
        \osc_{\tilde\nu}(\de E;B_{\eta_4 R}(x_0))\le\frac{\eta_4}{2}\eps R. 
    \end{equation}
\end{proposition}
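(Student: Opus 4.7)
The plan is to run the same compactness--contradiction scheme used in the proof of \Cref{thm:improvement_flatness}, but with two substantial simplifications: we are far from $\de\Omega$, so the boundary condition disappears, and the linearized equation reduces to the pure Laplace equation. Assume by contradiction that there exist sequences $\Omega^j$, $x_0^j\in\Omega^j$, $R_j>0$, $\nu_j\in\sphere$ and $(\vartheta_j,\beta)$-minimizers $E^j$ of $\Per_{\dista{\Omega^j}}$ in $B_{R_j}(x_0^j)$ satisfying \eqref{eq:iof_interior_hp} with $\eps_j\searrow 0$, for which no admissible $\tilde\nu$ exists. After translating and scaling (\Cref{remark:scaling}), we may reduce to $x_0^j=0$, $R_j=1$, $\nu_j=e_1$, with $d_{\Omega^j}(0)\ge C_4$ and $\|\nabla d_{\Omega^j}\cdot e_1\|_{L^\infty(B_1)}/d_{\Omega^j}(0)\le \eps_j/C_4$.

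Next, I would introduce the vertically rescaled sets
\begin{equation}
	\tilde E^j \coloneqq \{(x_1,x'')\in \Cc_1 : (\eps_j x_1,x'')\in E^j\}.
\end{equation}
By \Cref{cor:osc_decay_interior} applied at all points of $\de E^j\cap B_{3/4}$ at all scales $r\in[C'_2\eps_j,1/2]$ (whose hypotheses are guaranteed by \eqref{eq:iof_interior_hp} up to taking $\lambda_4\le\lambda_2$ and $C_4$ large enough), one gets a uniform $C^{0,\sigma_2}$-type control on the thickness of $\de E^j$, and hence, as in the proof of \Cref{thm:improvement_flatness}, the sets $\overline{\de\tilde E^j\cap \Cc_{3/4}}$ converge in the Hausdorff distance to the graph of some function $u\in C^{0,\sigma_2}(B_{3/4}'')$ with $\|u\|_\infty\le 1$.

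The key step will be to show that $u$ is a viscosity solution of $\Delta u=0$ in $B_{1/2}''$. Given a quadratic polynomial $\phi$ touching $u$ strictly from above at $\bar y''\in B_{1/2}''$, Hausdorff convergence yields points $y_j$ and constants $c_j\to 0$ such that $\{x_1\le \eps_j\phi(x''-y_j'')+c_j\}$ touches $E^j$ from outside at $y_j$ in some ball $B_r(y_j)$. Applying \Cref{lemma:technical_geom} with $p(x'')=\tfrac{\Lambda}{4(n-1)}(r^2/16-|x''-y_j''|^2)$ and using that $d_{\Omega^j}(y_j)\ge C_4/2$ together with the hypothesis on $\|\nabla d_{\Omega^j}\cdot e_1\|_\infty$, one checks that the drift term $a\nabla G\cdot\nabla d_{\Omega^j}/(d_{\Omega^j}|\nabla G|)$ is of order $\eps_j/C_4$, hence dominated by the elliptic term $\Delta(p-\phi)\ge \Lambda/2$ provided $\Delta\phi(\bar y'')\le -\Lambda<0$. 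Combined with the lower density estimate of \Cref{lemma:density_estimates_away} giving $\Lc^n(E^j\setminus F^j)\ge c$, and with $\vartheta_j^{\lambda_4}\le\eps_j$, this contradicts \eqref{eq:technical_geom_integral} for $j$ large. The case of touching from below is symmetric, so $u$ is harmonic in the viscosity sense.

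Finally, standard interior Schauder estimates yield $\|u\|_{C^2(B_{1/4}'')}\le C$ universal. Choosing $\eta_4$ so small that
\begin{equation}
	|u(x'')-u(0)-\nabla u(0)\cdot x''|\le \tfrac{1}{4}\eta_4 \quad \text{in } B_{2\eta_4}'',
\end{equation}
Hausdorff convergence of $\de\tilde E^j$ to $\graph u$ implies, for $j$ large,
\begin{equation}
	\de E^j\cap B_{\eta_4}\subset \Bigl\{|x_1-\eps_j u(0)-\eps_j \nabla u(0)\cdot x''|\le \tfrac{1}{2}\eps_j\eta_4\Bigr\},
\end{equation}
yielding the improved direction $\tilde\nu_j\in\sphere$ with $|\tilde\nu_j-e_1|\le C\eps_j$ and the desired oscillation bound, contradicting the contradiction assumption. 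The principal technical obstacle is the viscosity-harmonicity step; once the drift term is shown to be negligible under the assumption \eqref{eq:iof_interior_hp}, the argument is a direct adaptation of the interior portion of the proof of \Cref{thm:improvement_flatness}.
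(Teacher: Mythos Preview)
Your overall scheme --- compactness--contradiction, interior oscillation decay via \Cref{cor:osc_decay_interior}, viscosity argument through \Cref{lemma:technical_geom}, Schauder estimates, and Taylor expansion --- matches the paper's proof. The one genuine gap is the identification of the limit equation.

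You claim that $u$ is harmonic because the drift term $a\nabla G\cdot\nabla d_{\Omega^j}/(d_{\Omega^j}|\nabla G|)$ is of order $\eps_j/C_4$ and hence dominated by the elliptic term $\eps_j\Lambda/2$. But this does not close. Writing $\nabla G = e_1 - \eps_j\nabla''(\phi-p)$, the drift splits as
\[
a\frac{e_1\cdot\nabla d_{\Omega^j}}{|\nabla G|\,d_{\Omega^j}}
\;-\;
a\,\eps_j\,\frac{\nabla''(\phi-p)\cdot\nabla d_{\Omega^j}}{|\nabla G|\,d_{\Omega^j}}.
\]
The first piece is indeed $O(\eps_j/C_4)$ by the hypothesis on $\|\nabla d_{\Omega^j}\cdot e_1\|_\infty$. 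The second piece, however, is $O\!\big(\eps_j\,|\nabla''(\phi-p)|/C_4\big)$, with a constant depending on the test paraboloid. Since $C_4$ is a \emph{fixed} universal constant while $\Lambda=-\Delta\phi(\bar y'')$ may be arbitrarily small and $\|\phi\|_{C^2}$ arbitrarily large, the inequality $\Lambda/2 > C(\phi)/C_4$ needed for your contradiction can fail. In particular, along a contradiction sequence with $d_{\Omega^j}(0)\to d_0<\infty$, the second piece does not vanish relative to the elliptic term and survives in the limit as a genuine first-order drift.

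The fix is exactly what the paper does: keep the drift in the linearized equation. Up to a subsequence (and a rotation in $e_1^\perp$), the limit $u$ is a viscosity solution of
\[
\Delta u + a\,\frac{\partial_n u}{x''_n + d_0}=0\qquad\text{in }B''_{1/2},
\]
where $d_0=\lim_j d_{\Omega^j}(0)\in[C_4,+\infty]$; when $d_0=+\infty$ this reduces to your Laplace equation. Since the first-order coefficient is bounded by $2a/C_4$, classical interior Schauder estimates still give $\|u\|_{C^2(B''_{1/4})}\le C$ with a universal constant, and the rest of your argument (Taylor expansion, Hausdorff convergence, contradiction) goes through unchanged. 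You should also adjust your choice of $p$ accordingly (the paper uses $\tfrac{\Lambda}{4(n+a-1)}$ rather than $\tfrac{\Lambda}{4(n-1)}$ so that the \emph{full} linearized operator applied to $p-\phi$ is bounded below by $\Lambda/2$).
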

\begin{proof}
    Since the proof is very similar to the one of \Cref{thm:improvement_flatness}, we only sketch it.
    \begin{itemize}
        \item Without loss of generality, we assume $R=1$, $x_0=0$ and that $d_\Omega(0)\ge C$ for $C\ge C_2'$ given in \Cref{cor:osc_decay_interior}.
        \item We consider a sequence of sets $E^j$ that are $(\vartheta_j,\beta)$-minimizers of $\Per_{\dista{\Omega^j}}$ in $B_1$ and that satisfy \eqref{eq:iof_interior_hp} with $\eps_4$ replaced by $\eps_j$ for some $\eps_j\searrow0$. Without loss of generality, we assume $\nu_j=e_1$ for every $j$.
        \item Using \Cref{cor:osc_decay_interior}, we prove that the rescalings $\tilde E^j$ defined as in the proof of \Cref{thm:improvement_flatness} converge (up to a subsequence) in the Hausdorff distance to the graph of a $C^{0,\sigma}$ function $u$.
        \item Reproducing the argument used in the case $\bar y''_n>0$ of the proof of \Cref{thm:improvement_flatness}, we prove that $u$ is a viscosity solution to
        \begin{equation}
            \Delta u +a\frac{\de_n u}{x_n''+d_0}=0\quad\mbox{in }B''_{1/2}
        \end{equation}
        where $d_0\coloneqq\lim_{j\to\infty}d_{\Omega_j}(0)\in[C,+\infty]$ which exists up to extracting a further subsequence.
        \item By classical Schauder estimates (see, for instance, \cite{Gilbarg_Trudinger}), it holds $||u||_{C^2}\le C$ for some $C$ universal.
        \item As in the proof of \Cref{thm:improvement_flatness}, we conclude the proof by taking a second-order Taylor expansion of $u$ at $0''$ and exploiting the Hausdorff convergence proved above.
     \end{itemize}
\end{proof}

The last step towards the proof of \Cref{thm:main_theorem} is the following
\begin{corollary}[Iteration of the improvement of flatness]\label{cor:iof_iter}
    There exist universal constants $\eps_5,\lambda_5, \gamma_5$ (small) and $C_5$ (large) with the following property.
    Let $\Omega$ be $\varkappa$-flat and let $E$ be a $(\vartheta,\beta)$-minimizer of $\Per_\dista{\Omega}$ in $B_1$. If
    \begin{equation}
        (\vartheta+\varkappa)^{\lambda_5}\le \osc_{e_1}(\de E;B_1)\eqqcolon \eps\le\eps_5,
    \end{equation}
    then for every $x\in B_{1/4}\cap\overline{\Omega}$ there exists $\nu_x\in\sphere$ with $|\nu_x-e_1|\le C_5\eps$ and, for every $0<r\le1/4$:
    \begin{equation}\label{eq:iof_final_thesis}
        \osc_{\nu_x}(\de E;B_r(x))\le C\eps r^{1+\gamma_5}.
    \end{equation}
    Moreover, if $x\in\de\Omega$ then $\nu_x\perp\nu_\Omega(x)$.
\end{corollary}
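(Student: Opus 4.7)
\textbf{Proof plan for \Cref{cor:iof_iter}.}

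The plan is to iterate the two improvement-of-flatness statements—\Cref{thm:improvement_flatness} at boundary points and \Cref{prop:interior_IOF} at interior points—at geometrically shrinking scales, gluing the boundary iteration with the interior one for points $x\in\Omega$ that are not uniformly far from $\partial\Omega$. Let $\eta\coloneqq\min\{\eta_1,\eta_4\}$, choose $\gamma_5\in(0,1)$ so small that $\eta^{\gamma_5}\ge 1/2$ and $\gamma_5\le\min\{\lambda_1,\lambda_4\}\min\{\alpha,\beta\}$, and set $\lambda_5\le\min\{\lambda_1,\lambda_4\}$. By the scaling property (\Cref{remark:scaling}), the rescaling $z\mapsto (z-x)/r_k$ at scale $r_k\coloneqq\eta^k$ turns a $(\vartheta,\beta)$-minimizer in a $\varkappa$-flat domain into a $(\vartheta r_k^\beta,\beta)$-minimizer in a $\varkappa r_k^\alpha$-flat domain; hence the smallness hypothesis $(\vartheta r_k^\beta+\varkappa r_k^\alpha)^{\lambda_1}\le\eps\cdot 2^{-k}$ required at step $k$ follows uniformly in $k$ from $(\vartheta+\varkappa)^{\lambda_5}\le\eps$.

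Fix $x\in B_{1/4}\cap\overline{\Omega}$. We construct inductively a sequence $\{\nu_k^x\}_{k\ge 0}\subset\sphere$ with $\nu_0^x=e_1$, $|\nu_{k+1}^x-\nu_k^x|\le C\eps\cdot 2^{-k}$, satisfying
\begin{equation}
\osc_{\nu_k^x}(\partial E;B_{r_k}(x))\le\eps\cdot r_k\cdot 2^{-k}.
\end{equation}
At step $k$ two cases arise. In \textit{Case (B)} ($d_\Omega(x)<C_4 r_k$) we pick $z_k\in\partial\Omega\cap B_{2r_k}(x)$ and apply \Cref{thm:improvement_flatness} at $z_k$ on a ball of radius comparable to $r_k$, obtaining $\nu_{k+1}^x$ orthogonal to $\nu_\Omega(z_k)$; when $x\in\partial\Omega$ every step falls in Case (B) and the limit $\nu_x\coloneqq\lim_k\nu_k^x$ is orthogonal to $\nu_\Omega(x)$. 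In \textit{Case (A)} ($d_\Omega(x)\ge C_4 r_k$) we apply \Cref{prop:interior_IOF} directly at $x$. The perturbations $|\nu_{k+1}^x-\nu_k^x|$ form a geometric series bounded by $C\eps$, so $\nu_x$ is well defined with $|\nu_x-e_1|\le C_5\eps$; interpolation over $r\in[r_{k+1},r_k]$ yields \eqref{eq:iof_final_thesis} for every $r\in(0,1/4]$.

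The main delicate point is the \textit{transition} from Case (B) to Case (A), which occurs at the unique step $k^*$ at which $d_\Omega(x)$ first exceeds $C_4 r_{k^*}$; thereafter all iterations fall in Case (A). At step $k^*$ the direction $\nu_{k^*}^x$ inherited from the preceding boundary step is orthogonal to $\nu_\Omega(z_{k^*-1})$, and by \Cref{lemma:notes_on_distance} we have $\|\nabla d_\Omega+\nu_\Omega(z_{k^*-1})\|_{L^\infty(B_{r_{k^*}}(x))}\le C\varkappa\, r_{k^*}^\alpha$. Combined with $d_\Omega(x)\ge C_4 r_{k^*}$, this yields
\begin{equation}
\frac{r_{k^*}\,\|\nabla d_\Omega\cdot\nu_{k^*}^x\|_{L^\infty(B_{r_{k^*}}(x))}}{d_\Omega(x)}\le C\varkappa\, r_{k^*}^\alpha\le\eps\cdot 2^{-k^*},
\end{equation}
which is precisely the auxiliary smallness condition needed in \Cref{prop:interior_IOF}. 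For $k>k^*$, the accumulated direction drift $\sum_{j\ge k^*}|\nu_{j+1}^x-\nu_j^x|$ is controlled by the geometric tail $C\eps\cdot 2^{-k^*}$ and contributes only a lower-order term to the tilt; propagating this inductively, the hypothesis of \Cref{prop:interior_IOF} remains verified at every subsequent step, closing the induction.
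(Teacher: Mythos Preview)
Your overall strategy matches the paper's: iterate the boundary and interior improvement-of-flatness results and glue them at the scale where $r$ becomes comparable to $d_\Omega(x)$. The paper, however, organizes this differently: it first runs the full iteration of \Cref{thm:improvement_flatness} at a \emph{fixed} boundary point $y\in\partial\Omega$ (the nearest one to $x$), obtaining the decay $\osc_{\nu_y}(\partial E;B_s(y))\le C\eps s^{1+\gamma}$ for all $s$; then for $x\in\Omega$ and $r\gtrsim d_\Omega(x)$ it simply uses the inclusion $B_r(x)\subset B_{r+d_\Omega(x)}(y)$, and for $r\lesssim d_\Omega(x)$ it initializes the interior iteration at $x$ with $\nu_y$. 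This separation avoids the bookkeeping issues below.

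Your single induction centered at $x$ has a concrete gap in Case~(B). You require $z_k\in\partial\Omega\cap B_{2r_k}(x)$, which forces $d_\Omega(x)<2r_k$, yet your Case~(B) threshold is $d_\Omega(x)<C_4 r_k$ with $C_4$ large. For the (finitely many but nonempty) steps with $2r_k\le d_\Omega(x)<C_4 r_k$, no $z_k$ exists as described and neither case applies. Even when $z_k$ exists, the inductive hypothesis lives on $B_{r_k}(x)$, whereas applying \Cref{thm:improvement_flatness} at $z_k$ requires flatness on a ball around $z_k$; since $|x-z_k|$ may be comparable to $r_k$, you must borrow from the $(k{-}1)$-st hypothesis on $B_{r_{k-1}}(x)$, and then the output ball $B_{\eta_1 R}(z_k)$ must contain $B_{r_k}(x)$. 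Chasing these inclusions shows the oscillation bound you obtain at $B_{r_k}(x)$ is $\frac{\eta_1}{2}\eps_{k-1}r_{k-1}$, not $\frac{\eta}{2}\eps_{k-1}r_{k-1}$, so with $\eta=\min\{\eta_1,\eta_4\}<\eta_1$ the claimed halving of flatness per step fails. This is fixable (use a looser decay factor, or take $z_k=y$ fixed and absorb the finitely many transitional steps into $C_5$), but as written the induction does not close; the paper's two-stage organization sidesteps exactly this issue.
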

\begin{proof}
    \textbf{Step 1: decay at the boundary. }
    For $x\in B_{1/2}\cap\de{\Omega}$ and $0<r\le\frac{1}{2}$, let
    \begin{equation}
    F_x(r)\coloneqq\inf_{\nu\in\sphere}\osc_\nu(\de E;B_r(x)) + Ar\big(\vartheta r^\beta+\varkappa r^\alpha\big)^\lambda,
    \end{equation}
    where $A$ is a large universal constant whose value will be specified alter.
    Notice that the infimum in the definition above is attained.

    We claim that, if for some $x\in\de\Omega$ and $r>0$ it holds $F_x(r)\le \eps_1r$, then
    \begin{equation}\label{eq:iof_iteration0}
        F_x(\eta r)\le\frac{\eta}{2}F_x(r).
    \end{equation}
    To prove the above claim, we distinguish two cases:
    \begin{itemize}
        \item If
        \begin{equation}\label{eq:deficit_grande}
            (\vartheta r^\beta+\varkappa r^\alpha)^\lambda\ge\frac{1}{r}\inf_{\nu}\osc_\nu(\de E;B_r(x))
        \end{equation}
        then we trivially have
        \begin{align}
            F_x(\eta r)\le Ar\big(\vartheta r^\beta+\varkappa r^\alpha\big)^\lambda\left(\frac{1}{A}+\eta^{1+\lambda(\beta\wedge\alpha)}\right)\le\frac{\eta}{2}F_x(r)
        \end{align}
        provided $\eta$ is small enough so that $\eta^{1+\lambda(\beta\wedge\alpha)}\le\frac{\eta}{4}$ and $A$ is larger than some constant depending on $\eta$.
        
        \item If \eqref{eq:deficit_grande} does not hold and $x\in\de\Omega$, provided $\lambda\le\lambda_1$, the assumptions of \Cref{thm:improvement_flatness} are in place, thus
        \begin{equation}
            F_x(\eta r)\le\frac{\eta}{2}\inf_{\nu\in\sphere}\osc_\nu(\de E;B_r(x))+A\eta^{1+\lambda(\beta\wedge\alpha)} r\big(\vartheta r^\beta+\varkappa r^\alpha\big)^\lambda\le\frac{\eta}{2}F_x(r)
        \end{equation}
        as claimed, where we have used again the fact that $\eta^{1+\lambda(\beta\wedge\alpha)}\le\frac{\eta}{4}$.
    \end{itemize}
    For all $x\in\de\Omega\cap B_{1/2}$, by choosing $\eps_5$ small enough it holds $F_x(1/2)\le\eps_1/2$, thus by induction
    $F_x(\eta^k/2)\le\left(\frac{\eta}{2}\right)^2 F_x(1/2)$ for all $k\in\NN$ and every $x\in\de\Omega\cap B_{1/2}$.
    Moreover, as $k\to\infty$, the unit vectors realizing the infimum in the definition of $F_x(\eta^k r)$ converge to a unit vector $\nu_x$ which by \Cref{thm:improvement_flatness} is orthogonal to $\nu_\Omega(x)$. Interpolating between scales $\eta^kr$ for $k\in\NN$, we finally find
    \begin{equation}\label{eq:iof_decay_boundary}
        \osc_{\nu_{x}}(\de E;B_r(x))\le C\eps r^{1+\gamma}
    \end{equation}
    for all $x\in\de\Omega\cap B_{1/2}$ and every $r\in(0,1/2)$, where $C$ and $\gamma$ are universal constants.

    \textbf{Step 2: Decay away from the boundary. }
    For $x\in B_{1/4}\cap\Omega$, let
    \begin{gather}
        G_x(r,\nu)\coloneqq \osc_\nu(\de E;B_r(x))+Ar\left(r\frac{||\nabla d_\Omega\cdot \nu||_{L^\infty(B_r(x))}}{d_\Omega(x)}+(\vartheta r^\beta)^\lambda\right)\\
        G_x(r)\coloneqq\inf_{\nu\in\sphere} G_x(r,\nu)
    \end{gather}
    where $A$ is a large universal constant (possibly different than the one chosen in Step 1) that will be specified later.
    Notice that the infimum in the definition of $G_x(r)$ is attained.
    We claim that, if for some $x\in B_{1/4}\cap\Omega$ and $r\le \frac{d_\Omega(x)}{\bar C}$ (with $\bar C\ge C_4$ large, to be specified later) it holds $G_x(r)\le\eps_4r$, then
    \begin{equation}
        G_x(\eta r)\le\frac{3}{4}\eta\,G_x(\eta r).
    \end{equation}
    As above, we distinguish two cases.
    \begin{itemize}
        \item Let $\nu\in\sphere$ realize the infimum in the definition of $G_x(r)$.
        If
        \begin{equation}\label{eq:interior_flat_vince}
            C_4 \left(r\frac{||\nabla d_\Omega\cdot \nu||_{L^\infty(B_r(x))}}{d_\Omega(x)}+(\vartheta r^\beta)^\lambda\right)\le\frac{1}{r}\osc_\nu(\de E;B_r(x)),
        \end{equation}
        then by \Cref{prop:interior_IOF} there exists $\tilde\nu\in\sphere$ such that
        \begin{equation}
            \osc_{\tilde\nu}(\de E;B_{\eta r}(x))\le\frac{\eta}{2}\osc_\nu(\de E;B_r(x))\quad\mbox{and}\quad |\tilde \nu-\nu|\le C_4\frac{\osc_\nu(\de E;B_r(x))}{r},
        \end{equation}
        hence
        \begin{equation}
            ||\nabla d_\Omega\cdot \tilde\nu||_{L^\infty(B_{\eta r}(x))}\le ||\nabla d_\Omega\cdot \nu||_{L^\infty(B_{r}(x))}+C_4\frac{\osc_\nu(\de E;B_r(x))}{r}.
        \end{equation}
        Therefore
        \begin{align}
            G_x(\eta r)&\le G_x(\eta r,\tilde\nu)\\
            &\le \left(\frac{\eta}{2} + C_4 A\frac{\eta^2 r}{d_\Omega(x)}\right) \osc_\nu(\de E;B_r(x)) \\
            &\qquad+ A \eta r\left(\eta r\frac{||\nabla d_\Omega\cdot\nu||_{L^\infty(B_r(x))}}{d_\Omega(x)} + (\vartheta(\eta r)^\beta)^\lambda\right)\\
            &\le \frac{3}{4}\eta \,G_x(r,\nu)
        \end{align}
        as claimed, where we have used the fact that $d_\Omega(x)\ge r$ and we have assumed that $\eta$ is small enough depending on $C_4$ and $A$.
        \item On the other hand, if $\nu\in\sphere$ realizes the infimum in the definition of $G_x(r)$ and \eqref{eq:interior_flat_vince} fails, then we trivially have
        \begin{align}
            G_x(\eta r)&\le G_x(\eta r,\nu)\\
            &\le \osc_\nu(\de E;B_r(x)) + A\eta r\left(\eta r\frac{||\nabla d_\Omega\cdot\nu||_{L^\infty(B_r(x))}}{d_\Omega(x)}+\vartheta^\lambda(\eta r)^{\beta\lambda}\right)\\
            &\le A r \left( r\frac{||\nabla d_\Omega\cdot\nu||_{L^\infty(B_r(x))}}{d_\Omega(x)}+(\vartheta r^\beta)^{\lambda}\right) \left(\frac{C_4}{A}+\eta^{1+(1\wedge(\beta\lambda))} \right)\\
            &\le \frac{3}{4}\eta\, G_x(r)
            \end{align}
            provided $\eta$ is small enough and $A$ is large enough, depending on $C_4$ and $\eta$.
    \end{itemize}

    \textbf{Step 3: Conclusion. }
    Let $x\in\Omega\cap B_{1/4}$ and let $y\in\de\Omega\cap B_{1/2}$ be such that $d_\Omega(x)=|x-y|$.
    Let also $\rho\coloneqq d_\Omega(x)/\bar C$.
    By Step 1, if $r\ge\rho$, then 
    \begin{equation}\label{eq:decay_per_r_grandi}
        \osc_{\nu_y}(\de E;B_r(x))\le \osc_{\nu_y}(\de E;B_{(1+\bar C)r}(y))\le C\eps r^{1+\gamma}.
    \end{equation}
    Now, if $z\in B_\rho(x)$ and $\tilde z\in\de\Omega$ satisfies $|\tilde z-z|=d_\Omega(z)$, then $|\tilde z-y|\le 2d_\Omega(z)\le 2(1+\bar C)\rho$ and, by \Cref{lemma:tech_distance}
    \begin{equation}
        |\nabla d_\Omega(z)\cdot\nu_y|=|\nu_\Omega(\tilde z)\cdot\nu_y|\le|\nu_\Omega(\tilde z)-\nu_\Omega(y)|\le C\varkappa \rho^\alpha. 
    \end{equation}
    Therefore
    \begin{align}
        G_x(\rho)&\le C\eps \rho^{1+\gamma} + A\rho\left(\rho\frac{C\varkappa\rho^\alpha}{d_\Omega(x)}+(\vartheta\rho^\beta)^\lambda\right)\\
        &\le 2C\eps \rho^{1+\gamma}\\
        &\le\eps_5\rho
    \end{align}
    where in the second inequality $\gamma$ was chosen small enough and $C$ large enough, and in the third one $\bar C$ was chosen large so that $\rho\le 1/\bar C$ is smaller than some universal constant.
    Assuming $\eps_5\le\eps_4$, using the above computation and Step 2 and arguing by induction, we find
    \begin{equation}
        G_x(\eta^k\rho)\le \left(\frac{3}{4}\eta\right)^kG_x(\rho).
    \end{equation}
    By interpolating between scales and using $G_x(\rho)\le C\eps \rho^{1+\gamma}$, we finally obtain
    \begin{equation}
        G_x(r)\le C\eps r^{1+\gamma'}
    \end{equation}
    for some $\gamma'$ small enough and $C$ large enough.
    The above inequality and \eqref{eq:decay_per_r_grandi} give \eqref{eq:iof_final_thesis}.
    
\end{proof}

\appendix

\section{Proof of the weak Harnack inequality}\label{sec:appendix_DeSilvaSavin}

For the reader’s convenience, we restate the results from \cite[Section 3]{DeSilva_Savin_2021} using slightly different notation. This adjustment aligns the notation with ours and clarifies which results or hypotheses are used and when.

We clarify that the main result involves deriving a Harnack inequality from an ABP-type estimate, interpreting this as an adaptation of the methods proposed in \cite{Savin_2007}. 
The key challenge stems from the weak viscosity framework that is used, which does not inherently yield ABP estimates or pointwise information. 
To tackle this issue, a discretization technique in combination with a Calderón-Zygmund-inspired argument are used.

In this appendix we strongly use the notation introduced in \Cref{subsec:interior_harnack} and to streamline the analysis and avoid introducing complex conditions on the radii, we will consistently assume
\begin{equation}
	\Lambda > 2, \qquad \mbox{and} \qquad r< \frac{1}{8}.
\end{equation}

Lastly, through this Appendix, we refer to constants that depend on also on $\Lambda$ as universal.

The proof of \Cref{prop:weakharnack} is based on the following
\begin{proposition}[Corollary 3.2 in \cite{DeSilva_Savin_2021}]\label{prop:firstmeas}
	There exists a universal constant $\mu\in(0,1)$ with the following property.
    Let $\tau,r>0$ and $E\in\Pc^{\{4\tau\}}_\Lambda(r)$ in $B_{1/2}$, then for all $\ell\ge1$ with $r < 2^{-\ell}$ and $\rho>0$ smaller than some universal constant such that
    \begin{equation}
        A^{\tau}(E) \cap Q''_{\rho/2} \ne \emptyset,
	\end{equation}
    it holds
	\begin{equation}
		\Lc^{n-1}(A^{2\tau}_{\ell}(E) \cap Q''_\rho)\ge (1-\mu) \Lc^{n-1}(Q''_\rho).
	\end{equation}
\end{proposition}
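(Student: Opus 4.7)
The plan is to prove this single–scale density estimate by a sliding–paraboloid argument in the spirit of Savin and De~Silva--Savin, using the contact point supplied by $A^\tau(E)\cap Q''_{\rho/2}\ne\emptyset$ as an anchor and the weak viscosity class $\Pc^{\{4\tau\}}_\Lambda(r)$ as a rigidity tool. First I would fix $\bar y\in B_1$ with $\bar y''\in Q''_{\rho/2}$ and $\bar x\in E$ such that $F^\tau_{\bar y}$ touches $E$ from outside at $\bar x$; this anchor ensures that $\partial E$ lies below the paraboloid $p^\tau_{\bar y}$ on a sizeable portion of $B_1$.

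Next, for every $z''\in Q''_{c\rho}$ (with $c$ a small universal constant to be chosen later) consider the family of doubled–slope paraboloids
\[
q^{z''}_h(x'')\coloneqq \tau|x''-z''|^2+h,\qquad h\in\RR,
\]
and slide from below (increasing $h$) until $\{x_1\le q^{z''}_h(x'')\}$ first touches $E$ from outside at some point $X(z'')\in E$. Termination of the sliding is forced by the anchor: the quadratic of slope $2\tau$ strictly beats $p^\tau_{\bar y}$ outside a small neighbourhood of $\bar y''$, so contact must occur before the paraboloid escapes $B_{1/2}$ (this is the step that fixes the universal smallness of $\rho$). By construction the projection $\pi X(z'')$ belongs to $A^{2\tau}(E)\cap Q''_{C\rho}$.

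The core of the argument is a near–injectivity statement for the contact map $z''\mapsto \pi X(z'')$ at scale $r$. If two distinct vertex points $z_1'',z_2''\in Q''_{c\rho}$ produced contact points $X(z_1''),X(z_2'')$ lying within distance $r$ of each other, then one could envelope (by taking the pointwise minimum and correcting by a rank--one quadratic along $z_2''-z_1''$) the two touching paraboloids of slope $2\tau$ into a single test surface of the form $\sigma\phi(x''-w'')$ with $\sigma$ close to $4\tau$ and $\phi\in\Fc_\Lambda$ (for $\Lambda$ universal and sufficiently large, as in the definition of $\Fc_\Lambda$). This envelope would touch $E$ from outside in some ball of radius $\gtrsim r$, contradicting $E\in\Pc^{\{4\tau\}}_\Lambda(r)$. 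Consequently $\pi X$ is quantitatively non–degenerate, and a change–of–variables/Vitali covering yields
\[
\Lc^{n-1}\!\bigl(A^{2\tau}(E)\cap Q''_{C\rho}\bigr)\ \ge\ c_0\,\Lc^{n-1}(Q''_\rho),
\]
with $c_0$ universal.

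Finally, to upgrade from $A^{2\tau}(E)$ to the dyadic enlargement $A^{2\tau}_\ell(E)$ appearing in the statement: any dyadic cube of $\Qc_\ell$ meeting $A^{2\tau}(E)$ is, by definition, contained in $A^{2\tau}_\ell(E)$, and since $2^{-\ell}>r$ the dyadic grid resolves the near–injectivity scale. A standard Besicovitch/Vitali covering of $A^{2\tau}(E)\cap Q''_{C\rho}$ by cubes of $\Qc_\ell$ then converts the lower measure bound above into $\Lc^{n-1}(A^{2\tau}_\ell(E)\cap Q''_\rho)\ge (1-\mu)\Lc^{n-1}(Q''_\rho)$ for a universal $\mu\in(0,1)$.

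The main obstacle I anticipate is the envelope construction certifying near–injectivity: the pointwise minimum of two paraboloids of the form $\tau|x''-z_i''|^2+h_i$ is not itself in $\Fc_\Lambda$, so one must carefully realise it as (a translate of) an element of $\sigma\Fc_\Lambda$ with $\sigma\approx 4\tau$ by absorbing the separation of the vertices into the rank--one negative direction $\nu''$ allowed in the definition \eqref{eq:test_paraboloid0}. This is precisely where the flexibility of $\Lambda$ (assumed universal and large) is used, and where the restriction $r$ controlling the radius of the contradiction ball must be matched with the separation $|z_1''-z_2''|$. A secondary, more routine point is the quantitative termination of the sliding, which dictates the universal threshold on $\rho$.
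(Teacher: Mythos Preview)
The paper does not give its own proof of this proposition; it simply refers to De~Silva--Savin, noting that ``the proof relies on purely geometric arguments on paraboloids''. Your overall framework --- anchor paraboloid of opening $\tau$, sliding $2\tau$-paraboloids with vertices ranging over a cube, and using the class $\Pc^{\{4\tau\}}_\Lambda(r)$ to constrain the resulting contact map --- is exactly the De~Silva--Savin scheme, so you have identified the right mechanism.

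There are, however, two concrete gaps. First, your envelope lands in the wrong class. The pointwise minimum of two paraboloids of opening $2\tau$, after a rank-one correction along $z_2''-z_1''$, still has Hessian $2\tau$ in the directions orthogonal to $\nu''$; matching with $\sigma\phi$ for $\phi\in\Fc_\Lambda$ (orthogonal Hessian $I$) forces $\sigma=2\tau$, not ``close to $4\tau$''. Since the hypothesis is the \emph{singleton} $\Pc^{\{4\tau\}}_\Lambda(r)$, no contradiction follows from a $2\tau\Fc_\Lambda$ test. What De~Silva--Savin actually do is replace $P^{2\tau}_z$ by the $4\tau$-paraboloid tangent to it at the contact point $X(z)$ (which lies above $P^{2\tau}_z$ everywhere, hence still touches $E$ from outside), and only then introduce the negative direction $\nu''$; the anchor $P^\tau_{\bar y}$ --- not a second sliding paraboloid --- is what bounds $E$ along $\nu''$ in a ball whose radius is comparable to the vertex--contact distance $|X(z)''-z''|$.

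Second, your near-injectivity is too strong as written: if \emph{any} two distinct vertices gave contact points within $r$ and this were contradictory, the image of the uncountable set $Q''_{c\rho}$ would be $r$-separated in a bounded region, which is impossible. The statement one can actually extract is a Lipschitz-type inverse bound, roughly $|z_1''-z_2''|\le C\bigl(|X(z_1'')''-X(z_2'')''|+r\bigr)$, which says that the preimage under the contact map of each cube in $\Qc_\ell$ has measure at most a universal multiple of that cube's measure. Counting cubes then gives $\Lc^{n-1}(A^{2\tau}_\ell(E)\cap Q''_\rho)\ge c_0\,\Lc^{n-1}(Q''_\rho)$ directly --- and since the statement only asks for \emph{some} $\mu\in(0,1)$, taking $\mu=1-c_0$ already finishes. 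Your final ``upgrade'' via Vitali covering is therefore both unnecessary and, as you phrased it (passing from a $c_0$-bound on $Q''_{C\rho}$ to a $(1-\mu)$-bound on the smaller $Q''_\rho$), a non-sequitur. To make the localisation work you must also verify that the contact points actually fall in $Q''_\rho$; this uses the position $\bar x''\in Q''_{\rho/2}$ of the anchor \emph{contact} point, not its vertex $\bar y''$, which is only known to lie in $B_1''$.
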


The proof \Cref{prop:firstmeas} relies on a purely geometric arguments on paraboloids, hence we directly refer the reader to \cite{DeSilva_Savin_2021}.

With this preliminary result, we are now in a position to prove \Cref{prop:weakharnack}.

\begin{proof}[Proof of \cref{prop:weakharnack}]
    We set $\rho=2^{-M}$ for some $M$ larger than some universal constant to be determined later.
	The proof works by induction on $h$.
	
    \textbf{Base case $h=1$.}
		By \Cref{prop:firstmeas}, it suffices to show that $A^{C\tau}(E) \cap B''_{\rho/2}\ne\emptyset$ for some large and universal $C>0$. 
		Once this is established, the conclusion follows by taking $\bar{C} = 5C$.
		The proof relies on the use of a proper barrier.

    \textbf{Step 1 of Base Case: Barriers' definition.}
        We start by defining an auxiliary function $h$.
        To this end, for $s>0$, we define
        \begin{equation}
            \psi(x'') = -\frac{\rho^{2\Lambda}}{|x''|^{2\Lambda}} + t_\Lambda,\qquad \mbox{and}\qquad p^{\sigma_s}(x'') = \frac{\sigma_s}{2}|x''|^2 + t_s,
        \end{equation}
        where $t_\Lambda=|6\sqrt{n-1}|^{-2\Lambda}$, $\sigma_s= \frac{2\Lambda}{s^{2\Lambda+2}}>0$ and $t_s$ are chosen such that $\psi(x'')>0$ outside $B''_{6\sqrt{n-1}\rho}$,  $p^{\sigma_s}(x'')\ge \psi(x'')$ in $\RR^{n-1}$ and $p^{\sigma_s}(x'') = \psi(x'')$ on $\de B_s''$.
        
        We observe that if $z''\in\de B_{s}''$, taking the second order expansion of $\psi$ in $z''$, we find 
        
        \begin{equation}\label{eq:secondderbarrAppendix}
			p^{\sigma_s}(x'') - {\psi}(x'') \ge 2\Lambda \frac{\rho^{2\Lambda}}{s^{2\Lambda+2}}(\Lambda+1) \left( (x''-z'')\cdot \frac{z''}{|z''|} \right)^2,
		\end{equation}
        for all $x''\in B''_{cs}(z'')$, for some small $c=c(\Lambda)>0$.

        We now fix $s=\frac{\rho}{2}$ and we define $h(x'')$ as 
        \begin{equation}
            {h}(x'') = 
            \begin{cases}
                \psi( x'') & \mbox{if } |x''|\ge \frac{\rho}{2},\\
                p^{\sigma_{\rho/2}}(x'') &\mbox{if } |x''|< \frac{\rho}{2}.
            \end{cases}
        \end{equation}
        Notice that, by construction, $h\in C^{1}(\RR^{n-1})$.

        Finally, we define the barrier $G$ we are going to use.
		By assumption $A^{\tau}(E) \cap Q''_{3\rho}\ne\emptyset$, hence there exists a paraboloid $p_y^\tau(x'')$ (which we fix) such that $\{x_1 < p_y^{\tau}(x'')\}$ touches $E$ from outside at a point whose projection onto $\RR^{n-1}$ lies in $Q''_{3\rho}$.
		For a sufficiently large constant $C>1$, we define the set
		\begin{equation}
			G := \left\{x_1 < p^{\tau}_y(x'') + C\tau h(x'') + t \right\}.
		\end{equation}
	where $t\in\RR$ is such that $G$ touches $E$ from outside in $B_1$. 
        This $t$ exists since $h(0)<0$ and $h>0$ outside $B''_{6\sqrt{n-1}\rho}$ and $\{x_1 < p_y^{\tau}(x'')\}$ touches $E$ from outside.
        We call $z\in B_1$ the contact point between $G$ and $E$.

    \textbf{Step 2 of Base case.} We now show that $z\in Q_{\rho/2}$.
		
	By contradiction, if $z''\notin B''_{\rho/2}$, calling $s=|z''|$, then from \eqref{eq:secondderbarrAppendix} applied to $p^{C\tau\sigma_s}(x'')$, it follows that
	\begin{equation}
			\left\{
			x_1 < p^{\tau}_y(x'') + p^{C\tau\sigma_{s}}(x'') - C\tau\Lambda\frac{\rho^{2\Lambda}}{s^{2\Lambda+2}}(\Lambda+1)\left((x''-z'')\cdot\frac{z''}{|z''|} \right)^2
			\right\},
	\end{equation}		
	touches $E$ from outside in $B_{c\rho}(z)$.
        Taking $C = |6\sqrt{n-1}|^{2\Lambda+2}\Lambda^{-1}$, we get
        \begin{equation}
			\bar{C} r \le \rho, \qquad C\tau\Lambda (\Lambda+1) \frac{\rho^{2\Lambda}}{s^{2\Lambda+2}} > \tau \left(1+C\frac{\rho^{2\Lambda}}{s^{2\Lambda+2}}\right) \Lambda, \qquad T > 5\tau\left(\frac{|12\sqrt{n-1}|^{2\Lambda+2}}{\Lambda}+1\right),
	\end{equation}
	that contradict $E\in\Pc_\Lambda^{[\tau,T]}(r)$.
		
    \textbf{Step 3 of Base case.} Since $G$ touches from outside $E$ in $B_1$ at $z\in Q_{\rho/2}^1$, then by elementary polynomial manipulations, we get that
	\begin{equation}
			\left\{ x_1< p^{\tau}_y(x'') + C\tau p^{\sigma_{\rho/2}}(x'') = p^{(C\sigma_{\rho/2} +1)\tau}_{y_0}(x'') \right\}
	\end{equation}
	touches $E$ from outside in $Q^1_{\rho/2}$, where $y_0$ is a point such that $y_0'' = \frac{1}{C\sigma_p+1}y''
        \in B_1''$.

    \textbf{Inductive Step, $h \Rightarrow h+1$}.
        We now show that if the statement holds for $h$ and $C^{h+1}\tau \le T$, then it holds also for $h+1$.
        To this end, we define a Calderón-Zygmund decomposition of $Q_\rho''$ inductively as follows. Fix $\ell\in\NN$ such that $\bar C^h\tau\le T$ as in the assumptions of \Cref{prop:weakharnack};
        \begin{itemize}
                \item we set $\Fc:=\{Q''_\rho\}$ and $\Bc=\emptyset$;
                \item For all integers $j$ such that $M\le j\le \ell$ and for all $Q''\in\Fc\cap\Qc_{j}$:
                \begin{itemize}
                    \item if $Q''\cap A^{\bar{C}h\tau}_\ell(E)=\emptyset$, we add $Q''$ to $\Bc$,
                    \item otherwise, we add the dyadic decomposition of $Q''$ to $\Fc$. 
                \end{itemize}
        \end{itemize}
        By construction, $B$ has the following two properties. 
        \begin{itemize}
                \item $Q_\rho'' \setminus A^{\bar{C}^h\tau}_\ell(E) = \bigcup\{Q'': Q''\in \Bc\}$. 
                Indeed it is straightforward from the definition that $\bigcup\{Q_j'': Q_j''\in \Bc\} \subset Q_\rho'' \setminus A^{\bar{C}^h\tau}_\ell(E)$.
                The other inclusion follows by observing that any $Q''\subset Q_\rho\setminus A^{\bar{C}^h\tau}_\ell(E)$ and $Q''\in\Qc_\ell$ belongs to $\Bc$.
                \item If we dilate any $Q''\in \Bc$ around its center by a factor $3$, it intersects $A^{\bar{C}^h\tau}(E)$.
            Since $\bar{C}^{h+1}\tau\le T$, by the Base Case ($h=1$) applied to an appropriate translation of $Q''$, we find 
            \begin{equation}
                \Lc^{n-1}\left(A^{\bar{C}^{h+1}\tau}_\ell (E)\cap Q''\right) \ge (1-\mu)\Lc^{n-1}(Q'').
            \end{equation}
        \end{itemize}
        From these considerations, it follows
        \begin{equation}
			\begin{split}
				\Lc^{n-1}(A^{\bar{C}^{h+1}\tau}_\ell(E)\cap Q''_\rho)
				&= \Lc^{n-1}(A^{\bar{C}^{h}\tau}_\ell(E)\cap  Q''_\rho) + \sum_{Q''\in \Bc} \Lc^{n-1}(A^{\bar{C}^{h+1}\tau}_\ell(E) \cap Q'')\\
				&\ge \Lc^{n-1}(A^{\bar{C}^{h}\tau}_\ell(E)\cap  Q''_\rho) + (1-\mu) \sum_{Q''\in \Bc} \Lc^{n-1}(Q'')\\
				&= \mu \Lc^{n-1}(A^{\bar{C}^{h}\tau}_\ell(E)\cap  Q''_\rho) + (1-\mu) \Lc^{n-1} (Q''_\rho)\\
				& \ge \left(\mu(1-\mu^h) + (1-\mu)\right)\Lc^{n-1}(Q_\rho'')\\
				&= (1-\mu^{h+1})\Lc^{n-1}( Q''_\rho) 
			\end{split}
	\end{equation}
        which concludes the proof.
		
		\begin{figure}
        \centering
		\begin{tikzpicture}[use Hobby shortcut, scale=0.7]
			\begin{scope}
				\clip (-3.5,-3.5) rectangle (3.5,3.5);
				\draw[fill=yellow!10] (-3,-3) rectangle (1,1);	
				\draw[fill=red!10] (-1,-1) rectangle (1,1);										
				\draw[fill=blue!10,rotate=90,shift={(0,3)},scale=0.3] (-2.8,-2) .. (-2,-1.5) .. (-2,-2) ..  (-2.5,-2.5) .. (-2.8,-2);
				\draw[fill=blue!10, rotate=45, shift={(-1,2.5)}, scale=0.5] (-1.8,-2) .. (-1.6,-1.5) .. (-1.4,-1.5) .. (-1,-2) .. (-1.3,-2.2) .. (-1.7,-2.5)  .. (-2.4,-3) .. (-1.8,-2);
				\draw[fill=blue!10, shift={(-1.3,0.2)}] 
					(0,0) circle (0.2);
				\draw[fill=blue!10] (-4,-1.9) .. (-3,-2.3) .. (-2.5,-1.3) .. (-1.4,-2.5) .. (-3,-5);
				\draw (-3,-3) rectangle (1,1);	
				
				\node at (0,0) {$Q_j$};
				\node at (0,-1.5) {$Q_{j-1}$};
				\node[scale=0.8] at (-2.2,-2.4) {$A^{C^{h}\tau}(E)$};
				
				\draw (-3,-3) rectangle (3,3);
				\draw (-3,-1) -- (3,-1);
				\draw (-3,1) -- (3,1);
				\draw (-1,-3) -- (-1,3);
				\draw (1,-3) -- (1,3);
				\draw[dashed] (-4,-1) -- (4,-1);
				\draw[dashed] (-4,1) -- (4,1);
				\draw[dashed] (-4,-3) -- (4,-3);
				\draw[dashed] (-4,3) -- (4,3);

				\draw[dashed] (-1,-4) -- (-1,4);
				\draw[dashed] (1,-4) -- (1,4);
				\draw[dashed] (3,-4) -- (3,4);
				\draw[dashed] (-3,-4) -- (-3,4);

			\end{scope}
		\end{tikzpicture}
        \caption{One step in the Calderón-Zygmund decomposition of $Q_\rho''$}
		\end{figure}
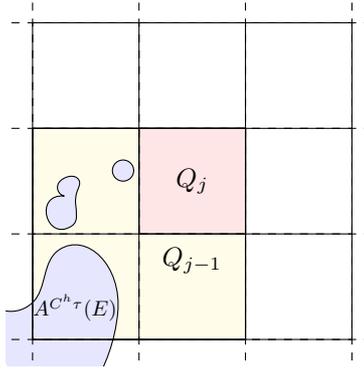

\end{proof}

\bibliographystyle{siam} 
\bibliography{citations2}

\begin{thebibliography}{10}

\bibitem{ambrosio_fusco_pallara00}
{\sc L.~Ambrosio, N.~Fusco, and D.~Pallara}, {\em Functions of Bounded
  Variation and Free Discontinuity Problems}, Oxford Science Publications,
  Clarendon Press, 2000.

\bibitem{Audrito_Fioravanti_Vita_2024}
{\sc A.~Audrito, G.~Fioravanti, and S.~Vita}, {\em Schauder estimates for
  parabolic equations with degenerate or singular weights}, Calculus of
  Variations and Partial Differential Equations, 63 (2024), p.~204.

\bibitem{Bevilacqua_Stuvard_Velichkov_2024}
{\sc G.~Bevilacqua, S.~Stuvard, and B.~Velichkov}, {\em Classical solutions to
  the soap film capillarity problem for plane boundaries}, July 2024.
\newblock arXiv:2407.09193.

\bibitem{CabreRosOtonSerra2016}
{\sc X.~Cabré, X.~Ros-Oton, and J.~Serra}, {\em Sharp isoperimetric
  inequalities via the {ABP} method}, Journal of the European Mathematical
  Society, 18 (2016), p.~2971–2998.

\bibitem{caffarelliCabre95}
{\sc L.~A. Caffarelli and X.~Cabr{\'e}}, {\em Fully Nonlinear Elliptic
  Equations}, no.~v. 43 in American Mathematical Society: Colloquium
  publications, American Mathematical Soc., 1995.

\bibitem{Caffarelli_Cordoba_1993}
{\sc L.~A. Caffarelli and A.~C\'ordoba}, {\em An elementary regularity theory
  of minimal surfaces}, Differential and Integral Equations, 6 (1993),
  p.~1–13.

\bibitem{Cinti_Glaudo_Pratelli_Ros-Oton_Serra_2022}
{\sc E.~Cinti, F.~Glaudo, A.~Pratelli, X.~Ros-Oton, and J.~Serra}, {\em Sharp
  quantitative stability for isoperimetric inequalities with homogeneous
  weights}, Transactions of the American Mathematical Society, 375 (2022),
  p.~1509–1550.

\bibitem{DeLellis_2018}
{\sc C.~De~Lellis}, {\em Allard's interior regularity theorem: an invitation to
  stationary varifolds}, Proceedings of CMSA Harvard. Nonlinear analysis in
  geometry and applied mathematics. Part 2,  (2018), p.~23–49.

\bibitem{De_Masi_Edelen_Gasparetto_Li_2024}
{\sc L.~De~Masi, N.~Edelen, C.~Gasparetto, and C.~Li}, {\em Regularity of
  minimal surfaces with capillary boundary conditions}, May 2024.
\newblock arXiv:2405.20796.

\bibitem{DeP_Gas_Schu24}
{\sc G.~De~Philippis, C.~Gasparetto, and F.~Schulze}, {\em A short proof of
  {Allard’s} and {Brakke’s} regularity theorems}, International Mathematics
  Research Notices, 2024 (2024), p.~7594–7613.

\bibitem{DeSilva_2011}
{\sc D.~De~Silva}, {\em Free boundary regularity for a problem with right hand
  side}, Interfaces and Free Boundaries, 13 (2011), p.~223–238.

\bibitem{DeSilva_Savin_2021}
{\sc D.~De~Silva and O.~Savin}, {\em Quasi-harnack inequality}, American
  Journal of Mathematics, 143 (2021), pp.~307--331.

\bibitem{Dierkes1988}
{\sc U.~Dierkes}, {\em Minimal hypercones and {$C^{0,1/2}$} minimizers for a
  singular variational problem}, Indiana Univ. Math. J., 37 (1988),
  pp.~841--863.

\bibitem{Dierkes_1990}
\leavevmode\vrule height 2pt depth -1.6pt width 23pt, {\em On the nonexistence
  of energy stable minimal cones}, Ann. Inst. H. Poincar\'e{} C Anal. Non
  Lin\'eaire, 7 (1990), pp.~589--601.

\bibitem{Dierkes1995}
\leavevmode\vrule height 2pt depth -1.6pt width 23pt, {\em Curvature estimates
  for minimal hypersurfaces in singular spaces}, Inventiones mathematicae, 122
  (1995), p.~453–473.

\bibitem{Dierkes2003}
\leavevmode\vrule height 2pt depth -1.6pt width 23pt, {\em Singular minimal
  surfaces}, in Geometric analysis and nonlinear partial differential
  equations, Springer, Berlin, 2003, pp.~177--193.

\bibitem{Ferreri_Tortone_Velichkov_2023}
{\sc L.~Ferreri, G.~Tortone, and B.~Velichkov}, {\em A capillarity one-phase
  bernoulli free boundary problem}, Oct. 2023.
\newblock arXiv:2310.14309.

\bibitem{Gilbarg_Trudinger}
{\sc D.~Gilbarg and N.~S. Trudinger}, {\em Elliptic partial differential
  equations of second order}, Classics in Mathematics, Springer-Verlag, Berlin,
  2001.
\newblock Reprint of the 1998 edition.

\bibitem{Hardt_Lin_1987}
{\sc R.~Hardt and F.-H. Lin}, {\em Regularity at infinity for area-minimizing
  hypersurfaces in hyperbolic space}, Inventiones mathematicae, 88 (1987),
  p.~217–224.

\bibitem{Lin_1989CPAM}
{\sc F.~H. Lin}, {\em Asymptotic behavior of area-minimizing currents in
  hyperbolic space}, Communications on Pure and Applied Mathematics, 42 (1989),
  p.~229–242.

\bibitem{Lin_1989}
\leavevmode\vrule height 2pt depth -1.6pt width 23pt, {\em On the dirichlet
  problem for minimal graphs in hyperbolic space}, Inventiones mathematicae, 96
  (1989), p.~593–612.

\bibitem{Bianco_Manna_Velichkov_2022}
{\sc S.~G. Lo~Bianco, D.~A. La~Manna, and B.~Velichkov}, {\em Free boundary
  cluster with {Robin} condition on the transmission interface}, Calculus of
  Variations and Partial Differential Equations, 64 (2025), pp.~Paper No. 4,
  28.

\bibitem{maggi12}
{\sc F.~Maggi}, {\em Sets of Finite Perimeter and Geometric Variational
  Problems: An Introduction to Geometric Measure Theory}, Cambridge Studies in
  Advanced Mathematics, Cambridge University Press, 2012.

\bibitem{Restrepo_Ros-Oton_2024}
{\sc D.~Restrepo and X.~Ros-Oton}, {\em $c^\infty$ regularity in semilinear
  free boundary problems}, July 2024.
\newblock arXiv:2407.20426.

\bibitem{Savin_2007}
{\sc O.~Savin}, {\em Small perturbation solutions for elliptic equations},
  Communications in Partial Differential Equations, 32 (2007), p.~557–578.

\bibitem{simonGMT}
{\sc L.~Simon}, {\em Lectures on geometric measure theory}, vol.~3 of
  Proceedings of the Centre for Mathematical Analysis, Australian National
  University, Australian National University, Centre for Mathematical Analysis,
  Canberra, 1983.

\bibitem{Sire_Terracini_Vita_2021}
{\sc Y.~Sire, S.~Terracini, and S.~Vita}, {\em Liouville type theorems and
  regularity of solutions to degenerate or singular problems part {I}: even
  solutions}, Communications in Partial Differential Equations, 46 (2021),
  p.~310–361.

\bibitem{Tamanini_1982}
{\sc I.~Tamanini}, {\em Boundaries of {C}accioppoli sets with
  {H}\"older-continuous normal vector}, J. Reine Angew. Math., 334 (1982),
  pp.~27--39.

\bibitem{Terracini_Tortone_Vita_2024}
{\sc S.~Terracini, G.~Tortone, and S.~Vita}, {\em Higher order boundary harnack
  principle via degenerate equations}, Archive for Rational Mechanics and
  Analysis, 248 (2024), p.~29.

\bibitem{Tonegawa_1996}
{\sc Y.~Tonegawa}, {\em Existence an regularity of constant mean curvature
  hypersurfaces in hyperbolic space}, Mathematische Zeitschrift, 221 (1996),
  p.~591–615.

\bibitem{Velichkov_2023}
{\sc B.~Velichkov}, {\em Regularity of the one-phase free boundaries}, vol.~28
  of Lecture Notes of the Unione Matematica Italiana, Springer, Cham, 2023.

\end{thebibliography}

\end{document}